\documentclass[11pt]{article}
\usepackage{amsmath,amssymb, mathtools,a4wide} % basic maths packages
\usepackage{amscd} % for commutative diagrams (rectangular)
\usepackage{amsthm} % for theorem environments
\usepackage{dsfont} % for mathematical fonts like mathbb{R}
\usepackage{stmaryrd} % for //

\usepackage{float} % for fixing pictures where I want (with [H] in \begin{figure}[H]) but interferes with hyperref...

\usepackage[pdftex]{graphicx} % for including pictures with includegraphics
\usepackage{xcolor} % for colors like textcolor{red}
\usepackage[colorlinks,citecolor=blue,linkcolor=blue,urlcolor=blue]{hyperref} % for references in the text with colors%\usepackage[nottoc,numbib]{tocbibind} % pour afficher references dans la table des matieres MAIS fait disparaitre les sections en haut...

%\setlength{\headheight}{16pt} % enough space for fancyhdr

%\usepackage{fancyhdr} % for headings and footers
%\pagestyle{fancy}
%\renewcommand\headrulewidth{0pt} % no line between document and header
%\fancyhf{} % clear header and footer
%\fancyhead[LE,RO]{\thepage} % page at left on even and right on odd pages
%\chead{\nouppercase{\leftmark}} % section title in middle of heading

\usepackage{geometry} % for size, margins, ...
\geometry{a4paper,left=35mm,right=35mm, top=30mm, bottom=30mm}

\usepackage[utf8]{inputenc} % to translate accents to intern latex code
\usepackage[english]{babel} % for english language

\usepackage{tikz} % for pictures with tikz
\usepackage{cite} %bibtex

%\usepackage{textcomp} % for section styles
%\usepackage[nocfg]{nomencl} % for index of notations -> does not work

%\makenomenclature % to activate the index of notations
%\pagestyle{myheadings} % page numbers at top right

%\usepackage{chngcntr} % to label things, here only figures

%\newcounter{theorem}
%\newcounter{thm}
\newtheorem{thm}{Theorem}
\newtheorem{conj}[thm]{Conjecture}
\newtheorem{lemma}[thm]{Lemma}
\newtheorem{coro}[thm]{Corollary}
\newtheorem{definition}[thm]{Definition}
\newtheorem{prop}[thm]{Proposition}
\newtheorem{example}[thm]{Example}
\newtheorem*{Remark}{Remark}
\newtheorem{exo}[thm]{Exercise}
\newtheorem{oq}[thm]{Open question}
\newtheorem*{goal}{Goal}
%\newtheorem{fakeprop}{Proposition}

%\newcounter{definition}
%\newcounter{pro}
%\newcounter{clai}
%\newtheorem*{claim}{Claim}
%\newcounter{subclai}
%\newtheorem*{subclaim}{Subclaim}
%\newtheorem{prob}{Problem}
%\newtheorem{obs}{Observation}

\numberwithin{thm}{section} % theorem labeling within sections
\numberwithin{equation}{section} % equation labeling within sections
%\numberwithin{conj}{section}
%\numberwithin{lemma}{section}
%\numberwithin{definition}{section}
%\numberwithin{prop}{section}
%\numberwithin{example}{section}
\numberwithin{figure}{section} % figure labeling within sections

% New commands, shortcuts, redefined symbols
\newcommand*{\R}{\mathbb{R}}
\newcommand*{\C}{\mathbb{C}}

\newcommand*{\Z}{\mathbb{Z}}
\renewcommand*{\H}{\mathbb{H}}

\renewcommand*{\S}{\Sigma}
\newcommand*{\g}{\mathfrak{g}}

\newcommand*{\e}{\varepsilon}
\renewcommand*{\l}{\lambda}
\newcommand*{\mf}{\mathfrak}
\newcommand*{\mc}{\mathcal}
\newcommand*{\bb}{\mathbb}
\newcommand*{\del}{\partial}
\newcommand*{\delbar}{\bar\partial}
\newcommand*{\cotang}{T^*\mc{T}^n}
\newcommand*{\T}{\mc{T}}

% New math operators for good spacing

\DeclareMathOperator{\Rep}{Rep}
\DeclareMathOperator{\Hol}{Hol}
\DeclareMathOperator{\Hom}{Hom}
\DeclareMathOperator{\GL}{GL}
\DeclareMathOperator{\SL}{SL}
\DeclareMathOperator{\PSL}{PSL}
\DeclareMathOperator{\tr}{tr}
\DeclareMathOperator{\rk}{rk}
\DeclareMathOperator{\id}{id}
\DeclareMathOperator{\Fun}{Fun}
\DeclareMathOperator{\End}{End}

\begin{document}

%\author{Alexander Thomas}
%\date{}
\title{A Gentle Introduction to the Non-Abelian Hodge Correspondence}
%\address{}
%\email{}
\date{}

\maketitle

\vspace{-1.5cm}
\begin{center}
Alexander Thomas\footnote{Max-Planck Institute Bonn, athomas@mpim-bonn.mpg.de, Basic Research Community for Physics \href{https://basic-research.org/}{BRCP}}
\end{center}

\vspace{0.25cm}
\begin{abstract}
We aim at giving a pedagogical introduction to the non-abelian Hodge correspondence, a bridge between algebra, geometric structures and complex geometry. The correspondence links representations of a fundamental group, the character variety, to the theory of holomorphic bundles. 

We focus on motivations, key ideas, links between the concepts and applications. Among others we discuss the Riemann--Hilbert correspondence, Goldman's symplectic structure via the Atiyah--Bott reduction, the Narasimhan--Seshadri theorem, Higgs bundles, harmonic bundles and hyperkähler manifolds. 
%Technical details are mostly referred to references.
\end{abstract}

\tableofcontents

\section{Introduction}

This paper is an introduction to the non-abelian Hodge correspondence, focusing on the key principles, motivations and links to other areas. Technical details and computations are mostly referred to references, while the ideas and concepts are presented. No new results are presented, but the way of presentation is original.
%It is not a survey article.

We base our exposition on geometric and sometimes on physical intuition, with some emphasis on symplectic geometry. The paper should be useful to get a first glimpse on the topic or to step back from technical details to see the clear conceptual picture. It should be accessible to a broad audience, in particular to master students in mathematics with interest in mathematical physics. Some knowledge of differential geometry, Riemann surfaces and Lie groups are welcome, but not a must have. 

The non-abelian Hodge correspondence is the huge achievement due to many mathematicians, above all Nigel Hitchin \cite{hitchin1987self}, Carlos Simpson \cite{simpson}, Kevin Corlette \cite{corlette} and Simon Donaldson \cite{donaldson}. It links three worlds together: the topological and algebraic world of representations of fundamental groups, the differential geometry world of connections and the complex geometry world of holomorphic bundles. It can be interpreted as a diffeomorphism between moduli spaces, which play an important role in theoretical physics. The correspondence is an incarnation of a very strong structure on these moduli spaces, a hyperkähler structure.

The paper is structured as follows:
\begin{itemize}
	\item \emph{Introduction and motivations:} Section \ref{sec:char} presents the multiple facets of the character variety. In Section \ref{bundle-formalism} we review bundles and connections and describe the Riemann--Hilbert correspondence between flat connections and the character variety.
	\item \emph{The toolbox:} an important topic is how to define quotients. One way is the hamiltonian reduction, which is presented together with a crash course in symplectic geometry in Section \ref{sec:symp-geo}. As an application, Section \ref{atiyah--bott} constructs the Goldman symplectic structure on the character variety. Another way uses stability conditions exposed in Section \ref{Sec-GIT} and applied to holomorphic bundles in Section \ref{sec:holo-bundles}.
	\item \emph{The core:} Section \ref{Sec-Higgs-bundles} introduces the notion of a Higgs bundle and states the non-abelian Hodge correspondence. The main ideas for the proof are given in Section \ref{harmonic-bundles} via harmonic map theory and the Hitchin--Simpson theorem. Section \ref{HK} gives a deeper understanding coming from hyperkähler geometry.
	\item \emph{Applications and generalizations:} In Section \ref{hit-comp} we construct Hitchin components as an application of the non-abelian Hodge correspondence. The final section \ref{sec:generalizations} exposes generalizations and research directions.
\end{itemize}

Some paragraphs are marked with an asterisk. They are more advanced and not necessary for the other (non-advanced) topics.
I used the material of this paper for a master lecture at the University of Bonn in the summer semester 2022.

\medskip
\paragraph{Acknowledgments.}
I warmly thank Vladimir Fock, from whom I learned most of the ideas and concepts. I'm also grateful to Florent Schaffhauser and Georgios Kydonakis for helpful discussions.
I gratefully acknowledge support from the Max-Planck Institute for Mathematics in Bonn.

%\textcolor{red}{
%\begin{itemize}
%	\item Make difference between holomorphic and complex vector bundles in the notations? For example $E$ for holo, $V$ for complex. $\Sigma$ for topological surface, $S$ for Riemann surface.
%	\item Be clear when we are over Riemann surface, when over manifold and which kind of bundle we use.
	%\item Use $G=\GL_n(\C)$ and $K=\mathrm{U}(n)$ etc. to state the general results by simply saying ``stays true for $G$ reductive'' or $G\subset \GL_n(\C)$ or similar.
	%\item almost-complex structure, hermitian structure, ... -> small definitions (in footnote?)
	%\item Add references.
	%\item Add pictures, diagrams, visual material.
	%\item Change Cauchy-Riemann operator to Dolbeault operator ?
%\end{itemize}
%}

\section{Starting point: Character varieties}\label{sec:char}

Character varieties are at the crossroad between many fields: representation theory, geometry, theoretical physics, dynamics, number theory,... They form a playground where techniques from various fields can be applied, and at the same time they have deep connections to multiple research streams.

\medskip
\paragraph{Motivation.}
Consider a group $\Gamma$ which we want to understand. The natural way to understand a group is to let it act. On the other hand, the theory we understand the best in mathematics is probably Linear Algebra. So we can try to let $\Gamma$ act on a vector space $V$, say of dimension $n$ defined over $\C$. In other words: we try to find matrices which mimic $\Gamma$. This is the basic idea of representation theory. Hence we consider $\Hom(\Gamma,\GL_n(\C))$.

As usual, we are not really interested in the set of all representations, but only in the isomorphism classes. For a linear action, any two representations into $\GL_n(\C)$ are isomorphic whenever we can obtain one from the other by a simple base change in $V$. Hence, the space of isomorphism classes is
$$\Hom(\Gamma,\GL_n(\C))/\GL_n(\C):= \Rep(\Gamma,\GL_n(\C))$$
where we quotient by the conjugation action.
Note that in order to get a nice topological space (separable), we restrict to representations $\Gamma\to \GL_n(\C)$ which are completely reducible. We come to this in Section \ref{Sec-GIT}.

Now, we can consider vector spaces with more structure, for example equipped with a hermitian product, a symplectic structure... Then we ask for representations preserving this structure. Thus, we analyze the space $\Rep(\Gamma,G)$ for some Lie group $G$, typically a subgroup of $\GL_n(\C)$.

In the specific case when $\Gamma$ is the fundamental group of a manifold $M$, we call $\Rep(\pi_1M,G)$ the \textbf{character variety} of $M$ and $G$.

The case if a fundamental group of a manifold is interesting because the character variety has several geometric meanings: it describes $(G,X)$-structures (geometric structures where the manifold $M$ is locally modeled on some space $X$ with transition functions in $G$) and flat $G$-connections. The link to flat connections is explained in Section \ref{bundle-formalism} via the Riemann--Hilbert correspondence.

For $M=\Sigma$ a surface, the character variety appears in physics, especially in string theory. It describes geometric structures on the world sheet, the surface traced out by a string in time. In mathematics, representations of surface groups into real groups ($G=\SL_n(\R)$ for example) have interesting dynamical properties. This goes under the name of \emph{higher Teichmüller theory} (see Section \ref{hit-comp} for more details).

\medskip
\paragraph{Many viewpoints.}
Since the character variety sits at the intersection of many mathematical areas, it allows many equivalent descriptions. The main goal of the paper is to understand all these incarnations.

\medskip
\textit{\textbf{Topological interpretation:}}
The character variety by its very definition is a space of representations of $\pi_1\S$ which does only depend on the topology of $\S$.

\medskip
\textit{\textbf{Smooth interpretation:}}
The character variety is also described as the space of all flat connections (on a trivial bundle over $\S$) modulo gauge equivalence. This description is called the \emph{Riemann--Hilbert correspondence}, which we will see at the end of Section \ref{bundle-formalism}.

\medskip
\textit{\textbf{Holomorphic interpretation:}}
The character variety can be described by holomorphic objects on a Riemann surface $S$ whose underlying smooth surface is $\S$. These objects are \emph{stable Higgs bundles}, which we introduce in Section \ref{Sec-Higgs-bundles}. This description is the content of the \emph{non-abelian Hodge correspondence}.

\section{Bundles and connections}\label{bundle-formalism}

To study the character variety, its link to the space of flat connections, the ``smooth interpretation'', is fundamental. We motivate and recall the basic concepts of differential geometry and show the link to character varieties.
To deepen the subject, I highly recommend the outstanding book of Baez and Muniain \cite{baez}.

\medskip
\paragraph{Dictionary.}
Most of modern physical theories, like Maxwell's theory of electromagnetism or the standard model of elementary particles, are using concepts of differential geometry such as bundles and connections. Why are these concepts so fundamental? 

To me, the reason comes from what I like to call the \emph{Global-to-Local heuristics}, the idea that our observations describe only local properties of our universe. When we try to describe natural phenomena which surround us, we get the impression that we live in a Euclidean space. Indeed, the forces applied to a same point add up in a vectorial way, and even on a larger scale like our solar system, nature seems to be well described by Newtonian mechanics in which the universe is an affine space modeled on $\mathbb{R}^3$. But our perceptions and observations are always limited in space and time. Nothing prevents Nature to behave only locally like an affine space, but to bend and twist on a global scale.

A good illustration is the surface of our Earth, which locally is well described by a flat part of the plane $\mathbb{R}^2$ (although there are mountains and valleys, on average it seems to be flat). Globally of course, the Earth is a ball, since it bends far away from the observation scales of our daily life.

The ``Global-to-Local heuristics'' can be summarized in the following dictionary:
\vspace{0.3cm}
\begin{center}
\setlength{\arrayrulewidth}{.05em}
\begin{tabular}{|c|c|}
\hline 
Linear algebra / Classical mechanics & Riemannian geometry / General relativity \\
\hline
Absolute space $\R^n$ &   Manifold \\ 
 Functions &  Sections of a bundle\\ 
Differential equations &  Connections \\
\hline
\end{tabular}
\end{center}
\vspace{0.5cm}

To the global concept of absolute space corresponds a mathematical concept which looks only locally like an open subset of $\mathbb{R}^n$: the concept of a \textbf{manifold}. 

In Newtonian mechanics, there are ingredients other than space: physical quantities are described by functions (temperature, speed, electromagnetic field, ...) whose evolution is described by differential equations. \emph{What is the local concept of a function?} I.e. what is the mathematical concept which looks locally like a function, but which might bend on large scales?

The reader who has never thought about that question should take a second to think about it. The answer is a bit tricky: a ``generalized function'' is a section of a fiber bundle! Consider a function $f:M \to F$ where $M$ is a manifold and $F$ is some target space ($F=\mathbb{R}$ for the temperature for example). An equivalent way to describe $f$ is through its graph $\mathrm{gr}(f)$ which is the subset of $M\times F$ given by all points of the form $(x,f(x))$. We can also say that a function is the choice of an inverse to the projection $M\times F \to M$. 

To generalize the notion of a function, we first need a space $E$ which locally looks like $U\times F$ where $U\subset M$ is a small open subset of $M$. This is precisely the notion of a \textbf{fiber bundle} on $M$ with fiber $F$. When $F$ is a vector space, we speak of a \textbf{vector bundle}. 
To be precise, a vector bundle with fiber $V$ is a manifold $E$ together with a surjective map $p:E\to M$ such that there exists an atlas on $M$ with charts $U_i$ such that $p^{-1}(U_i) = U_i \times V$ and transition maps which are linear and linear, i.e. in $\mathrm{GL}(V)$. You can imagine to built up $E$ by taking $U_i\times V$ and identifying them on $(U_i\cap U_j)\times V$ using the transition functions of $M$ and a fiberwise element of $\mathrm{GL}(V)$.

A ``generalized function'' is a \textbf{section} of $p$, i.e. a map $s: M\to E$ such that $p\circ s=\mathrm{id}_M$. The space of all sections is denoted by $\Gamma(E)$. For the trivial bundle $E=M\times F$, a section is nothing but a function on $M$ with values in $F$. For $E=TM$ the tangent bundle, a section is a vector field.

\begin{figure}[h!]
\centering
\begin{tikzpicture}[scale=1.2]
\draw (0.5, 0.55)--(-0.5+0.1,1.55-0.2)--(0.5,2.55)--(1.5-0.1,1.55+0.2)--cycle;
\draw (3, -0.45)--(2+0.1,0.55-0.2)--(3,1.55)--(4-0.1,0.55+0.2)--cycle;
\draw[dotted] (0.5,0.55)..controls +(1,0) and +(-1,0)..(3, -0.45);
\draw[thick] (0.5,1.55)..controls +(0.8,0.1) and +(-0.6,0.2)..(3, 0.55);
\draw (2,1.6) node {$D$};
\draw (0.2,1.6) node {$s$};
\draw (-0.3,0.1) node {$M$};
\draw[thick] (0,0)--(1,1.1);
\draw[thick] (0,0)..controls (1,0) and +(-1,0)..(2.5,-1);
\draw[thick] (2.5,-1)--+(1,1.1);
\draw[thick] (1,1.1)..controls +(1,0) and +(-1,0)..(3.5,0.1);
\end{tikzpicture}

\caption{Illustration of a bundle and a connection}
\end{figure}
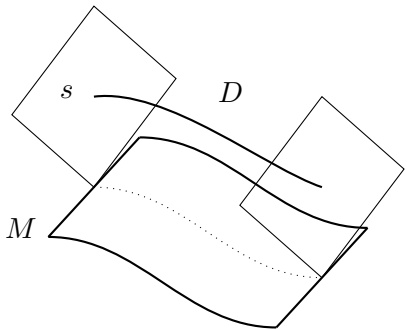

\begin{Remark}
The ``Global-to-Local heuristics'' should have a counterpart to the microscopic level: what we observe in our daily life is only at a \emph{mesoscopic} scale. Nothing prevents Nature to be different on an atomic scale. The search for a mathematical concept which gives a Euclidean structure at mesoscopic scales (and a manifold on the global scale) is still open. Candidates exist, for example discrete models or non-commutative geometry. This concept should play a paramount role in a hypothetical future theory of quantum gravity.
\end{Remark}

\medskip
\paragraph{Connections.}
What is the local concept of a linear differential equation? How to define the derivative of a section of a vector bundle? 
The answer is given by the notion of a connection.

In a first approximation, we can say that a connection is a generalization of a directional derivative, i.e. it allows to derive a section $s$ in a direction given by a vector field $X$. We write the result as $\nabla_X(s)$, which is again a section.

In a second approximation, we can say that a connection is a \emph{matrix-valued 1-form}. Locally it can be written $d+A$ where $A\in \Omega^1(M,\mathfrak{gl}_n(\C))$. To see this, fix basis vectors $e^i(x)$ in each fiber, varying smoothly in $x$. Then a section $s$ can be written $s=\sum_i s_ie^i(x)$. Locally, we can differentiate $s$ in the direction $X=\sum_j X_j\frac{\partial}{\partial x^j}$ in the usual way:
$$d_X(s) = \sum_i d_X(s_i)e^i(x) + \sum_{i,j} s_iX_j d_j(e^i(x))$$
where $d_j$ denotes the derivative in direction $x^j$. Since $(e^i(x))$ is a basis, there are matrices $A_j(x)$ defined by
$$d_j(e^i(x)) = A_j(x)e^i(x).$$
Putting $A=\sum_jA_jdx^j$, we can write $d_X(s) = (d+\sum_i A_i dx^i)_X(s)$, so the connection is $d+A$. Note that the matrix-valued 1-form $A$ appears from the fact that the basis $(e^i(x))$ depends on $x$.

%For a function $f:\R \to \R$, its derivative is again a function $f':\R\to \R$. For a function $f:\R^n\to \R^m$, there is the notion of the differential $df$, but it is not a function $\R^n \to \R^m$. We can reduce to the former case by taking slices. This gives the notion of the \emph{directional derivative} at a point. Given a direction for each point of $\R^n$, i.e. a vector field $X \in \Gamma(T\R^n)$, we can define the directional derivative $d_X(f)$. Of course, this is nothing but the differential evaluated at the vector field: $d_X(f) = df(X)$. 

To be precise, a \textbf{connection} (or \textbf{covariant derivative}) is a map $D: \Gamma(E)\times \Gamma(TM) \to \Gamma(E)$, where $E$ is a vector bundle, which satisfies $\forall s,t \in \Gamma(E), f,g \in \mathcal{C}^\infty(M)$ and $X,Y \in \Gamma(TM)$:
\begin{align*}
(i) & \text{ Linearity for sections: } & D_X(s+t) = D_X(s) + D_X(t), \\
(ii) & \text{ Linearity for vector fields: } & D_{fX+gY}(s) = fD_X(s) + gD_Y(s), \\
(iii) & \text{ Leibniz' rule: } & D_X(fs) = df(X)\, s+ f D_X(s). 
\end{align*}
The name ``connection'' comes from the fact that a connection allows to connect different fibers of $p: E\to M$. A section $s$ is said to be \textbf{flat} if $D_X(s) = 0$ for all $X\in \Gamma(TM)$.

%To see better the link to differential equations, let us determine the local form of a connection in some chart $U \subset M$ with coordinates $(x_1, ..., x_n)$. Fix a basis $(e_1(x), ..., e_r(x))$ of the fiber of $E$ over $x\in U$. Given a section $s$, we can decompose it in this basis: $s=\textstyle\sum_j s_j(x)e_j(x)$. By (ii) it is sufficient to know $D_i(s) = D_{\partial_i}(s)$ where $\partial_i = \frac{\partial}{\partial x_i}$. By (i) and (iii) we get 
%$$D_i(s) = \sum_j D_i(s_j(x)e_j(x)) = \sum_j \partial_i(s_j) e_j + s_j D_i(e_j).$$
%While the first term is a usual derivative, the second term gives a linear transformation in the fiber $p^{-1}(x)$ described by some matrix $A_i$ where $A_i(e_j) = D_i(e_j)$.  We can put all $A_i$ together into a \emph{matrix-valued 1-form} $A= \sum_i A_i dx_i$. Thus locally on $U$, we can write $D=d+A$. When working locally, we will often refer to $A$ as being the connection. A flat section $s$ is a solution to $$(d+A)s = 0.$$
%This generalizes a linear differential equation. Indeed in dimension 1, a linear differential equation for $\psi$ can be written $$(d^n-t_1d^{n-1}-t_2d^{n-2}-...-t_n)\psi=0$$
%where we have put $d=\frac{d}{dx}$.

To see the link to differential equations, consider first the case of a 1-dimensional manifold, for example $M=\R$. A linear differential equation is given by 
$$(d^n+t_1(x)d^{n-1}+t_2(x)d^{n-2}+...t_n(x))\psi(x)=0$$
where $d=\frac{d}{dx}$.

This is equivalent to a matrix-valued differential equation of order 1: $(d+A)s=0$ where
$$A = \begin{pmatrix} &-1&&\\ && \ddots &\\ &&&-1\\ t_n & t_{n-1} &\cdots & t_1\end{pmatrix} \;\text{ and }\; s=\begin{pmatrix}\psi\\ d\psi \\ \vdots \\ d^{n-1}\psi\end{pmatrix}.$$

In dimension 2, you get a system of two differential equations. You can write it as
$$\left \{ \begin{array}{cl}
D_1\psi(x,y) &=0  \\
D_2\psi(x,y) &=0  
\end{array} \right.$$

As in dimension 1, there is a standard form:
$$\left \{ \begin{array}{cl}
\partial_x \Psi(x,y) &=A_x(x,y)\Psi(x,y)  \\
\partial_y \Psi(x,y) &=A_y(x,y)\Psi(x,y)
\end{array} \right.$$
where $\Psi$ is a vector whose entries are suitable derivatives of $\psi$.

A natural question then arises: under which conditions there is a full set of solutions?

A necessary condition, which turns out to be sufficient, is that $\partial_x\partial_y\Psi = \partial_y\partial_x\Psi$ which gives
\begin{equation}\label{comp-cond-diff-sys}
\partial_xA_y-\partial_yA_x+[A_x,A_y]=0.
\end{equation}

This expression is the \emph{curvature of the connection $d+A$}, as defined below. In terms of the differential operators $D_1$ and $D_2$, if the curvature does not vanish, then you can reduce the system to a smaller one\footnote{Explicitly to $D_1\psi=0$ and $([D_1,D_2] \mod \langle D_1,D_2\rangle)\psi=0$}.

\begin{exo}
Work through an explicit example. For instance $D_1=\partial_x^2-y\partial_x$ and $D_2=\partial_y^2-x$.
\end{exo}

The \textbf{curvature} of a connection $D$ measures the failure of the covariant derivatives $D_i$ to commute. It is a 2-tensor $F_D$ given by
$$F_D(X,Y) = [D_X,D_Y]-D_{[X,Y]} \;\; \text{ for } X,Y\in \Gamma(TM).$$
The fact that it is a tensor means that $F_D(X,Y)(fs) = fF_D(X,Y)(s)$ for $f\in \mathcal{C}^\infty(M)$. If the curvature vanishes, we call the connection \textbf{flat}.

Locally we can write $D=d+A$ and take for $X$ and $Y$ coordinate vector fields. Then we get $$F_D(\partial_i,\partial_j) = [D_i, D_j]-D_{[\partial_i,\partial_j]} = [D_i,D_j] = [\partial_i+A_i, \partial_j+A_j] = \partial_i A_j-\partial_j A_i+[A_i,A_j].$$
We get Equation \eqref{comp-cond-diff-sys}. This can be written concisely as 
\begin{equation}\label{curvature-expr}
F(A) = dA+A\wedge A
\end{equation}
which is a matrix-valued 2-form.

\medskip
\paragraph{Gauge transformations and structure group.} 
The natural symmetry group acting on bundles are bundle automorphisms, also called gauge transformations.

When working with a vector bundle locally, we often fix a basis $(e_1(x), ..., e_r(x))$ in each fiber varying smoothly with $x$. Changing this basis by a matrix $A(x)$ is a \textbf{gauge transformation}, or just \textbf{gauge} in short. It is a smooth map from $M$ to $\mathrm{GL}(V)$ where $V$ is the fiber of the vector bundle $E$. Mathematically a gauge transformation is nothing but a \emph{bundle automorphism}, i.e. an invertible map $E\to E$ which preserves the fibers where it acts linearly.

How do connections and the curvature behave under gauge transformations? Since a gauge transformation acts fiberwise, we can work locally and write $D=d+A$.
\begin{prop}
The gauge action on connections is given by: $$g.A=gAg^{-1}+gd\left(g^{-1}\right).$$
On the curvature tensor we get: $$g.F(A) = gF(A)g^{-1}.$$
\end{prop}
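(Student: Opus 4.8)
The plan is to compute directly how a gauge transformation acts on a connection by unwinding the local description $D = d + A$, and then propagate the result to the curvature using the formula $F(A) = dA + A\wedge A$ established above. The key point is that a gauge transformation $g$ is a fiberwise change of the smoothly varying frame $(e_i(x))$, so the covariant derivative $D$ is an intrinsic object: it is the new \emph{matrix} of $1$-forms, not $D$ itself, that changes. Concretely, I would let $s$ be a section with coordinate vector $\sigma$ in the old frame, so that its coordinate vector in the new frame is $g\sigma$; since $D_X s$ is frame-independent as a section, comparing its expression in the two frames yields the transformation law for $A$.

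First I would write, in the old frame, $(d + A)\sigma$ for the coordinate vector of $D s$. In the new frame the same section $Ds$ has coordinate vector $g\bigl((d+A)\sigma\bigr) = g\,d\sigma + gA\sigma$. On the other hand, if the new connection matrix is $A' = g.A$, then $Ds$ should also read $(d + A')(g\sigma) = d(g\sigma) + A'g\sigma = g\,d\sigma + (dg)\sigma + A'g\sigma$. Equating the two expressions gives $gA\sigma = (dg)\sigma + A'g\sigma$ for all $\sigma$, hence $A' = gAg^{-1} - (dg)g^{-1}$. The only remaining step is to rewrite $-(dg)g^{-1}$ in the form stated: differentiating $gg^{-1} = \mathrm{id}$ gives $(dg)g^{-1} + g\,d(g^{-1}) = 0$, so $-(dg)g^{-1} = g\,d(g^{-1})$, yielding $g.A = gAg^{-1} + g\,d(g^{-1})$.

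For the curvature, I would substitute $g.A$ into $F(g.A) = d(g.A) + (g.A)\wedge(g.A)$ and expand, carefully tracking signs coming from wedging $1$-forms (so $d(g\,d(g^{-1})) = dg \wedge d(g^{-1})$ and $d(gAg^{-1}) = dg\wedge Ag^{-1} + g\,dA\,g^{-1} - gA\wedge dg^{-1}$, using $d(g^{-1}) = -g^{-1}(dg)g^{-1}$). Collecting terms, the derivative-of-$g$ contributions cancel in pairs and one is left with $g(dA + A\wedge A)g^{-1} = gF(A)g^{-1}$. Alternatively, and more cleanly, I could argue conceptually: the curvature is the tensorial operator $F_D(X,Y) = [D_X,D_Y] - D_{[X,Y]}$, which depends only on $D$ and not on the frame; changing the frame by $g$ conjugates the matrix of any fiberwise-linear operator by $g$, so the matrix of $F_D$ transforms by $F(A) \mapsto gF(A)g^{-1}$ with no inhomogeneous term.

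The main obstacle is purely bookkeeping: keeping the order of matrix factors and the signs from the wedge product straight in the curvature computation, since matrix-valued forms do not commute and $d$ interacts with $g^{-1}$ through $d(g^{-1}) = -g^{-1}(dg)g^{-1}$. The conceptual argument for the curvature sidesteps this entirely and is the route I would favor in an expository text, relegating the direct expansion to a remark or an exercise.
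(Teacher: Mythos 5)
Your proposal is correct and essentially reproduces the paper's argument: your frame-change computation for $g.A$ is just an unwinding of the paper's one-line conjugation $g(d+A)g^{-1}=d+gAg^{-1}+g\,d\left(g^{-1}\right)$, and the curvature statement, which the paper leaves as an exercise via $F(A)=dA+A\wedge A$, is exactly the expansion you carry out (and your signs and cancellations check out). Your alternative tensoriality argument for $F_D$ is a nice conceptual shortcut, consistent with the paper's remark that $F_D$ is a tensor, but it does not change the substance of the proof.
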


The first point simply comes from the conjugation action of a gauge on a connection:
$$g(d+A)g^{-1} = d+gAg^{-1}+gd\left( g^{-1}\right).$$
Hence the action of $g$ on $A$ is given by $g.A = gAg^{-1}+gd\left( g^{-1}\right)$ which is an affine transformation. Indeed, the \textbf{space of all connections} $\mathcal{A}(E)$ is an affine space, whose underlying vector space is $\Omega^1(M,\mathfrak{gl}(V))$. This means that the difference of two connections $D-D_0$ is a matrix-valued 1-form $A$. Hence, while we can locally write $D=d+A$, we can always globally write $$D=D_0+A.$$
\begin{exo}
Check the gauge transformation of the curvature tensor using $F(A)=dA+A\wedge A$.
\end{exo}

In general, the transition functions of a bundle are any elements of $\mathrm{GL}(V)$, but it might happen that they all are in some subgroup $G\subset \mathrm{GL}(V)$. In that case, we say that $V$ is a bundle with \textbf{structure group} $G$. For example, if our vector space $V$ is equipped with a scalar product, we might require the transition functions to respect this structure, i.e. $G=\mathrm{O}(V)$. Then, we have a well-defined scalar product in each fiber of $E$. \textit{The structure group allows to put more structure on the bundle}, hence its name.
There is also a more abstract notion, that of a \emph{principal bundle}, which we will not treat in these lectures. 

For a more detailed treatment of bundles and connections and their links to physics, we warmly recommend the book of John Baez and Javier Munian \cite{baez}, in particular Chapter 2.

\medskip
\paragraph{Parallel transport.}
We have already said that a connection allows to connect different fibers. We make this precise.

Consider two points $x,y\in M$ and a path $\gamma:[0,1]\to M$ connecting them, i.e. $\gamma(0)=x, \gamma(1)=y$. Along $\gamma$ there is a unique flat section $s_p$ with given starting point $p\in \pi^{-1}(x)$, i.e. $s_p(x)=p$.

The map $p\mapsto s_p(y)$ is called \textbf{parallel transport} along $\gamma$. It is a linear map (element in the structure group $G$).
In general, the parallel transport depends on the path. \emph{For a flat connection, the parallel transport only depends on the homotopy class of the path} (i.e. you can change $\gamma$ by isotopies).

For flat connections, the parallel transport along loops is called the \textbf{monodromy}. The monodromy is an element of the character variety $\Rep(\pi_1M,G)$. Indeed, for loops based at $x$, we get a point in $\Hom(\pi_1(M,x),G)$ and changing $x$ conjugates the monodromy.

\medskip
\paragraph{Riemann--Hilbert correspondence.}
The first fundamental result about character varieties is that they describe the moduli space of flat connections, i.e. the space of flat connections modulo gauge equivalence. This gives a link between the topological and the smooth interpretation of character varieties.

We have just seen how to associate a point in the character variety to a flat connection, simply by considering its monodromy. This map is a diffeomorphism:
\begin{thm}[Riemann--Hilbert correspondence]
The character variety is the space of flat $G$-connections on the trivial bundle on $M$ modulo gauge:
$$\boxed{\Rep(\pi_1M,G)\cong \{\text{flat } G-\text{connections}\}/\text{gauge} .}$$
\end{thm}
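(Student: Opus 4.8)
The plan is to construct an explicit inverse to the monodromy map, going from a representation $\rho \in \Hom(\pi_1 M, G)$ back to a flat connection on the trivial bundle $M \times V$, and then check that the two constructions are mutually inverse and well-behaved under the respective equivalence relations (conjugation on one side, gauge equivalence on the other).

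\medskip

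Concretely, I would proceed as follows. First, fix a basepoint $x_0 \in M$ and recall that the monodromy construction (parallel transport along loops) already gives a well-defined map from $\{\text{flat }G\text{-connections}\}/\text{gauge}$ to $\Rep(\pi_1 M, G)$; the path-independence up to homotopy was established in the paragraph on parallel transport, and changing the basepoint or applying a gauge transformation only conjugates the resulting representation, so the map descends to the quotients. Second, for the inverse, given $\rho : \pi_1(M, x_0) \to G$, form the associated flat bundle $E_\rho = (\widetilde{M} \times V)/\pi_1 M$, where $\pi_1 M$ acts on the universal cover $\widetilde{M}$ by deck transformations and on $V$ via $\rho$; the trivial connection $d$ on $\widetilde{M} \times V$ is $\pi_1 M$-invariant and hence descends to a flat connection $D_\rho$ on $E_\rho$. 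Third, one checks that $E_\rho$ is smoothly trivial — this uses that $V$ is a vector space (or that $G$ is connected, so the associated $G$-bundle is trivial as a topological bundle), so that $D_\rho$ may be transported to an honest flat connection on the trivial bundle $M \times V$, well-defined up to gauge. Fourth, verify that monodromy of $D_\rho$ recovers $\rho$ (a path in $M$ lifts to $\widetilde{M}$, and parallel transport of the trivial connection is literally the identity in the $\widetilde{M} \times V$ picture, so the holonomy around a loop $\gamma$ is exactly $\rho([\gamma])$), and conversely that a flat connection $D$ is gauge-equivalent to $D_{\rho_D}$ (trivialize $E$ by flat frames on simply connected charts; the transition functions are then locally constant, and comparing them to the monodromy shows $D$ is isomorphic to the flat bundle built from its own monodromy). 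Finally, note that all maps involved are smooth in families, so the bijection is a diffeomorphism of moduli spaces, not just a set-theoretic bijection.

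\medskip

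The main obstacle — or rather the point requiring the most care — is the passage between the associated flat bundle $E_\rho$ and the \emph{trivial} bundle $M \times V$. The statement is phrased for connections on the trivial bundle, but the natural object produced from a representation is $E_\rho$, which is trivial as a smooth bundle only because the fiber is a vector space and the base a manifold admitting partitions of unity (or, for a general structure group $G$, because $G$ is assumed connected). Keeping track of this identification — and checking that the gauge ambiguity in choosing the trivialization matches exactly the conjugation ambiguity in $\rho$ coming from the choice of basepoint and of a frame at $x_0$ — is where the bookkeeping lives. The other steps (flatness of $D_\rho$, homotopy-invariance of holonomy, the local existence of flat frames via the Frobenius/Picard–Lindelöf theorem applied to $F_D = 0$) are standard and were essentially set up in the preceding paragraphs; I would cite \cite{baez} for the details and focus the exposition on the conceptual content of the equivalence.
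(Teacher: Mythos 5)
Your construction is the same as the paper's: the paper's sketch likewise builds the inverse of the monodromy map by letting $\pi_1M$ act diagonally on $\widetilde{M}\times G$ (you use $\widetilde{M}\times V$, the associated vector bundle, which is equivalent), descends the trivial connection $d$ to a flat connection on $E_\rho$, and observes that conjugating $\rho$ corresponds to a gauge transformation. You supply more of the bookkeeping than the paper does (mutual-inverse checks, local flat frames via $F_D=0$, smoothness in families), which is all fine and in the intended spirit.

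One justification in your third step is wrong, though, and it concerns exactly the point you single out as delicate. The triviality of $E_\rho$ as a smooth bundle does \emph{not} follow from the fiber being a vector space together with partitions of unity (those give hermitian metrics, not trivializations --- think of $T\mathbb{S}^2$), nor from connectedness of $G$ (flat bundles with connected structure group can be topologically non-trivial; flat $\PSL_2(\R)$-bundles over a closed surface with nonzero Euler class are the standard example). The honest reason, in the setting the paper cares about ($G\subset\GL_n(\C)$ and eventually $M=\Sigma$ a closed surface), is that a flat complex bundle has vanishing first Chern class, hence degree $0$, hence is trivial as a smooth complex bundle --- this is precisely what the paper records later in the paragraph on flat bundles in Section \ref{sec:holo-bundles}. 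For a general manifold $M$ and general $G$ the bundle $E_\rho$ need not be trivial at all, and the correspondence must then be stated per topological type of bundle; so either invoke the Chern-class argument for the case at hand or restrict the claim accordingly. With that repair, your argument matches the paper's proof.
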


The idea of the proof is to construct explicitly the inverse map: given a representation $\rho:\pi_1M \to G$, we can consider the diagonal action of $\pi_1M$ on $\widetilde{M}\times G$, where $\widetilde{M}$ denotes the universal cover of $M$ on which $\pi_1M$ acts by deck transformations. The quotient $$E_\rho=(\widetilde{M}\times G)/\pi_1M$$ is a (principal) $G$-bundle over $M$ and the trivial connection $d$ on $\widetilde{M}\times G$ descends to a flat $G$-connection. Finally, changing $\rho$ by conjugation corresponds to changing the connection by a gauge transformation.

%\textcolor{red}{Reference for the theorem? In particular, diffeo (not just a bijection)?}

\section{A crash course in symplectic geometry}\label{sec:symp-geo}

Grown out of modern treatments of classical mechanics in the early 19th century, symplectic geometry is a very active mathematical domain today. We give motivations and introduce basic concepts, in particular the symplectic quotient called Hamiltonian reduction. This will be used for the study of character varieties in the next section.

To deepen the subject, I warmly recommend the book of McDuff and Salamon \cite{mcduff} (especially Sections 1, 3 and 5), and the book of Kirillov \cite{kirillov} (in particular Chapter 1, Section 4 and Appendix II, Section 3). For the physical interpretation, I advocate Arnold's classical book \cite{arnold}.

\subsection{Symplectic structures}
The physical motivations for symplectic structures are sometimes a bit obscure. We will describe how they naturally arise.

Consider a physical system, for example the motion of a particle which is restricted to stay on some surface $S$. The first important idea is to consider the space of all possible states of our system, which is called the \textbf{phase space} $M$. In our example the system is uniquely given by knowing the position and the momentum of the particle, so the phase space is the cotangent bundle\footnote{It turns out that while the velocity lives in the tangent bundle, the momentum lives in the cotangent bundle.} $M=T^*S$. The picture you might have in mind is the following: we replace a complicated system by one point in a complicated space, which describes all possible states. Then the evolution of the system is nothing but a path in the phase space (see left of Figure \ref{Fig:phase-space}).

\begin{figure}[h!]
\centering
\includegraphics[height=2.8cm]{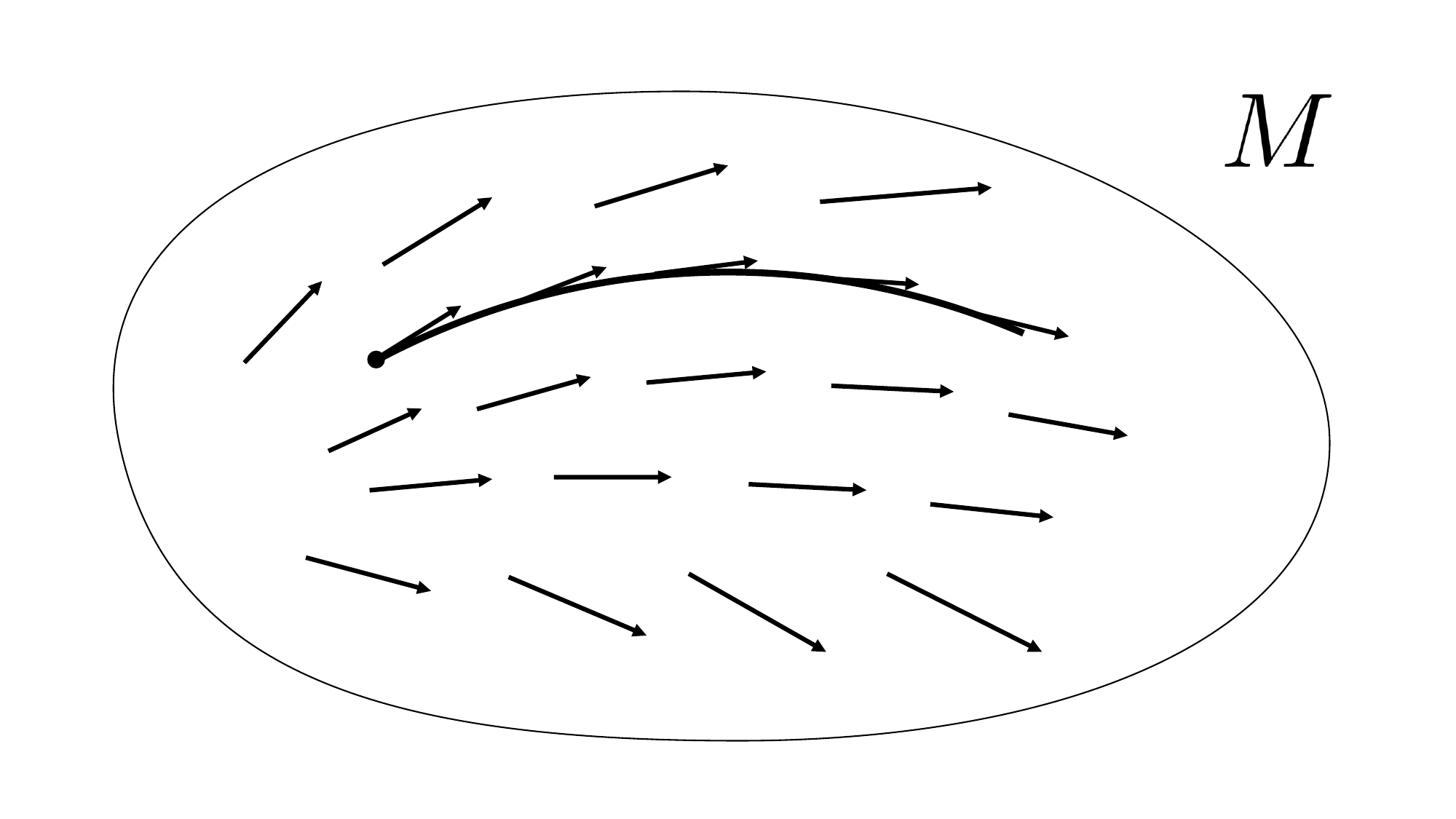} \hspace{0.3cm}
\includegraphics[height=2.8cm]{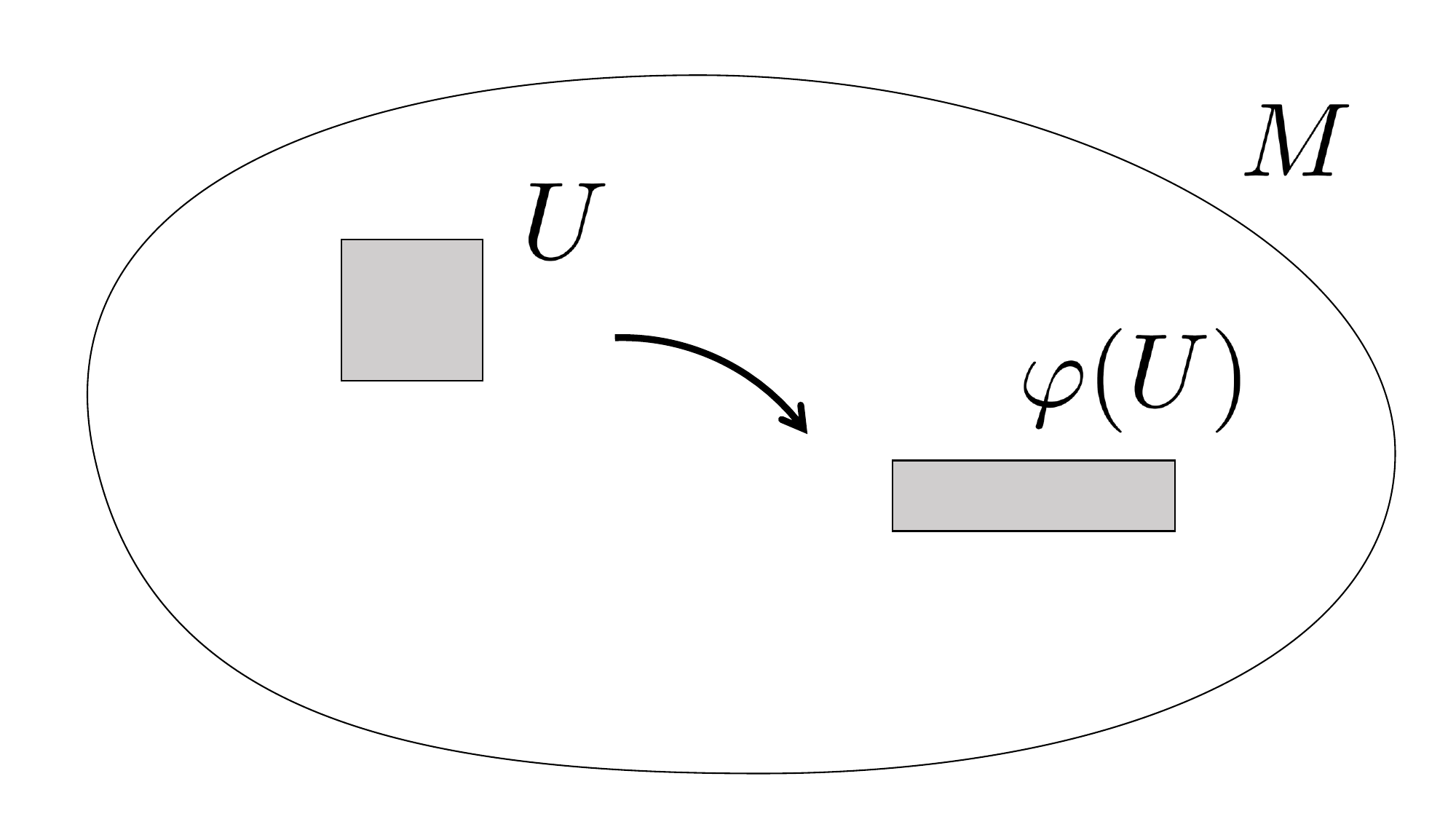}

\caption{Phase space with volume preserving evolution}\label{Fig:phase-space}
\end{figure}

%\textcolor{red}{Add pendulum phase space?}

Now, in practice we never precisely know at which point of $M$ our system is, since for all quantities, we have some uncertainty in the measurement. What we know is that our system is somewhere in an open set $U\subset M$. An important observation is that when the system is isolated (i.e. does not exchange energy or information with the environment), then the evolution of $U$ through the time flow is \emph{volume-preserving}. This translates the fact that \emph{we will not gain (or lose) information about the system by simply waiting some time!}

It turns out that it is even better: the physical quantities come in pairs, to each quantity we can associate a so-called \emph{conjugated} quantity. For example, the conjugate of the position in a direction is the momentum in that direction and \textit{vice versa}. The time evolution does not only preserve the total volume of $U$ in the phase space, but also the (induced) volume of all 2-dimensional slices given by a pair of conjugated quantities. This means that we may gain information on the position, but we will lose information on the momentum.

This property of ``preserving volumes of special 2-dimensional slices'' is captured by the fact that \emph{the phase space has a symplectic structure and the time evolution preserves that structure}. 
\begin{definition}
A \textbf{symplectic structure} on a manifold $M$ is a two-form $\omega$ which is closed and non-degenerate. 
\end{definition}

The notion of conjugated variables arise by some linear algebra of two-forms:
\begin{exo}
Let $V$ be a vector space equipped with a antisymmetric bilinear form $\omega$ which is non-degenerate (i.e. $\omega(x,y)=0 \;\forall \, y \Rightarrow x=0$). Show that the dimension of $V$ is even and that $V$ admits a basis $(e_1,...,e_{2n})$ such that $\omega(e_{2i-1},e_{2i}) = 1 = -\omega(e_{2i},e_{2i-1})$ for all $i=1,...,n$ and $\omega(e_i,e_j)=0$ for all other $i,j$.
\end{exo}

The standard examples arise as the simplest phase spaces of physical systems. 
\begin{itemize}
	\item A free particle moving on $\R$ gives $M=T^*\R$. The symplectic form is nothing but the area form, given by $\omega_0=dp\wedge dx$ (where $(p,x)$ are coordinates on $T^*\R\cong \R^2$).
	\item More generally, for a free particle moving in $\R^n$, its phase space is $\R^{2n}=T^*\R^n$ (position and momentum) with $\omega_0 = \sum_i dp_i\wedge dx_i$, called the \emph{standard symplectic structure}.
	\item If the particle is constraint to stay on some manifold $N$, the phase space becomes the cotangent bundle $T^*N$. Again we can write 
	\begin{equation}\label{sympl-cotangent-bundle}
	\omega=\sum_i dp_i\wedge dx_i.
	\end{equation}
	This needs some explanation. We have $$T_{(p,x)}T^*N \cong T^*_xN\oplus T_xN.$$ In a coordinate independent manner, we have $\omega((\varphi,X),(\varphi',X'))=\varphi(X')-\varphi'(X)$ which can be identified with Equation \eqref{sympl-cotangent-bundle}. Note that $\omega=d\lambda$ is exact where $\lambda=\sum_i p_i dx_i$ is called the \emph{Liouville form}.
\end{itemize}

An important result in the local theory of symplectic manifolds is the following:
\begin{thm}[Darboux theorem]
On any symplectic manifold $(M,\omega)$ there exists local coordinates $(p_i,x_i)$ such that $\omega=\sum_i dp_i\wedge dx_i$.
\end{thm}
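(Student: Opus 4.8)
The natural approach is Moser's homotopy method (Moser's trick). The plan is to fix a point $m_0 \in M$ and build a local diffeomorphism straightening $\omega$ into the constant form $\sum_i dp_i \wedge dx_i$ near $m_0$; the desired coordinates are then the pullbacks of the linear coordinates under this diffeomorphism.

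First I would invoke the linear algebra from the exercise above: the value $\omega_{m_0}$ is a nondegenerate antisymmetric form on $T_{m_0}M$, so it admits a standard basis. Choosing coordinates $(p_i, x_i)$ adapted to this basis gives a second, constant-coefficient symplectic form $\omega_0 := \sum_i dp_i \wedge dx_i$ on a neighborhood of $m_0$, with $\omega_0 = \omega$ at $m_0$. Shrinking to a ball $U \ni m_0$ and setting $\omega_t := (1-t)\omega_0 + t\omega$, each $\omega_t$ is closed and equals $\omega_{m_0}$ at $m_0$; since nondegeneracy is an open condition, after further shrinking $U$ all the $\omega_t$, $t \in [0,1]$, are symplectic.

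Next I would seek a time-dependent vector field $X_t$ on $U$ whose flow $\psi_t$ (with $\psi_0 = \mathrm{id}$) satisfies $\psi_t^* \omega_t = \omega_0$. Differentiating in $t$ and applying Cartan's formula $\mathcal{L}_{X_t}\omega_t = d(\iota_{X_t}\omega_t)$ (the term $\iota_{X_t} d\omega_t$ vanishes), this reduces to $d(\iota_{X_t}\omega_t) + \dot{\omega}_t = 0$ with $\dot{\omega}_t = \omega - \omega_0$. Since $\omega - \omega_0$ is closed and $U$ is contractible, the Poincaré lemma yields $\omega - \omega_0 = d\sigma$ for a $1$-form $\sigma$ that I may choose to vanish at $m_0$. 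It then suffices to solve the pointwise linear equation $\iota_{X_t}\omega_t = -\sigma$, which has a unique smooth solution $X_t$ because $\omega_t$ is nondegenerate. As $\sigma(m_0)=0$, we get $X_t(m_0)=0$, so $m_0$ is a fixed point and the flow $\psi_t$ exists for all $t \in [0,1]$ on a possibly smaller neighborhood of $m_0$. Finally $\psi := \psi_1$ satisfies $\psi^*\omega = \omega_0$, and the functions $(p_i,x_i)\circ\psi$ are the sought-after Darboux coordinates.

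The main obstacle is not a single computation but the bookkeeping of the successive shrinkings: one must guarantee that a common neighborhood of $m_0$ survives on which $\omega_t$ stays nondegenerate for all $t$, $\sigma$ is defined, and the merely locally defined, time-dependent field $X_t$ can be integrated up to time $1$. The fixed-point observation $X_t(m_0)=0$ is precisely what rescues the integration step. An alternative, more classical route is induction on $\dim M$: pick $p_1$ with $dp_1 \ne 0$ at $m_0$, choose $x_1$ with $\{p_1, x_1\} = 1$, verify that the Hamiltonian flows of $p_1$ and $x_1$ commute and are transverse, use them to split off a two-dimensional symplectic factor, and apply the inductive hypothesis to the symplectic complement; but Moser's method is shorter and more conceptual, so I would present that one.
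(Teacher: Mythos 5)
The paper does not actually prove the Darboux theorem: it is quoted as a standard result (with the reader pointed to references such as McDuff--Salamon), and the text immediately moves on to its interpretation, namely the absence of local invariants in symplectic geometry. So there is nothing in the paper to compare your route against; judged on its own, your Moser-deformation argument is the standard modern proof and is essentially complete and correct. The linear-algebra normalization at $m_0$, the family $\omega_t=(1-t)\omega_0+t\omega$ (nondegenerate on a common neighborhood by openness plus compactness of $[0,1]$), the reduction via Cartan's formula to $\iota_{X_t}\omega_t=-\sigma$ with $d\sigma=\omega-\omega_0$ and $\sigma(m_0)=0$, and the observation that $X_t(m_0)=0$ guarantees the flow up to time $1$ near $m_0$ are all exactly as they should be. One small slip at the very end: from $\psi^*\omega=\omega_0$ you get $\omega=(\psi^{-1})^*\omega_0$, so the Darboux coordinates for $\omega$ are $(p_i,x_i)\circ\psi^{-1}$ (equivalently, the old coordinates pushed forward by $\psi$), not $(p_i,x_i)\circ\psi$; the latter would instead straighten $(\psi^{-1})^*\omega$. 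Your alternative sketch by induction on dimension is also a legitimate classical route, but as you say, Moser's method is the cleaner one to present.
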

This means that there is no local invariant in symplectic geometry. Note that this is not the case for Riemannian geometry where the curvature is a local invariant. 
Coordinates with the property of the theorem are called \emph{Darboux coordinates}.

\begin{exo}
Show that a non-degenerate 2-form on a compact manifold without boundary cannot be exact. Deduce that there is no symplectic embedding of a symplectic manifold into $(\R^{2n},\omega_0)$. Which spheres admit symplectic structures?
\end{exo}

\medskip
\paragraph{Symplectic gradient.}
The definition of a symplectic structure is very similar to that of a Riemannian structure, the main difference being that the 2-tensor is symmetric for the Riemannian structure, and anti-symmetric for a symplectic structure. One of the key notions in Riemannian geometry, the gradient of a function, is still available in symplectic geometry: the \textbf{symplectic gradient} of a function $f$ is a vector field $\mathrm{sgrad}(f)$ such that 
$$\omega(\mathrm{sgrad}(f),X) = -df(X)$$ 
for all $X\in \Gamma(TM)$. This is exactly the same definition as for the gradient (we need only non-degeneracy of $\omega$ to get a well-defined notion).

So why is the geometry of a phase space symplectic and not Riemannian? One reason is that a symplectic structure gives naturally the notion of conjugated variables. Another reason is that it allows to treat the important issue of \emph{conserved quantities}. While the usual gradient of a function $f$ points in the direction of biggest change of $f$, the symplectic gradient points in the direction where $f$ stays constant (the level set). This allows to refine our picture from above: the physical evolution of a system is described by a vector field on the phase space which is the symplectic gradient of a special function, called the \textbf{Hamiltonian} of the system, which is conserved in time. For isolated systems, \emph{the Hamiltonian is the total energy}.

\begin{example}
Consider $M=\mathbb{S}^2\subset \R^3$ the sphere with symplectic form being the area form, and the height function $h:\mathbb{S}^2\to\R$ given by the $z$-coordinate. Then the hamiltonian flow is the rotation around the $z$-axis. This of course preserves the level-sets.
\begin{figure}[h!]
\centering
\includegraphics[height=2.5cm, angle=-90,origin=c, trim=100 0 150 0, clip]{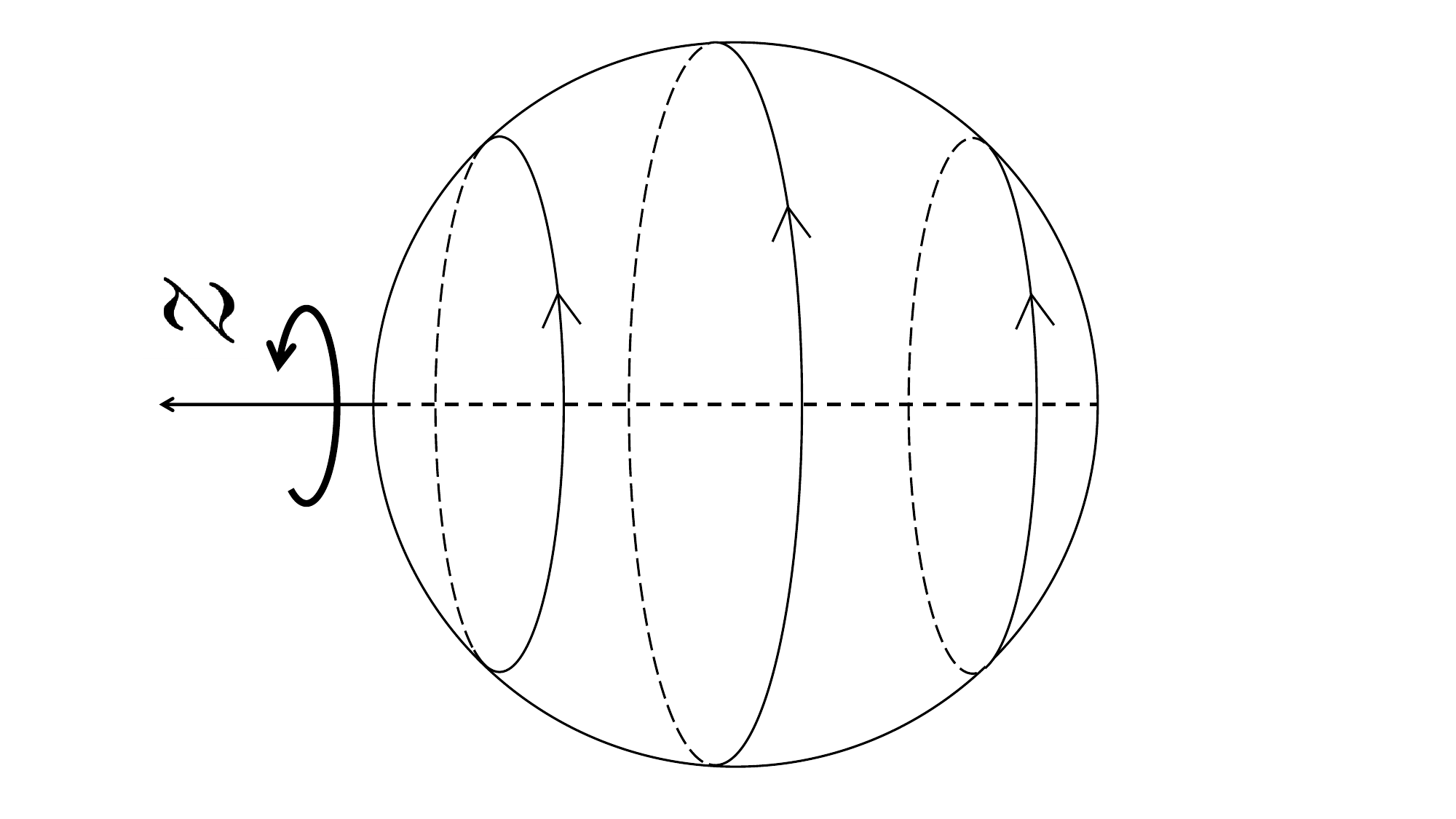}
\caption{Height function generates rotation for a sphere}
\end{figure}
\end{example}

\begin{prop}
The 1-parameter group of diffeomorphisms of $M$, obtained by integrating the symplectic gradient of a function, preserve the symplectic structure of $M$.
\end{prop}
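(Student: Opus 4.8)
The plan is to reduce the statement to the vanishing of a single tensor, the Lie derivative of $\omega$ along the relevant vector field. Write $X := \mathrm{sgrad}(f)$ and let $\phi_t$ denote the (local) $1$-parameter group obtained by integrating $X$. The fundamental identity from the calculus of Lie derivatives is
$$\frac{d}{dt}\,\phi_t^*\omega = \phi_t^*\bigl(\mathcal{L}_X\omega\bigr),$$
so it suffices to show $\mathcal{L}_X\omega = 0$: then $t\mapsto \phi_t^*\omega$ is constant, and since $\phi_0 = \mathrm{id}$ we get $\phi_t^*\omega = \omega$ for all $t$ where $\phi_t$ is defined, i.e. $\phi_t$ is a symplectomorphism.

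To compute $\mathcal{L}_X\omega$ I would invoke Cartan's magic formula $\mathcal{L}_X = d\circ\iota_X + \iota_X\circ d$, where $\iota_X$ denotes contraction with $X$. Applied to the $2$-form $\omega$ this gives $\mathcal{L}_X\omega = d(\iota_X\omega) + \iota_X(d\omega)$. The second term vanishes because $\omega$ is closed — this is the one place where we use that $\omega$ is a genuine symplectic form and not merely non-degenerate. For the first term, the defining property of the symplectic gradient is exactly $\iota_X\omega = \omega(X,\cdot) = -df$, whence $d(\iota_X\omega) = -d(df) = 0$. Thus $\mathcal{L}_X\omega = 0$, which closes the argument.

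The proof is essentially bookkeeping once the two standard ingredients are in place, so the only point that genuinely needs care is the Lie-derivative identity $\tfrac{d}{dt}\phi_t^*\omega = \phi_t^*\mathcal{L}_X\omega$ — which I would either cite from a differential geometry reference or, for a self-contained treatment, establish at $t=0$ directly from the definition of $\mathcal{L}_X$ and then propagate to all $t$ using the group law $\phi_{t+s} = \phi_t\circ\phi_s$ — together with the mild caveat that on a non-compact $M$ the flow $\phi_t$ need only be local, so the conclusion is about the open sets on which it is defined. It is worth remarking in passing that the same computation shows a vector field $X$ preserves $\omega$ precisely when $\iota_X\omega$ is closed; such $X$ are the \emph{symplectic} (locally Hamiltonian) vector fields, the ones of the form $\mathrm{sgrad}(f)$ being the globally Hamiltonian ones.
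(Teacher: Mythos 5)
Your proof is correct and follows essentially the same route as the paper: Cartan's magic formula reduces everything to $d\omega=0$ and the exactness of $\iota_X\omega=-df$. Your only additions — propagating from $t=0$ to all $t$ via $\frac{d}{dt}\phi_t^*\omega=\phi_t^*\mathcal{L}_X\omega$ rather than the paper's ``the other values are similar,'' and the remark about local flows on non-compact $M$ — are welcome refinements, not a different argument.
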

The proof uses Cartan's magical formula and illustrates the importance of $\omega$ being closed.

\begin{proof}
We want to show that $f_t^*\omega=\omega$ for all $t\in \R_+$, where $f_t$ denotes the flow associated to a Hamiltonian $H$. For that, it is sufficient to show that the derivative of $f_t^*\omega$ is zero. We restrict attention to $t=0$, the other values are similar.

By definition of the Lie derivative $\mathcal{L}$, we have $$\frac{d}{dt}\bigg\rvert_{t=0}f_t^*\omega = \mathcal{L}_{X_H}\omega$$ where $X_H$ denotes the symplectic gradient of $H$. Using Cartan's magical formula and writing $\iota_X$ for the inner product, we get
$$\mathcal{L}_{X_H}\omega = (d\circ \iota_{X_H}+\iota_{X_H}\circ d)\omega = d\left(\omega(X_H,.)\right) + \iota_{X_H}d\omega.$$
Since $\omega$ is closed, the second term vanishes. The first term vanishes as well since by definition $\omega(X_H,.)=-dH$ is exact.
\end{proof}

A diffeomorphism $f$ of $M$ preserving its symplectic structure (i.e. $f^*\omega=\omega$) is called a \textbf{symplectomorphism}. The special case coming from the time 1 flow of a symplectic gradient is called a \textbf{Hamiltonian diffeomorphism}. Not all symplectomorphisms are Hamiltonian. This distinction can be seen on the ``infinitesimal'' level: the Lie algebra of the diffeomorphism group is the space of vector fields. A vector field $X$ is called
\begin{itemize}
	\item symplectic if $\omega(X,.)$ is a closed 1-form,
	\item Hamiltonian if $\omega(X,.)$ is exact.
\end{itemize}
A vector field associated to a symplectomorphism (resp. hamiltonian diffeomorphism) is symplectic (resp. hamiltonian).

Note that there is no constraint on the Hamiltonian of a physical system: any function in  $\mathcal{C}^\infty(M)$ can in principle be used to generate time evolution, and it is the physicists job to find out which function it is for a given system. Jürgen Jost \cite{jost} puts it in these words: 
\begin{center}
\textit{``The aim of physics is to write down the Hamiltonian of the universe. \newline The rest is mathematics.''}
\end{center}
Hence, we can give two physical meanings to a function: as \emph{observable quantity} or as \emph{infinitesimal generator of a transformation}. 
This shows the importance of the space of functions.

\medskip
\paragraph{Poisson bracket.}
The space of smooth functions $\mathcal{C}^\infty(M)$ comes with an extra structure: a \textbf{Poisson bracket}. Concretely, the symplectic form $\omega$ allows to associate to two functions $f, g\in \mathcal{C}^\infty(M)$ a function denoted by $\{f,g\}$ defined by
$$\{f,g\} = \omega (\mathrm{sgrad}(f),\mathrm{sgrad}(g)).$$
Abstractly, a Poisson bracket is a bilinear map $\mathcal{C}^\infty(M)\times \mathcal{C}^\infty(M)\to \mathcal{C}^\infty(M)$ which is symmetric, satisfies Leibniz' rule $\{fg,h\} = f\{g,h\}+\{f,h\}g$ and the \emph{Jacobi identity} $$\{f,\{g,h\}\}+\{h,\{f,g\}\}+\{g,\{h,f\}\}=0.$$
Roughly speaking, a Poisson bracket is a Lie bracket and a derivation.

\begin{exo}
Show that the Jacobi identity of the Poisson bracket is equivalent to $\omega$ being closed.
\end{exo}

In Darboux coordinates $(x_i,p_i)$, the Poisson bracket has a simple expression:
\begin{equation}\label{Poisson-bracket-expr}
\{.,.\} = \sum_i \frac{\del}{\del p_i}\wedge \frac{\del}{\del x_i}.
\end{equation}
This means that 
$$\{f,g\}=\sum_i \frac{\del f}{\del p_i}\frac{\del g}{\del x_i}-\frac{\del f}{\del x_i}\frac{\del g}{\del p_i}.$$

The important point is to know the interpretation of the Poisson bracket: \emph{$\{f,g\}(x)$ is the change of $g$ at $x$ along the flow line generated by the symplectic gradient of $f$}. In physical words: if $H$ is the Hamiltonian of a physical system and $g$ some observable, then 
\begin{equation}\label{Poisson-signification}
\boxed{\frac{dg}{dt}(x) = \{H,g\}}
\end{equation}
where $t$ is the parameter of the family of Hamiltonian diffeomorphisms generated by $H$, and $\frac{dg}{dt}=dg(X_H)$. In particular, the Hamiltonian is preserved in time since for $g=H$ we get zero in Equation \eqref{Poisson-signification}. This is the well-known fact that the total energy of an isolated system is conserved.

The strength of Equation \eqref{Poisson-signification} is that we can recover, in a uniform way, all differential equations from classical systems. All exercises from physics class in High School about determination of the equation of motion become easy (at least systematic)!

\begin{example}
Consider a point particle with mass $m$ moving on $\R$ in a potential $V$ with gravity $g$. The phase space is $M=T^*\R$ with standard symplectic structure. The total energy is $H=\frac{p^2}{2m}+mgV(x)$ (kinetic + potential energy). Therefore we get
$$\dot x = \{H,x\} = \frac{\partial H}{\partial p}=p/m,$$
which is nothing new, since $p=mv=m\dot x$. Then
$$\ddot x = \{H,\dot x\}=\{H,p/m\}=-\frac{1}{m}\frac{\partial H}{\partial x} = -gV'(x)$$
which is nothing but Newton's law.
\end{example}

\begin{example}
Consider a pendulum: a mass $m$ turning around a fixed point at distance $\ell$ (see Figure \ref{fig:pendulum}). The phase space is $M=T^*\mathbb{S}^1$ with Darboux coordinates $(p,\ell\theta)$. The total energy is $H=\frac{p^2}{2m}-mg\ell \cos\theta$. Hence
$$\dot \theta = \{H,\theta\} = \frac{1}{\ell}\{H,x\}=\frac{1}{\ell}\frac{\partial H}{\partial p}=\frac{p}{m\ell},$$
which is nothing new, since $p=mv=m\dot x=m\ell \dot\theta$. Then
$$\ddot \theta = \{H,\dot \theta\}=\frac{1}{m\ell}\{H,p\}=-\frac{1}{m\ell^2}\frac{\partial H}{\partial \theta} = -\frac{g}{\ell}\sin \theta$$
which is the usual law for a pendulum.

\begin{figure}[h!]
\centering
\includegraphics[height=3.7cm]{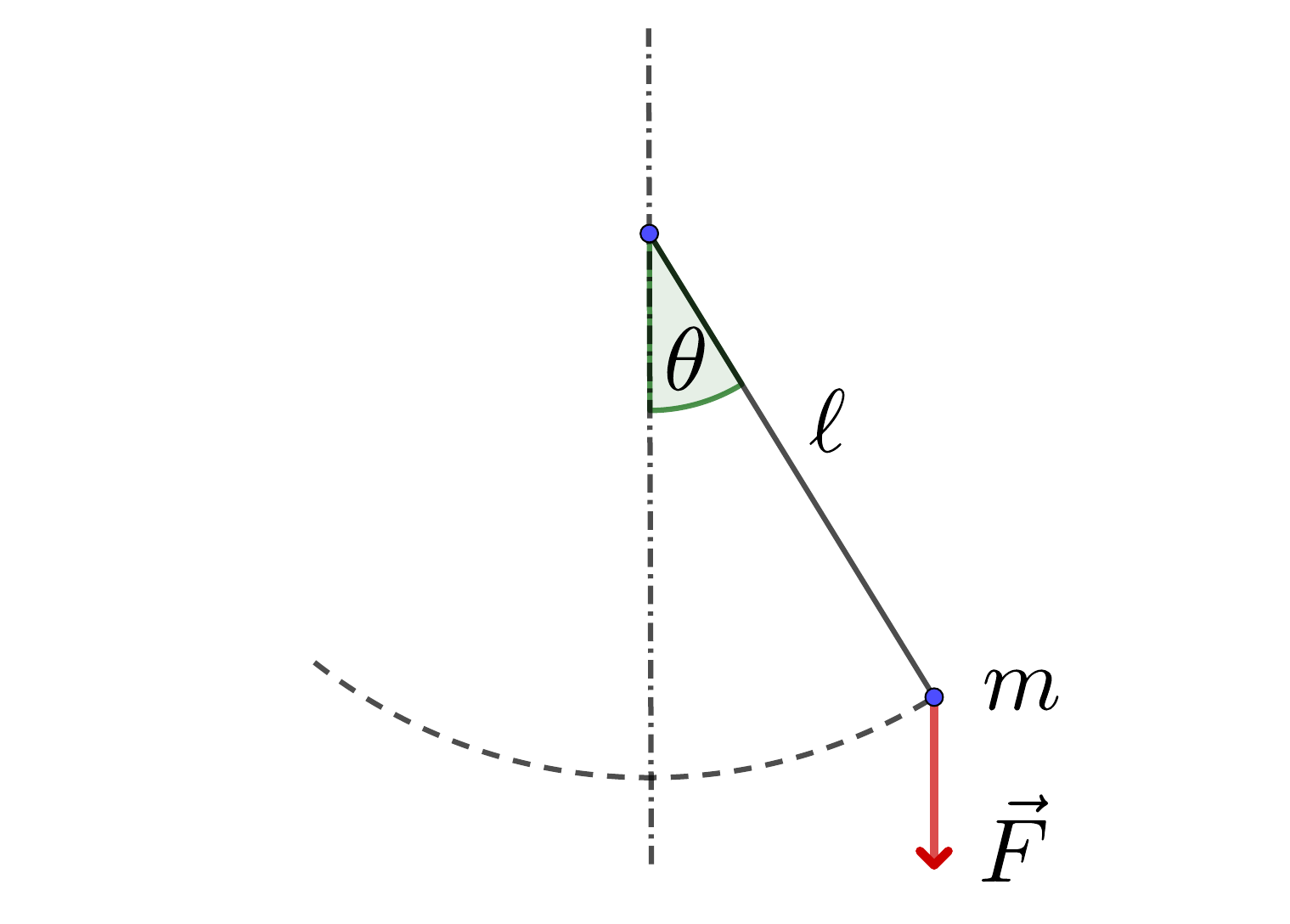}

\caption{Pendulum}\label{fig:pendulum}
\end{figure}
\end{example}

A manifold $M$ with a Poisson bracket on its function space $\mathcal{C}^\infty(M)$ is called a \textbf{Poisson manifold}. They are more general than symplectic manifolds:

\begin{example} \label{cross-product}
Using the cross product, we can construct a Poisson structure on $\R^3$, without being symplectic (since the dimension is odd). The Poisson bracket for linear functions (which can be represented by vectors) is given by the cross product. This means that $\{x,y\}=z, \{y,z\}=x$ and $\{z,x\}=y$.

Then we extend the bracket to all functions using the Leibniz rule. As a bivector, we can write
$$\{.,.\}=z\frac{\del}{\del x}\wedge \frac{\del}{\del y}+x\frac{\del}{\del y}\wedge \frac{\del}{\del z}+y\frac{\del}{\del z}\wedge \frac{\del}{\del x}.$$
\end{example}

The main thing to know about a Poisson manifold is that \emph{it is the disjoint union of symplectic manifolds} (in a unique way), called \textbf{symplectic leaves}. For $(\R^3, \times)$ from above, the symplectic leaves are all spheres with radius $r\geq 0$. Note in particular that the various symplectic leaves might have different dimensions.

The following is a very important example. In some sense, it is the universal $G$-Poisson manifold.

\begin{example}
A dual Lie algebra $\g^*$ is always Poisson\footnote{The dual is taken in the sense of linear algebra: $\g^*$ is the vector space of all linear forms on $\g$.}. For linear functions, which can be identified with $\g^{**}\cong \g$, the Poisson bracket is given by the Lie bracket in $\g$. Then we extend to all functions by the Leibniz rule. Concretely for two functions $f,g \in \mathcal{C}^\infty(\g^*)$ and $\xi \in \g^*$ we have
$$\{f,g\}(\xi) = \langle \xi, [d_\xi f, d_\xi g]\rangle$$
where $\langle .,. \rangle$ denotes the canonical pairing between $\g^*$ and $\g$. 

The symplectic leaves of $\g^*$ are the \emph{coadjoint orbits}, i.e. the orbits under the action of the Lie group $G$ acting on $\xi\in \g^*$ by 
$$\langle g.\xi, x\rangle = \langle \xi, \mathrm{Ad}_{g^{-1}}(x)\rangle \;\forall\, x\in \g$$
where $\mathrm{Ad}$ denotes the adjoint action of $G$ on $\g$.
\end{example}

\begin{exo}
For $\g=\mathfrak{so}(3)$, show that $\g^*$ is $(\R^3, \times)$ from Example \ref{cross-product} above, and deduce the symplectic leaves.
\end{exo}

\subsection{Hamiltonian reduction}\label{hamiltonian-reduction}

One of the most important operations on spaces is the quotient under a group action. For symplectic manifolds, this quotient is known as the Hamiltonian reduction (sometimes called Marsden--Weinstein quotient). From the physics perspective, given a phase space with some symmetry, this allows to define a reduced phase space.

\bigskip
Let $(M,\omega)$ be a symplectic manifold and $G$ be a Lie group acting on $M$. The action is called \textbf{symplectic} if all $g\in G$ act by symplectomorphisms, i.e. $g^*\omega=\omega$ for all $g$.

The infinitesimal action of an element $\xi\in \g$ gives a vector field $X_\xi$. Explicitly it is given by $$X_\xi(x)=\frac{d}{dt}\bigg\rvert_{t=0}\exp(t\xi).x$$ where we write $g.x$ for the action of $g\in G$ on $x\in M$. Note that for any group action, we have $[X_\xi,X_\nu]=X_{[\xi,\nu]}$ for all $\xi, \nu \in \g$. In the case of a symplectic action, the vector fields $X_\xi$ are symplectic.

In order to define the quotient of $M$ by $G$ and to ensure to obtain a symplectic manifold, the action has to satisfy some extra conditions.
\begin{definition}
The action of $G$ on $M$ is called \textbf{weakly Hamiltonian} if each $g\in G$ acts by a Hamiltonian diffeomorphism.
\end{definition}

For a weakly Hamiltonian action, $X_\xi$ is a hamiltonian vector field, i.e. the symplectic gradient of some function $H_\xi$. The function $H_\xi$ is not uniquely defined from the vector field, but only up to addition of an overall constant. 

% By definition of the group action, we have $X_{[\xi,\nu]}=[X_\xi,X_\nu]$, so $\g\to \Gamma(TM)$ is a Lie algebra homomorphism. The extra condition we need is that $\xi\mapsto H_\xi$ is a Lie algebra homomorphism $\g\to \mathcal{C}^\infty(M)$.

\begin{definition}
A weakly Hamiltonian action of $G$ on $M$ is called \textbf{Hamiltonian} if there is a Lie algebra homomorphism $H: \g\to \mathcal{C}^\infty(M)$ such that $H_\xi$ generates the Hamiltonian vector field associated to $\xi\in \g$. In particular, we have
	\begin{equation}\label{Hamiltonian-map}
	H_{[\xi,\nu]} = \{H_\xi,H_\nu\}.
	\end{equation}
\end{definition}

In more abstract terms, we can say that the action is
\begin{itemize}
	\item \emph{weakly Hamiltonian} if there is a Lie algebra homomorphism $\g\to \Gamma_H(TM)$, where $\Gamma_H(TM)$ denotes the space of Hamiltonian vector fields,
	\item \emph{Hamiltonian} if there is a Lie algebra homomorphism $\g\to \mathcal{C}^\infty(M)$.
\end{itemize}

As we will see below, the difference between a weakly Hamiltonian action and a Hamiltonian action is some 2-cocycle in $\mathrm{H}^2(\g)$. 
To see a bit clearer in this zoo of symplectic, weakly hamiltonian and hamiltonian actions, there are some useful facts from topology:
\begin{itemize}
	\item If $H^1(M)=0$ or $H^1(\g)=0$, then symplectic implies weakly hamiltonian.
	\item If $H^2(\g)=0$, then weakly hamiltonian implies hamiltonian.
\end{itemize}

From a general fact in Lie algebra cohomology, the \emph{Whitehead lemma}, we know that $H^1(\g)=0=H^2(\g)$ for finite-dimensional semisimple $\g$. Therefore:
\begin{prop}
If $G$ is semisimple and finite-dimensional, then a symplectic action is automatically Hamiltonian.
\end{prop}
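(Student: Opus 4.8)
The plan is to deduce the proposition from the two cohomological criteria recalled just above together with the Whitehead lemma: for a finite-dimensional semisimple $\g$ one has $H^1(\g)=0$ and $H^2(\g)=0$, the vanishing of $H^1(\g)$ turning a symplectic action into a weakly Hamiltonian one, and the vanishing of $H^2(\g)$ turning a weakly Hamiltonian action into a Hamiltonian one. The actual content of the argument is in those two implications, which I now indicate in case one wants a self-contained proof.

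For the passage from symplectic to weakly Hamiltonian, note that a symplectic action provides for each $\xi\in\g$ a symplectic vector field $X_\xi$, with $\xi\mapsto X_\xi$ a Lie algebra homomorphism $\g\to\Gamma(TM)$, and the only obstruction to $X_\xi$ being Hamiltonian is the de Rham class $[\iota_{X_\xi}\omega]\in H^1(M)$ (the form $\iota_{X_\xi}\omega$ is already closed, that being the meaning of "symplectic"). Since the bracket of two symplectic vector fields is always Hamiltonian --- $\iota_{[X,Y]}\omega=d(\omega(Y,X))$ by Cartan's formula --- the assignment $\xi\mapsto[\iota_{X_\xi}\omega]$ is a Lie algebra homomorphism into the abelian Lie algebra $H^1(M)$, hence vanishes on $[\g,\g]$. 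If $H^1(\g)=0$ then $\g=[\g,\g]$, so every class $[\iota_{X_\xi}\omega]$ vanishes, every $X_\xi$ is Hamiltonian, and $\xi\mapsto X_\xi$ is the required homomorphism $\g\to\Gamma_H(TM)$. (If instead $H^1(M)=0$ the conclusion is immediate, which explains the first bullet.)

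For the passage from weakly Hamiltonian to Hamiltonian, choose a linear lift $\xi\mapsto H_\xi$ with $\mathrm{sgrad}(H_\xi)=X_\xi$; this is possible because each $H_\xi$ is determined only up to an additive constant, so one fixes the choice on a basis of $\g$ and extends linearly. Set $c(\xi,\nu):=\{H_\xi,H_\nu\}-H_{[\xi,\nu]}$. Applying $\mathrm{sgrad}$ and using both $\mathrm{sgrad}\{H_\xi,H_\nu\}=[X_\xi,X_\nu]$ and $[X_\xi,X_\nu]=X_{[\xi,\nu]}$ shows $\mathrm{sgrad}(c(\xi,\nu))=0$, so $c(\xi,\nu)$ is a constant; thus $c\colon\g\times\g\to\R$ is a well-defined skew-symmetric bilinear form, and the Jacobi identities for the Poisson bracket and for $\g$ force it to be a Lie algebra $2$-cocycle. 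If $H^2(\g)=0$ this cocycle is a coboundary, i.e. $c(\xi,\nu)=\gamma([\xi,\nu])$ for some $\gamma\in\g^*$; then $\xi\mapsto H_\xi+\gamma(\xi)$ still has symplectic gradient $X_\xi$ and satisfies $\{H_\xi+\gamma(\xi),H_\nu+\gamma(\nu)\}=\{H_\xi,H_\nu\}=H_{[\xi,\nu]}+\gamma([\xi,\nu])$, so it is a Lie algebra homomorphism $\g\to\mathcal{C}^\infty(M)$ and the action is Hamiltonian.

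Putting the pieces together, the Whitehead lemma gives $H^1(\g)=0=H^2(\g)$ for $\g$ finite-dimensional semisimple, the first step then makes a symplectic action weakly Hamiltonian, and the second makes it Hamiltonian. I expect the main obstacle to be the middle step: checking that $c$ is genuinely constant (not merely closed), that it satisfies the $2$-cocycle identity, and that correcting the $H_\xi$ by an element of $\g^*$ does not disturb the symplectic gradients. This is precisely the bookkeeping with the "overall constants" in the $H_\xi$ that the whole statement is about; everything else is either formal or a one-line computation with Cartan's formula.
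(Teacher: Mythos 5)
Your proof is correct and follows essentially the same route as the paper, which deduces the proposition from the two cohomological criteria (symplectic $\Rightarrow$ weakly Hamiltonian when $H^1(\g)=0$, weakly Hamiltonian $\Rightarrow$ Hamiltonian when $H^2(\g)=0$) combined with the Whitehead lemma. The paper merely cites those criteria, whereas you supply the standard arguments for them ($\g=[\g,\g]$ killing the classes $[\iota_{X_\xi}\omega]$, and the constant 2-cocycle $c$ being a coboundary), and these details are accurate.
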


In the presence of a Hamiltonian action, we can define the \textbf{moment map} $\mu:M\to \g^*$ defined by
$$\mu(m).\xi = H_\xi(m).$$

The name ``moment map'' comes from some basic examples where we recover momenta:
\begin{example}
Consider $M=T^*\R^2$ with translation symmetry by $G=\R$ acting by $r.(p,x) = (p,x+r)$. One checks that the action is Hamiltonian with moment map $\mu: T^*\R^2\to \R^*\cong \R$ given by the momentum $p$, i.e. $\mu(p,x).t = pt$.
\end{example}

We recommend to be careful when dealing with moment maps since they are not very intuitive at the beginning (in particular since the target is the dual Lie algebra). Here is a general procedure to compute the moment map:

\begin{itemize}
	\item \textbf{\textit{Step 1:}} Determine the vector field $X_\xi$ for $\xi\in \g$ by computing $g.x$ for $g=1+\varepsilon \xi$ to first order in $\varepsilon$. The result is of the form $(1-\varepsilon \xi).x=x+\varepsilon X_\xi(x)$.
	\item \textbf{\textit{Step 2:}} Compute $\omega(X_\xi,\delta x)$ and put it into the form $\delta (\text{something})$, where $\delta$ denotes the variation. The expression for ``something'' is the moment map.
\end{itemize}

%\medskip
%\noindent \textbf{\textit{Step 1:}} Determine the vector field $X_\xi$ for $\xi\in \g$ by computing $g.x$ for $g=1+\varepsilon \xi$ to first order in $\varepsilon$. The result is of the form $(1-\varepsilon \xi).x=x+\varepsilon X_\xi(x)$.

%\smallskip
%\noindent \textbf{\textit{Step 2:}} Compute $\omega(X_\xi,\delta x)$ and put it into the form $\delta (\text{something})$, where $\delta$ denotes the variation. The expression for ``something'' is the moment map.

Some explanations for the procedure: First, the mysterious $\delta x$ is a ``variation around $x\in M$''. To be precise, given a path $x:[0,1]\to M$ with $x(0)=x$, the variation is defined to be 
$$\delta x = \frac{d}{dt}\bigg\rvert_{t=0}x(t).$$
It is a tangent vector in $T_xM$. We can generalize and define the variation of a function $f$ around $x$: $$\delta f (x) = \frac{d}{dt}\bigg\rvert_{t=0} f(x(t)).$$
The variation $\delta$ acts like a derivation, i.e. $\delta (fg) = \delta(f) g+ f\delta(g)$.

Second, let us see why the procedure works. Since $\mu: M\to \g^*$, we have $d_x\mu : T_xM\to T_{\mu(x)}\g^* \cong \g^*$. Hence $d_x\mu(\xi,X)=d_xH_\xi(X) = \omega_x(X_\xi,X)$. Let $x:[0,1]\to M$ be a path with $x(0)=x$. Then 
$$\delta \mu(x,\xi) = \frac{d}{dt}\bigg\rvert_{t=0} \mu(x(t),\xi) = d_x \mu(\xi,x'(0)) = \omega(X_\xi, \delta x).$$

This justification seems complicated, but in practice, the procedure is extremely efficient. Let us see an example:
\begin{example}
Take the phase space of a particle moving in three-space, $M=T^*\R^3\cong \R^3\times \R^3$. Consider the diagonal action of $G=\mathrm{SO}(3)$. Since the group is simple, we only have to show that the action is weakly Hamiltonian. For that, we directly compute the moment map.

Step 1: For $\xi\in \mathfrak{so}(3)$, the vector field is simply given by $X_\xi(x,p)=(\xi(x),\xi(p))$.

Step 2: We compute for $X=(x,p)$:
\begin{align*}
\omega((\xi(x),\xi(p)),(\delta x, \delta p)) &= \sum_i (\xi(x)_i \delta p_i - \xi(p)_i\delta x_i) \\
&= \sum_{i,j} (\xi_{ij} x_j\delta p_i - \xi_{ij} p_j \delta x_i) \\
&= \sum_{i,j} (\xi_{ij} x_j\delta p_i + \xi_{ji} p_j \delta x_i) \;\; \text{ using antisymmetry of } \xi \\
&= \delta \left(\textstyle\sum_{i,j}\xi_{ij}x_ip_j\right) \\
&= \delta \langle p,\xi(x)\rangle.
\end{align*}

Hence the moment map is given by $\mu(x,p).\xi=\langle p,\xi(x)\rangle$. Under some identification of $\R^3$ with $\mathfrak{so}(3)$ (see \cite[Example 5.3.1]{mcduff}), we can write $\mu(x,p)=x\times p$. Hence the moment map is nothing but the \emph{angular momentum}.
\end{example}

\begin{exo}
Show that the natural action of the linear symplectic group $\mathrm{Sp}_{2n}(\R)$ on $(\R^{2n},\omega_0)$ is hamiltonian with moment map $\mu(x).A=\frac{1}{2}\omega(x,Ax)$.
\end{exo}

In the two examples, we have seen that to a continuous symmetry, we can associate a conserved quantity, which is described by the moment map. For translations, we get the momenta, and for rotations, we get the angular momenta. 

In general, the moment map behaves nicely under the action of $G$:
\begin{equation}\label{Noether-thm}
\mu(g.m) = \mathrm{Ad}^*_g.\mu(m)
\end{equation}
It shows that for an abelian group $G$ (a torus action for example), the moment map is constant along the $G$-orbits. Thus, we can think of it as the \emph{collection of all preserved quantities associated to the symmetry}. For $G$ not abelian, this is not exactly true, but we precisely know how the moment map changes on the $G$-orbit.

Equation \eqref{Noether-thm} can be seen as one way to formulate the famous \textbf{Noether theorem}, stating that \emph{to each continuous symmetry in a physical system, there is an associated conserved quantity}.

\begin{exo}
Show Equation \eqref{Noether-thm}. Notice the importance of the requirement $H_{[\xi,\nu]} = \{H_\xi,H_\nu\}$.\footnote{Hint: For the infinitesimal action, check that $\mu((1+\xi).m).\nu-\mu(m).\nu=\frac{dH_\nu}{dt}(m)=\{H_\xi,H_\nu\}(m) = H_{[\xi,\nu]}(m) = \mathrm{ad}^*_\xi \mu(m).\nu$.}
\end{exo}

%\begin{exo}
%Show that the natural action of $\mathrm{Sp}_{2n}(\R)$ on $(\R^{2n},d\mathbf{x}\wedge d\mathbf{p})$ is Hamiltonian and compute its moment map. \textcolor{red}{speak aout linear symplectic geometry before.}
%\end{exo}

Now, we can define the Hamiltonian reduction:
\begin{definition}
For a Hamiltonian action of $G$ on $(M,\omega)$, suppose that $0\in \g^*$ is a regular value of the moment map $\mu$. Then the \textbf{Hamiltonian reduction} of $M$ by $G$ (over 0) is defined to be
$$M\sslash_0 G := \mu^{-1}(\{0\})/G.$$
More generally, for a coadjoint orbit $\mathcal{O}\subset \g^*$, we define 
$$M\sslash_\mc{O} G := \mu^{-1}(\mc{O})/G.$$
\end{definition}
Note that by Equation \eqref{Noether-thm}, the level surface $\mu^{-1}(\mc{O})$ is invariant under $G$. 

\begin{thm}
The hamiltonian reduction $M\sslash_\mc{O} G$ inherits a symplectic structure from $M$. To be precise: if $M\sslash_\mc{O} G$ is a manifold, i.e. if the action of $G$ on the level set $\mu^{-1}(\{\mc{O}\})$ is free and proper, then it symplectic.
\end{thm}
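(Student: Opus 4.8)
The plan is to work on the level set $Z := \mu^{-1}(\mathcal{O})$ and push the symplectic form down to the quotient $M\sslash_{\mathcal{O}} G = Z/G$. First I would treat the case $\mathcal{O} = \{0\}$, since the general case reduces to it (or can be handled identically with minor bookkeeping). Because $0$ is a regular value, $Z$ is a submanifold of $M$ with $T_x Z = \ker d_x\mu$ for $x\in Z$. The first key computation is the identification of $(\ker d_x\mu)^{\omega}$, the $\omega$-orthogonal complement: using $d_x\mu(X)\cdot\xi = \omega_x(X_\xi, X)$ from the discussion of the moment map, one sees that $X \in (\ker d_x\mu)^{\omega}$ precisely when $\omega_x(X_\xi,X)=0$ for all $\xi$ is the defining condition of the \emph{other} factor — more precisely, $(T_x Z)^{\omega} = \{X_\xi(x) : \xi\in\g\} = T_x(G.x)$, the tangent space to the $G$-orbit. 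This is the linear-algebra heart of the proof.

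Next I would check that the orbit directions are tangent to $Z$: since the action preserves $\mu$ up to the coadjoint action (Equation \eqref{Noether-thm}) and $\mathcal{O}$ is a coadjoint orbit, $Z$ is $G$-invariant, so $T_x(G.x)\subset T_x Z$. Combined with the previous step this gives $T_x(G.x) = (T_x Z)^{\omega}\subset T_x Z$, i.e. the orbit directions are exactly the null directions of the restricted form $\omega|_Z$. Hence $\omega|_{T_x Z}$ descends to a well-defined non-degenerate $2$-form $\bar\omega$ on the quotient $T_x Z / T_x(G.x) = T_{[x]}(Z/G)$. Here one uses that the action on $Z$ is free and proper, so that $Z/G$ is a smooth manifold and $\pi : Z\to Z/G$ is a submersion with $\ker d_x\pi = T_x(G.x)$; the form $\bar\omega$ is characterized by $\pi^*\bar\omega = \iota^*\omega$ where $\iota : Z\hookrightarrow M$ is the inclusion, and well-definedness across orbits follows from $G$-invariance of $\iota^*\omega$ together with $\iota_{X_\xi}\iota^*\omega = \iota^*(dH_\xi) = d(H_\xi|_Z) = 0$ on $Z$ (as $H_\xi$ is locally constant — indeed zero — on $Z$). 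Closedness of $\bar\omega$ is then immediate: $\pi^*(d\bar\omega) = d\pi^*\bar\omega = d\iota^*\omega = \iota^* d\omega = 0$, and since $\pi^*$ is injective on forms, $d\bar\omega = 0$.

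The main obstacle is the non-degeneracy of $\bar\omega$, and it rests entirely on the orthogonality identity $(T_x Z)^{\omega} = T_x(G.x)$. One inclusion, $T_x(G.x)\subset (T_x Z)^{\omega}$, is the statement that orbit directions are $\omega$-orthogonal to $\ker d_x\mu$, which follows directly from $\omega_x(X_\xi, X) = d_x\mu(X)\cdot\xi = 0$ for $X\in\ker d_x\mu$. The reverse inclusion uses a dimension count: $\dim(T_x Z)^{\omega} = \dim M - \dim T_x Z = \dim\g^* \cdot(\text{something}) $ — more carefully, $\dim(\ker d_x\mu)^{\omega} = \operatorname{rank} d_x\mu = \dim\mathcal{O}$ plus the codimension contribution, balanced against $\dim T_x(G.x) = \dim\g - \dim(\text{stabilizer})$; freeness of the action makes the stabilizer trivial, and regularity of the value pins down the ranks so the two dimensions agree. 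Once the two spaces have the same dimension and one contains the other, they coincide, and then a vector in $T_x Z$ that is $\omega$-orthogonal to all of $T_x Z$ lies in $(T_x Z)^{\omega} = T_x(G.x)$, hence maps to zero in the quotient — which is exactly non-degeneracy of $\bar\omega$. I would present the dimension bookkeeping carefully since that is where regularity of $0$ (resp. $\mathcal{O}$) and freeness of the action are actually consumed; everything else is formal.
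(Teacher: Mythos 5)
The paper states this theorem without proof (it refers the reader to McDuff--Salamon and Kirillov), so there is no in-paper argument to compare against; judged on its own, your plan is the standard Marsden--Weinstein proof and is essentially correct for the case $\mc{O}=\{0\}$: the identity $\ker d_x\mu=(T_x(G.x))^{\omega}$, hence $(T_xZ)^{\omega}=T_x(G.x)$ by taking $\omega$-orthogonals once more (this double-orthogonal observation would let you skip the dimension bookkeeping entirely, since regularity gives surjectivity of $d_x\mu$ and with it triviality of the infinitesimal stabilizer), the descent of $\iota^*\omega$ to $Z/G$ using freeness and properness, and closedness via injectivity of $\pi^*$ are all exactly right.

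The genuine gap is your claim that the case of a general coadjoint orbit $\mc{O}$ is ``handled identically with minor bookkeeping.'' It is not, and your own key step fails there: on $Z=\mu^{-1}(\mc{O})$ with $\dim\mc{O}>0$ the function $H_\xi=\langle\mu(\cdot),\xi\rangle$ is \emph{not} locally constant (it varies along $\mc{O}$), so $\iota_{X_\xi}\iota^*\omega=d(H_\xi|_Z)\neq 0$ in general and $\iota^*\omega$ is not basic for the projection $Z\to Z/G$; its null space at $x$ is only $T_x(G_{\mu(x)}.x)$, the orbit of the stabilizer of $\mu(x)$, which is strictly smaller than the fiber direction $T_x(G.x)$. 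To prove the theorem as stated for general $\mc{O}$ you need one of the standard devices: either point reduction, i.e. run your argument verbatim on $\mu^{-1}(\{\xi\})/G_\xi$ for $\xi\in\mc{O}$ (using Equation \eqref{Noether-thm} to see that $G_\xi$ preserves the level set) and then identify this quotient with $\mu^{-1}(\mc{O})/G$, or the shifting trick: reduce $M\times\mc{O}^-$ (the orbit with its Kirillov--Kostant--Souriau form reversed) at the zero level of the combined moment map, which is exactly your $\mc{O}=\{0\}$ case applied to an auxiliary space. Either way the reduced form on $\mu^{-1}(\mc{O})/G$ differs from the naive pushforward of $\iota^*\omega$ by the KKS term pulled back via $\mu$, so this correction must appear somewhere in a complete proof.
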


The situation is even more exciting: if we take the simple quotient $M/G$, we get a Poisson manifold (which is not symplectic). Indeed, a function on $M/G$ is nothing but a $G$-invariant function on $M$ and the Poisson bracket of $G$-invariant functions stays $G$-invariant. The natural question you should always ask when you have a Poisson manifold is that of its symplectic leaves. Well:

\begin{thm}\label{thm:Poisson-manifold}
For a Hamiltonian action of $G$ on $M$, the quotient $M/G$ is a Poisson manifold (potentially singular) whose symplectic leaves are given by $M\sslash_\mc{O} G $ where $\mc{O}$ describes all coadjoint orbits of $\g^*$.
\end{thm}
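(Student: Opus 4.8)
The plan is to prove the statement in two parts: that $M/G$ carries a Poisson structure, and that its symplectic leaves are exactly the reductions $M\sslash_\mc{O}G$. For the Poisson structure I would use that smooth functions on $M/G$ are precisely the $G$-invariant smooth functions on $M$, so $\mathcal{C}^\infty(M/G)=\mathcal{C}^\infty(M)^G$. Since $G$ acts by symplectomorphisms it preserves the Poisson bracket, $g^*\{f,h\}=\{g^*f,g^*h\}$, so the bracket of two $G$-invariant functions is again $G$-invariant. Hence $\{.,.\}$ restricts to $\mathcal{C}^\infty(M)^G$ and descends to $M/G$, with Jacobi identity and Leibniz rule inherited; it fails to be symplectic in general because $M/G$ may have ``too few'' directions produced by invariant Hamiltonians. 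Throughout I would assume the $G$-action is free and proper so that $M/G$ and the $M\sslash_\mc{O}G$ are genuine manifolds, and only mention the singular case at the end.

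For the leaves I would use that the symplectic leaf of a Poisson manifold through a point is the maximal integral submanifold of the distribution spanned by all Hamiltonian vector fields. Fix $m\in M$, let $\pi:M\to M/G$ be the projection and $\mc{O}=G.\mu(m)$ the coadjoint orbit through $\mu(m)$. The key computation is that for $f\in\mathcal{C}^\infty(M)^G$ and $\xi\in\g$ one has $\{f,H_\xi\}=-\{H_\xi,f\}=-df(X_\xi)=0$, since a $G$-invariant function is constant along the orbit $t\mapsto\exp(t\xi).m$, whose velocity is $X_\xi$. Thus each component $H_\xi=\mu(\cdot).\xi$ of the moment map is constant along the flow of $X_f$, so the whole moment map $\mu$ is preserved by Hamiltonian flows of $G$-invariant functions. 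Since a Hamiltonian vector field on $M/G$ lifts to such an $X_f$, the leaf through $\pi(m)$ is obtained by flowing inside $\mu^{-1}(\mu(m))$ and projecting, and by the equivariance $\mu(g.x)=\mathrm{Ad}^*_g\mu(x)$ of Equation \eqref{Noether-thm} its image lies in $\pi(\mu^{-1}(\mc{O}))=M\sslash_\mc{O}G$. This gives the inclusion ``leaf through $\pi(m)$'' $\subseteq M\sslash_\mc{O}G$.

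For the reverse inclusion I would invoke the reduction theorem stated above: $M\sslash_\mc{O}G$ carries a symplectic form $\omega_\mc{O}$, pulled back from the restriction of $\omega$ to $\mu^{-1}(\mc{O})$. It remains to identify this with the Poisson structure inherited from $M/G$: the computation above shows $X_f$ is tangent to $\mu^{-1}(\mc{O})$ for $G$-invariant $f$, and one checks that its class in $T(M\sslash_\mc{O}G)$ is precisely the $\omega_\mc{O}$-Hamiltonian field of the induced function, so the characteristic distribution of $M/G$ restricted to $M\sslash_\mc{O}G$ is the full tangent bundle. Hence each connected component of $M\sslash_\mc{O}G$ is open in a symplectic leaf and therefore equal to it; since the level sets $\mu^{-1}(\mc{O})$ partition $M$ as $\mc{O}$ ranges over coadjoint orbits, the spaces $M\sslash_\mc{O}G$ exhaust all the leaves.

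The step I expect to be the main obstacle is precisely this last compatibility check — matching the reduced form $\omega_\mc{O}$ with the ambient Poisson bracket — which requires carefully unwinding the definition of $\omega_\mc{O}$ and verifying that the bracket of $G$-invariant functions, restricted to $\mu^{-1}(\mc{O})$, descends to the $\omega_\mc{O}$-bracket on $M\sslash_\mc{O}G$; everything else is essentially bookkeeping with the characteristic distribution. The genuinely harder variant, which I would not carry out but only point to in the literature, is the case when the action is not free: then $M/G$ is not a manifold and one must replace it by the Sjamaar--Lerman stratified symplectic space, with the leaves described stratum by stratum.
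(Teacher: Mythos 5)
Your proposal is correct, and it actually supplies more than the paper does: the paper proves only the first half, with exactly your observation that $\mathcal{C}^\infty(M/G)=\mathcal{C}^\infty(M)^G$ and that the Poisson bracket of $G$-invariant functions is again $G$-invariant (this one-line argument is the sentence preceding the theorem), while the description of the symplectic leaves is asserted without proof. Your second half is the standard orbit-reduction argument and is sound under your standing freeness/properness assumption: $\{H_\xi,f\}=df(X_\xi)=0$ for invariant $f$ shows that Hamiltonian flows of invariant functions preserve $\mu$, which together with the equivariance \eqref{Noether-thm} gives the inclusion of the leaf through $\pi(m)$ into $M\sslash_\mc{O} G$ for $\mc{O}=G.\mu(m)$; conversely, the component of $M\sslash_\mc{O} G$ through $\pi(m)$ is a connected integral submanifold of the characteristic distribution, hence contained in the maximal one, i.e.\ the leaf. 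Two points in your sketch deserve to be made explicit. First, the ``full tangent bundle'' claim needs an argument: the span of the $X_f(m)$ for invariant $f$ is $\ker d\mu_m=(T_m(G\cdot m))^{\omega}$, so you must know that $d\pi_m(\ker d\mu_m)=T_{\pi(m)}(M\sslash_\mc{O} G)$, which follows from the identity $T_m\mu^{-1}(\mc{O})=\ker d\mu_m+T_m(G\cdot m)$ (a consequence of $d\mu_m(X_\xi(m))=\mathrm{ad}^*_\xi\,\mu(m)$ spanning $T_{\mu(m)}\mc{O}$); equivalently, one can locally extend any function on $M\sslash_\mc{O} G$ to a $G$-invariant function on $M$ and use nondegeneracy of $\omega_\mc{O}$. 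Second, strictly speaking the leaves are the \emph{connected components} of the $M\sslash_\mc{O} G$ (the theorem's wording glosses over this), and freeness is what guarantees that $d\mu_m$ is surjective, so that $\mu^{-1}(\mc{O})$ is a submanifold and the orbit reduction is defined. Your final remark correctly identifies the genuinely hard case, non-free actions, where one needs the Sjamaar--Lerman stratified picture; that is what the paper's ``(potentially singular)'' alludes to.
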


\begin{example}
Consider $M=T^*G$ with the natural $G$-action. Since $T^*G \cong \g^*\times G$, we have $M/G = \g^*$ which is Poisson. Further, $\mu$ is simply the projection map. Hence the reduced space $M\sslash_\mc{O} G$ is nothing but $\mc{O}$ itself. We recover that the symplectic leaves of $\g^*$ are its coadjoint orbits. 
\end{example}

\begin{example}\label{cp-n-quotient}
Consider the action of $\mathbb{S}^1=U(1)$ acting on $M=\C^{n}$ via $\lambda.(z_j) = (\lambda z_j)$. Since $\C^n\cong T^*\R^n$, we have the standard symplectic structure which in complex coordinates is given by $\omega_0=\frac{i}{2}\sum_j dz_j\wedge d\bar{z}_j$. Note that the action is symplectic since $\lambda \bar{\lambda}=1$.

Identifying $\mathfrak{u}(1)\cong i\R$, step 1 gives the vector field $Z_{ir}(z_j)=(irz_j)$ for $r\in \R$. Then step 2 gives
$$\omega(Z_{ir},\delta z)=\frac{i}{2}\sum_j r(iz_j\delta \bar{z}_j+i\bar{z}_j\delta z_j) = \delta\left(-\frac{1}{2}r\left\|z\right\|^2\right).$$
Since $\mathbb{S}^1$ is abelian, every point of its dual Lie algebra is a coadjoint orbit. The hamiltonian reduction over -2 gives
$$\C^n\sslash_{\{-2\}} \mathbb{S}^1=\mu^{-1}(\{-2\})/\mathbb{S}^1 \cong \C P^{n-1}.$$
Hence, we have shown that \emph{complex projective spaces carry a natural symplectic structure} (called the Fubini--Study structure). Note that the coadjoint orbit of 0 is not a regular value for $\mu$ in this example.
\end{example}

An important fact which we will use quite often when computing the cotangent bundle of moduli spaces is left as an exercise:
\begin{exo}\label{cotang-group-quotient}
For $G$ acting on a manifold $X$, show that (under mild conditions) 
$$\boxed{T^*(X/G) \cong T^*X\sslash G.}$$
\end{exo}

\begin{exo}
Consider the diagonal action of $G$ on $T^*\g$. Show that the action is hamiltonian and compute the moment map\footnote{Using the Killing form to identify $\g^*$ with $\g$, you should find $\mu(X,Y)=[X,Y]$.}.
\end{exo}

\medskip
\paragraph{Weakly hamiltonian reduction*.}
Finally, without entering into too much detail, let us analyze the case of a weakly Hamiltonian action. For more details, we refer to \cite[Chapter 1, Section 4]{kirillov}. The slogan is: 
\begin{center}
\emph{``You can always transform a weakly Hamiltonian action into a Hamiltonian one by replacing the group $G$ by a central extension.''}
\end{center}

Consider a weakly Hamiltonian action of $G$ on $(M,\omega)$. Since every $g\in G$ acts by Hamiltonian diffeomorphisms, each vector field associated to $\xi\in \g$ is the symplectic gradient of some function $H_\xi$. Define constants $c(\xi,\nu)$ by
$$\{H_\xi,H_\nu\}=H_{[\xi,\nu]}+c(\xi,\nu).$$
\begin{exo}
Prove that $c(\xi,\nu)$ is a constant function\footnote{Hint: Show that the symplectic gradient of $\{H_\xi,H_\nu\}$ and $H_{[\xi,\nu]}$ coincide.}.
\end{exo}

One checks that the Jacobi identity implies that $c$ form a 2-cocycle in the cohomology of $\g$. This means that $$c([X,Y],Z)+c([Y,Z],X)+c([Z,X],Y)=0 \;\forall\, X,Y,Z\in \g.$$ Changing the functions $H_\xi$ by constants changes $c$ by a coboundary term. Hence, the cohomology class $[c]\in H^2(\g)$ is well-defined.

Since elements in $H^2(\g)$ describe central extensions, there is an associated central extension $\widehat{\g}=\g\oplus \mathbb{C}c$. The bracket is defined by 
$$[(\xi_1,a_1),(\xi_2,a_2)] = ([\xi_1,\xi_2],c(\xi_1,\xi_2)).$$

Consider the group $\widehat{G}$ associated to $\widehat{\g}$. It is an extension of $G$ by a one-dimensional subgroup $Z$ consisting in central elements:
$$1\to Z \to \widehat{G}\to G\to 1.$$
Define the action of $\widehat{G}$ on $M$ to be the one of $G$, with the subgroup $Z$ acting trivially. By construction, this action is Hamiltonian. Indeed we can put $H_{(\xi,a)}=H_\xi+a$ and then we get
$$\{H_{(\xi_1,a_1)},H_{(\xi_2,a_2)}\} = \{H_{\xi_1},H_{\xi_2}\} = H_{[\xi_1,\xi_2]}+c(\xi_1,\xi_2)=H_{([\xi_1,\xi_2],c(\xi_1,\xi_2))}=H_{[(\xi_1,a_1),(\xi_2,a_2)]}.$$
The only thing which changes is the structure of the dual Lie algebra, which is now $\widehat{\g}^*$. So the moment map remembers one more extra information.

We will see a nice application of this construction in the Atiyah--Bott reduction for surfaces with boundary in the next section.

\section{Atiyah--Bott reduction}\label{atiyah--bott}

We have seen in the Riemann--Hilbert correspondence that the character variety is the moduli space of flat connections. In the special case where the manifold $M=\Sigma$ is a closed surface, the character variety gets a symplectic structure, called the \textbf{Goldman symplectic structure}. More precisely, the character variety is the Hamiltonian reduction of a very simple, but infinite-dimensional space, the space of all connections. For a surface with boundary, the character variety has a Poisson structure.

\medskip
Let $\Sigma$ be a surface and $G$ be a subgroup of $\GL_n(\C)$ with Lie algebra $\g$ (it can easily be adapted to general semisimple Lie groups). Let $E$ be a trivial $G$-bundle over $\Sigma$. Denote by $\mathcal{A}$ the space of all $\g$-connections on $E$. We have seen in Section \ref{bundle-formalism} that $\mathcal{A}$ is an affine space modeled over the vector space of $\g$-valued 1-forms $\Omega^1(\Sigma, \g)$. Further, denote by $\mathcal{G}$ the space of all gauge transforms, i.e. bundle automorphisms. We can identify the gauge group with $G$-valued functions: $\mathcal{G}=\Omega^0(\Sigma,G)$.

On the space of all connections $\mathcal{A}$, there is a natural symplectic structure given by 
\begin{equation}\label{sympl-str-on-conn}
\hat{\omega} = \int_{\Sigma} \tr \;\delta A \wedge \delta A
\end{equation}
where $\tr$ denotes an $\mathrm{Ad}$-invariant non-degenerate form on the Lie algebra, which we call the \emph{trace form}. For matrix Lie algebras $\g\subset \mathfrak{gl}_n$, we can simply choose the trace (for semisimple Lie algebras, we can take the Killing form).

Let us explain this simple looking expression for $\hat{\omega}$. 
Since $\mathcal{A}$ is an affine space, its tangent space at every point is canonically isomorphic to $\Omega^1(\Sigma, \g)$. So given $A \in \mathcal{A}$ and $B, C \in T_A\mathcal{A} \cong \Omega^1(\Sigma, \g)$, we can write $B=B_xdx+B_ydy$ with $B_x,B_y\in \g$ and similar for $C$. Then we have $$\hat{\omega}_A(B, C) = \int_{\Sigma} \tr \; B\wedge C = \int_\S \tr(B_xC_y-B_yC_x)dx\wedge dy.$$ Note that $\hat{\omega}$ is independent of $A$ so $d\hat{\omega} = 0$. Further, the 2-form $\hat{\omega}$ is clearly antisymmetric and non-degenerate (since the trace form is). Note that this construction only works on a surface. Finally, if $G$ is a complex Lie group, $\hat\omega$ is a complex symplectic structure. 

\begin{Remark}
When using the wedge product and commutator of $\g$-valued 1-forms, some intuitions from usual exterior calculus are not valid anymore. In particular we have 
\begin{equation}\label{a-wedge-a}
[A,B] = [B,A] \;\;\; \text{ and } \;\;\; A\wedge A =\frac{1}{2}[A,A]
\end{equation} 
which is non-zero in general. Check these equalities by plugging in $A=A_x\, dx+A_y \, dy$ and similar for $B$.
\end{Remark}

On the space of connections, we have the natural action by the gauge transforms.
The surprising observation of Atiyah and Bott (see end of chapter 9 in \cite{atiyah-bott} for unitary case, see section 1.8 in Goldman's paper \cite{goldman1984symplectic} for the general case) is the following:
\begin{thm}[Atiyah, Bott 1983]\label{atiyah-bott-thm}
The action of gauge transforms on the space of connections is Hamiltonian with moment map the curvature.
\end{thm}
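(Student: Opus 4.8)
The plan is to verify the three defining features --- that the $\mathcal{G}$-action is symplectic, that each infinitesimal generator has a Hamiltonian $H_\xi$, and that $\xi\mapsto H_\xi$ is a Lie-algebra homomorphism --- following the moment-map recipe of Section~\ref{hamiltonian-reduction}, working formally with the infinite-dimensional spaces $\mathcal{A}$ and $\mathcal{G}=\Omega^0(\Sigma,G)$ (one may insert Sobolev completions if one wants honest Banach manifolds). Setup: $\mathcal{A}$ is affine over $\Omega^1(\Sigma,\g)$, so each $T_A\mathcal{A}$ is canonically $\Omega^1(\Sigma,\g)$; the form $\hat\omega$ of \eqref{sympl-str-on-conn} is constant in $A$, hence closed, and non-degenerate because the trace form is. A gauge transformation $g$ acts on a tangent vector $B$ by conjugation $B\mapsto gBg^{-1}$ (differentiate $g\cdot A=gAg^{-1}+g\,d(g^{-1})$), so $\mathrm{Ad}$-invariance of $\tr$ gives $\hat\omega(gBg^{-1},gCg^{-1})=\hat\omega(B,C)$: the action is symplectic.

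Next, Step~1: linearizing at $g=1+\varepsilon\xi$ with $\xi\in\Omega^0(\Sigma,\g)=\mathrm{Lie}\,\mathcal{G}$ produces the fundamental vector field $X_\xi(A)=d_A\xi=d\xi+[A,\xi]$. Step~2: I compute $\hat\omega_A(X_\xi,\delta A)=\int_\Sigma\tr(d_A\xi\wedge\delta A)$, split it as $\int_\Sigma\tr(d\xi\wedge\delta A)+\int_\Sigma\tr([A,\xi]\wedge\delta A)$, and integrate the first term by parts via $d\,\tr(\xi\,\delta A)=\tr(d\xi\wedge\delta A)+\tr(\xi\,\delta(dA))$ --- on the \emph{closed} surface $\Sigma$ the total-derivative term dies --- obtaining $-\delta\int_\Sigma\tr(\xi\,dA)$. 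For the second term, $\mathrm{Ad}$-invariance of the trace form rewrites $\tr([A,\xi]\wedge\delta A)=-\tr(\xi\,[A,\delta A])=-\tr(\xi\,\delta(A\wedge A))$, using $\delta(A\wedge A)=[A,\delta A]$ from the sign conventions in \eqref{a-wedge-a}. Adding and pulling $\delta$ out of the integral (it does not act on the fixed $\xi$),
$$\hat\omega_A(X_\xi,\delta A)=-\delta\!\left(\int_\Sigma\tr\big(\xi\,F(A)\big)\right),\qquad F(A)=dA+A\wedge A.$$
So $X_\xi$ is the symplectic gradient of $H_\xi(A)=-\int_\Sigma\tr(\xi\,F(A))$, and under the pairing $\langle\beta,\xi\rangle=\int_\Sigma\tr(\xi\beta)$ identifying $\Omega^2(\Sigma,\g)$ with $(\Omega^0(\Sigma,\g))^*$ the moment map is $\mu(A)=F(A)$ --- the curvature.

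It remains to see the action is genuinely Hamiltonian, i.e. $H_{[\xi,\nu]}=\{H_\xi,H_\nu\}$. The clean route is equivariance: the transformation law $F(g\cdot A)=gF(A)g^{-1}$ from the proposition in Section~\ref{bundle-formalism}, together with $\mathrm{Ad}$-invariance of $\tr$, gives $\mu(g\cdot A)=\mathrm{Ad}^*_g\,\mu(A)$, which is Equation~\eqref{Noether-thm}; as in the discussion of Noether's theorem, its infinitesimal form is precisely $H_{[\xi,\nu]}=\{H_\xi,H_\nu\}$. Alternatively one checks it directly: $\{H_\xi,H_\nu\}(A)=\hat\omega_A(X_\xi,X_\nu)=\int_\Sigma\tr(d_A\xi\wedge d_A\nu)$, and integrating by parts with $d_A^2\nu=[F(A),\nu]$ plus cyclicity of $\tr$ collapses this to $\pm\int_\Sigma\tr([\xi,\nu]F(A))=\pm H_{[\xi,\nu]}(A)$. (For a surface with boundary the only change is that the Stokes step leaves a surviving boundary integral, which breaks the homomorphism property and yields a Poisson rather than symplectic structure --- outside the present statement.)

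I expect the only real difficulty to be bookkeeping rather than anything conceptual: the two pieces $\tr(d\xi\wedge\delta A)$ and $\tr([A,\xi]\wedge\delta A)$ must reassemble into $\delta$ of the full curvature $dA+A\wedge A$ paired against $\xi$, and getting the $A\wedge A$ term with the correct coefficient rests entirely on $\mathrm{Ad}$-invariance of the trace form applied to the commutator term, together with the somewhat counterintuitive wedge/bracket sign rules for $\g$-valued forms recorded in \eqref{a-wedge-a}. A minor secondary nuisance is the sign and orientation juggling in the bracket check, which is exactly why it is cleaner to deduce $H_{[\xi,\nu]}=\{H_\xi,H_\nu\}$ from $\mathrm{Ad}^*$-equivariance of $\mu$ than from a head-on computation.
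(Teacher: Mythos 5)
Your proposal is correct and, for the core of the argument (Steps 1--2), follows exactly the paper's route: linearize the gauge action to get the fundamental vector field $\pm(d\xi+[A,\xi])$, pair it with $\hat\omega$, integrate by parts on the closed surface, and use $\mathrm{Ad}$-invariance/cyclicity of the trace together with $\delta(A\wedge A)=[\delta A,A]$ to reassemble $\delta\int_\Sigma\tr\bigl(\xi(dA+A\wedge A)\bigr)$, identifying the moment map with the curvature under the pairing $\int_\Sigma\tr(\xi\,\cdot)$. Where you genuinely diverge is Step 3: the paper verifies $H_{[\xi_1,\xi_2]}=\{H_{\xi_1},H_{\xi_2}\}$ by a head-on expansion of $\hat\omega(d\xi_1+[A,\xi_1],\,d\xi_2+[A,\xi_2])$ into four terms and collapsing them by parts, whereas you deduce it from $\mathrm{Ad}^*$-equivariance of $\mu$, i.e.\ from $F(g\cdot A)=gF(A)g^{-1}$ plus invariance of the trace form, and only sketch the direct check (via $d_A^2\nu=[F(A),\nu]$) as a fallback. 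The equivariance route is cleaner and less sign-prone, and it makes transparent why the construction fails on a surface with boundary (the boundary term breaks equivariance, producing the cocycle of the next paragraph of the paper), while the paper's direct computation has the virtue of exhibiting exactly which boundary integral survives. One small internal blemish: with the paper's convention $\omega(\mathrm{sgrad}f,X)=-df(X)$, your identity $\hat\omega_A(X_\xi,\delta A)=-\delta\int_\Sigma\tr(\xi F(A))$ makes the generating function $H_\xi(A)=+\int_\Sigma\tr(\xi F(A))$ rather than the $-\int_\Sigma\tr(\xi F(A))$ you wrote; as written, your $H_\xi$ and your conclusion $\mu(A)=F(A)$ differ by a sign. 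This is purely a convention-level slip (the paper's own sketch is equally loose about the sign of the infinitesimal generator) and does not affect the substance.
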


Let us explain the moment map with more detail: it is a map $m:\mathcal{A} \to \mathrm{Lie}(\mathcal{G})^*$. The Lie algebra $\mathrm{Lie}(\mathcal{G})$ is equal to $\Omega^0(\Sigma, \g)$, so its dual is isomorphic to $\Omega^2(\Sigma, \g)$ via the pairing $\langle X,f \rangle = \int_{\Sigma} \tr fX$ for $f\in\Omega^0(\Sigma, \g)$ and $X \in \Omega^2(\Sigma, \g)$.
On the other hand, given a connection $A$, its curvature $F(A)$ is a $\g$-valued 2-form, i.e. an element of $\Omega^2(\Sigma,\g)$. Hence, the theorem asserts that the moment map $\mu$ evaluated at $A\in \mathcal{A}$ and $f\in \Omega^0(\S,\g)$ is given by
\begin{equation}\label{id-AB-reduction}
\mu(A).f = \int_\S \tr f F(A).
\end{equation}

We give a sketch of the proof, a computation of the moment map which is the curvature, neglecting all issues about infinite-dimensional spaces.

\begin{proof}[Sketch of proof]
The action of a gauge transform $g$ on a connection $A$ is given by $g.A = gAg^{-1}+gdg^{-1}$ coming from expanding $g(d+A)g^{-1}$.
So the action on a tangent vector $\delta A$ is given by $g.\delta A = g\delta A g^{-1}$. 

\underline{Step 1:} Let us compute the infinitesimal action by an element $g=1+\varepsilon \xi$ to show that the action is weakly Hamiltonian. We get $(1+\varepsilon\xi).A = A +\varepsilon([\xi,A]-d\xi)$. So we have a vector field $A_\xi=[\xi,A]-d\xi$ on $\mathcal{A}$.

\underline{Step 2:} Now we compute
\begin{align*}
\hat{\omega}(A_\xi,\delta A) &= \int_{\Sigma} \tr \; A_{\xi} \wedge \delta A = \int_{\Sigma} \tr \;([\xi,A]-d\xi) \wedge \delta A \\
&= \int_{\Sigma}\tr [\delta A,A]\xi +\int_{\Sigma} \tr \; \xi\; d\delta A \\
&= \delta \left( \int_{\Sigma} \tr (\xi \; (dA+A\wedge A)) \right)
\end{align*}
where we used integration by parts and the following facts:
\begin{itemize}
	\item the so-called cyclicity property of the trace: $\tr [A,B]C = \tr [B,C]A$,
	\item by Equation \eqref{a-wedge-a}, we have $\delta(A\wedge A) = \frac{1}{2}\delta([A,A]) = \frac{1}{2}([\delta A,A]+[A,\delta A])= [\delta A,A].$
\end{itemize}
Therefore using the identification by Equation \eqref{id-AB-reduction}, we get $\mu(A)=dA+A\wedge A=F(A)$ which is the curvature.

\underline{Step 3:} to show that the action is Hamiltonian, we compute 
\begin{align*}
\{H_{\xi_1},H_{\xi_2}\} &= \hat\omega(d\xi_1+[A,\xi_1],d\xi_2+[A,\xi_2]) \\
&= \int_\S \tr \left(d\xi_1\wedge d\xi_2+d\xi_1\wedge [A,\xi_2]+[A,\xi_1]\wedge d\xi_2+[A,\xi_1]\wedge [A,\xi_2] \right) \\
&= \int_\S \tr \left(d(\xi_1d\xi_2)+[\xi_1,\xi_2](dA+A\wedge A) \right) \\
&= H_{[\xi_1,\xi_2]}
\end{align*}
where we used integration by parts and cyclic properties of the trace.
\end{proof}

The important consequence of the Atiyah--Bott theorem is:
\begin{coro}
We have $$\boxed{\Rep(\pi_1\S,G)\cong \{\text{flat connections}\}/\mathcal{G}= \mathcal{A}\sslash \mathcal{G}.}$$
In particular for a closed surface $\S$, the character variety $\Rep(\pi_1\S,G)$ is a symplectic manifold.
\end{coro}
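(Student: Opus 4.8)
The plan is to assemble three ingredients already established: the Riemann--Hilbert correspondence, the Atiyah--Bott theorem (Theorem~\ref{atiyah-bott-thm}), and the symplectic reduction theorem. First I would identify the two quotients appearing on the right-hand side. By Theorem~\ref{atiyah-bott-thm} the moment map of the $\mathcal{G}$-action on $\mathcal{A}$ is the curvature, $\mu(A)=F(A)$ under the pairing of Equation~\eqref{id-AB-reduction}; hence $\mu^{-1}(0)$ is precisely the set of flat connections and, by definition of Hamiltonian reduction, $\mathcal{A}\sslash\mathcal{G}=\mu^{-1}(0)/\mathcal{G}=\{\text{flat connections}\}/\mathcal{G}$. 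The isomorphism $\Rep(\pi_1\S,G)\cong\{\text{flat connections}\}/\mathcal{G}$ is then nothing but the Riemann--Hilbert correspondence applied to the trivial $G$-bundle over $\S$: sending a flat connection to its monodromy is well defined up to conjugation, and the explicit inverse $\rho\mapsto(E_\rho,d)$ exhibits it as a bijection.

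For the symplectic statement I would invoke the Hamiltonian reduction theorem. The space $(\mathcal{A},\hat\omega)$ is symplectic: $\hat\omega$ is non-degenerate because the trace form is, and closed because it has constant coefficients on the affine space $\mathcal{A}$. Since the $\mathcal{G}$-action is Hamiltonian by Theorem~\ref{atiyah-bott-thm}, the reduced space $\mathcal{A}\sslash\mathcal{G}$ inherits a symplectic form from $\hat\omega$. This is exactly where closedness of $\S$ enters: $\hat\omega$ is built by integration over the surface, and the integration-by-parts steps in the proof of Theorem~\ref{atiyah-bott-thm} produce no boundary contributions, so the action is genuinely Hamiltonian with $\mathrm{Lie}(\mathcal{G})^*$-valued moment map; for a surface with boundary one would only obtain a Poisson structure on the quotient, as noted earlier.

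The main obstacle — which in the spirit of this survey I would flag rather than resolve — is analytic. The spaces $\mathcal{A}$ and $\mathcal{G}$ are infinite-dimensional, $0$ is not a regular value of $\mu$ in general (reducible flat connections are singular points), and the $\mathcal{G}$-action is not free, so $\mathcal{A}\sslash\mathcal{G}$ is genuinely singular and the phrase \emph{symplectic manifold} should be read on the open dense locus of irreducible representations, where the reduction argument above applies without change. A rigorous treatment requires Sobolev completions of $\mathcal{A}$ and $\mathcal{G}$, a slice theorem for the gauge action, and control of the stabilizers; I would refer to \cite{atiyah-bott} and \cite{goldman1984symplectic} for these details.
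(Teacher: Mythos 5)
Your argument is correct and follows exactly the route of the paper: the Atiyah--Bott theorem identifies $\mu^{-1}(0)$ with the flat connections, so $\mathcal{A}\sslash\mathcal{G}=\{\text{flat connections}\}/\mathcal{G}$, and the Riemann--Hilbert correspondence identifies this with $\Rep(\pi_1\S,G)$, the symplectic structure coming from the Hamiltonian reduction theorem. The extra remarks on closedness of $\S$ (absence of boundary terms) and on the infinite-dimensional and singularity caveats go beyond what the paper spells out, but they are accurate and consistent with its discussion elsewhere.
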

\begin{proof}
Carrying out the hamiltonian reduction of the gauge action on the space of connections $\mathcal{A}$, we get $$\mathcal{A}\sslash \mathcal{G}=\{\text{flat connections}\}/\mathcal{G}.$$
By the Riemann--Hilbert correspondence, the moduli space of flat connections is diffeomorphic to the character variety.
\end{proof}

\begin{Remark}
Goldman \cite{goldman1984symplectic} gives an explicit formula for this symplectic structure. This is why it is called the \emph{Goldman symplectic structure}. He computes the tangent space to $\Rep(\pi_1 \S,G)$ at a point $\varphi\in \Rep(\pi_1 \S,G)$ in terms of cohomology (group cohomology with coefficients in a twisted module): $T_\varphi \Rep(\pi_1 \S,G)=\mathrm{H}^1(\pi_1 \S, \mathfrak{g}_{\mathrm{Ad}(\varphi)}).$
Combining the cup-product with the trace form, he gets a map
$$\mathrm{H}^1(\pi_1 \S, \mathfrak{g}_{\mathrm{Ad}(\varphi)})\otimes \mathrm{H}^1(\pi_1 \S, \mathfrak{g}_{\mathrm{Ad}(\varphi)}) \to H^2(\pi_1 S,\R) \cong \R$$
which is nothing but the symplectic form.
\end{Remark}

\medskip
\paragraph{Surface with boundary*.}
Consider now the case of a surface with boundary. In step 2 above, we get an extra boundary term from the integration by parts: 
$$H_\xi(A) = \int_\S \tr \xi(dA+A\wedge A) + \int_{\partial \S}\tr \xi A.$$

Since we can find these functions, the action is weakly Hamiltonian. In step 3, we also get an extra term due to integration by parts:
$$\{H_{\xi_1},H_{\xi_2}\} = H_{[\xi_1,\xi_2]}+\int_{\del\S} \xi_1d\xi_2.$$
Thus we have a non-trivial cocycle $c(\xi_1,\xi_2)=\int_{\del\S} \xi_1d\xi_2$.

\begin{thm}
For a surface with boundary, the character variety $\Rep(\pi_1\S,G)$ is a Poisson manifold whose symplectic leaves are flat connections with prescribed conjugacy class for the monodromy around each boundary component.
\end{thm}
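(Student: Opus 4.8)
The plan is to push the weakly Hamiltonian reduction of Section~\ref{hamiltonian-reduction} through in this infinite-dimensional setting. We have just computed that the gauge action of $\mathcal{G}=\Omega^0(\S,G)$ on $\mathcal{A}$ fails to be Hamiltonian only by the $2$-cocycle $c(\xi_1,\xi_2)=\int_{\del\S}\tr\,\xi_1 d\xi_2$ on $\mathrm{Lie}(\mathcal{G})=\Omega^0(\S,\g)$. The first observation is that $c$ factors through the restriction $\Omega^0(\S,\g)\to\Omega^0(\del\S,\g)$, and on each boundary circle $C_i$ it is exactly the cocycle defining the affine (Kac--Moody) central extension of the loop algebra. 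So the associated central extension $\widehat{\mathcal{G}}\to\mathcal{G}$ has kernel a product of circles, one per boundary component, and $\widehat{\mathcal{G}}$ coincides with $\mathcal{G}$ away from $\del\S$.

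Next I would identify the extended moment map and invoke Theorem~\ref{thm:Poisson-manifold}. By construction the $\widehat{\mathcal{G}}$-action is Hamiltonian, with moment map $A\mapsto\big(F(A),\,A|_{\del\S}\big)$: the curvature in the bulk (the Atiyah--Bott part, Theorem~\ref{atiyah-bott-thm}) and the restriction of the connection form to the boundary, now living in the dual of the centrally extended boundary Lie algebra. Theorem~\ref{thm:Poisson-manifold} then says that the simple quotient $\mathcal{A}/\mathcal{G}$ is Poisson and that its symplectic leaves are the reductions $\mu^{-1}(\mathcal{O})/\mathcal{G}$ over coadjoint orbits $\mathcal{O}\subset\widehat{\mathcal{G}}^*$. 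The leaves that matter for the character variety are those whose bulk component vanishes — and $\{0\}\subset\Omega^2(\S,\g)$ is a single coadjoint orbit, since gauge acts on the curvature by fibrewise conjugation and fixes $0$, while the coadjoint action cannot generate curvature out of nothing; such a leaf is governed by $\mu(A)=(0,A|_{\del\S})$, whose boundary part is a coadjoint orbit $\prod_i\mathcal{O}_i$ of the extended loop group, one factor $\mathcal{O}_i$ per circle $C_i$.

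The conceptual heart, and the step I expect to be the main obstacle, is the classical identification of the level-one coadjoint orbits of the centrally extended loop group $\widehat{LG}$ with conjugacy classes in $G$ (or, depending on $\pi_1 G$ and the precise level, in a cover of $G$), via the holonomy map $a\,d\theta\mapsto\Hol_{C_i}(d+a\,d\theta)$: two connection one-forms on a circle lie in the same $\widehat{LG}$-coadjoint orbit precisely when their holonomies are conjugate. This is a genuine theorem (see Pressley and Segal's treatment of loop groups), and pinning down exactly which conjugacy classes appear requires some care; with $G\subset\GL_n(\C)$ and the trace form it works out well enough for our purposes. A more hands-on route that avoids loop groups: on a collar neighbourhood of $C_i$ a flat connection is gauge-equivalent to one pulled back from $C_i$, so the boundary datum modulo boundary gauge is exactly the conjugacy class of the monodromy around $C_i$; one then checks directly that the reduced moment map, valued in $\prod_i\mathrm{Conj}(G)$, has as its fibres precisely the loci of prescribed boundary monodromy.

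Finally I would assemble the pieces. The reduction $\mu^{-1}\big(\{0\}\times\prod_i\mathcal{O}_i\big)/\mathcal{G}$ (the central circles act trivially, so the quotient is by $\mathcal{G}$) is the space of flat connections on $\S$ whose monodromy around each $C_i$ lies in the fixed conjugacy class $\mathcal{C}_i$; by the Riemann--Hilbert correspondence this is exactly the subspace of $\Rep(\pi_1\S,G)$ cut out by those boundary conditions. Since $\Rep(\pi_1\S,G)\cong\{\text{flat connections}\}/\mathcal{G}$ sits inside $\mathcal{A}/\mathcal{G}$ as the union of the vanishing-curvature leaves, it is a Poisson submanifold, and Theorem~\ref{thm:Poisson-manifold} identifies its symplectic leaves as the prescribed-monodromy loci — which is the assertion. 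As everywhere in the Atiyah--Bott story, the analytic points (smoothness of level sets, freeness and properness of the $\mathcal{G}$-action, convergence of the integrals) are passed over in silence.
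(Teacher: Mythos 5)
Your proposal is correct at the paper's level of rigor, and it rests on the same key ingredients as the paper's argument --- the boundary cocycle $c(\xi_1,\xi_2)=\int_{\del\S}\tr\,\xi_1 d\xi_2$, the passage to a central (affine Kac--Moody) extension, Theorem \ref{thm:Poisson-manifold}, and the identification of level-one coadjoint orbits of the extended loop group with conjugacy classes of the holonomy --- but you organize the reduction differently. The paper quotients in two stages: it first reduces by the subgroup $\mathcal{G}_0$ of gauge transformations trivial on $\del\S$ (for which the cocycle vanishes), obtaining the symplectic manifold $\{\text{flat connections}\}/\mathcal{G}_0$, and only then treats the residual action of $\mathcal{G}/\mathcal{G}_0\cong\mc{L}G^k$; the weak-Hamiltonian issue, the central extension, and the lemma identifying $\widehat{\g}^*$ with $k$-connections on the circle (with coadjoint action the gauge action) all live on the boundary circles, where they can be handled explicitly via the computation $\mathrm{ad}^*_{(A,a)}(X,k)=[A,X]+k\,dA$. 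You instead extend the full gauge group at once, take the combined moment map $A\mapsto(F(A),A|_{\del\S})$ on $\mathcal{A}$, and realize $\Rep(\pi_1\S,G)$ as the union of the zero-curvature leaves inside the Poisson manifold $\mathcal{A}/\mathcal{G}$. Your route avoids the intermediate symplectic quotient and the quotient-in-stages bookkeeping, at the price of having to check (as you do) that the cocycle, and hence the central circles, only see the boundary, and that the coadjoint orbits with vanishing bulk component are exactly $\{0\}\times\prod_i\mc{O}_i$; the paper's route isolates the delicate loop-group material where Pressley--Segal applies verbatim. Both arguments conclude identically: a connection on a boundary circle modulo gauge retains only the conjugacy class of its monodromy (your collar argument is the same as the paper's ``cut the circle and trivialize''), so the symplectic leaves are precisely the loci of prescribed boundary monodromy.
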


The idea of the proof is to reduce the problem to the boundary $\partial \S$ which is a disjoint union of circles. The remaining gauge group on the circle is the \emph{loop group}. Since the action is only weakly Hamiltonian, we have to consider a central extension of the loop group, the famous \emph{affine Lie group}. A good reference for loop groups is the Pressley--Segal book \cite{segal}, especially Chapter 4.

\begin{proof}
Consider $\mathcal{G}_0\subset \mathcal{G}$ the subgroup of those gauge transformations which are the identity on $\del\S$. For $\mathcal{G}_0$, the cocycle vanishes, so we can define the symplectic quotient $\mathcal{A}\sslash\mathcal{G}_0 = \{\text{flat connections}\}/\mathcal{G}_0$.
By some gymnastics, we get 
$$\Rep(\pi_1\S,G) = \{\text{flat connections}\}/\mathcal{G} = (\{\text{flat connections}\}/\mathcal{G}_0) \big/ (\mathcal{G}/\mathcal{G}_0).$$
Hence we obtain the character variety as the quotient of a symplectic manifold. By theorem \ref{thm:Poisson-manifold}, it is a Poisson manifold whose symplectic leaves are given by all possible Hamiltonian reductions.

To determine the symplectic leaves, note that the remaining gauge group is $\mathcal{G}/\mathcal{G}_0 \cong \mathcal{L}G^k$ where $\mc{L}G=\mathcal{C}^\infty(\mathbb{S}^1,G)$ denotes the \emph{loop group} and $k$ the number of boundary components of $\S$. Without loss of generality we consider $k=1$ in the sequel.

The action of $\mc{L}G$ on $\mathcal{A}\sslash\mathcal{G}_0$ is only weakly Hamiltonian, since we have the cocycle $c$. Hence we have to consider the central extension $\widehat{\g}=\mc{L}\g\oplus \C$ with Lie bracket given by $$[(A(z),a),(B(z),b)] = ([A(z),B(z)],\textstyle\oint \tr AdB)$$
where $z\in \bb{S}^1\subset \C$ and $\oint \tr AdB = \int_{\bb{S}^1} \tr AdB$ represents the cocycle $c$. This is the famous \emph{affine Kac--Moody algebra} of type $\g$.

Since the action of $\widehat{G}$ is only on the boundary circle, we can restrict our connection to $\del \S$. Surprisingly, the moment map for the action of $\widehat{G}$ is nearly the identity (it is an inclusion):
\begin{lemma}
The dual affine Lie algebra $\widehat{g}^*$ can be identified with the space of all $k$-connections of type $G$ on the circle. The coadjoint action of $\widehat{G}$ is the gauge action.
\end{lemma}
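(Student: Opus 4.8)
The plan is to identify $\widehat{\g}^*$ with a space of connections on $\mathbb{S}^1$ by a careful bookkeeping of the dualities involved, and then to check that the coadjoint action matches the gauge action. First I would recall that $\mathcal{L}\g = \mathcal{C}^\infty(\mathbb{S}^1,\g)$ carries the natural nondegenerate pairing $\langle A, B\rangle = \oint \tr(AB)$, which identifies a dense part of $(\mathcal{L}\g)^*$ (the \emph{smooth dual}) with $\mathcal{L}\g$ again. For the central extension $\widehat{\g} = \mathcal{L}\g \oplus \C$, an element of the (smooth) dual is then a pair $(\alpha, \lambda)$ with $\alpha \in \mathcal{L}\g$ and $\lambda \in \C$; the trick, as always with affine algebras, is that $\lambda$ is best thought of as the ``level'', and the pair $(\alpha,\lambda)$ should be packaged as the connection $d_z := \lambda \frac{d}{dz} + \alpha(z)$ on the trivial $G$-bundle over $\mathbb{S}^1$ — i.e.\ a $\lambda$-connection (this is the meaning of ``$k$-connection of type $G$'' in the statement, with $k=\lambda$). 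So Step 1 is to set up this identification $\widehat{\g}^* \cong \{\lambda\tfrac{d}{dz} + \alpha(z)\}$ explicitly.

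Step 2 is to compute the coadjoint action and see it is the gauge action. By definition, for $\widehat{g} = (g(z), t) \in \widehat{G}$ acting on $(\alpha, \lambda) \in \widehat{\g}^*$, one has $\langle \mathrm{Ad}^*_{\widehat{g}}(\alpha,\lambda), (\xi, a)\rangle = \langle (\alpha,\lambda), \mathrm{Ad}_{\widehat{g}^{-1}}(\xi,a)\rangle$. I would first work out the adjoint action of $\widehat{G}$ on $\widehat{\g}$: conjugation in $\mathcal{L}G$ is the pointwise adjoint action, twisted by a cocycle term coming from the central extension, of the shape $\mathrm{Ad}_g(\xi) = g\xi g^{-1}$ together with an $\oint \tr(g^{-1}dg \cdot \xi)$-type correction in the central direction (this is the infinitesimal version of the bracket $[(A,a),(B,b)] = ([A,B], \oint \tr A\,dB)$ already written in the excerpt). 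Dualizing and unwinding the pairing, the $\lambda$-component is preserved (the centre acts trivially, as it must), and on $\alpha$ one gets precisely $\alpha \mapsto g\alpha g^{-1} - \lambda\, (dg) g^{-1}$, i.e.\ $\lambda\tfrac{d}{dz} + \alpha \mapsto \lambda\tfrac{d}{dz} + g\alpha g^{-1} + \lambda g \tfrac{d}{dz}(g^{-1})$, which is exactly the gauge transformation law $g.A = gAg^{-1} + g\,d(g^{-1})$ from the Proposition in Section \ref{bundle-formalism} (with the connection $\lambda d + \alpha$). That is the content of the lemma.

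The main obstacle — and the reason I would be careful rather than cavalier — is the functional-analytic one that pervades this whole section: $(\mathcal{L}\g)^*$ is genuinely larger than $\mathcal{L}\g$, so the identification is only of the \emph{smooth} (or ``restricted'') dual, and one must either say this honestly or restrict to the coadjoint orbits that actually appear as symplectic leaves, where everything is smooth. A secondary subtlety is fixing signs and normalizations so that the cocycle $\oint \tr A\,dB$ used to define $\widehat{\g}$ is the \emph{same} one (up to coboundary) as the cocycle $c(\xi_1,\xi_2) = \int_{\partial\Sigma}\xi_1 d\xi_2$ produced by the boundary integration by parts — this is what ties the lemma back to the Atiyah--Bott computation and makes the ``$k$'' in ``$k$-connection'' be the level of the representation. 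Once these identifications are pinned down, the lemma is essentially a restatement of the definition of the coadjoint action, and combined with Theorem \ref{thm:Poisson-manifold} it yields the description of the symplectic leaves as moduli of flat connections with fixed boundary monodromy conjugacy classes, since a coadjoint orbit of $\mathcal{L}G$ on connections on $\mathbb{S}^1$ is precisely a conjugacy class of holonomies.
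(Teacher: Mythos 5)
Your proposal is correct and follows essentially the same route as the paper: you identify $\widehat{\g}^*$ with pairs (a $\g$-valued 1-form on the circle, a level $k$) via the trace pairing, read such a pair as the $k$-connection, and verify by unwinding the pairing that the coadjoint action fixes the level and acts on the 1-form by the gauge formula $g\alpha g^{-1}+k\,g\,d(g^{-1})$. The only cosmetic difference is that you compute the group-level coadjoint action of $\widehat{G}$ directly (adjoint action with its cocycle correction, then dualize), whereas the paper computes the infinitesimal action $\mathrm{ad}^*_{(A,a)}(X,k)=[A,X]+k\,dA$ and then integrates; your caveats about the smooth dual and sign normalizations are sensible and consistent with the paper's informal treatment.
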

A \emph{$k$-connection} is a generalization of a connection where the Leibniz rule is replaces by $D(fs)=k \, df s+fD(s)$. Locally, a $k$-connection is of the form $kd+A$. For $k=1$ we get usual connections and for $k=0$ we get $\g$-valued 1-forms. In particular, the moment map of the action of $\widehat{G}$ on the space of connections on $\mathbb{S}^1$ is the inclusion (with $k=1$).

\begin{proof}[Proof of lemma]
Elements of $\widehat{\g}^*$ are pairs $(X(z),k)$ where $X$ is a $\g$-valued 1-form on $\bb{S}^1$ and $k\in \C$. The pairing with $\widehat{\g}$ is given by $$\langle (X(z),k),(A(z),a)\rangle = \textstyle\oint \tr A(z)X(z) + ka.$$
We first show the infinitesimal version of the lemma by computing the $\widehat{\g}$-coadjoint action:
\begin{align*}
\langle \mathrm{ad}^*_{(A,a)}(X,k),(B,b)\rangle &= \langle (X,k), [(-A,-a),(B,b)]\rangle \\
&= -\textstyle\oint \tr X[A,B] - k\textstyle\oint \tr A dB \\
&= \textstyle\oint \tr B ([A,X]+k\, dA)\\
&= \langle ([A,X]+k \, dA,0), (B,b)\rangle.
\end{align*}
Hence $\mathrm{ad}^*_{(A,a)}(X,k)=[A,X]+k\, dA$ which can be identified with the action of an infinitesimal gauge $A$ on a $k$-connection $-kd+X$. After integration, the $\widehat{G}$-coadjoint action is given by the gauge action.
\end{proof}

Finally, we can conclude on the symplectic leaves. The coadjoint orbits are the gauge-equivalence classes of connections on the circle. Now cut of the circle at a point. Using the gauge, we can trivialize the connection. The only information we can not change is how to glue the two ends together. This is precisely the conjugacy class of the monodromy. Therefore, the condition from the second moment map prescribes the conjugacy class of the monodromy around $\del\S$.
\end{proof}

\section{GIT quotients and stability conditions}\label{Sec-GIT}

The geometric invariant theory (GIT) allows to define in a quite general setting a quotient of a manifold $M$ by some group action $G$. It introduces the notion of stable and unstable points which are treated differently to define a well-behaved quotient $M/G$. These stability conditions play a crucial role for defining moduli spaces, in particular for flat connections. We recommend \cite{thomas2005notes} for more details.

\medskip
\paragraph{Introduction.}
Let $G$ be a group acting on some manifold $M$. We wish to define a nice space ``$M/G$''. It should satisfy a universal property: whenever there is a $G$-invariant map $M\to X$, it should factor through $M/G$.

If we take the set-theoretic quotient, i.e. the space of orbits, we often get a space which is not Hausdorff. This happens for example whenever two orbits $\mathcal{O}_1$ and $\mathcal{O}_2$ get arbitrarily close, i.e. if $\bar{\mc{O}}_1\cap\bar{\mc{O}}_2 \neq \emptyset$. Indeed any open sets around the points representing $\mc{O}_1$ and $\mc{O}_2$ in $M/G$ intersect.

\begin{example}\label{ex-aff-git}
Consider the action of $G=\C^*$ on $M=\C^2$ given by $\lambda.(x,y) = (\lambda x, \lambda^{-1}y)$. The orbits are shown in Figure \ref{non-hausdorff}. Note that the only non-closed orbits are $(x=0,y\neq 0)$ and $(x\neq 0,y=0)$ whose closures intersect. The set-theoretic quotient gives $\C$ with three points at the origin.
\begin{figure}[h!]
\centering
\includegraphics[height=3.5cm]{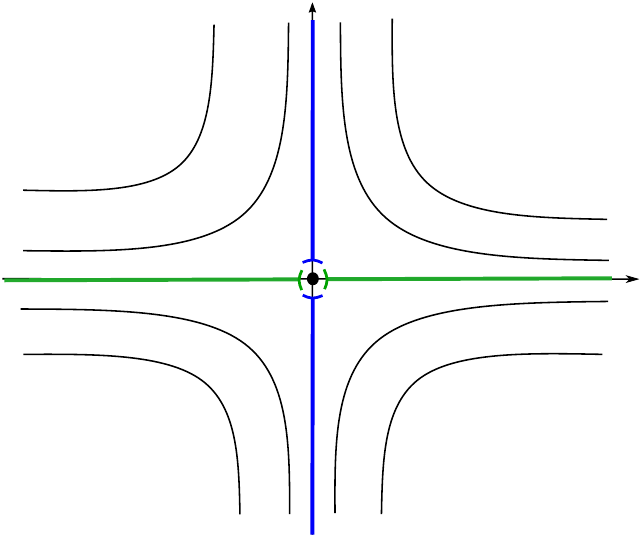}

\caption{Example of non-Hausdorff quotient}\label{non-hausdorff}
\end{figure}
\end{example}

We would like to define a quotient such that in the example, the result is $\C$. For that, two options seem to be reasonable:
\begin{enumerate}
	\item Only keep the close orbits.
	\item Identify orbits whenever their closure intersect.
\end{enumerate}
In the first case, we would ``throw away'' the two non-closed orbits, and in the second case we would identify all three points above the origin to a single point. The GIT quotient construction takes the second case, but as we will see is equivalent to the first one in the affine case.

\medskip
\paragraph{Affine GIT quotient.}
We will use the very basic idea of algebraic geometry: the dictionary between geometry and algebra. 
Consider a smooth affine variety $M$, i.e. the zero-set of a set of polynomials. Replacing these polynomials by the ideal $I$ they generate (which does not change $M$, but gives more structure) we can write $$M=\{x\in \C^n\mid P(x) = 0\forall x\in I\}.$$
To $M$, we associate the space of all polynomial functions on $M$. Such a function is a restriction of any element of $\C[x_1,...,x_n]$ to $M$. Since any element of $I$ is zero on $M$ (by definition), we get $$\Fun(M) = \C[x_1,...,x_n]/I.$$
Finally from the quotient $\Fun(M)=\C[x_1,...,x_n]/I$, we can directly get $M$: every point of $M$ corresponds to a unique maximal ideal in $\Fun(M)$. The algebraic procedure of taking all maximal ideals of a ring $A$ is called the spectrum $\mathrm{Spec}(A)$.

The idea of the GIT quotient is now simple to explain: the functions on $M/G$ have to be the $G$-invariant functions on $M$. By the correspondence between geometry and algebra, we can define the quotient this way:
\begin{equation}\label{affine-GIT}
M/_{GIT}\, G := \mathrm{Spec}(\Fun(M)^G).
\end{equation}

In Example \ref{ex-aff-git} above, where $\C^*$ acts on $\C^2$, we get $$\mathrm{Spec}(\Fun(\C^2)^{\C^*}) = \mathrm{Spec}(\C[xy])=\C.$$
Note that the function $xy$ does not distinguish between the three orbits $(0,0), (x=0, y\neq 0)$ and $(x\neq 0,y=0)$.

We see that two orbits $\mc{O}_1$ and $\mc{O}_2$ get identified if $\overline{\mc{O}}_1\cap\overline{\mc{O}}_2\neq \emptyset$: indeed any smooth $G$-invariant function is constant on $\overline{\mc{O}}_1$ and $\overline{\mc{O}}_2$. Since their closures intersect, the function takes the same value on them. Since no $G$-invariant function can distinguish the two orbits, they get identified in the GIT quotient.

Finally, notice that in the closure of any orbit $\overline{\mc{O}}$, there is a unique closed orbit. That's why we can also say that the affine GIT quotient keeps only the closed orbits.

\begin{exo}
a) Consider the action of $\mathrm{GL}_n(\C)$ on $\mathfrak{gl}_n(\C)$ by conjugation. Show that the orbit of a matrix $M$ is closed iff $M$ is diagonalizable and that its orbit is of maximal dimension if the eigenvalues are pairwise distinct.

b) Compute the GIT quotient\footnote{Hint: Use that the diagonalization is unique up to permutation, and that the ring of symmetric polynomial functions is free.}.

c) Show that the invariant functions are generated by the coefficients of the characteristic polynomial\footnote{Hint: Use the Frobenius form of the matrix.}.
\end{exo}

\begin{Remark}
The exercise generalizes to any semisimple complex Lie group $G$, the notion of diagonalizable being replaces by regular elements. The Chevalley restriction theorem gives $\mathfrak{g}^{reg}/G = \mathfrak{h}/W$ where $\mathfrak{h}$ denotes the Cartan subalgebra (the diagonal matrices above) and $W$ the Weyl group (the permutation group above). In addition $\Fun(\mathfrak{h}/W)=\C[p_1,...,p_r]$ is a free algebra generated by invariant polynomials, which play an important role in Lie theory.
\end{Remark}

Let us now turn to the character variety. The space $\Hom(\pi_1 S,G)$ is an affine variety if $G$ is an affine algebraic group. This can be seen from an explicit presentation of $\pi_1S$:
$$\pi_1 S=\langle (a_i,b_i)_{1\leq i\leq g}\mid \textstyle\prod_i [a_i,b_i]=1 \rangle.$$
This presentation comes from the fact that a surface of genus $g$ can be obtained by a special gluing of a $4g$-gon. Since the only relation is algebraic, the space $\Hom(\pi_1 S,G)$ is an algebraic subset of $G^{2g}$.

To go to the character variety, we have to quotient by the conjugation action by $G$. To get a nice quotient, we have to take the GIT quotient. In the case $G=\GL_n(\C)$, we have
\begin{thm}
A point $\rho\in\Hom(\pi_1 S,\GL_n(\C))$ is polystable iff $\rho$ is completely reducible, i.e. all $\rho$-invariant subspaces of $\C^n$ admit an invariant supplement. If $\rho$ is irreducible (there are no non-trivial $\rho$-invariant subspaces of $\C^n$), then it is a stable point.
\end{thm}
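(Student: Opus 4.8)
The plan is to combine the affine GIT picture with the Hilbert--Mumford criterion, reading one-parameter subgroup limits as associated-graded representations. First I would record the relevant GIT dictionary: a point $\rho\in\Hom(\pi_1 S,\GL_n(\C))$ is polystable precisely when its $\GL_n(\C)$-orbit (for the simultaneous conjugation action on $\GL_n(\C)^{2g}\supset\Hom(\pi_1 S,\GL_n(\C))$) is closed, and --- since $\GL_n(\C)$ is reductive --- this orbit is closed if and only if for every one-parameter subgroup $\lambda\colon\C^*\to\GL_n(\C)$ for which $\lim_{t\to 0}\lambda(t)\cdot\rho$ exists, that limit already lies in $\GL_n(\C)\cdot\rho$. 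This closedness criterion is the one technical input I would quote rather than prove (see \cite{thomas2005notes}).

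Next I would unwind what such a limit is. Up to conjugating $\rho$ we may take $\lambda(t)=\mathrm{diag}(t^{a_1},\dots,t^{a_n})$, which equips $\C^n$ with a grading by weight spaces and hence a decreasing filtration $F^a=\bigoplus_{b\ge a}(\C^n)_b$. A direct matrix-entry computation shows that, for a single $M\in\GL_n(\C)$, the limit $\lim_{t\to0}\lambda(t)M\lambda(t)^{-1}$ exists iff $M$ preserves every $F^a$, in which case the limit is the block-diagonal operator it induces on $\mathrm{gr}_F(\C^n)=\bigoplus_a F^a/F^{>a}$. Applying this to each generator, $\lim_{t\to 0}\lambda(t)\cdot\rho$ exists iff $F^\bullet$ is a filtration of $\C^n$ by $\rho$-invariant subspaces, and then the limit is the associated graded representation $\mathrm{gr}_F(\rho)$.

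With this in hand the equivalence is short. For the direction ``completely reducible $\Rightarrow$ polystable'': if $\rho$ is semisimple and $\lim_{t\to0}\lambda(t)\cdot\rho=\mathrm{gr}_F(\rho)$ exists, then $F^\bullet$ consists of subrepresentations of a semisimple module, so each step $F^{>a}\hookrightarrow F^a$ splits $\rho$-equivariantly; inducting gives $\mathrm{gr}_F(\rho)\cong\rho$, i.e. the limit lies in the orbit, so by the criterion the orbit is closed. For the converse: choose a Jordan--Hölder filtration $0=V_0\subset V_1\subset\cdots\subset V_k=\C^n$ with irreducible quotients and realize it as the filtration attached to a one-parameter subgroup (assign an integer weight to each $V_j/V_{j-1}$, strictly decreasing in $j$); then $\lim_{t\to0}\lambda(t)\cdot\rho=\bigoplus_j V_j/V_{j-1}$ exists and is semisimple, so if the orbit is closed this limit is conjugate to $\rho$, forcing $\rho$ itself to be semisimple, hence completely reducible.

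Finally, irreducibility gives stability: an irreducible $\rho$ is a fortiori completely reducible, hence polystable; moreover the scalars $\C^*\cdot\id$ act trivially on $\Hom(\pi_1 S,\GL_n(\C))$ and Schur's lemma identifies the full conjugation stabilizer of $\rho$ with exactly these scalars, so the induced $\mathrm{PGL}_n(\C)$-action has trivial stabilizer at $\rho$; the orbit is then closed of maximal dimension, which is the defining property of a stable point. The one genuinely nontrivial ingredient is the Hilbert--Mumford closedness criterion; beyond that the argument is the elementary translation between filtrations, one-parameter limits and semisimplicity, so I expect no real obstacle once that criterion is granted.
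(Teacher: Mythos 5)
Your argument is correct in outline, and it is worth noting that the paper itself does not prove this statement: it only cites \cite[Section 7]{sikora}, where essentially the route you take is carried out. The structure of your proof is the standard one: quote the Kempf/Hilbert--Mumford closedness criterion for reductive-group orbits in affine varieties, observe that for a diagonal one-parameter subgroup the limit $\lim_{t\to 0}\lambda(t)\cdot\rho$ exists iff the associated weight filtration is $\rho$-invariant and then equals the associated graded representation, and play this off against semisimplicity (filtrations of semisimple modules split; Jordan--H\"older filtrations realize the associated graded as a limit). Two small points deserve to be made explicit. First, you should check that the limit stays inside $\Hom(\pi_1 S,\GL_n(\C))$: the surface-group relation is a closed condition preserved by conjugation, and the determinant of each generator is constant along the $\lambda(t)$-orbit, so the limiting matrices are still invertible; without this the quoted criterion (stated for the affine $G$-variety $\Hom(\pi_1 S,\GL_n(\C))$) would not apply as written. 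Second, for the stability clause, note that the $\GL_n(\C)$-stabilizer is never finite (it always contains the scalars), so the correct formulation is the one the paper uses --- closed orbit of maximal dimension, equivalently stabilizer reduced to $\C^*\id$, equivalently a free action of $\PSL_n(\C)$ up to finite groups --- and your appeal to Schur's lemma does exactly this. With those caveats spelled out, the only input taken on faith is the closedness criterion, which is the same delicate ingredient the cited reference relies on.
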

For the proof, which is quite delicate, we refer to \cite[Section 7]{sikora}. From the algebraic geometric perspective, a point $\rho\in\Hom(\pi_1S,G)$ is regular iff $\rho$ is irreducible, see \cite[Theorem 26]{gunning}.

\medskip
\paragraph{Projective GIT quotient.}
The construction of an affine GIT quotient does not always give a satisfactory answer. Consider the following example:
\begin{example}\label{ex-proj-git} Let $\C^*$ act on $\C^2$ via $\lambda.(x,y) = (\lambda x,\lambda y)$. Then all orbit closures intersect and the GIT quotient would give just one point. Of course we would like to get $\C P^1$ as quotient.
\end{example}

The idea is to slightly modify the functors $\Fun$ and $\mathrm{Spec}$ in the defining Equation \eqref{affine-GIT} above. For that, we consider a \emph{projective} variety $X\subset \C P^n$ and we suppose that $G$ acts via $G\to \SL_{n+1}(\C)$. Then the action lifts to the cone $\tilde{X} \subset \C^{n+1}-\{0\}$.
\begin{figure}[h!]
\centering
\includegraphics[height=3.8cm]{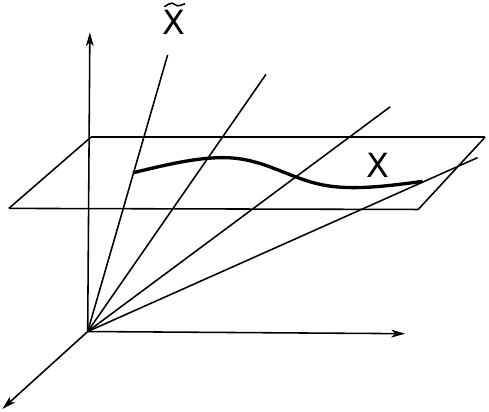}

\caption{Projective cone over $X$}\label{proj-cone}
\end{figure}

The main difference with the affine case is that on $\tilde{X}$ we have a $\C^*$-action, which induces a grading of its function space into homogeneous parts. We define 
\begin{equation}\label{proj-GIT}
M/G=\mathrm{Proj}\left(\mathrm{Projfun}(\tilde{X})^G\right)
\end{equation}
where $\mathrm{Projfun}$ is the direct sum of all homogeneous polynomial function of degree $k\geq 1$ on $\tilde{X}$, and $\mathrm{Proj}$ associates a projective variety to a ring (by taking its maximal essential ideals).

In Example \ref{ex-proj-git} above, the action extends to $\C^3$ via $\lambda.(x,y,z) = (\lambda x,\lambda y, \lambda^{-2}z)$ (since the action takes values in $\SL_3(\C)$). The invariant homogeneous polynomials are generated by monomials $x^ay^bz^{(a+b)/2}$ with $a+b$ even. Taking $z=1$ gives $\mathrm{Projfun}(\tilde{X})^{\C^*} = \oplus_{k\geq 1}\C_{2k}[x,y]$, whose associated projective variety is $\C P^1$ (with structure bundle $\mathcal{O}(2)$).

The good news is that you don't have to know much about the functors $\mathrm{Projfun}$ and $\mathrm{Proj}$ since there is a \emph{simple geometric recipe to compute the projective GIT quotient}. For that, we define types of points and treat each type differently: a point $x\in X$ is
\begin{itemize}
	\item \textbf{Unstable}, if for all non-constant homogeneous $G$-invariant polynomial $f$, we have $f(x)=0$.
	\item \textbf{Semistable}, if it is not unstable.
	\item \textbf{Polystable}, if it is semistable and has a closed orbit in $\tilde{X}$.
	\item \textbf{Stable}, if it is semistable and its orbit in $\tilde{X}$ is closed and of maximal dimension.
\end{itemize}

The recipe of the projective GIT quotient can then be given as follows: \emph{Throw away all unstable points and identify semistable points if their orbit closures in $\tilde{X}$ intersect.}

The explanation is that unstable points are not seen by $G$-invariant homogeneous functions. The same argument as for the affine GIT quotient explains why to identify orbits whose closure intersect.

In addition to the simple recipe, there is a nice geometric characterization of unstable points:
\begin{prop}
A point $x$ is unstable iff 0 is in the orbit closure of $\tilde{x}$ (any preimage of $x$ of $\tilde{X}\to X$).
\end{prop}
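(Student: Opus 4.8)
The statement has two implications, and I would treat them separately. For the direction ``$0\in\overline{G\cdot\tilde x}\Rightarrow x$ unstable'', I would argue purely topologically, working in the affine cone $\widehat X:=\tilde X\cup\{0\}\subset\C^{n+1}$. Any non-constant homogeneous $G$-invariant polynomial $f$ is constant along the orbit $G\cdot\tilde x$, hence along its closure; since $f$ is homogeneous of positive degree it vanishes at $0$, so if $0\in\overline{G\cdot\tilde x}$ then $f(\tilde x)=f(0)=0$. As $f$ was arbitrary, $x$ is unstable. (This does not depend on the chosen preimage, since rescaling $\tilde x$ rescales the orbit and preserves vanishing of homogeneous functions.) This direction is immediate.

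The content is in the converse, which I would prove by contrapositive: assuming $0\notin\overline{G\cdot\tilde x}$, I would construct a non-constant homogeneous $G$-invariant polynomial not vanishing at $\tilde x$. First I would note that $0$ is a $G$-fixed point, because $G$ acts linearly on $\C^{n+1}$ (through $\SL_{n+1}(\C)$), so $\{0\}$ and $\overline{G\cdot\tilde x}$ are two disjoint closed $G$-invariant subsets of the affine $G$-variety $\widehat X$. Then I would invoke the basic separation theorem of geometric invariant theory: for a reductive group acting on an affine variety, disjoint closed invariant subsets are separated by an invariant regular function. Applied to $\widehat X$, this produces $h\in\Fun(\widehat X)^G$ with $h\equiv 1$ on $\overline{G\cdot\tilde x}$ and $h(0)=0$.

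The final step would be to pass to a homogeneous piece of $h$. Since the grading on $\Fun(\widehat X)$ comes from the scaling $\C^*$-action, which commutes with the linear $G$-action, each homogeneous component of a $G$-invariant function is again $G$-invariant; so writing $h=\sum_{k\ge 0}h_k$ with $h_k$ homogeneous of degree $k$ and $G$-invariant, the condition $h(0)=0$ forces $h_0=0$, and then $1=h(\tilde x)=\sum_{k\ge 1}h_k(\tilde x)$ gives some $k\ge 1$ with $h_k(\tilde x)\ne 0$. This $h_k$ witnesses that $x$ is not unstable, completing the contrapositive. I expect the GIT separation theorem to be the main obstacle: its proof rests on finite generation of the invariant ring (Hilbert--Nagata) and crucially on the reductivity of $G$, so in a pedagogical account I would state it and refer to \cite{thomas2005notes} rather than reprove it. A minor technical point to watch is that $G\cdot\tilde x$ need not be closed, which is precisely why one works with its closure throughout.
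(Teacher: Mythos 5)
Your argument is correct. Note, however, that the paper does not prove this proposition at all: it is stated as part of the expository recipe, with the Hilbert--Mumford criterion mentioned immediately afterwards and the details delegated to the GIT references, so there is no proof in the text to compare against. What you wrote is the standard argument that fills this gap: the easy direction follows from continuity of invariants on orbit closures together with homogeneity forcing $f(0)=0$, and the substantial direction rests on the separation theorem for reductive groups (disjoint closed invariant subsets of an affine $G$-variety are separated by an invariant regular function), applied to $\{0\}$ and $\overline{G\cdot\tilde x}$ inside the affine cone, followed by extraction of a homogeneous component --- legitimate because the linear $G$-action commutes with the scaling $\C^*$-action, so the grading on $\Fun$ of the cone is preserved. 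Two small points worth making explicit if you write this up: the affine cone $\tilde X\cup\{0\}$ really is a closed affine $G$-invariant variety because the ideal of $X\subset\C P^n$ is homogeneous and $G$ acts through $\SL_{n+1}(\C)$; and the separation theorem is a statement about Zariski-closed invariant sets, so you should either take Zariski closures throughout or invoke the fact that for an orbit of an algebraic group the Zariski and analytic closures coincide. With those caveats your identification of reductivity (via Hilbert--Nagata finite generation) as the essential input, to be cited rather than reproved, is exactly right for the level of the paper.
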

The so-called Hilbert--Mumford criterion states that it is sufficient to check that property for all 1-parameter subgroups of $G$.

In Example \ref{ex-proj-git}, the point $(0,0)$ is unstable since $\lambda.(0,0,1)=(0,0,\lambda^{-2}) \to 0$ for $\lambda\to \infty$. Any other point $(x,y)$ is semistable since $x^2z$ or $y^2z$ is an invariant homogenous function not vanishing on the point. They are even stable since the orbit in $\C^3$ is closed and the stabilizer is trivial. Hence, we throw away the origin and take the set-theoretic quotient of the rest: we get precisely $\C P^1$.

The projective GIT quotient is summarized in the following table:
\vspace{0.3cm}
\begin{center}
\begin{tabular}{c|c|c|c}
Type & Algebraic  & Geometric & GIT quotient \\
\hline 
Unstable & $f(x)=0 \,\forall f\in \mathrm{Fun}(\tilde{X})^G$ &   $0\in \overline{G.\tilde{x}}$ & throw away \\ 
Semistable &  not unstable & $0\notin \overline{G.\tilde{x}}$ & $\mc{O}_1\sim\mc{O}_2$ if $\overline{\mc{O}}_1\cap\overline{\mc{O}}_2\neq \emptyset$\\ 
Polystable &  semistable, closed orbit & $G.\tilde{x}$ is closed & keep \\
Stable &  maximal polystable &  $+ \;\mathrm{Stab}(\tilde{x})$ finite & keep
\end{tabular}
\end{center}
\vspace{0.5cm}

Note that in the affine case, all orbits are semistable (since we do not exclude the constant functions) and in every orbit closure there is a unique polystable orbit. This is not always true in the projective case.

\medskip
\paragraph{Link to symplectic quotient.}
The link between the GIT quotient and the symplectic quotient is given by the Kempf--Ness theorem. Roughly, it tells that
\begin{equation}
\boxed{X\sslash K \cong X/_{GIT}\, K^\C = X^{ps}/K^\C.}
\end{equation}

To be more precise:
\begin{thm}[Kempf--Ness]
Let $V$ be a complex vector space with hermitian inner product, $K\subset \mathrm{U}(V)$ a closed subgroup and put $G=K^\C$. Let $X\subset V$ be a $G$-invariant affine variety. Then the action of $K$ on $X$ is Hamiltonian and the Hamiltonian reduction $X\sslash K$ equals the GIT quotient $X/G$.
\end{thm}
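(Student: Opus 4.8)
The plan is to reduce the statement to the analysis of a single convex function on the symmetric space $G/K$, the \emph{Kempf--Ness function}, which simultaneously detects zeros of the moment map and closed $G$-orbits.

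\emph{The Hamiltonian action.} First I would equip $V$ with the symplectic form $\omega=\operatorname{Im}\langle\cdot,\cdot\rangle$ coming from the imaginary part of the hermitian product (up to sign, the form of Example~\ref{cp-n-quotient}). Then $\mathrm{U}(V)$ acts by symplectomorphisms, and exactly as in the exercise computing the moment map of $\mathrm{Sp}_{2n}(\R)$ on $(\R^{2n},\omega_0)$, one finds that this action is Hamiltonian with moment map $\mu_{\mathrm{U}(V)}(v).\xi=\tfrac12\,\omega(v,\xi v)$. Restricting the structure group from $\mathrm{U}(V)$ to the closed subgroup $K$ just means post-composing with the projection $\mathfrak{u}(V)^*\to\mathfrak{k}^*$, so $K$ acts on $V$ in a Hamiltonian way with moment map $\mu(v).\xi=\tfrac12\,\omega(v,\xi v)$ for $\xi\in\mathfrak k$. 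Since $X$ is $G$-invariant it is in particular $K$-invariant, so (on its smooth locus) $X$ inherits a Hamiltonian $K$-action with moment map $\mu|_X$; this is the first assertion.

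\emph{The Kempf--Ness function.} Next, fix $x\in X$ and set $p_x\colon G\to\R$, $p_x(g)=\lVert g\cdot x\rVert^2$. Since $K$ acts by isometries, $p_x(kg)=p_x(g)$, so $p_x$ descends to $G/K$. Two facts are the engine of the proof: (a) by the Cartan decomposition $G=K\exp(i\mathfrak k)$, the space $G/K\cong\exp(i\mathfrak k)$ is a simply connected, nonpositively curved symmetric space, and $\log p_x$ is convex along its geodesics, strictly so transversally to the stabilizer of $x$; (b) differentiating $t\mapsto p_x(g\exp(it\xi))$ at $t=0$ produces, up to a positive constant, $\langle\mu(g\cdot x),\xi\rangle$, so the coset $gK$ is a critical point of $p_x$ if and only if $\mu(g\cdot x)=0$. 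Combining (a) and (b): if $\mu^{-1}(0)$ meets the orbit $G\cdot x$, it meets it in exactly one $K$-orbit (a critical point of a convex function is its minimum, unique modulo the stabilizer direction), and conversely a minimum of $p_x$ produces a point of $\mu^{-1}(0)\cap G\cdot x$. The remaining, genuinely analytic, input is that $p_x$ attains its infimum on $G/K$ \emph{if and only if} the orbit $G\cdot x$ is closed in $V$: one direction is immediate, and for the other one shows the sublevel sets of $p_x$ are compact modulo the stabilizer, using closedness of the orbit together with properness of the $KAK$ factors.

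\emph{Identifying the quotients, and the obstacle.} In the affine setting every orbit closure contains a unique closed orbit, which is precisely the polystable representative; hence ``$x$ polystable'' $\Leftrightarrow$ ``$G\cdot x$ closed'' $\Leftrightarrow$ ``$G\cdot x\cap\mu^{-1}(0)\neq\varnothing$'', and then $G\cdot x\cap\mu^{-1}(0)$ is a single $K$-orbit. This gives a bijection
\[
\mu^{-1}(0)/K\ \xrightarrow{\ \sim\ }\ X^{ps}/G\ =\ X/_{GIT}\,G .
\]
It remains to upgrade it to an isomorphism of symplectic (indeed Kähler) manifolds on the stable/smooth locus: the embedding $\mu^{-1}(0)\hookrightarrow X\subset V$ is $K$-equivariant, the pullback of $\omega$ to $\mu^{-1}(0)$ is $K$-basic and descends to the reduced form, $X^{ps}/G$ carries the Kähler quotient metric from $V$, and the map above intertwines the two since on points of $\mu^{-1}(0)$ it is just ``$K$-orbit $\mapsto$ $G$-orbit''. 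The main obstacle is the analytic core above — the equivalence between closedness of $G\cdot x$ and attainment of the minimum of the Kempf--Ness function, together with the convexity estimate (a) on $G/K$; the purely algebraic facts (Hilbert--Mumford, uniqueness of the closed orbit in a closure) are comparatively soft.
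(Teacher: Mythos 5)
The paper itself does not prove this theorem: it states it and defers to the original article \cite{kempf-ness}, adding only the remark that $X^{ps}=G.\mu^{-1}(0)$. So the comparison is with the standard Kempf--Ness argument, and that is exactly the route you take: the function $p_x(g)=\lVert g\cdot x\rVert^2$ on the symmetric space, convexity along geodesics, critical points of $p_x$ corresponding to zeros of the moment map on the orbit, attainment of the infimum corresponding to closedness of the orbit, and the unique closed orbit in each orbit closure identifying $\mu^{-1}(0)/K$ with $X^{ps}/G$. The first assertion (restricting the $\mathrm{U}(V)$-moment map $\mu(v).\xi=\tfrac12\omega(v,\xi v)$ to $\mathfrak{k}$) and the overall architecture are correct.

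There is, however, one genuine wobble in the analytic core: you have the two directions of ``$p_x$ attains its infimum $\Leftrightarrow$ $G\cdot x$ is closed'' the wrong way around. The immediate direction is ``closed $\Rightarrow$ attained'': a minimizing sequence $g_n x$ is bounded in $V$, a subsequence converges to some $w\in\overline{G\cdot x}=G\cdot x$, and the infimum is attained there; no properness of $KAK$ factors is needed. The substantive direction is ``attained $\Rightarrow$ closed'' (equivalently $\mu^{-1}(0)\subset X^{ps}$, which is precisely the paper's remark $X^{ps}=G.\mu^{-1}(0)$), and this is where compactness of sublevel sets modulo the stabilizer really enters --- but it must be deduced from convexity together with criticality, not from closedness of the orbit, which is what you are trying to prove; as written your argument is circular exactly where the work is. The fix is standard: if $\mu(y)=0$ for $y=g\cdot x$, then along a geodesic ray $t\mapsto\exp(it\xi)y$ one has $\sum_\lambda e^{2\lambda t}\lvert y_\lambda\rvert^2$ with vanishing derivative at $t=0$, so unless $\xi y=0$ there is a nontrivial positive-weight component and $p_x\to\infty$ along the ray; a convexity argument then gives compactness of sublevel sets modulo $\mathrm{Stab}(y)$, and writing $g_n=k_n\exp(i\xi_n)$ for a sequence with $g_n y\to w$ forces $w\in G\cdot y$. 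Two harmless bookkeeping points: $p_x$ is invariant under \emph{left} multiplication by $K$, so it descends to $K\backslash G\cong\exp(i\mathfrak{k})$ rather than $G/K$; and the derivative of $p_x(g\exp(it\xi))$ at $t=0$ carries an adjoint twist by $g$ --- the clean statement is that $\tfrac{d}{dt}\big\rvert_{t=0}\lVert\exp(it\xi)\,y\rVert^2$ is proportional to $\mu(y).\xi$ with $y=g\cdot x$ --- but this does not affect the identification of critical points with zeros of $\mu$.
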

Denoting by $\mu$ the moment map, one can show that $X^{ps}=G.\mu^{-1}(0)$, i.e. that a $G$-orbit intersects $\mu^{-1}(0)$ iff it is closed.
We refer to the original article \cite{kempf-ness} for the proof.

To give an example, let us reconsider the case of projective space, already seen in Example \ref{cp-n-quotient}:
\begin{example}
Consider $G=\mathrm{U}(1)\subset \C$ acting on $\C^n$ by scaling all coordinates by some factor (of module 1). We have seen in Example \ref{cp-n-quotient} that the symplectic reduction over the coadjoint orbit $\{-2\}$ gives $\C P^{n-1}$.
This coincides with the projective GIT quotient (see Example \ref{ex-proj-git}).
\end{example}

\section{Stable bundles and the Narasimhan--Seshadri theorem}\label{sec:holo-bundles}

The goal of this section is to characterize flat bundles with unitary monodromy, i.e. understand $\Rep(\pi_1\S,\mathrm{U}(n))$. We will see that the corresponding bundles carry a holomorphic structure and satisfy some stability condition (in the GIT sense). Before, we will see some basic theory about bundles.

\medskip
\paragraph{Different types of bundles.}
Up to now, we were a bit sloppy when speaking about bundles. In fact, there are three different types of bundles: real (or topological), complex and holomorphic bundles.

Before, we always treated \emph{real bundles}, where the transition functions are smooth. If the fiber is a complex vector space, we speak about a \emph{complex vector bundle}. If in addition the base manifold $X$ is complex, we can define a \emph{holomorphic bundle} to be a complex bundle with holomorphic transition functions. To summarize:
\vspace{0.3cm}
\begin{center}
\begin{tabular}{c|c|c}
Real  & Complex & Holomorphic \\
\hline
smooth transitions & complex structure on fiber & holomorphic transitions\\ 
$\check{\mathrm{H}}^1(X,\mathcal{C}^\infty(\GL_n(\R)))$ & $\check{\mathrm{H}}^1(X,\mathcal{C}^\infty(\GL_n(\C)))$ & $\check{\mathrm{H}}^1(X,\mathrm{Hol}(\GL_n(\C)))$ \\ 
\end{tabular}
\end{center}
\vspace{0.5cm}

If you know about sheaf cohomology, the last line indicates how to characterize the different types of bundles in topological terms. The set of all transition functions form a 1-cocycle in sheaf cohomology, and the isomorphism class of the bundle is described by the associated cohomology class.

The case of line bundles is particularly interesting since they form a group under tensor product. The transition functions simply get multiplied when you tensor line bundles. Since $\GL_1(\C)\cong \C^*$, the transition function vanish nowhere, so are invertible.

\begin{Remark}
Again for the reader familiar with algebraic topology, some remarks on the classification of line bundles:
\begin{itemize}
	\item Using $0\to \Z/2\Z \to \mathcal{C}^\infty(\R^*) \to \mathcal{C}^\infty(\R_{>0}) \to 0$ given by $f\mapsto f^2$, one can show that $\check{\mathrm{H}}^1(X,\mathcal{C}^\infty(\R^*))\cong \mathrm{H}^1(X,\Z/2\Z)$, so real line bundles are classified by the \emph{first Stiefel--Whitney class}.
	\item Using $0\to \Z \to \mathcal{C}^\infty(\C) \to \mathcal{C}^\infty(\C^*) \to 0$ given by $f\mapsto \exp(f)$, one can show that $\check{\mathrm{H}}^1(X,\mathcal{C}^\infty(\C^*))\cong \mathrm{H}^2(X,\Z)$, so complex line bundles are classified by the \textbf{first Chern class}. For $X=\S$ a surface, we have $H^2(X,\Z) \cong \Z$, so the first Chern class equals the \textbf{degree} of the bundle.
	\item Holomorphic line bundles are much more abundant. On a Riemann surface $S$ of genus at least 2, there are smooth families of them. The space of all line bundles is called the \emph{Picard variety} $\mathrm{Pic}(S)$ and is given by $\mathrm{Pic}(S) \cong \Z\times \mathrm{Jac}(S)$ where the first factor gives the degree and the second factor is the \emph{Jacobian variety}.
\end{itemize}
\end{Remark}

Let us give some examples of line bundles over $X=\C P^1$.
\begin{example}
The Riemann sphere $\C P^1$ is given by two charts $U_0 = \C$ and $U_1=\C$ with transition map $U_0\cap U_1 = \C^* \to \C^*$ given by $z\mapsto 1/z$. Define the line bundle $\mathcal{O}(k)$ in the following way: it is made of two pieces, $U_0\times \C$ and $U_1\times \C$ which are glued together via
$$\left \{ \begin{array}{cl}
 (U_0\cap U_1)\times \C &\to (U_0\cap U_1)\times \C\\
(z,v) &\mapsto (1/z, z^kv)
\end{array} \right.$$
It turns out that the $\mathcal{O}(k)$ describe all holomorphic line bundles: $\mathrm{Pic}(\C P^1) \cong \Z$.
\end{example}

To any vector bundle $E\to M$, we can associate a line bundle in the following way. There is an open covering of $M$ by open sets $U_\alpha$ where $E$ is trivial and with transition maps $f_{\alpha \beta}$. Define the \textbf{determinant line bundle} $\det(E)$ to be the line bundle which is trivial over the $U_\alpha$ with transition functions $\det(f_{\alpha \beta})$. The first Chern class is preserved: $c_1(E)=c_1(\det E)$.

To understand a bundle, we can try to decompose it into simpler pieces, to write it as a direct sum. On real or complex line bundles, we can put a scalar or hermitian product on the fibers (varying smoothly) which allows to construct canonically a complement to any subbundle. This means that whenever we have a subbundle $F\subset E$, we can write $E=F\oplus F'$. We say that a bundle is \textbf{irreducible} if it has no non-trivial subbundles. 

This is not true for holomorphic bundles since there might be no hermitian structure varying holomorphically with the point. We speak about an \textbf{indecomposable} holomorphic bundle whenever it cannot be written as a direct sum of two other bundles.

Since line subbundles are nothing but non-vanishing sections, we can look for these to decompose the bundle:
\begin{prop}
Let $E\to M$ be a vector bundle with $M$ a real $m$-dimensional manifold. 
\begin{itemize}
	\item If $E$ is real of rank more than $m$, then there is a non-vanishing section.
	\item If $E$ is complex of (complex) rank more than $m/2$, then there is a non-vanishing section.
\end{itemize}
\end{prop}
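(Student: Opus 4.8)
The plan is to prove this by an elementary transversality (dimension-counting) argument. Recall that a section of $p:E\to M$ is a map $s:M\to E$ with $p\circ s=\mathrm{id}_M$, that the zero section $Z\subset E$ is a closed submanifold diffeomorphic to $M$, and that if $E$ has real rank $r$ then $Z$ has codimension $r$ in the total space $E$, which has dimension $m+r$. A section is nowhere vanishing precisely when its image is disjoint from $Z$, so it suffices to produce one section whose image misses $Z$.

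First I would start from any section (the zero section will do) and perturb it, within the vector space $\Gamma(E)$ of smooth sections, to a section $\sigma$ that is transverse to $Z$. The key point is that the evaluation map $M\times\Gamma(E)\to E$, $(x,\tau)\mapsto\tau(x)$, is a submersion: near a given point $x$ one can add to $\tau$ a bump section taking an arbitrary prescribed value in the fibre $E_x$, so every tangent direction in $E_x$ is realised. Hence the parametric transversality theorem applies and a generic $\sigma\in\Gamma(E)$ is transverse to $Z$. (If one wishes to avoid an infinite-dimensional parameter space, it is enough to use a finite-dimensional family of bump sections that spans each fibre, glued by a partition of unity.) Once $\sigma\pitchfork Z$, the preimage $\sigma^{-1}(Z)$ is a submanifold of $M$ of dimension $\dim M-\operatorname{codim}_E Z=m-r$.

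Now the first bullet follows: if $r>m$ then $m-r<0$, so $\sigma^{-1}(Z)=\emptyset$, i.e.\ $\sigma$ is nowhere vanishing. For the second bullet, a complex vector bundle of complex rank $r$ is in particular a real vector bundle of real rank $2r$; applying the first bullet, if $2r>m$, equivalently $r>m/2$, there is a nowhere-vanishing real (hence also complex) section. The only non-formal ingredient — and thus the main obstacle — is the perturbation step, i.e.\ making precise that a section can be pushed off the zero section within $\Gamma(E)$; this is routine given the parametric transversality theorem. As an alternative one could argue by obstruction theory: fixing a CW structure on $M$ and a fibre metric, the obstruction to extending a nowhere-zero section over the $i$-skeleton lies in $H^i\bigl(M;\pi_{i-1}(S^{r-1})\bigr)$, and $\pi_{i-1}(S^{r-1})=0$ for $i\le m<r$, so no obstruction ever arises.
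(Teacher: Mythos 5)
Your argument is correct and is exactly the paper's approach made precise: the paper's one-line proof, ``a generic perturbation of any section works,'' is the transversality statement you spell out, with the dimension count $m-r<0$ (resp. $m-2r<0$ in the complex case) forcing the zero set of a generic section to be empty.
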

The proof is simple: a generic perturbation of any section works.

\begin{coro}
A complex bundle $E\to \S$ of rank $k$ over a surface is isomorphic to $\det(V)\oplus \C^{k-1}$ (as complex bundle).
\end{coro}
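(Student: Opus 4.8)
The plan is to induct on the rank $k$, repeatedly using the previous proposition to split off trivial line subbundles. The base case $k=1$ is vacuous: then $E=\det E$ and $\C^{0}$ is the zero bundle, so there is nothing to prove. For the inductive step, assume the statement holds for rank $k-1$ and let $E\to\S$ have rank $k\geq 2$.

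First I would produce a nowhere-vanishing section. Since $\S$ is a real surface, $m=\dim_\R\S=2$, and the complex rank satisfies $k\geq 2>m/2=1$, so the proposition gives a nowhere-vanishing $s\in\Gamma(E)$. This $s$ spans a complex line subbundle $L\subset E$, and the map $(\lambda,x)\mapsto\lambda s(x)$ identifies $\S\times\C$ with $L$, so $L$ is trivial. Choosing a hermitian metric on $E$ (which exists by a partition-of-unity argument) and taking fiberwise orthogonal complements yields a complex subbundle $E'=L^{\perp}$ of rank $k-1$ with $E\cong L\oplus E'\cong\C\oplus E'$. By the inductive hypothesis, $E'\cong\det E'\oplus\C^{k-2}$.

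It then remains to identify $\det E'$ with $\det E$. From $E\cong\C\oplus E'$ we get $\det E=\Lambda^{k}(\C\oplus E')\cong\bigoplus_{i+j=k}\Lambda^{i}\C\otimes\Lambda^{j}E'$, and the only surviving summand is $i=1,\ j=k-1$, because $\Lambda^{i}\C=0$ for $i\geq 2$ and $\Lambda^{k}E'=0$ since $\rk E'=k-1$; hence $\det E\cong\C\otimes\det E'\cong\det E'$. Combining everything, $E\cong\C\oplus E'\cong\C\oplus\det E'\oplus\C^{k-2}\cong\det E\oplus\C^{k-1}$, which closes the induction.

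There is no real obstacle here: the only steps needing a word of justification are the existence of a hermitian metric and the fact that the orthogonal complement of a subbundle is again a (complex) subbundle, both of which are standard, together with the elementary exterior-algebra computation of $\det E'$ above. Conceptually the corollary says that a complex vector bundle over a surface is determined up to isomorphism by its rank and its degree $c_{1}(E)=c_{1}(\det E)$, so this recovers the classification of such bundles.
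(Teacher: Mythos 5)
Your proof is correct and follows essentially the same route as the paper: split off trivial line subbundles using the non-vanishing section proposition, leaving a line bundle which must be $\det(E)$. The only difference is presentational — your induction (with metric complements and the exterior-algebra identification of the determinant) just makes explicit the step the paper compresses into ``find $k-1$ non-vanishing independent sections.''
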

\begin{proof}
By the previous proposition we find $k-1$ non-vanishing independent sections. So $V\cong L\oplus \C^{k-1}$ where $L$ is some line bundle. Then $\det(V) = \det(L) = L$.
\end{proof}

Now we can give an example illustrating the difference between the three kinds of bundles:
\begin{example}
On $\C P^1$, consider the holomorphic bundle $\mc{O}(1)\oplus \mc{O}(1)$. Its underlying complex vector bundle is $\C\oplus T\C P^1$, since it can be easily checked that $\det(\oplus \mc{O}(a_i))=\mc{O}(\sum a_i)$ and $\mc{O}(2)$ is the tangent bundle of $\C P^1$. Note that $\C\oplus T\C P^1$ is not trivial, since there is no non-vanishing section of $T\C P^1$ (the ``hairy ball theorem'').

Finally, its underlying real bundle is \emph{trivial}. You can check that $T(\bb{S}^2) \oplus \R$ is a trivial bundle. You just take the unit sphere $\bb{S}^2\subset \R^3$ and consider its normal bundle, which is trivial (there is a constant section). Hence its direct sum with the tangent bundle gives the tangent bundle of $\R^3$, which is trivial, restricted to the sphere. Therefore $T(\bb{S}^2)\oplus \R^2 \cong \R^4$.
\end{example}

\begin{Remark}
A theorem of Grothendieck asserts that the set of all holomorphic bundles on $\C P^1$ are the $\oplus \mc{O}(a_i)$. This can be proven using methods from loop groups.
\end{Remark}

\medskip
\paragraph{Flat bundles.}
Now that we have seen the different kind of bundles, we may ask which bundles arise in the Riemann--Hilbert correspondence.

Recall that to a representation $\rho:\pi_1M\to \GL_n(\C)$, we associate the bundle $E_\rho=(\widetilde{M}\times \C^n)/\pi_1M$. This is a complex vector bundle which is trivial, since $\pi_1M$ acts faithfully on $\widetilde{M}$. On a surface, complex bundles are classified by their degree. We have:
\begin{prop}
A complex vector bundle $E$ over a surface $\S$ admits a flat connection iff $\deg(E) =0$ (iff $E$ is trivial).
\end{prop}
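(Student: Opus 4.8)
The plan is to prove the cycle of implications $E$ trivial $\Rightarrow$ $E$ admits a flat connection $\Rightarrow$ $\deg(E)=0$ $\Rightarrow$ $E$ trivial, which establishes all three equivalences at once. Two of these are soft. For ``$E$ trivial $\Rightarrow$ flat connection'', the trivial bundle $\S\times\C^k$ carries the trivial connection $d$, whose curvature vanishes identically, so it is flat. For ``$\deg(E)=0\Rightarrow E$ trivial'', I would invoke the classification of complex vector bundles over a surface already recalled above: by the Corollary, $E\cong\det(E)\oplus\C^{k-1}$, and $\det(E)$ is a line bundle of degree $\deg(E)=0$; since complex line bundles over $\S$ are classified by $c_1\in H^2(\S,\Z)\cong\Z$, a degree-zero line bundle is trivial, hence $E\cong\C^k$.

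The only implication with real content is ``flat $\Rightarrow\deg(E)=0$'', and I would deduce it from Chern--Weil theory. If $D$ is a flat connection on $E$, the induced connection on $\det(E)$ has curvature $\tr(F_D)=0$, so $\det(E)$ is flat as well. Chern--Weil identifies the de Rham class of $\tfrac{i}{2\pi}\tr(F_D)$ with $c_1(\det E)$, so this class vanishes, and since $\deg(E)=c_1(E)=c_1(\det E)$ we conclude $\deg(E)=0$. An alternative, morally the same, uses the Riemann--Hilbert correspondence: a flat connection yields a holonomy representation $\rho\colon\pi_1\S\to\GL_n(\C)$ with $E\cong E_\rho$, and $E_\rho$ is trivial as explained in the paragraph preceding the proposition; but making ``trivial'' rigorous again appeals to the classification by degree, so the Chern--Weil route is the most economical.

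The main obstacle is exactly the Chern--Weil input: the statement that for any connection $D$ on a complex line bundle $L$ over $\S$ one has $\tfrac{i}{2\pi}\int_\S F_D=\deg(L)$, independently of $D$. Granting this standard fact, everything else is bookkeeping with the classification of complex bundles on surfaces quoted above. Should one wish to be self-contained, this special case can be proven directly: pick a connection subordinate to a trivializing cover $\{U_\alpha\}$, apply Stokes to $\int_\S F_D$ to write it as a sum of integrals of the local connection $1$-forms over the boundaries of the $U_\alpha$, and recognize the result as the winding number of the transition cocycle $\det(f_{\alpha\beta})$, i.e. the degree. For a gentle introduction, however, I would simply cite Chern--Weil.
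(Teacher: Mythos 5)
Your proof is correct and follows exactly the route the paper intends: the paper states the proposition without a detailed proof, but its adjacent Remark on Chern--Weil theory ($\deg(E)=c_1(E)=[\tfrac{i}{2\pi}\tr F(A)]$) together with the earlier classification of complex bundles over a surface by rank and degree is precisely the argument you spell out. The only implication with content, flat $\Rightarrow \deg(E)=0$, is handled by the same Chern--Weil identity the paper quotes, so nothing is missing.
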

Over a general manifold, a complex vector bundle is trivial iff all its Chern classes vanish.

\begin{Remark}
More generally, there is a beautiful link between characteristic classes and the curvature of a connection, described by \emph{Chern--Weil theory}. For the degree of a bundle $E$ over $M$, the link is $$\deg(E) = c_1(E) = \left[\frac{i}{2\pi}\tr F(A) \right] \in H^2(M,\Z).$$
\end{Remark}

Since trivial complex bundles can have non-trivial holomorphic structures, we can ask which \emph{holomorphic} bundles can arise through the Riemann--Hilbert correspondence? For this to make sense, we have to equip the smooth surface $\S$ with a complex structure. The resulting Riemann surface is denoted by $S$.

We have seen the notion of indecomposable holomorphic bundles (not the direct sum of others). By definition, they form the building blocks for all holomorphic bundles:
\begin{prop}
Every holomorphic bundle over a compact Riemann surface is the direct sum of indecomposable bundle in a unique way (up to permutation of the factors).
\end{prop}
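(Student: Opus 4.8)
The plan is to prove the existence of such a decomposition by an elementary induction, and its uniqueness by the Krull--Schmidt argument, whose driving force is the fact that an indecomposable holomorphic bundle over a compact Riemann surface has a \emph{local} endomorphism ring.

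Existence is the easy half: I would induct on $\rk E$. If $E$ is indecomposable there is nothing to prove; otherwise $E=E'\oplus E''$ with both summands of strictly smaller rank, and the induction hypothesis applies to each. The substance is in uniqueness, and the first thing to set up is the endomorphism algebra. For any holomorphic bundle $E$ over a compact Riemann surface $S$ one has $\End(E)=H^0(S,E^\vee\otimes E)$, which is a finite-dimensional $\C$-algebra because the space of global holomorphic sections of a holomorphic bundle over a compact complex manifold is finite-dimensional --- this is the one place where compactness of $S$ really enters. Next I would observe that direct sum decompositions $E=E'\oplus E''$ correspond precisely to idempotents $e\in\End(E)$ (project onto $E'$ along $E''$), the trivial ones corresponding to $e\in\{0,1\}$; hence $E$ is indecomposable exactly when $\End(E)$ has no idempotents besides $0$ and $1$. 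Finally I would invoke the purely algebraic fact that a finite-dimensional $\C$-algebra $A$ without nontrivial idempotents is local: its Jacobson radical $J$ is nilpotent, $A/J$ is semisimple hence (Wedderburn, over the algebraically closed field $\C$) a product of matrix algebras over $\C$, and since idempotents lift modulo the nil ideal $J$ the absence of nontrivial idempotents forces $A/J\cong\C$, so that $J$ is the unique maximal ideal and everything outside it is invertible.

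With locality in hand, uniqueness follows from the standard exchange argument. Given two decompositions $E\cong\bigoplus_{i=1}^m E_i\cong\bigoplus_{j=1}^n F_j$ into indecomposables, with inclusions and projections $\iota_i,\pi_i$ and $\iota'_j,\pi'_j$, I would consider $\phi_j=\pi_1\circ\iota'_j\circ\pi'_j\circ\iota_1\in\End(E_1)$, note that $\sum_j\phi_j=\id_{E_1}$, and use that the non-units of the local ring $\End(E_1)$ form a proper ideal to find $j_0$ with $\phi_{j_0}$ invertible. Then $\pi'_{j_0}\iota_1\colon E_1\to F_{j_0}$ is a split monomorphism, so indecomposability of $F_{j_0}$ gives $E_1\cong F_{j_0}$; a short diagram chase upgrades this to an automorphism of $E$ sending $E_1$ to $F_{j_0}$ and inducing $\bigoplus_{i\ge2}E_i\cong\bigoplus_{j\ne j_0}F_j$, and one finishes by induction on $m$, obtaining $m=n$ and a matching permutation.

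The hard part is not really any single step but the accumulation of input: one needs the analytic finiteness of $\End(E)$, the dictionary between idempotents and decompositions, the structure theory of finite-dimensional algebras, and the bookkeeping of the exchange lemma. For a pedagogical text I would state the finiteness of $H^0$ as known and refer to Atiyah's theorem on the Krull--Schmidt property for coherent sheaves on compact complex spaces for the details of the cancellation argument.
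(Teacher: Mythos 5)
Your proof is correct, and in fact the paper offers no proof to compare it with: the proposition is stated there as a known background fact (it is Atiyah's Krull--Schmidt theorem for bundles on compact complex spaces, which you correctly cite), so your proposal supplies exactly what the paper omits. The route you take --- existence by induction on rank, uniqueness via finite-dimensionality of $\End(E)=H^0(S,\End E)$ on a compact base, the observation that indecomposability means no nontrivial idempotents, the algebraic fact that an idempotent-free finite-dimensional $\C$-algebra is local, and the exchange argument --- is the standard one and is essentially complete. The only step worth making explicit is the dictionary between idempotents and direct-sum decompositions at the level of \emph{bundles}: for a holomorphic idempotent $e\in\End(E)$ the fiberwise rank of $e$ is locally constant (it equals $\tr e$, a holomorphic and hence constant function on the compact connected surface, or one can use semicontinuity of the ranks of both $e$ and $\id-e$), so $\mathrm{im}(e)$ and $\ker(e)$ are genuine holomorphic subbundles and $E=\mathrm{im}(e)\oplus\ker(e)$; without this remark the correspondence would a priori only produce subsheaves. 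With that one sentence added, your argument is a complete and correct proof of the proposition.
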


For holomorphic bundles, we have the following theorem due to Weyl:
\begin{thm}[Weyl]
A holomorphic bundle $E=\oplus E_i$ which is a direct sum of indecomposable bundles $E_i$ admits a flat connection if and only if $c_1(E_i)=0 \,\forall\, i$.
\end{thm}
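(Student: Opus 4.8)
First I would pin down the statement: ``admits a flat connection'' must mean ``admits a holomorphic connection'' --- a $\C$-linear $D\colon \mathcal{O}(E)\to \mathcal{O}(E\otimes \Omega^1_S)$ obeying Leibniz --- equivalently ``$E$ is the holomorphic bundle underlying a flat connection $\nabla$ with $\nabla^{0,1}=\bar\partial_E$'', equivalently ``$E$ comes from a representation of $\pi_1 S$''. Indeed, on a curve a holomorphic connection is automatically flat (its curvature lies in $H^0(S,\Omega^2_S\otimes \End(E))=0$), and $\nabla:=D+\bar\partial_E$ is then a flat connection inducing the given $\bar\partial$-operator; conversely the $(1,0)$-part of such a $\nabla$ is a holomorphic connection. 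With the naive reading (any smooth flat connection) the ``only if'' already fails --- e.g.\ $\mathcal{O}(1)\oplus \mathcal{O}(-1)$ over $\C P^1$ has trivial underlying complex bundle but an indecomposable summand of nonzero degree.

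\medskip
\emph{``Only if''.} Given a holomorphic connection $D$ on $E=\bigoplus_i E_i$, let $\iota_j\colon E_j\hookrightarrow E$ and $\pi_j\colon E\twoheadrightarrow E_j$ be the holomorphic inclusion and projection. Then $D_j:=(\pi_j\otimes\mathrm{id})\circ D\circ \iota_j$ is a holomorphic connection on $E_j$: the Leibniz rule survives because $\iota_j,\pi_j$ are $\mathcal{O}_S$-linear and $\pi_j\iota_j=\mathrm{id}$. So each $E_j$ carries a flat connection, whence $c_1(E_j)=\deg E_j=0$ by the earlier Proposition (equivalently by Chern--Weil, $\deg E_j=[\frac{i}{2\pi}\tr F]$ vanishes when $F$ does).

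\medskip
\emph{``If''.} Assume $c_1(E_i)=0$ for all $i$. A direct sum of holomorphic connections on the $E_i$ is one on $E$, so it suffices to treat $E$ indecomposable with $\deg E=0$. The plan is to use the \emph{Atiyah class} $a(E)\in H^1(S,\Omega^1_S\otimes \End(E))$, the extension class of the first-jet sequence $0\to \Omega^1_S\otimes E\to \mathcal{J}^1(E)\to E\to 0$, whose vanishing is equivalent to the existence of a holomorphic connection (a holomorphic splitting of the sequence). Serre duality on the curve identifies $H^1(S,\Omega^1_S\otimes \End(E))\cong H^0(S,\End(E))^{*}$ (using self-duality of $\End(E)$ under the trace), so $a(E)$ is a linear functional on $H^0(S,\End(E))$. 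I would then show it vanishes from two facts. First, its value on $\mathrm{id}_E$ is, up to a nonzero universal constant, the image of $a(E)$ under the trace map $H^1(S,\Omega^1_S\otimes \End(E))\to H^1(S,\Omega^1_S)$, which is $a(\det E)$ and computes $c_1(E)=0$. Second, since $E$ is indecomposable the algebra $H^0(S,\End(E))$ is finite-dimensional and local, so every endomorphism is a scalar plus a nilpotent, and the functional $a(E)$ annihilates nilpotents. Hence $a(E)=0$, $E$ admits a holomorphic (hence flat) connection, and its monodromy is a representation of $\pi_1 S$ with associated bundle $E$; reassembling over the $E_i$ concludes.

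\medskip
The real obstacle is the vanishing of $a(E)$ on nilpotents --- Atiyah's computation that for an indecomposable bundle the Atiyah functional factors through $\End(E)\twoheadrightarrow \C$. The mechanism is that a nilpotent $N$ strictly lowers the filtration $0\subset \ker N\subset \ker N^2\subset \cdots$, hence induces the zero endomorphism on the associated graded, and the Atiyah pairing only records that induced graded endomorphism; combined with $\deg E=0$ this kills the whole functional. Everything else --- the reduction to indecomposable summands, the jet-sequence and Serre-duality bookkeeping, and the automatic flatness of holomorphic connections on a curve --- is routine, and the two implications together single out precisely the holomorphic bundles arising through the Riemann--Hilbert correspondence.
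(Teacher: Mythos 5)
The paper itself gives no proof of this theorem --- it simply refers to Gunning --- so the comparison is with the classical argument behind that citation, and your sketch is exactly that argument (Atiyah--Weil) and is correct at the ``idea of proof'' level. Your opening clarification is the right one and worth having: ``admits a flat connection'' must mean a flat connection compatible with the holomorphic structure, i.e.\ a holomorphic connection (automatically flat on a curve), since with the naive smooth reading the ``only if'' direction fails, as your $\mc{O}(1)\oplus\mc{O}(-1)$ example shows. The ``only if'' direction by compressing $D$ to each summand via $\pi_j D\iota_j$ and applying Chern--Weil is correct. For the ``if'' direction, the reduction to an indecomposable $E$ of degree $0$, the Atiyah class as the obstruction, and Serre duality turning $a(E)$ into a functional on $H^0(S,\End(E))$ is the standard route; the two inputs you invoke --- that the functional evaluated on $\id_E$ computes $c_1(E)$, and that for indecomposable $E$ the local algebra $H^0(S,\End(E))$ forces every endomorphism to be scalar plus nilpotent with the functional killing nilpotents --- are precisely Atiyah's computation, and you correctly identify the second as the genuine content. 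To close that step one needs the additivity of the Atiyah functional over a $\varphi$-invariant filtration by subbundles (together with the easy remark that on a curve each $\ker N^i$ is saturated, hence a subbundle, so the filtration argument applies); that is a cocycle computation found in Atiyah's paper and in Gunning. Modulo that explicitly flagged lemma, your proposal is a faithful and complete outline of the proof the paper delegates to the literature.
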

A proof can be found in \cite[Theorem 16]{gunning}.

\medskip
\paragraph{Cauchy--Riemann operator.}
On a holomorphic bundle $E$, there is a natural connection $\delbar_E$ generalizing the Cauchy--Riemann operator (which defines what a holomorphic function is). It turns out that holomorphic structures are in bijection with these operators.

A \textbf{Cauchy--Riemann operator} $\delbar_E$ (also called \emph{Dolbeault operator}) on a complex bundle $E$ over a complex manifold $M$ is a connection, i.e. a map $\Gamma(E)\otimes \Gamma(TM) \to \Gamma(E)$, such that $\delbar_E^2=0$ and
\begin{equation}\label{cauchy-riemann}
\delbar_E(fs) = (\delbar f) s+f\delbar_E(s)
\end{equation}
for all sections $s\in\Gamma(E)$ and functions $f\in \mathcal{C}^\infty(M)$. Note that $\delbar$ is the usual Cauchy--Riemann operator, which is well-defined since $M$ is a complex manifold.

\begin{prop}
A holomorphic structure on a complex vector bundle $E$ is equivalent to the existence of a Cauchy--Riemann operator $\delbar_E$.
\end{prop}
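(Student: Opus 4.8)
The statement is an equivalence, so there are two directions, one essentially formal and one genuinely analytic.

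First I would treat the direction from a holomorphic structure to an operator. Given a holomorphic atlas for $E$ with holomorphic transition functions $g_{\alpha\beta}\colon U_\alpha\cap U_\beta \to \GL_n(\C)$, I define $\delbar_E$ by declaring that in each holomorphic trivialization it is the componentwise Cauchy--Riemann operator $\delbar$ acting on the $\C^n$-valued representation of a section. The only thing to check is that this is globally well defined: on an overlap the two candidate values of $\delbar_E s$ differ by $\delbar(g_{\alpha\beta})\cdot s$, which vanishes precisely because $g_{\alpha\beta}$ is holomorphic. The Leibniz rule \eqref{cauchy-riemann} and the vanishing $\delbar_E^2=0$ are then inherited verbatim from the ordinary $\delbar$.

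The substance is in the converse. Given a Cauchy--Riemann operator $\delbar_E$, I would call a local section $s$ \emph{holomorphic} when $\delbar_E s = 0$, and the goal is to show that $E$ can be trivialized by local frames built from holomorphic sections; once such frames exist, the transition functions between any two of them are $\GL_n(\C)$-valued and killed by $\delbar$, hence holomorphic, and these frames constitute the sought holomorphic atlas. The problem is local, so I fix a point $p$, a smooth local frame $(e_1,\dots,e_n)$ on a coordinate neighbourhood $U$, and write $\delbar_E=\delbar+A$ in this frame, with $A$ a $\mathfrak{gl}_n(\C)$-valued $(0,1)$-form on $U$. The task reduces to finding, on a possibly smaller neighbourhood $U'\ni p$, a smooth map $g\colon U'\to \GL_n(\C)$ with
\begin{equation}\label{dbar-gauge}
\delbar g + A g = 0,
\end{equation}
for then a direct computation shows that the frame $(e_1,\dots,e_n)\cdot g$ is annihilated by $\delbar_E$, i.e.\ is holomorphic, and elliptic regularity makes $g$ smooth.

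The hard part is solving \eqref{dbar-gauge} near $p$. On a Riemann surface --- the only case needed in this paper --- this is a single $\delbar$-equation in one complex variable with $\delbar_E^2=0$ automatic (there are no $(0,2)$-forms), so no integrability condition intervenes. I would solve it on a small disc by a fixed-point argument: let $T$ be the Cauchy--Green right inverse of $\delbar$, recast \eqref{dbar-gauge} as $g = I - T(Ag)$, and note that on a disc of small enough radius the operator $u\mapsto T(Au)$ has operator norm less than $1$ in a suitable Hölder or sup norm, so the iteration converges to a solution with $g$ close to the identity, hence invertible after shrinking. For a general complex manifold the same scheme works but now genuinely uses $\delbar_E^2=0$, equivalently $\delbar A+A\wedge A=0$; this is the Koszul--Malgrange theorem, whose proof parallels that of the Newlander--Nirenberg theorem, and I would only indicate the statement and refer to the literature rather than reproduce the argument.
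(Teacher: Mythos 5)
Your proposal is correct and follows exactly the route the paper has in mind: the paper does not prove the statement but refers to McCarthy and Atiyah--Bott, noting that the delicate point is producing holomorphic transition functions from $\delbar_E$ by solving an elliptic system, which is precisely your local equation $\delbar g + Ag = 0$ solved by a contraction argument (Koszul--Malgrange). Your treatment of the easy direction and your remark that no integrability condition is needed on a Riemann surface are both accurate, so you have in fact supplied more detail than the paper itself.
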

We refer to the thesis of McCarthy \cite[Theorem 3.3.3]{mccarthy} and \cite[p.555]{atiyah-bott} for a proof. The delicate point is to construct holomorphic transition functions from $\delbar_E$ which leads to an elliptic system to solve.

On a Riemann surface $S$, a complex bundle admits plenty of holomorphic structures: any connection gives one. Indeed we can decompose a connection $\nabla=\nabla^{1,0}+\nabla^{0,1}$ into the $dz$ and $d\bar{z}$-part. Then $\nabla^{0,1}$ is a Cauchy--Riemann operator (since $d=\del+\delbar$). 

Consider now a trivial complex bundle $E$ over $S$. Using the differential $d$ as canonical base point, we can identify the space of $G$-connections with $\Omega^1(S,\g)$. Hence, we can describe holomorphic structures by $\Omega^{0,1}(S,\g)$.
Then, two holomorphic structures are equivalent (under some bundle automorphism) iff the corresponding operators are gauge-equivalent. Note that this action is given by 
$$g.B = gBg^{-1}+g\delbar\left(g^{-1}\right).$$ 
Denote by $\Hol(S,\GL_n(\C))$ the moduli space of holomorphic structures on $E=S\times \C^n$ and by $\mc{G}^\C = \mc{G}(\GL_n(\C))$ the gauge group. Since the trivial complex bundle has degree 0, we have 
\begin{equation}\label{hol-quotient}
\Hol_{\deg=0}(S,\GL_n(\C)) \cong \Omega^{0,1}(S,\mf{gl}_n(\C))/\mc{G}^\C.
\end{equation}

\medskip
\paragraph{Stability for bundles.}
The space of holomorphic structures is given by the quotient \eqref{hol-quotient}. To get a nice space, we have to interpret this quotient in the GIT sense. The associated stability condition is described now.

For a holomorphic bundle $E\to M$, we define its \textbf{slope} $\mu(E)$ by 
\begin{equation}
\mu(E) = \frac{\deg (E)}{\rk(E)}.
\end{equation}
To memorize: the degree can be zero, so cannot be in the denominator. 

\begin{definition}[Mumford]
A holomorphic bundle $E$ is \textbf{stable} if for all holomorphic subbundles $F\subset E$ we have $\mu(F)<\mu(E)$. The bundle is semistable if the inequality is not strict.
\end{definition}

Let us see two properties of stable bundles:
\begin{prop}
Let $E$ be a stable holomorphic bundle. Then
\begin{enumerate}
	\item $E$ cannot be a direct sum $E_1\oplus E_2$.
	\item $E$ has only trivial holomorphic automorphisms (of the form $\lambda \id$ for some constant $\lambda$).
\end{enumerate}
\end{prop}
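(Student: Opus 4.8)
Both properties are consequences of a single, fundamental principle about stable bundles: a nonzero holomorphic map between stable bundles of the same slope must be an isomorphism. So the plan is first to establish this "Schur-type" lemma, and then deduce the two statements as special cases.

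\textbf{Step 1: the key lemma.} I would prove that if $E$ and $E'$ are stable holomorphic bundles with $\mu(E) = \mu(E')$, then any nonzero holomorphic bundle map $\varphi\colon E \to E'$ is an isomorphism. The argument runs through the image and kernel subsheaves. Let $K = \ker\varphi$ and $I = \operatorname{im}\varphi$; over a Riemann surface these are again (saturated) holomorphic subbundles after taking saturations, so slopes behave correctly. If $K \neq 0$ is a proper subbundle of $E$, then stability of $E$ gives $\mu(K) < \mu(E)$, hence $\mu(E/K) > \mu(E)$. But $E/K \cong I$ (up to the subtlety of saturating $I$ inside $E'$, which only increases $\mu(I)$), so $\mu(I) \geq \mu(E/K) > \mu(E) = \mu(E')$, contradicting semistability of $E'$ applied to the subbundle $I \subset E'$. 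Therefore $K = 0$, i.e.\ $\varphi$ is injective as a sheaf map. Now $I \subset E'$ has $\mu(I) \geq \mu(E) = \mu(E')$; if $I$ were a proper subbundle this would contradict stability of $E'$, so $I = E'$, and an injective map of bundles of equal rank (forced by surjectivity and injectivity) with equal degree is an isomorphism.

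\textbf{Step 2: deduce property 2.} Apply the lemma with $E' = E$ and $\varphi$ any holomorphic automorphism. Then $\varphi$ is an isomorphism (which we knew) but more importantly we can run the standard Schur argument: pick an eigenvalue $\lambda$ of $\varphi$ at one point (the characteristic polynomial has a root since we are over $\mathbb{C}$), and consider $\varphi - \lambda\,\mathrm{id}$. This is a holomorphic endomorphism of $E$ which fails to be an isomorphism (it is not invertible at that point). By the lemma, a nonzero endomorphism of the stable bundle $E$ to itself must be an isomorphism; since $\varphi - \lambda\,\mathrm{id}$ is not, it must be zero, so $\varphi = \lambda\,\mathrm{id}$.

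\textbf{Step 3: deduce property 1.} Suppose $E = E_1 \oplus E_2$ with both summands nonzero. Since $\deg E = \deg E_1 + \deg E_2$ and $\operatorname{rk} E = \operatorname{rk} E_1 + \operatorname{rk} E_2$, the slope $\mu(E)$ is a weighted average of $\mu(E_1)$ and $\mu(E_2)$, so at least one of them, say $\mu(E_1)$, satisfies $\mu(E_1) \geq \mu(E)$. But $E_1$ is a proper holomorphic subbundle of $E$, so stability forces $\mu(E_1) < \mu(E)$, a contradiction. Hence no such decomposition exists. (Alternatively, property 1 follows from property 2: a nontrivial direct sum admits the projection onto $E_1$ as a holomorphic endomorphism that is not a scalar multiple of the identity.)

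\textbf{Main obstacle.} The conceptual content is easy; the technical care needed is in Step 1, around subsheaves versus subbundles. Over a compact Riemann surface every torsion-free coherent subsheaf is locally free, but the image of $\varphi$ need not be a \emph{saturated} subbundle — its saturation $\bar I$ inside $E'$ satisfies $\deg \bar I \geq \deg I$, so $\mu(\bar I) \geq \mu(I)$, and one must check the inequalities survive this passage. Likewise one must verify $E/K \cong I$ at the level of slopes. These are standard facts about bundles on curves, but they are the only place the argument is not a one-line formal manipulation; everything else is the elementary slope bookkeeping in Steps 2 and 3.
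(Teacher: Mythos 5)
Your argument is correct, and it takes a recognizably different route from the paper for part (2). The paper proves (1) exactly as in your Step 3 (direct slope bookkeeping on the two summands), but for (2) it first observes that the characteristic polynomial of $\varphi$ has constant coefficients (holomorphic functions on a compact surface are constant), uses part (1) to rule out two distinct eigenvalues via the eigenbundle decomposition, and then kills the remaining nilpotent part $\psi$ by the slope comparison $\mu(\ker\psi)<\mu(E)$, $\mu(\mathrm{Im}\,\psi)<\mu(E)$ together with $\mathrm{Im}\,\psi\cong E/\ker\psi$. You instead prove the Schur-type lemma up front (a nonzero map between stable bundles of equal slope is an isomorphism), pick an eigenvalue of $\varphi$ at a single point, and conclude $\varphi-\lambda\,\id=0$ because it fails to be invertible there. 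The underlying mechanism is the same kernel/image slope estimate, but your organization buys more: the lemma immediately gives that \emph{every} holomorphic endomorphism (not just automorphism) of a stable bundle is scalar, it makes (2) independent of (1), and it avoids the characteristic-polynomial step entirely; the paper's route is slightly more elementary in that it never needs the saturation discussion, since it only ever compares subbundles of $E$ itself. Your flagged obstacle (image subsheaf versus saturated subbundle, $\deg$ increasing under saturation, $E/K\cong I$ at the level of slopes) is indeed the only technical point, and you handle it correctly; note the paper quietly faces the same issue when it invokes $\mu(\mathrm{Im}(\psi))$ and $\mu(\ker(\psi))$ for the nilpotent $\psi$, whose image and kernel need not be saturated either.
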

\begin{proof}
Part (1) is simply proved by contradiction: if $E=E_1\oplus E_2$, then $\deg(E) = \deg(E_1)+\deg(E_2)$ and $\rk(E)=\rk(E_1)+\rk(E_2)$. Since $E$ is stable, we have $\mu(E_1)<\mu(E)$ and $\mu(E_2)<\mu(E)$ which leads to a contradiction: $$\deg(E) =\deg(E_1)+\deg(E_2) < \deg(E)\left(\frac{\rk(E_1)}{\rk(E)}+\frac{\rk(E_2)}{\rk(E)}\right) = \deg(E).$$

For part (2), consider a holomorphic automorphism $\varphi: E\to E$. Then its characteristic polynomial is constant, since its coefficients are holomorphic functions on a compact manifold. If there are at least two different eigenvalues, we can decompose $E$ into a direct sum which is impossible by (1). So there is a constant $\lambda\in \C^*$ such that $\varphi =\lambda \id+\psi$ where $\psi$ is nilpotent. Suppose $\psi\neq 0$. Since $\psi$ is still a holomorphic automorphism of $E$, we have $\mu(E) > \mu(\mathrm{Im}(\psi))$ and $\mu(E)>\mu(\mathrm{ker}(\psi))$ by stability. Since $\mathrm{Im}(\psi)=E/\mathrm{ker}(\psi)$, we also get $\mu(E) < \mu(\mathrm{Im}(\psi))$, a contradiction. Hence $\varphi=\lambda \id$.
\end{proof}

The second property gives a hint why Mumford's stability is the appropriate notion, since stable objects have usually a small automorphism group.

\medskip
\paragraph{Unitary character varieties.}
We are now ready to characterize the holomorphic bundles arising through unitary representations of the fundamental group:
\begin{thm}[Narasimhan--Seshadri \cite{ns}]\label{ns-thm}
The character variety for the unitary group is in bijection with semistable holomorphic bundles of degree 0:
$$\boxed{\Rep(\pi_1\S,\mathrm{U}(n)) \cong \Hol^{ss}_{\deg=0}(S,\GL_n(\C)).}$$
Moreover, the stable points correspond to irreducible representations.
\end{thm}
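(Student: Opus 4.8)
\emph{Proof idea.}
The plan is to realise both sides as quotients of the \emph{same} infinite-dimensional space, in the spirit of Sections~\ref{atiyah--bott} and \ref{Sec-GIT} (this is Donaldson's route to the theorem; the original argument of \cite{ns} is algebro-geometric). Fix the trivial $C^\infty$ bundle $E=\S\times\C^n$, of degree $0$. By \eqref{hol-quotient}, holomorphic structures on $E$ form the affine space $\mc{B}=\Omega^{0,1}(S,\mf{gl}_n(\C))$ on which the complex gauge group $\mc{G}^\C=\Omega^0(S,\GL_n(\C))$ acts, with orbits the isomorphism classes of degree-$0$ holomorphic bundles. Via $A\mapsto A^{0,1}$, the space $\mc{A}$ of $\mathrm{U}(n)$-connections is identified (as a real vector space) with $\mc{B}$, and under this identification the $\mc{G}^\C$-action extends the action of the unitary gauge group $\mc{G}=\Omega^0(S,\mathrm{U}(n))$; by Theorem~\ref{atiyah-bott-thm} the latter is Hamiltonian with moment map the curvature, so $\mu^{-1}(0)$ consists of the flat unitary connections and, by the corollary to that theorem together with Riemann--Hilbert, $\mc{A}\sslash\mc{G}\cong\Rep(\pi_1\S,\mathrm{U}(n))$. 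It thus suffices to identify $\mc{A}\sslash\mc{G}$ with $\mc{B}/_{GIT}\,\mc{G}^\C$ and to check that the relevant GIT-stability is Mumford's.

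\emph{Unitary representations give semistable bundles.}
Given $\rho\colon\pi_1\S\to\mathrm{U}(n)$, form $E_\rho$ with its flat unitary connection $\nabla$ and the holomorphic structure $\nabla^{0,1}$; this bundle has degree $0$. For a holomorphic subbundle $F\subset E_\rho$, equip $F$ with the restricted metric and the induced Chern connection. The Gauss--Codazzi identity expresses the curvature of $F$ through the restriction of $F_\nabla$ and a second-fundamental-form term $\beta\in\Omega^{1,0}(S,\Hom(F,E_\rho/F))$; since $F_\nabla=0$, taking traces and integrating (Chern--Weil) gives $\deg(F)=-\tfrac{1}{2\pi}\int_S|\beta|^2\le 0=\rk(F)\,\mu(E_\rho)$. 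Hence $E_\rho$ is semistable, with $\mu(F)=\mu(E_\rho)$ only when $\beta=0$, i.e.\ when $F$ is $\nabla$-parallel, hence a subrepresentation. In particular, if $\rho$ is irreducible then every proper holomorphic subbundle has strictly negative degree and $E_\rho$ is stable; conversely a stable $E_\rho$ admits no proper $\nabla$-parallel subbundle, so $\rho$ is irreducible.

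\emph{Semistable bundles carry flat unitary connections.}
This is the hard direction. A Jordan--H\"older filtration reduces the claim to the polystable case and then, summand by summand, to a \emph{stable} bundle $E$ of degree $0$; for this one seeks a Hermitian metric $h$ on $E$ whose Chern connection is flat. On a Riemann surface a degree-$0$ bundle admits such a metric iff it satisfies the Hermitian--Einstein equation $\Lambda F_h=0$, and the existence of a Hermitian--Einstein metric on a stable bundle is precisely Donaldson's theorem \cite{donaldson}, proved by running the gradient flow of the Yang--Mills (equivalently Donaldson) functional, the stability being exactly what keeps the flow from escaping to infinity. Conceptually this is an infinite-dimensional Kempf--Ness statement: the $\mc{G}^\C$-orbit of a holomorphic structure meets $\mu^{-1}(0)$ iff it is ``polystable,'' and by the Hilbert--Mumford criterion the destabilising one-parameter subgroups of $\mc{G}^\C$ are exactly the slope-destabilising holomorphic subbundles, so this polystability is Mumford's condition. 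Granting this, $\mc{B}/_{GIT}\,\mc{G}^\C\cong\mc{A}\sslash\mc{G}$, which together with the first paragraph gives the bijection; and a stable bundle yields an irreducible $\rho$ because a reducible holonomy would produce a proper $\nabla$-parallel (hence slope-$0$) subbundle, contradicting stability.

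\emph{Main obstacle.}
The real difficulty lies entirely in the last direction: existence of a Hermitian--Einstein metric is a genuinely nonlinear PDE result, and the accompanying statement ``Mumford-stable $\Leftrightarrow$ the $\mc{G}^\C$-orbit meets the zero locus of the moment map'' is an analytic fact requiring hard estimates rather than the formal convexity argument that settles the finite-dimensional Kempf--Ness theorem. A secondary but essential point is bookkeeping: the right-hand side must be read as the moduli space, i.e.\ semistable bundles up to $S$-equivalence (equivalently, polystable bundles up to isomorphism), so that it genuinely matches the conjugation quotient of the completely reducible unitary representations on the left.
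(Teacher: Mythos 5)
Your proposal follows essentially the same route as the paper's sketch: Donaldson's argument via the Atiyah--Bott moment-map picture ($\Rep(\pi_1\S,\mathrm{U}(n))\cong\mc{A}\sslash\mc{G}$), an infinite-dimensional Kempf--Ness identification with the complex gauge quotient of holomorphic structures, and the Yang--Mills gradient flow with Mumford stability as the analytic core. The extra details you supply --- the Gauss--Codazzi/Chern--Weil computation showing flat unitary bundles are semistable with equality only on parallel subbundles, the Hilbert--Mumford reading of the stability condition, and the S-equivalence/polystability bookkeeping --- are correct refinements of steps the paper leaves implicit, not a different approach.
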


In other words: a holomorphic bundle $E=\oplus E_i$ with $E_i$ indecomposable, comes from a unitary representation iff all $E_i$ are stable and of degree 0.
Note that we can generalize Narasimhan--Seshadri theorem to any group $K\subset \mathrm{U}(n)$: 
$$\Rep(\pi_1\S,K) \cong \Hol^{ss}_{\deg=0}(S,K^\C)$$
where the holomorphic bundles have structure group $K^\C$.

Let us try to understand the profound meaning of the theorem. One direction is less surprising: given a representation coming from a flat connection $\nabla$, its $(0,1)$-part defines a holomorphic structure. The theorem asserts that this holomorphic bundle is semistable.

The other direction is much more surprising: given a stable holomorphic bundle, there is preferred \emph{flat} unitary connection on it! This is surprising since a holomorphic structure is given by a $(0,1)$-part of a connection. Looking for unitary connections, we can complete this $(0,1)$-part into a full connection, but there is no reason for that connection to be flat! The point is that we work on the level of moduli spaces, so within gauge equivalence classes. The theorem asserts that in the complex gauge-orbit of a stable holomorphic structure, there is a representative, given by a Cauchy--Riemann operator $\delbar_E$, whose associated unitary connection is flat.

We present the idea of the proof in layers, like peeling an onion. The proof strategy presented here is due to Donaldson \cite{donaldson-NS} which uses the ideas of the Kempf--Ness theorem.

\begin{proof}[Idea of proof]
We start by the Atiyah--Bott reduction which gives
$$\Rep(\pi_1\S,\mathrm{U}(n)) \cong \mc{A}(\mathfrak{u}(n))\sslash\mc{G}$$
where $\mc{A}(\mathfrak{u}(n))$ denotes the space of all unitary connections on a trivial complex bundle $V$ over $\S$ equipped with a hermitian structure $h$. $\mc{G}=\mc{G}(\mathrm{U}(n))$ denotes the unitary gauge group.

By the principle of the Kempf--Ness theorem, we have
$$\mc{A}(\mathfrak{u}(n))\sslash\mc{G} \cong \mc{A}^{ss}(\mathfrak{u}(n))/ \mc{G}^\C$$
where $\mc{G}^\C$ denotes the complex gauge group. An element $g\in \mc{G}^\C$ acts on a unitary connection $d+A$ in two steps: first act only on $A^{(0,1)}$ by a gauge transformation, i.e. $g.A^{(0,1)}=gA^{(0,1)}g^{-1}+g\delbar(g^{-1})$, and second complete the result to a unitary connection. Note that the total action is not a gauge action (unless $g\in \mc{G}$) which allows to modify the curvature.
%Note that the difficult part of the proof is to make this isomorphism work since it is an infinite-dimensional analog of the Kempf--Ness theorem. 
%In particular, one has to check that the stability condition appearing here coincides with Mumford's slope stability.

Since a unitary connection is uniquely determined by its $(0,1)$-part:
$$\mc{A}^{ss}(\mathfrak{u}(n))/ \mc{G}^\C \cong \Omega^{ss, (0,1)}(\mathfrak{gl}_n(\C))/\mc{G}^\C.$$

Finally, since a Cauchy--Riemann operator determines a holomorphic structure, we get from Equation \eqref{hol-quotient}:
$$ \Omega^{ss, (0,1)}(\mathfrak{gl}_n(\C))/\mc{G}^\C \cong \Hol^{ss}_{\deg=0}(S,\GL_n(\C)).$$
\end{proof}

This elegant proof, combining nicely all the material we have seen before, is only the first layer, lacking lots of important details. In a second layer, one has to prove two things: the stability condition appearing in the proof idea is identical with the Mumford slope-stability, and the Kempf--Ness theorem can be adapted to the infinite-dimensional setting.

To carry out the latter, Donaldson imitates the proof strategy of the Kempf--Ness theorem. The rough idea is to show that the complex gauge orbit of a unitary connection intersects the zero-set of the moment map (the flatness condition) iff the connection is semistable. To achieve this, one uses a gradient descent method. The function we consider for that is simply the norm of the moment map (for some adapted $L^2$-norm): $$A\mapsto \left\|F(A)\right\|^2.$$
This is called the \emph{Yang--Mills functional}. The absolute minima of this function are obviously given by flat connections. 

So you start with a point, apply the gradient flow to get a sequence of connections in the same complex gauge orbit. What you have to show is that you converge to an absolute minimum iff your starting point is semistable.

This can be done in a third layer using the Uhlenbeck--Yau compactness theorem. See Donaldson's paper \cite{donaldson-NS} for details.

\medskip
We might ask, what is so special about bundles coming from unitary representations? One aspect is the following: since the transition functions are unitary, they are in particular bounded. Hence any holomorphic section of $E$ is constant, by the maximum principle ($S$ is compact and transitions are bounded).

To understand character varieties for non-unitary groups, especially non-compact groups, we need the notion of Higgs bundles.

\section{Higgs bundles and the non-abelian Hodge correspondence}\label{Sec-Higgs-bundles}

In this section we will see how the notion of a Higgs bundle naturally arises. We then state the main theorem of our lecture, the non-abelian Hodge correspondence.

\medskip
\paragraph{Cotangent space to $\Hol(S,\GL_n(\C))$.}

The Narasimhan--Seshadri theorem has told us that stable holomorphic bundles correspond to irreducible unitary representations of $\pi_1\S$. In some sense, the holomorphic structure of a bundle is encoded in a $(0,1)$-form which can be uniquely completed to a unitary connection. 

To describe representations $\pi_1\S\to \GL_n(\C)$, we need more than holomorphic bundles: something which is encoded both in a $(0,1)$-form and a $(1,0)$-form. Put $\g=\mathfrak{gl}_n(\C)$. We remark that 
\begin{equation}\label{cotangent01}
T^*\Omega^{0,1}(S,\g) \cong \Omega^{0,1}(S,\g)\oplus \Omega^{1,0}(S,\g).
\end{equation}
Indeed, the tangent space at any point is given by $\Omega^{0,1}(S,\g)$ itself (since it is a vector space). For $\alpha\in\Omega^{1,0}(S,\g)$ and $\beta\in\Omega^{0,1}(S,\g)$ the map
$$\langle \alpha,\beta \rangle = \int_S \tr \alpha\wedge \beta$$
is a non-degenerate pairing, which explains Equation \eqref{cotangent01}.

This leads to the idea to consider the cotangent bundle of $\Hol(S,\GL_n(\C))$.
\begin{prop}\label{Prop:higgs-bundle-cotangent-vector}
$$T^*\Hol(S,\GL_n(\C)) \cong \{(\bar{A},\Phi)\in \Omega^{0,1}\times \Omega^{1,0}\; \big| \; \delbar\Phi+[\bar{A},\Phi]=0\}/\mathcal{G}$$
where the gauge group $\mathcal{G}$ acts by $g.\bar{A} = gAg^{-1}+gd\left(g^{-1}\right)$ and $g.\Phi = g\Phi g^{-1}$.
\end{prop}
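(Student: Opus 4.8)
The plan is to apply the cotangent-bundle-of-a-quotient formula, Exercise \ref{cotang-group-quotient}, to the situation $X = \Omega^{0,1}(S,\g)$ with $G = \mathcal{G}^\C$ the complex gauge group, since by Equation \eqref{hol-quotient} we have $\Hol_{\deg=0}(S,\GL_n(\C)) \cong \Omega^{0,1}(S,\g)/\mathcal{G}^\C$. That exercise gives $T^*(X/G) \cong T^*X \sslash G$, so the right-hand side should emerge as a Hamiltonian reduction of $T^*\Omega^{0,1}(S,\g)$ by the (cotangent-lifted) gauge action. Concretely, I would first identify $T^*\Omega^{0,1}(S,\g)$ with $\Omega^{0,1}(S,\g)\oplus\Omega^{1,0}(S,\g)$ using the non-degenerate pairing $\langle\alpha,\beta\rangle=\int_S\tr\,\alpha\wedge\beta$, exactly as in Equation \eqref{cotangent01}: a point is a pair $(\bar A,\Phi)$ with $\bar A$ the base point and $\Phi$ the cotangent direction.

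Next I would compute the moment map for the lifted gauge action. The gauge group $\mathcal{G}^\C = \Omega^0(S,\g)$ acts on $\bar A$ by the Dolbeault version of the affine gauge action, $g.\bar A = g\bar A g^{-1} + g\delbar(g^{-1})$, whose infinitesimal form for $\xi\in\Omega^0(S,\g)$ is $\bar A_\xi = \delbar\xi+[\bar A,\xi]$ (wait — sign: from $(1+\e\xi).\bar A$ one gets $\bar A_\xi = [\xi,\bar A] - \delbar\xi$, matching the Atiyah--Bott computation verbatim with $d$ replaced by $\delbar$), and on $\Phi$ by the adjoint action $g.\Phi = g\Phi g^{-1}$, infinitesimally $\Phi_\xi = [\xi,\Phi]$. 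The canonical symplectic form on the cotangent bundle is $\hat\omega((\delta\bar A_1,\delta\Phi_1),(\delta\bar A_2,\delta\Phi_2)) = \int_S\tr(\delta\bar A_1\wedge\delta\Phi_2 - \delta\bar A_2\wedge\delta\Phi_1)$. Applying the standard two-step moment-map recipe from Section \ref{hamiltonian-reduction} — compute $\hat\omega((\bar A_\xi,\Phi_\xi),(\delta\bar A,\delta\Phi))$ and massage it into $\delta(\text{something})$ via integration by parts and cyclicity of the trace, precisely as in the proof of Theorem \ref{atiyah-bott-thm} — should yield $\mu(\bar A,\Phi)\cdot\xi = \int_S\tr\,\xi\,(\delbar\Phi + [\bar A,\Phi])$, so that the moment-map equation $\mu=0$ becomes exactly $\delbar\Phi+[\bar A,\Phi]=0$. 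Then $T^*X\sslash\mathcal{G}^\C = \mu^{-1}(0)/\mathcal{G}^\C$ is the claimed space, and one checks the stated gauge actions on $\bar A$ and $\Phi$ are the restrictions of the cotangent-lifted action.

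There is also a conceptual reading of the constraint $\delbar\Phi+[\bar A,\Phi]=0$ worth recording: if one writes $\delbar_{\bar A} = \delbar + \bar A$ for the Cauchy--Riemann operator determined by $\bar A$, then the equation says $\delbar_{\bar A}\Phi = 0$, i.e.\ $\Phi$ is a \emph{holomorphic} $\g$-valued $(1,0)$-form with respect to the holomorphic structure defined by $\bar A$ — which is precisely the statement that $\Phi\in H^0(S,\End(E)\otimes K_S)$, matching the fact that the cotangent space to the moduli of bundles at $[E]$ is $H^0(S,\End(E)\otimes K_S)$ by Serre duality against $H^1(S,\End(E)) = T_{[E]}\Hol$. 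I would include this remark since it is both a sanity check and the bridge to the Higgs-bundle definition that follows.

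The main obstacle is the same one flagged throughout this section: all the spaces are infinite-dimensional, so the naive application of Exercise \ref{cotang-group-quotient} and the moment-map recipe requires analytic justification — one must work with appropriate Sobolev completions, verify that $0$ is (on the stable/irreducible locus) a regular value so that $\mu^{-1}(0)/\mathcal{G}^\C$ is genuinely a smooth manifold, and check that the formal integration-by-parts identities hold without boundary terms (here $S$ is closed, so this last point is clean). In the spirit of the rest of the paper I would carry out the computation formally, neglecting these functional-analytic subtleties, and point to the literature (e.g.\ the treatments of the Hitchin moduli space) for the rigorous version; the algebraic heart of the argument — the moment-map computation giving the constraint $\delbar\Phi+[\bar A,\Phi]=0$ — is short and is the only thing that really needs to be displayed.
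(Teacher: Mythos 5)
Your proposal follows essentially the same route as the paper's proof: apply $T^*(X/G)\cong T^*X\sslash G$ to $\Omega^{0,1}(S,\g)$ modulo the gauge group, identify the cotangent space via the trace pairing of Equation \eqref{cotangent01}, and compute the moment map of the lifted action (with the same infinitesimal vector fields and the same integration-by-parts manipulation) to obtain the constraint $\delbar\Phi+[\bar{A},\Phi]=0$. Your closing remark that the constraint says $\Phi$ is holomorphic for the structure defined by $\bar{A}$ is exactly the conceptual discussion the paper gives immediately after its proof, so nothing is missing.
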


The gauge action is explained by the idea that $\Phi$ is a cotangent vector, so gives a small deformation $\delbar+\varepsilon \Phi+\bar{A}$. Since the gauge parameter does not involve $\varepsilon$, it acts on $\Phi$ simply by conjugation.

The main ingredient to proof the proposition is the formula $T^*(X/G) = T^*X\sslash G$ seen in Exercise \ref{cotang-group-quotient}.
\begin{proof}
Since $\Hol(S,\GL_n(\C))=\Omega^{0,1}(S,\g)/\mathcal{G}$, we have
$$T^*\Hol(S,\GL_n(\C)) = T^*\Omega^{0,1}(S,\g) \sslash \mathcal{G} = (\Omega^{0,1}(S,\g)\oplus \Omega^{1,0}(S,\g)) \sslash \mathcal{G}$$
by Equation \eqref{cotangent01}. The symplectic form is given by
$$\omega = \int_S \tr \delta \bar{A}\wedge \delta \Phi.$$

To compute the moment map, we first compute the infinitesimal gauge action by $g=1+\e$. We easily get the vector fields representing an infinitesimal change
$$\bar{A}_\e = -\delbar \e+[\e,\bar{A}]$$
and 
$$\Phi_\e= [\e,\Phi].$$

By the usual procedure, we compute
\begin{align*}
\iota_{(\bar{A}_\e,\Phi_\e)}\omega(\delta \bar{A},\delta \Phi) &= \int_S \tr (\bar{A}_\e \delta \Phi-\delta\bar{A}\Phi_\e) \\
&= \int_S \tr((-\delbar\e+[\e,\bar{A}])\delta\Phi-\delta\bar{A}[\e,\Phi]) \\
&= \int_S \tr \e(\delbar\delta \Phi+[\bar{A},\delta \Phi]+[\delta \bar{A},\Phi]) \\
&= \delta\left(\int_S \tr \e(\delbar\Phi+[\bar{A},\Phi])\right).
\end{align*}

Hence, the moment map is given by $\mu(\bar{A},\Phi) = \delbar\Phi+[\bar{A},\Phi]$, which gives the Proposition.
\end{proof}

We can understand the Proposition in more conceptual terms: $\bar{A}$ defines a Cauchy--Riemann operator on $E$, which induces a holomorphic structure on $\End(E)$ given by a Cauchy--Riemann operator $\delbar_{\End(E)}$. On a section $\Phi$ of $\End(E)$ it acts via 
$$\delbar_{\End(E)}\Phi = \delbar{\Phi}+[\bar{A},\Phi].$$

If $\Phi$ is the $\End(E)$-valued $(1,0)$-form from above, the Proposition tells us that $\delbar_{\End(E)}\Phi=0$, hence $\Phi$ is holomorphic. Such an object is called a \textbf{Higgs field}. In technical terms, we have
$$\Phi \in \mathrm{H}^0(S,\End(E)\otimes K)$$
where $K$ denotes the canonical bundle (holomorphic $(1,0)$-forms), but you really should think of a Higgs field as a cotangent vector to the moduli space of holomophic structures.

\begin{definition}
A \textbf{Higgs bundle} is a holomorphic bundle $E$ equipped with a Higgs field $\Phi\in \mathrm{H}^0(S,\End(E)\otimes K)$.
\end{definition}

\medskip
\paragraph{Moduli space.}
We want to define the moduli space of Higgs bundles by $$\mc{M}_H(S,\GL_n(\C)) = \{\text{Higgs bundles}\}/\mc{G}.$$
As usual, we have to take the GIT quotient to get a nice topological space (in fact we get a manifold).

The appropriate stability condition is the following:
\begin{definition}
A Higgs bundle $(E,\Phi)$ is \textbf{stable}, if for all $\Phi$-invariant holomorphic subbundles $F\subset E$ we have $\mu(F)<\mu(E)$. It is called \textbf{semistable} if the inequality is not necessarily strict.
\end{definition}

Note that $\Phi$-invariant means that for all vector fields $X\in \Gamma(TS)$, we have $\Phi(X).F\subset F$.

\begin{example}
If $E$ is a stable holomorphic bundle, then $(E,\Phi)$ is a stable Higgs bundle for all Higgs fields $\Phi$.
\end{example}

\begin{example}\label{higgs-basic-ex}
Fix a so-called spin structure on $S$, i.e. a line bundle denoted by $K^{1/2}$ whose square is the canonical bundle $K$. Then, consider $$\left(E=K^{1/2}\oplus K^{-1/2}, \Phi=\begin{pmatrix}0 & 0 \\ 1 & 0\end{pmatrix}\right).$$
Note that the non-zero entry in $\Phi$ makes sense since it is an element of $$\Hom(K^{1/2},K^{-1/2})\otimes K \cong K^{-1}\otimes K \cong \mc{O}$$ which is the trivial line bundle. It is easy to check that the only non-trivial $\Phi$-invariant subbundle is $K^{-1/2}$. Since $\deg(K^{-1/2}) < 0 = \deg(E)$, we get that $(E,\Phi)$ is a stable Higgs bundle.
\end{example}
Note that in the last example, $E$ is not stable as holomorphic bundle since it has $K^{1/2}$ as holomorphic subbundle.

From the two examples, we see that
\begin{equation}\label{higgs-moduli-cotangent-bundle}
T^*\mathrm{Hol}^{s}(S,\mathrm{SL}_n(\C)) \subset \mathcal{M}_H \subset T^*\mathrm{Hol}(S,\mathrm{SL}_n(\C)),
\end{equation}
i.e. the moduli space of Higgs bundle sits between two cotangent bundles of holomorphic structures.

\begin{prop}
Stability is an open condition, i.e. if $(E,\Phi)$ is a stable Higgs bundle, then any Higgs bundle sufficiently close to it is also stable.
\end{prop}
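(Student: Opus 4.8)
The plan is to show that the set of unstable (or properly semistable but not stable) Higgs bundles is a closed condition, equivalently that stability is open. The natural approach is to argue by contradiction using a compactness/limiting argument on destabilizing subbundles. Suppose $(E,\Phi)$ is stable but there is a sequence of Higgs bundles $(E_i,\Phi_i) \to (E,\Phi)$ (in the appropriate moduli topology, i.e.\ the parameters $(\bar{A}_i,\Phi_i) \to (\bar{A},\Phi)$ converge) with each $(E_i,\Phi_i)$ not stable. Then each $(E_i,\Phi_i)$ admits a $\Phi_i$-invariant holomorphic subbundle $F_i \subset E_i$ with $\mu(F_i) \geq \mu(E_i) = \mu(E)$.

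First I would control the discrete invariants of the destabilizing subbundles. The rank $r_i = \operatorname{rk}(F_i)$ takes finitely many values in $\{1,\dots,n-1\}$, so after passing to a subsequence we may assume it is constant, $r_i = r$. The degree $\deg(F_i)$ is an integer satisfying $\mu(E) \leq \mu(F_i) = \deg(F_i)/r$, and it is also bounded above: a subsheaf of a fixed bundle (varying in a bounded family) cannot have arbitrarily large degree, since the degree of a sub-line-bundle of a fixed bundle is bounded, and more generally one uses that $F_i$ injects into $E_i$ with the $E_i$ lying in a compact family. Hence after a further subsequence $\deg(F_i) = d$ is constant. Now the destabilizing subbundles live in a fixed component of a Quot scheme (or, more elementarily in the analytic picture, the space of holomorphic subbundles of fixed rank and degree in a bundle varying in a compact set is itself compact), so after passing to a subsequence $F_i \to F$ converges to a subsheaf $F \subset E$ of rank $r$ and degree $d \geq r\,\mu(E)$, i.e.\ $\mu(F) \geq \mu(E)$.

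The remaining point is that the limit $F$ is still $\Phi$-invariant: the condition $\Phi_i(F_i) \subset F_i \otimes K$ is a closed condition, so it passes to the limit to give $\Phi(F) \subset F \otimes K$. Replacing $F$ by its saturation only increases the degree, so we get a $\Phi$-invariant saturated subsheaf (hence subbundle, on a curve) with $\mu(F) \geq \mu(E)$, contradicting the stability of $(E,\Phi)$. Therefore no such sequence exists and stability is open.

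The main obstacle, and the place where the argument needs genuine care rather than soft topology, is the compactness step: showing that destabilizing subbundles of bounded rank and degree inside a convergent family of bundles form a (sequentially) compact set, and that their limits are again subsheaves rather than degenerating. In the algebraic setting this is boundedness plus properness of the relative Quot scheme; in the gauge-theoretic/analytic setting one phrases it via weak compactness of the associated projection operators (or sections of a Grassmannian bundle) together with elliptic regularity to ensure the limiting object is holomorphic. I would isolate this as the one substantial lemma and cite the standard reference (e.g.\ the treatment of the Shatz stratification or Simpson's construction of the moduli space) rather than reprove it, since the rest of the argument is then routine.
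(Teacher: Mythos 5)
Your argument is sound, and in fact the paper states this proposition without any proof at all, so there is nothing in the text to compare against; your write-up supplies the standard argument one would give. The structure is right: fix the discrete invariants of the destabilizing $\Phi_i$-invariant subbundles (rank by finiteness, degree by the lower bound $\deg F_i \geq r\,\mu(E)$ together with boundedness of degrees of subsheaves of a bounded family), extract a limiting subsheaf by properness of the relative Quot scheme (or the analytic compactness of the associated projections), observe that $\Phi$-invariance is a closed condition, saturate, and contradict stability of $(E,\Phi)$. Two small points of care: your phrase that ``subbundles of fixed rank and degree in a compact family form a compact set'' is not literally true --- limits are only subsheaves, with possibly torsion quotient --- but you correct this yourself by taking the limit as a subsheaf and passing to the saturation, which only increases the degree; and since the underlying smooth bundle is fixed in the paper's gauge-theoretic setup, $\deg(E_i)=\deg(E)$ is automatic, which you implicitly use when writing $\mu(E_i)=\mu(E)$. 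Isolating the boundedness-plus-properness step as the one lemma to cite (Shatz stratification, Maruyama's openness of stability, or Simpson's construction of the moduli space) is a reasonable division of labor, consistent with the level of detail of the surrounding text.
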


\medskip
\paragraph{Non-abelian Hodge correspondence}
We are now ready to state the main theorem of our lectures:
\begin{thm}[non-abelian Hodge correspondence]\label{non-ab-Hodge-thm}
The moduli space of polystable Higgs bundles of degree 0 is diffeomorphic to the $\GL_n(\C)$-character variety: $$\boxed{\mc{M}^{ps}_{H, \deg=0}(S,\GL_n(\C)) \cong \Rep^{c.r.}(\pi_1\S,\GL_n(\C)).}$$
\end{thm}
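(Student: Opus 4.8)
The plan is to realize \emph{both} sides of the correspondence as the moduli space of solutions to a single system of gauge-theoretic equations on a fixed smooth rank-$n$ bundle $E$ over $S$ --- Hitchin's self-duality equations --- and to pass between a flat connection and such a solution by choosing a distinguished Hermitian metric on $E$, the \emph{harmonic metric}. Concretely, if $h$ is a Hermitian metric and $\delbar_E$ a holomorphic structure with Chern connection $\delbar_E+\partial_h$ of curvature $F_h$, and $\Phi\in\Omega^{1,0}(S,\End E)$ with adjoint $\Phi^*$, the relevant system is $\delbar_E\Phi=0$ together with the Hitchin equation $F_h+[\Phi,\Phi^*]=0$; in that situation $D=\delbar_E+\partial_h+\Phi+\Phi^*$ is a flat $\GL_n(\C)$-connection, as one checks by a short type-by-type computation of $D^2$.

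From representations to Higgs bundles: starting from a completely reducible $\rho\colon\pi_1\S\to\GL_n(\C)$, the Riemann--Hilbert correspondence produces a flat connection $D$ on $E$. \textbf{Corlette's theorem} then supplies a $\rho$-equivariant harmonic map $\widetilde S\to\GL_n(\C)/\mathrm{U}(n)$, equivalently a Hermitian metric $h$ for which, writing $D=d_h+\Psi$ with $d_h$ the $h$-unitary part and $\Psi$ self-adjoint, one has the harmonic equation $d_h^*\Psi=0$. Setting $\Phi=\Psi^{1,0}$ and $\delbar_E=(d_h)^{0,1}$, the identity $D^2=0$ unpacks, together with harmonicity, into $\delbar_E^2=0$, $\delbar_E\Phi=0$ and $F_{d_h}+[\Phi,\Phi^*]=0$. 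Thus $(E,\delbar_E,\Phi)$ is a Higgs bundle; integrating $\tr$ of the Hitchin equation over $S$ forces $\deg E=0$ and, applied to a $\Phi$-invariant subbundle, rules out destabilization, so $(E,\Phi)$ is polystable (complete reducibility of $\rho$ being exactly the hypothesis of Corlette's theorem). Conversely, given a polystable Higgs bundle $(E,\Phi)$ of degree $0$, the \textbf{Hitchin--Simpson theorem} furnishes a Hermitian metric $h$ solving $F_h+[\Phi,\Phi^*]=0$; then $D=\delbar_E+\partial_h+\Phi+\Phi^*$ is flat, and its monodromy is a completely reducible representation (reductivity coming from polystability).

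These two assignments are mutually inverse: the harmonic metric attached to a flat connection and the Hermitian--Einstein metric attached to the resulting Higgs bundle are both characterized as the metric in the given complex gauge orbit that solves the Hitchin equation, and such a metric is unique up to the automorphisms permitted by polystability. One then invokes the Hitchin--Simpson identification of the moduli space of solutions of the self-duality equations modulo unitary gauge with $\mc{M}^{ps}_{H,\deg=0}$ on one hand (via the holomorphic data $(\delbar_E,\Phi)$) and with $\Rep^{c.r.}(\pi_1\S,\GL_n(\C))$ on the other (via monodromy of $D$); since both appear as quotients of the same slice, the common smooth structure matches and the bijection is a diffeomorphism. This is precisely the structural picture of Section~\ref{HK}: one hyperkähler manifold seen in two of its complex structures.

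The main obstacle is entirely analytic: everything above except the two existence theorems is formal bookkeeping (the algebraic unpacking of $D^2=0$ into the Higgs plus Hitchin equations, the slope estimates, the inverse check). Corlette's theorem requires running a harmonic-map heat flow and excluding escape of energy to infinity, where reductivity of $\rho$ is the crucial input; the Hitchin--Simpson theorem requires a Donaldson-type gradient descent (for the Yang--Mills--Higgs, i.e. Donaldson, functional $A\mapsto\|F_A+[\Phi,\Phi^*]\|^2$) on the space of Hermitian metrics, with slope-stability being exactly the condition guaranteeing that the flow converges instead of running off to the boundary --- an infinite-dimensional Kempf--Ness argument entirely parallel to the Narasimhan--Seshadri proof sketched in Section~\ref{sec:holo-bundles}, now with the extra coupling term $[\Phi,\Phi^*]$. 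These are carried out via the Uhlenbeck--Yau type compactness results; see Section~\ref{harmonic-bundles} for the details and references.
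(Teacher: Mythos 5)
Your proposal follows essentially the same route as the paper: Corlette--Donaldson supplies the harmonic metric for a completely reducible flat connection, Hitchin--Simpson supplies it for a polystable degree-zero Higgs bundle, and the correspondence is the formal translation between the flatness equation $F(A)+\Psi\wedge\Psi=0$ and Hitchin's equation $F(A)+[\Phi,\Phi^{*_h}]=0$ via $\Psi=\Phi+\Phi^{*_h}$, with the analytic heavy lifting delegated to those two theorems exactly as in Section~\ref{harmonic-bundles}. This is correct and matches the paper's proof scheme.
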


The non-abelian Hodge correspondence gives a remarkable link between purely topological objects, completely reducible representations of the fundamental group, and holomorphic objects, polystable Higgs bundles. Irreducible representations correspond to stable Higgs bundles.

It generalizes the Narasimhan--Seshadri theorem: we will see that an irreducible unitary representation corresponds to a stable Higgs bundle $(E,\Phi)$ with vanishing Higgs field $\Phi=0$. So necessarily $E$ is stable.

This deep theorem is due to many people, above all Hitchin \cite{hitchin1987self}, Simpson \cite{simpson}, Corlette \cite{corlette} and Donaldson \cite{donaldson}. A nice and concise account for the main proof ideas together with crucial steps are given in \cite{wentworth2016}.

\medskip
Why is it called ``non-abelian Hodge correspondence''? The answer here is inspired by the introduction of Simpson's paper \cite{simpson}.

A basic motto in algebraic topology is that \emph{all cohomology theories are (more or less) equivalent}. This is why there is an axiomatization of cohomology (by Eilenberg--Steenrod). One instance of this motto is the \emph{Hodge correspondence}, which gives a link between de Rham cohomology (defined using differential geometry) and Dolbeault cohomology (defined using holomorphic objects):
$$H^k_{dR}(X,\C) \cong \bigoplus_{p+q=k}H^{p,q}_{Dol}(X).$$

For $k=1$ we get $$H^1(S,\C) \cong H^{0,1}(S) \oplus H^{1,0}(S) \cong H^1(S,\mc{O}_S) \oplus H^0(S,\Omega^1(S)),$$
where we used sheaf cohomology in the last term.

Replacing $\C$ by $\GL_n(\C)$, which is non-abelian for $n>1$, we can interpret $\Rep(\pi_1 S,\GL_n(\C))$ as $H^1(S,\GL_n(\C))$ since for $n=1$:
$$H^1(S,\GL_1(\C)) = H^1(S,\C^*) \cong \Hom(\pi_1 S,\C^*) = \Rep(\pi_1 S,\C^*)$$
where we used in the last equality that $\C^*$ is abelian.

The generalization of $H^1(S,\mc{O}_S)$ is $\check{H}^1(S,\Hol(\GL_n(\C)))$ which describes holomorphic bundles of rank $n$. Finally $H^0(S,\Omega^1(S))$ becomes
$H^0(S,\GL_n(\C)\otimes K)$ giving the Higgs field.

The analogy can even be enlarged: the most basic cohomology theories (simplicial, singular or cellular) are purely topological. They are sometimes calles Betti cohomology. By the basic motto, they all coincide with the de Rham or Dolbeault cohomology. 

The non-abelian analogs are (names were given by Simpson):
\begin{itemize}
	\item \textbf{Betti moduli space}: space of representations of the fundamental group, i.e. the character variety $\Rep(\pi_1\S,\GL_n(\C))$.
	\item \textbf{de Rham moduli space}: space of flat connections.
	\item \textbf{Dolbeault moduli space}: space of Higgs bundles.
\end{itemize}

The Betti and de Rham moduli space are equivalent by the Riemann--Hilbert correspondence. They are equivalent to the Dolbeault moduli space by the non-abelian Hodge correspondence.

We will see even more analogies: in the presence of a metric there is a preferred representative in each de Rham cohomology class, a harmonic form. The analog leads to the notion of harmonic bundles which are the key ingredient to prove the non-abelian Hodge correspondence.

\section{The proof strategy: Harmonic bundles}\label{harmonic-bundles}

In this section we will see two notions of harmonic bundles, giving representatives for flat bundles and Higgs bundles respectively. The existence of harmonic representatives are described by the theorems of Corlette--Donaldson and Hitchin--Simpson, which together give the non-abelian Hodge correspondence.

\medskip
\paragraph{Hermitian bundles.}
The basic ingredient to harmonic theory is the notion of a \emph{hermitian bundle}, which is a complex bundle with a hermitian product $(.,.)$ in each fiber, varying in a smooth manner.

In a holomorphic bundle $E$, a hermitian structure determines a preferred connection, similar to the Levi--Civita connection for a Riemannian manifold:
\begin{prop}
In a hermitian holomorphic bundle $E$, there is a unique connection $\nabla$, called the \textbf{Chern connection}, which is compatible with 
\begin{enumerate}
	\item the holomorphic structure: $\nabla^{0,1} = \delbar_E$,
	\item the hermitian structure: $d(s_1,s_2) = (\nabla s_1,s_2)+(s_1,\nabla s_2)$ for all sections $s_1,s_2$.
\end{enumerate}
\end{prop}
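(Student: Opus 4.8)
The plan is to reduce to a local computation in a holomorphic frame and solve for the connection matrix. Fix a local holomorphic frame $(e_i)$ of $E$ over an open set $U\subset S$, and let $H=(h_{ij})$ with $h_{ij}=(e_i,e_j)$ be the pointwise positive-definite hermitian matrix of the metric. Write $\nabla=d+A$ in this frame, with $A$ a matrix of $1$-forms. Condition (1) says that in a holomorphic frame $\delbar_E$ is just $\delbar$, hence $A^{0,1}=0$ and $A=A^{1,0}$ is of type $(1,0)$. Applying condition (2) to the frame vectors gives $dH = A^{*}H + HA$, where $A^{*}$ is the conjugate transpose of the matrix $A$, which is a $(0,1)$-form since $A$ is $(1,0)$.

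First I would prove uniqueness. Splitting $dH=\del H+\delbar H$ into types, the $(1,0)$-component of $A^{*}H+HA$ is $HA$ and the $(0,1)$-component is $A^{*}H$. Hence $\del H = HA$, which forces $A = H^{-1}\del H$, while the remaining $(0,1)$-equation $\delbar H = A^{*}H$ must be checked for consistency but imposes no further condition. So the connection, if it exists, is unique: in each holomorphic frame it is $\nabla = d + H^{-1}\del H$, whose $(0,1)$-part is indeed $\delbar_E$.

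Next, existence. Define $\nabla$ on $E|_U$ by this formula. One checks directly that it satisfies the Leibniz rule, hence is a genuine connection; that $\nabla^{0,1}=\delbar_E$; and — using that $H$ is hermitian, so $(H^{-1}\del H)^{*}=\delbar H\,H^{-1}$ — that $dH = (H^{-1}\del H)^{*}H + H(H^{-1}\del H) = \delbar H + \del H$, i.e. condition (2) holds. To see that the local connections patch into a global one, either invoke uniqueness (on an overlap two local solutions both satisfy (1) and (2), so they agree), or check directly that under a holomorphic change of frame $e\mapsto e\,g$ with $g$ holomorphic one has $H\mapsto g^{*}Hg$ and $H^{-1}\del H \mapsto g^{-1}(H^{-1}\del H)g + g^{-1}\del g$ (using $\delbar g=0$), which is exactly the transformation law of a connection matrix. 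Either way $\nabla$ is globally well-defined.

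All the computations are routine; the only point that genuinely requires care is the consistency of the leftover $(0,1)$-equation $\delbar H = A^{*}H$ in the uniqueness step, which is precisely where hermiticity $H^{*}=H$ enters and is the reason the over-determined-looking system $dH=A^{*}H+HA$ has a solution. A cleaner, frame-free variant defines $\nabla^{1,0}$ by the Koszul-type identity $\del(s_1,s_2)=(\nabla^{1,0}s_1,s_2)+(s_1,\delbar_E s_2)$ and sets $\nabla=\nabla^{1,0}+\delbar_E$; then the only thing to verify is that $\nabla^{1,0}$ obeys the Leibniz rule $\nabla^{1,0}(fs)=\del f\,s+f\nabla^{1,0}s$, which follows from the Leibniz rules for $\del$ and for $\delbar_E$ together with non-degeneracy of the hermitian pairing.
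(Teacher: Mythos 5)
Your proof is correct. The paper states this proposition without proof, so there is nothing to compare it against; your argument --- fix a holomorphic frame, derive $dH=A^{*}H+HA$ from the two compatibility conditions, split by type to get $A=H^{-1}\del H$, check the leftover $(0,1)$-equation via $H^{*}=H$, and patch either by uniqueness or by the transformation law $H^{-1}\del H\mapsto g^{-1}(H^{-1}\del H)g+g^{-1}\del g$ under holomorphic frame changes --- is the standard and complete one. One small caveat: with the paper's convention that the hermitian product is anti-linear in the \emph{first} argument, the Koszul-type identity in your frame-free variant should be read off from the $(0,1)$-part, namely $\delbar(s_1,s_2)=(\nabla^{1,0}s_1,s_2)+(s_1,\delbar_E s_2)$, rather than from the $(1,0)$-part as written (your version is the one for the opposite convention); this does not affect the main argument.
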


Let us analyze the space of all hermitian structures on a given flat bundle $E=E_\rho$, where $\rho:\pi_1\S\to G=\GL_n(\C)$ is the monodromy. At one point it is described by
$$\mathrm{Herm}^{++}=\{H\in \mathfrak{gl}_n\mid H^\dagger = H, \text{ positive definite}\}.$$
Indeed, a matrix $H\in \mathrm{Herm}^{++}$ determines a Hermitian product by $(x,y)_H = x^\dagger H y$. On $\mathrm{Herm}^{++}$, there is an action of $G=\GL_n(\C)$ by $(x,y)_{g.H}=(gx,gy)_H$ (where $g\in \GL_n(\C)$). Hence it is given by $g.H=g^\dagger H g$. The action is transitive and the stabilizer of $\id \in \mathrm{Herm}^{++}$ is $K=\mathrm{U}(n)$. Therefore
$$\mathrm{Herm}^{++} \cong \GL_n(\C) / \mathrm{U}(n)=G/K.$$

Note that $\mathrm{U}(n)$ is not a normal subgroup, so the quotient is merely a set.

Locally, in a given trivialization of $E$, a hermitian structure is given by a map $U\subset \S \to G/K$. Globally over $\S$, the map is not well-defined since taking a non-trivial loop $\gamma\in \pi_1\S$ results in a conjugated $G/g^\dagger K g$ where $g=\rho(\gamma)$. 

To get a well-defined map, we have to consider the universal cover $\widetilde{\S}$ and maps $\widetilde{\S}\to G/K$ which are equivariant with respect to $\pi_1\S$, which acts on the universal cover by deck transformations and on $G/K$ via $\rho$.

\begin{prop}\label{prop:harm-metric}
The space of hermitian structures on $E=E_\rho$ can be identified with the space of $\pi_1\S$-equivariant functions $u:\widetilde{\S}\to G/K$.
\end{prop}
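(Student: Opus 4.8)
The statement is ``soft'': it is proved by unwinding the construction of $E_\rho$ together with the homogeneous-space picture $\mathrm{Herm}^{++}\cong G/K$ recalled just above, and involves no analysis. Recall that $E_\rho=(\widetilde{\S}\times\C^n)/\pi_1\S$ is by definition the quotient of the trivial bundle $\widetilde{\S}\times\C^n\to\widetilde{\S}$ by the diagonal action $\gamma\cdot(\tilde x,v)=(\gamma\tilde x,\rho(\gamma)v)$, and that $\widetilde{\S}\to\S$ is a covering with deck group $\pi_1\S$. Since fibrewise data on a quotient of an equivariant bundle by a free, properly discontinuous group action is the same as invariant data upstairs, a smooth hermitian metric on $E_\rho$ is the same thing as a $\pi_1\S$-invariant smooth hermitian metric on $\widetilde{\S}\times\C^n$ (pull back in one direction, descend in the other). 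So the problem reduces to identifying $\pi_1\S$-invariant hermitian metrics on the \emph{trivial} bundle over $\widetilde{\S}$ with $\rho$-equivariant maps $u\colon\widetilde{\S}\to G/K$.

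First I would write such a metric as a smooth map $H\colon\widetilde{\S}\to\mathrm{Herm}^{++}$ via $(v,w)_{\tilde x}=v^\dagger H(\tilde x)w$, exactly the one-point description used before the statement. The map $\gamma\cdot(\tilde x,v)=(\gamma\tilde x,\rho(\gamma)v)$ is a fibrewise isometry iff $(v,w)_{\tilde x}=(\rho(\gamma)v,\rho(\gamma)w)_{\gamma\tilde x}$ for all $v,w$, which, upon expanding both sides, becomes the cocycle identity $H(\gamma\tilde x)=(\rho(\gamma)^{-1})^\dagger\,H(\tilde x)\,\rho(\gamma)^{-1}$; in the notation $g\cdot H=g^\dagger Hg$ for the $G$-action on $\mathrm{Herm}^{++}$ fixed in the paragraph above, this reads $H(\gamma\tilde x)=\rho(\gamma)^{-1}\cdot H(\tilde x)$, i.e. moving $\tilde x$ by a deck transformation transports the value of $H$ by the corresponding element of $G$ acting on $\mathrm{Herm}^{++}$. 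Invoking the already-established identification $\mathrm{Herm}^{++}\cong\GL_n(\C)/\mathrm{U}(n)=G/K$, I would then rewrite $H$ as a map $u\colon\widetilde{\S}\to G/K$, note that $H\leftrightarrow u$ is a smoothness-preserving bijection, and observe that it turns the invariance/cocycle condition on $H$ into precisely the $\pi_1\S$-equivariance condition on $u$ (with $\pi_1\S$ acting on $\widetilde{\S}$ by deck transformations and on $G/K$ via $\rho$).

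To finish I would check the reverse passage explicitly: given a $\rho$-equivariant $u$, the formula $(v,w)_{\tilde x}=v^\dagger H(\tilde x)w$ defines a hermitian product on the fibre $[\tilde x,\cdot]$ of $E_\rho$ which is independent of the chosen lift $\tilde x$ --- this independence is exactly the cocycle identity above --- and depends smoothly on the base point, hence is a genuine hermitian metric on $E_\rho$; and the two assignments are mutually inverse. I do not expect a real obstacle here: the only delicate point is the bookkeeping of adjoints and inverses when translating ``the metric is $\pi_1\S$-invariant'' into ``the classifying map is $\rho$-equivariant'', which is purely a matter of staying consistent with the convention $g\cdot H=g^\dagger Hg$ (equivalently, with the induced left action of $G$ on $G/K$) fixed in the text.
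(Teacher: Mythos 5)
Your argument is correct and essentially coincides with the paper's own approach: the text arrives at the proposition by exactly this reasoning (pulling a hermitian metric on $E_\rho$ back to the trivial bundle on $\widetilde{\S}$, reading it as a map to $\mathrm{Herm}^{++}\cong G/K$, and translating the behaviour under deck transformations into $\rho$-equivariance), deferring the remaining details to the cited reference. Your explicit cocycle computation $H(\gamma\tilde x)=(\rho(\gamma)^{-1})^\dagger\,H(\tilde x)\,\rho(\gamma)^{-1}$, kept consistent with the convention $g.H=g^\dagger Hg$, supplies precisely those details.
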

For more details, we refer to \cite[Prop. 2]{toulisseexistence}.

\medskip
\paragraph{Harmonic flat bundles and Corlette--Donaldson theorem.}
For a point in the moduli space of flat connections (a flat connection modulo gauge equivalence), we wish to define a nice representative, a harmonic flat bundle. This representative should exist whenever the point in the de Rham moduli space is semistable. We have seen in Section \ref{Sec-GIT} that this is the case whenever the monodromy is completely reducible. 

Thus, to find the appropriate notion of a harmonic flat bundle, we have to translate the property of being completely reducible from the Betti moduli space to the de Rham moduli space, i.e. in terms of flat bundles.

Recall the Riemann--Hilbert correspondence:
$$\rho\in \Rep(\pi_1\S,\GL(V)) \mapsto (E_\rho,\nabla) \text{ flat bundle given by } E_\rho = (\widetilde{\S}\times V)/\pi_1\S.$$

Clearly, a subrepresentation corresponds to a $\nabla$-invariant subbundle. Thus, completely reducible representations correspond to completely reducible flat bundles.

Consider for example a hermitian structure $h$ on the vector space $V$ such that $\mathrm{Im}(\rho)\subset U(V,h)$. If $F$ is a $\nabla$-invariant subbundle, then $F^{\perp_h}$ as well and $\nabla = \nabla_F+\nabla_{F^\perp}$ since $\nabla$ is unitary.

\begin{goal}
Find a condition on $\nabla$, where $(E,\nabla)$ is a flat bundle, which ensures complete reducibility of $E$.
\end{goal}

For a fixed hermitian structure $h$, we can decompose $\nabla = d_A+\Psi$ where $d_A$ is a unitary connection and $\Psi$ is the hermitian part of $\nabla$. Locally, this is nothing but writing a matrix as a sum of a hermitian and an anti-hermitian matrix.

Consider $F\subset E$ a $\nabla$-invariant subbundle. As complex bundles, we have $E=F\oplus F^\perp$. So we can write 
$$\nabla = \begin{pmatrix}\nabla_1 & \eta \\ 0& \nabla_2\end{pmatrix} \;\text{ with }\; \eta\in \Omega^1(\S,\Hom(F^\perp,F)).$$

Being reducible means that $\eta=0$, so we look for a condition which forces $\eta$ to vanish. For that, the idea due to Corlette is to consider $\End(E)$ with induced connection $d_A$ and the section $s=-\id_F\oplus \id_{F^\perp}$. Decompose $\nabla_i=d_{A_i}+\Psi_i$ into unitary and hermitian parts, hence
$$\nabla = \begin{pmatrix}\nabla_1 & \eta \\ 0& \nabla_2\end{pmatrix} = \begin{pmatrix}d_{A_1} & \eta/2 \\ -\eta^*/2& d_{A_2}\end{pmatrix}+\begin{pmatrix}\Psi_1 & \eta/2 \\ \eta^*/2& \Psi_2\end{pmatrix}.$$

Then compute $d_As$:
$$d_As=\begin{pmatrix}d_{A_1} & 0 \\ 0& d_{A_2}\end{pmatrix}\begin{pmatrix}-\id_F & 0 \\ 0& \id_{F^\perp}\end{pmatrix}+\left[\begin{pmatrix}0 & \eta/2 \\ -\eta^*/2& 0\end{pmatrix},\begin{pmatrix}-\id_F & 0 \\ 0& \id_{F^\perp}\end{pmatrix}\right]=\begin{pmatrix}0 & \eta \\ \eta^*& 0\end{pmatrix}.$$

Hence $$\langle \Psi, d_As\rangle_{L^2} = \int_\S (\Psi,d_As)_h = \int_\S \tr \begin{pmatrix}\Psi_1 & \eta/2 \\ \eta^*/2& \Psi_2\end{pmatrix}\begin{pmatrix}0 & \eta \\ \eta^*& 0\end{pmatrix} = \langle \eta,\eta\rangle_{L^2}.$$

We also have $\langle \Psi,d_As\rangle = \langle d_A^*\Psi,s\rangle$ using the adjoint. So to get $\eta=0$, it is sufficient to require $$d_A^*\Psi = 0.$$

\begin{definition}\label{def-harmonic-bundle}
A metric $h$ on $(E,\nabla)$ is \textbf{harmonic} if $d_A^*\Psi=0$ where $\nabla = d_A+\Psi$.
\end{definition}
A flat bundle equipped with a harmonic hermitian metric is called \emph{harmonic flat bundle}. 

\begin{prop}\label{Prop:herm-flat-bundle}
A bundle $(E,\nabla=d_A+\Psi,h)$ is a harmonic flat bundle iff $F(A)+\Psi\wedge\Psi=0$ and $d_A\Psi=0=d_A^*\Psi$.
\end{prop}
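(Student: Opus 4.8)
The plan is to reduce the whole statement to a splitting of the flatness equation $F(\nabla)=0$ into its self-adjoint and anti-self-adjoint parts with respect to $h$. First I would fix the metric $h$ and write, as in the discussion preceding the statement, $\nabla = d_A+\Psi$ with $d_A=\tfrac12(\nabla+\nabla^{*h})$ the unitary (metric-compatible) connection and $\Psi=\tfrac12(\nabla-\nabla^{*h})$ a $1$-form with values in the bundle $\End(E)^{\mathrm{sa}}$ of $h$-self-adjoint endomorphisms of $E$. By Definition \ref{def-harmonic-bundle}, ``$(E,\nabla,h)$ is a harmonic flat bundle'' means precisely ``$\nabla$ is flat \emph{and} $d_A^*\Psi=0$'', so the real content of the proposition is the equivalence
\[ F(\nabla)=0 \iff \big(F(A)+\Psi\wedge\Psi=0 \ \text{ and }\ d_A\Psi=0\big). \]

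Next I would expand the curvature. Since $d_A^2=F(A)$, substituting $\nabla=d_A+\Psi$ gives
\[ F(\nabla) = F(A) + d_A\Psi + \Psi\wedge\Psi, \]
where $d_A\Psi$ is the exterior covariant derivative of the $\End(E)$-valued $1$-form $\Psi$ with respect to the induced connection on $\End(E)$, and $\Psi\wedge\Psi=\tfrac12[\Psi,\Psi]$ (cf. Equation \eqref{a-wedge-a}) is the $2$-form $(X,Y)\mapsto[\Psi(X),\Psi(Y)]$. The key observation is that the three summands have definite type under the fibrewise adjoint $(\cdot)^{*h}$: because $d_A$ is metric, $F(A)$ is valued in the skew-adjoint (i.e. $\mathfrak{u}$-valued) part $\End(E)^{\mathrm{aa}}$; because $\Psi$ is valued in $\End(E)^{\mathrm{sa}}$ and $d_A$ preserves the (real, orthogonal) splitting $\End(E)=\End(E)^{\mathrm{sa}}\oplus\End(E)^{\mathrm{aa}}$, the form $d_A\Psi$ is valued in $\End(E)^{\mathrm{sa}}$; and $[\Psi(X),\Psi(Y)]^{*h}=[\Psi(Y),\Psi(X)]=-[\Psi(X),\Psi(Y)]$, so $\Psi\wedge\Psi$ is valued in $\End(E)^{\mathrm{aa}}$. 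Projecting $F(\nabla)=0$ onto the two summands then yields $F(A)+\Psi\wedge\Psi=0$ and $d_A\Psi=0$, and conversely these two equations add up to $F(\nabla)=0$; so the argument is reversible.

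Finally I would combine this equivalence with the harmonicity condition $d_A^*\Psi=0$ of Definition \ref{def-harmonic-bundle} to read off the three displayed equations (and conversely). The only delicate point is the bookkeeping in the previous paragraph: one must check that the adjoint operation really splits the curvature into exactly these three terms with the stated symmetry types — in particular that $d_A\Psi$ remains self-adjoint-valued, which is where metric-compatibility of $d_A$ enters, and that there is no cross term mixing the two eigenbundles. This is routine but requires care with the sign/normalisation conventions for $\Psi\wedge\Psi$ versus $\tfrac12[\Psi,\Psi]$; nothing deeper than linear algebra on fibres is needed.
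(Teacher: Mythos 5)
Your proposal is correct and is essentially the paper's own argument: the paper likewise observes that, given Definition \ref{def-harmonic-bundle}, the only content is the flatness of $\nabla$, expands $F(\nabla)=F(A)+\Psi\wedge\Psi+d_A\Psi$, and splits it into the anti-hermitian part $F(A)+\Psi\wedge\Psi$ and the hermitian part $d_A\Psi$, both of which must vanish. Your version simply spells out the type bookkeeping (why $F(A)$ and $\Psi\wedge\Psi$ are skew-adjoint and $d_A\Psi$ self-adjoint) that the paper leaves implicit.
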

\begin{proof}
The onyl thing to check is that $\nabla$ is flat. The curvature of $\nabla$ is $F(A)+\Psi\wedge\Psi+d_A\Psi$. The unitary part $F(A)+\Psi\wedge\Psi$ and the hermitian part $d_A\Psi$ have to vanish both.
\end{proof}

The main theorem which will give half of the non-abelian Hodge correspondence is the Corlette--Donaldson theorem \cite{corlette, donaldson}:

\begin{thm}[Corlette--Donaldson]\label{corlette-donaldson-thm}
A flat bundle $(E,\nabla)$ admits a harmonic metric iff it is completely reducible, i.e. iff its monodromy is completely reducible. In addition, this harmonic metric is unique up to an overall positive constant factor.
\end{thm}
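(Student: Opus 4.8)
The plan is to translate the statement into the language of equivariant harmonic maps and then invoke the existence and uniqueness theory for such maps into nonpositively curved symmetric spaces. By Proposition \ref{prop:harm-metric}, a hermitian metric $h$ on $E=E_\rho$ is precisely a $\pi_1\S$-equivariant map $u\colon\widetilde{\S}\to G/K$, where $G=\GL_n(\C)$, $K=\mathrm{U}(n)$, and $G/K\cong\mathrm{Herm}^{++}$ carries its (complete, simply connected, nonpositively curved) symmetric-space metric. The first step is to check that the harmonicity condition $d_A^*\Psi=0$ of Definition \ref{def-harmonic-bundle}, with $\nabla=d_A+\Psi$ the decomposition associated to $h$, is exactly the Euler--Lagrange equation of the energy $\mathcal{E}(u)=\tfrac12\int_\S\|du\|^2$, i.e.\ that $d_A^*\Psi$ is (a multiple of) the tension field $\tau(u)$; since $\S$ is a surface, $\mathcal{E}$ depends only on the conformal class, so everything is well defined using only the Riemann surface $S$ and needs no auxiliary Riemannian metric. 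The theorem thus becomes: an equivariant harmonic map $\widetilde{\S}\to G/K$ exists iff $\rho$ is completely reducible, and it is unique up to translation by the center of $G$.

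The ``only if'' direction is the local computation already carried out just before Definition \ref{def-harmonic-bundle}. Given a $\nabla$-invariant subbundle $F\subset E$, write $\nabla$ in block form with second fundamental form $\eta$, set $s=-\id_F\oplus\id_{F^\perp}\in\Gamma(\End E)$, and compute $d_As=\bigl(\begin{smallmatrix}0&\eta\\\eta^{*}&0\end{smallmatrix}\bigr)$, so that $\langle\Psi,d_As\rangle_{L^2}=\|\eta\|_{L^2}^2$; on the other hand $\langle\Psi,d_As\rangle_{L^2}=\langle d_A^*\Psi,s\rangle_{L^2}=0$ by harmonicity. Hence $\eta=0$, so $F^{\perp_h}$ is again $\nabla$-invariant and $\nabla=\nabla_F\oplus\nabla_{F^\perp}$; iterating on each factor gives complete reducibility.

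For the ``if'' direction one must \emph{produce} an equivariant harmonic map when $\rho$ is completely reducible. Two equivalent routes: run the equivariant harmonic map heat flow $\partial_t u=\tau(u)$ (long-time existence is automatic since $G/K$ has nonpositive curvature, by the Eells--Sampson Bochner argument), or minimize $\mathcal{E}$ directly over equivariant maps. In either case the crux is a compactness statement: a minimizing (or flow) sequence could a priori run off to infinity in the noncompact target $G/K$. The key point --- Corlette's insight --- is that such an escape to infinity forces the image of $\rho$ to stabilize a destabilizing parabolic reduction along which the energy can be decreased indefinitely, i.e.\ forces $\rho$ to fail to be completely reducible; conversely, complete reducibility of $\rho$ makes $\mathcal{E}$ \emph{proper} on the space of equivariant maps modulo translations by the center $Z(G)$, yielding a convergent subsequence and hence a harmonic minimizer, upgraded to a smooth solution by elliptic bootstrapping of the harmonic map equation. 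This properness/compactness argument is the heart of the matter and the step I expect to be the main obstacle; it is the infinite-dimensional incarnation of the Kempf--Ness picture, with the energy playing the role of $\|F(A)\|^2$ in the Yang--Mills story.

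Finally, uniqueness up to a positive constant. Given two harmonic metrics $h_1,h_2$ with associated equivariant harmonic maps $u_1,u_2\colon\widetilde{\S}\to G/K$, consider $\varphi(x)=d_{G/K}(u_1(x),u_2(x))^2$. Since $G/K$ is complete, simply connected and nonpositively curved and both maps are harmonic, $\varphi$ is subharmonic on $\widetilde{\S}$ by the standard Eells--Sampson distance-convexity estimate; being $\pi_1\S$-invariant it descends to a subharmonic function on the compact surface $\S$, hence is constant. Constant distance between two harmonic maps into a symmetric space of noncompact type forces $u_2$ to differ from $u_1$ by a translation in the flat factor of $G/K$; for $G=\GL_n(\C)$ this flat factor is the one-parameter family coming from the center $\C^*$, and translating there is exactly rescaling $h$ by a positive constant. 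Hence the harmonic metric is unique up to $h\mapsto ch$ with $c>0$, which completes the proof.
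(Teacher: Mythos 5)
Your skeleton is the same as the paper's: identify a hermitian metric with a $\pi_1\S$-equivariant map $u:\widetilde{\S}\to G/K$ via Proposition \ref{prop:harm-metric}, observe that $d_A^*\Psi=0$ is exactly the equivariant harmonic map equation (with conformal invariance of the energy on a surface), and obtain existence by an equivariant Eells--Sampson argument following Corlette; your ``harmonic $\Rightarrow$ completely reducible'' computation with $s=-\id_F\oplus\id_{F^\perp}$ and $\langle\Psi,d_As\rangle_{L^2}=\left\|\eta\right\|^2_{L^2}$ is precisely the computation the paper uses to motivate Definition \ref{def-harmonic-bundle}, here correctly promoted to one direction of the theorem. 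The paper stops at this level of detail and defers to Corlette; you go further on the two delicate points, and that is where problems appear.

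First, the properness claim is false as stated: complete reducibility of $\rho$ does \emph{not} make the energy proper modulo translations by the center $Z(G)$. Take $\rho=\rho_1\oplus\rho_2$ with $\rho_1,\rho_2$ unitary characters of $\pi_1\S$: the metrics $c_1h_1\oplus c_2h_2$ are all harmonic with the same (minimal) energy, and as $c_1/c_2\to\infty$ they leave every compact set of the space of metrics modulo $Z(G)=\C^*$. Correspondingly, escape to infinity of a minimizing sequence does \emph{not} contradict complete reducibility; it only produces a fixed point at infinity of $G/K$, i.e.\ a parabolic subgroup containing the image of $\rho$, and complete reducibility is then used to conjugate $\rho$ into a Levi of that parabolic and induct on the smaller symmetric space (equivalently: the energy is proper modulo the full centralizer of $\rho(\pi_1\S)$ in $G$, not modulo the center). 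Second, the same issue infects your uniqueness argument: subharmonicity of the distance and constancy on the compact quotient are fine, but the resulting flat strip between $u_1$ and $u_2$ consists of parallel geodesics whose transvections centralize $\rho(\pi_1\S)$; this flat need not lie in the Euclidean de Rham factor of $\GL_n(\C)/\mathrm{U}(n)$. It does so exactly when the centralizer reduces to the center, i.e.\ when $\rho$ is irreducible --- which is also the only case in which ``unique up to an overall positive constant'' holds literally (the example $c_1h_1\oplus c_2h_2$ above shows the general completely reducible case only gives uniqueness up to the action of the centralizer of the image). So the correct conclusions are: existence via the parabolic/Levi (or centralizer-properness) mechanism, and uniqueness up to the centralizer, specializing to an overall positive scalar in the irreducible case.
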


In other words $$\boxed{\Rep^{c.r.}(\pi_1\S,G) \cong \{\text{flat } G\text{-bundles}\}/\text{gauge} \cong\{\text{harmonic }G\text{-bundles}\}/\text{constants}.}$$
This is the analog of the harmonic representative in each de Rham cohomology class.

Let us see how to prove one direction of the non-abelian Hodge correspondence from the Corlette--Donaldson theorem: we can associate a Higgs bundle to a given flat bundle $(E,\nabla)$. Use a harmonic metric to decompose $\nabla=d_A+\Psi$. 

Using a complex structure on $\S$, we can further decompose $\Psi=\Phi+\Phi^{*_h}$ where $\Phi$ is the $(1,0)$-part of $\Psi$. The $(0,1)$-part of the unitary connection $d_A$ gives a holomorphic structure on $E$. Since $d_A\Psi=0$, we also have $d_A^{0,1}\Phi=0$. Hence $\Phi$ is a Higgs field. So to $(E, \nabla, h)$, we can associate $(E,d_A^{0,1},\Psi^{1,0})$ which is a Higgs bundle. The fact that it is semistable comes from the second half, the Hitchin--Simpson theorem.

\medskip
\paragraph{Harmonic map theory.}
To get an idea of the proof of the Corlette--Donaldson theorem, we give a brief introduction to harmonic maps.

The general setting is as follows: consider two Riemannian manifolds $(M,g)$ and $(N,G)$. To a smooth map $f:M\to N$, we associate the so-called \textbf{Dirichlet energy}
\begin{equation}\label{Dirichlet-energy}
E(f) =\int_M \left\| df^2\right\| d\mathrm{vol}(g) = \int_M \frac{\partial f^i}{\partial x^\alpha}\frac{\partial f^j}{\partial x^\beta} G_{ij}g^{\alpha\beta}\sqrt{\lvert \mathrm{det}(g)\rvert}dx
\end{equation}
where we use the Einstein sum convention, $x^\alpha$ are coordinates on $M$ and $(g^{\alpha\beta})$ is the inverse of the matrix $(g_{\alpha\beta})$.

A map $f$ is called \textbf{harmonic} if it is a critical point of $E$. This is the case iff $\Delta_{g,G}f=0$, where $\Delta_{g,G}$ is some generalization of the Laplacian, which explains the name ``harmonic''.

Some examples:
\begin{itemize}
	\item For $\dim M =1$, a harmonic map $f:M\to N$ is the same as a geodesic in $N$ parametrized by $M$.
	\item For $\dim N =1$, being harmonic is equivalent to $\Delta_M f=0$ where $\Delta_M$ is the Laplace--Beltrami operator.
	\item For $\dim M =2$, the energy only depends on a conformal class of $g$, i.e. the metrics $g$ and $e^\varphi g$ for $\varphi$ a function on $M$ gives the same. In dimension 2, a conformal class of a metric is the same as a complex structure, so we can do harmonic map theory with $M$ being a Riemann surface.
\end{itemize}

The main result of harmonic map theory is the following theorem due to Eells and Sampson \cite{eells-sampson}:
\begin{thm}[Eells--Sampson]
If $(M,g)$ and $(N,G)$ are compact Riemannian manifolds where $N$ has non-positive sectional curvature, then there is a unique harmonic map in each homotopy class of functions $[M,N]$.
\end{thm}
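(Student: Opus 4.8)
The standard approach is the \emph{harmonic map heat flow}, i.e.\ the gradient flow of the Dirichlet energy $E$ from \eqref{Dirichlet-energy}. Starting from an arbitrary smooth representative $f_0$ of the given homotopy class, I would study the parabolic system $\partial_t f_t = \tau(f_t)$, where $\tau(f_t) = \Delta_{g,G} f_t$ is the tension field, so that $\frac{d}{dt}E(f_t) = -\int_M |\tau(f_t)|^2\, d\mathrm{vol}(g) \le 0$; in particular $E(f_t) \le E(f_0)$ for all $t$. This system is quasilinear parabolic, so standard parabolic theory gives a smooth solution on a maximal interval $[0,T)$, and since $t\mapsto f_t$ is continuous, every $f_t$ lies in the homotopy class of $f_0$.

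The crux is to show $T = \infty$, and this is the only place where the curvature hypothesis on $N$ is really needed. I would derive a Bochner-type inequality for the energy density $e(f_t) = \tfrac12|df_t|^2$ along the flow, schematically
\begin{equation*}
\left(\partial_t - \Delta_M\right) e(f_t) \;\le\; C_M\, e(f_t) \;+\; \sum_{i,j}\big\langle R^N\!\big(df_t(e_i),df_t(e_j)\big)df_t(e_j),\, df_t(e_i)\big\rangle ,
\end{equation*}
where $C_M$ depends only on a lower bound for the Ricci curvature of the compact manifold $M$. When $\mathrm{sec}^N \le 0$ the last sum is $\le 0$ and can be discarded, leaving $(\partial_t - \Delta_M)e(f_t) \le C_M\, e(f_t)$. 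Combined with the energy bound $E(f_t)\le E(f_0)$ and the parabolic maximum principle, this yields a uniform bound on $e(f_t)$ on every finite time interval; parabolic Schauder estimates then bootstrap this to uniform $C^\infty$ bounds, ruling out finite-time blow-up, so the flow exists for all $t\ge 0$. I expect this estimate --- coupling the Bochner computation with the maximum principle --- to be the main obstacle.

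Given global existence, convergence follows softly: since $E(f_t)$ is non-increasing and bounded below, $\int_0^\infty \|\tau(f_t)\|_{L^2}^2\, dt < \infty$, so $\|\tau(f_{t_k})\|_{L^2}\to 0$ for some $t_k\to\infty$. The uniform $C^\infty$ bounds let me extract a further subsequence with $f_{t_k}\to f_\infty$ in $C^\infty$; then $\tau(f_\infty)=0$, i.e.\ $f_\infty$ is harmonic, and $f_\infty$ is homotopic to $f_0$ by $C^0$-closeness. This proves existence.

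For uniqueness I would use Hartman's convexity argument, which again rests on $\mathrm{sec}^N\le 0$. If $f_0, f_1$ are harmonic and homotopic, form the geodesic homotopy $s\mapsto f_s$: for each $x$, take the geodesic in the homotopy class of the given path from $f_0(x)$ to $f_1(x)$, which exists and is unique since $N$ is complete and non-positively curved. The second variation formula shows $s\mapsto E(f_s)$ is convex, and $\frac{d}{ds}E(f_s)\big|_{s=0}=\frac{d}{ds}E(f_s)\big|_{s=1}=0$ because $f_0,f_1$ are harmonic; a convex function with vanishing derivative at both endpoints is constant, and the equality case of the convexity estimate forces $\partial_s f_s\equiv 0$ (except possibly along flat directions of $N$, which is the source of the ``up to constants'' ambiguity in the applications). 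Hence $f_0 = f_1$.
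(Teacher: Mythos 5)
Your proposal is correct and follows essentially the same route as the paper, which only sketches the argument: run the heat flow (gradient flow of the Dirichlet energy), prove short-time then long-time existence using the non-positive curvature of $N$ via the Bochner estimate, and extract a harmonic limit as $t\to\infty$. You additionally supply Hartman's geodesic-homotopy convexity argument for the uniqueness clause (correctly flagging the caveat about flat directions of $N$), which the paper states but does not discuss; this is a welcome completion rather than a different approach.
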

The main idea for the proof is to start with any function in a given class and to apply a heat flow (some kind of steepest descent flow). One has to show that the flow exists for a short time, then for all times and that when time goes to infinity, we get a well-defined limit which is harmonic.

Now, we are ready to see the proof idea of the Corlette--Donaldson theorem. It reduces nearly to the Eells--Sampson theorem, in an equivariant setting.

Remember that we wish to show that in the gauge-orbit of a flat connection (with completely reducible monodromy) there is a harmonic representative. So we have a flat bundle $(E,\nabla)$ with fixed hermitian metric $h$ and we vary $\nabla$ in its gauge-orbit. 

The first important idea is to notice that we can fix $\nabla$ and vary $h$ instead. To determine the action of a gauge transformation $g$ on $h$, just note that $\Psi$ is hermitian with respect to $h$, i.e. $\Psi^{*_h}=\Psi$. So $g.\Psi$ has to be hermitian with respect to $g.h$. Locally, we can write $\Psi^{*_h}=h\Psi^\dagger h^{-1}$. Hence
$$g\Psi g^{-1} = g.\Psi = (g.\Psi)^{*_{g.h}}=(g.h)(g\Psi g^{-1})^\dagger (g.h)^{-1}.$$
Using $\Psi=\Psi^{*h}=h\Psi^\dagger h^{-1}$, we deduce $$g.h=ghg^\dagger,$$
which is the usual action on hermitian structures.

We have now a flat bundle $(E,\nabla)$ with varying hermitian metric $h$. We have seen in Proposition \ref{prop:harm-metric} that a hermitian metric is a $\pi_1\S$-equivariant map $u:\widetilde{\S}\to G/K$.

The second important observation is that $h$ is harmonic iff $u$ is harmonic in the sense of harmonic map theory. Note that  $\widetilde{\S}$ is the hyperbolic plane and $G/K$ is a symmetric space, so both carry a natural Riemannian structure. One can check that changing $u$ by a homotopy is equivalent to changing $h$ by a gauge transform.
%\textcolor{red}{Ref for that? Easy?}

Since $K$ is the maximal compact subgroup of $G$, the symmetric space $G/K$ has non-positive sectional curvature. Thus, we are almost in the setting of the Eells--Sampson theorem. The only problem is that $\widetilde{\S}$ and $G/K$ are not compact. But our map $u$ is equivariant and the fundamental domain is $\S$ which is compact. 
In his paper \cite{corlette}, Corlette imitates the proof strategy of the Eells--Sampson theorem in the given setting. I recommend his paper to find more details.

Finally, note that the Dirichlet energy for a flat connection $\nabla = d_{A_h}+\Psi_h$ decomposed using a hermitian metric $h$, is given by
$$E_\nabla(h) = \int_\S \left\|\Psi_h\right\|^2_{L^2} d\mathrm{vol}_\S.$$

\medskip
\paragraph{Harmonic Higgs bundles and the Hitchin--Simpson theorem.}
We present the notion of a harmonic Higgs bundle, giving a link between Higgs bundles and flat connections. The existence of harmonic representatives, the Hitchin--Simpson theorem, completes the proof of the non-abelian Hodge correspondence.

To a stable Higgs bundle $(E,\Phi)$, we wish to associate a flat connection. The idea is to fix a hermitian structure $h$ on $E$. Since $E$ is holomorphic, we get the Chern connection $\nabla_A$. Then we consider
$$\mc{A} = \Phi+\nabla_A+\Phi^{*_h}.$$
The appearance of $\Phi+\Phi^{*_h}$ is not surprising since we wish to get an equivalence with harmonic flat bundles $(E,\nabla=d_A+\Psi,h)$. Since $\Phi=\Psi^{1,0}$, we have $\Psi=\Phi+\Phi^{*_h}$.

The strategy is to find a point in the gauge-orbit of $(E,\Phi)$, i.e. a point in the moduli space $\mathcal{M}_H$, such that $\mc{A}$ is flat. There is one important observation to be made: a $\C^*$-action on $\mathcal{M}_H$ simplifying the flatness condition.

The flatness of $\mathcal{A}$ is \textit{a priori} one complicated equation. The trick is to split this into five much simpler equations. The fact which allows this decomposition is a $\C^*$-action on $\mc{M}_H$ given by 
$$\lambda.[(E,\Phi)] = [(E,\lambda \Phi)],$$
i.e. we simply scale the Higgs field. This is well defined since the scaling commutes with the gauge action (where $\Phi$ simply gets conjugated) and one easily checks that $(E,\lambda \Phi)$ stays stable.

Using this action, we are looking actually at a whole family of connections
\begin{equation}\label{flat-lambda-connection}
\mc{A}(\lambda) = \lambda \Phi+\nabla_A+\lambda^{-1}\Phi^{*_h}.
\end{equation}
The reason why to consider $\Phi^{*_h}$ with weight $\lambda^{-1}$ comes from twistor theory, explained in Section \ref{HK}.

The curvature of $\mathcal{A}(\lambda)$, which is a Laurent polynomial in $\lambda$, is flat for all $\lambda$ iff all its coefficients are zero. In a local chart where $\mc{A}(\lambda) = d+\lambda \Phi+A+\lambda^{-1}\Phi^{*_h}$, we get
\begin{enumerate}
	\item $\Phi\wedge \Phi = 0$ and $\Phi^*\wedge \Phi^*=0$ (coefficients of $\lambda^2$ and $\lambda^{-2}$),
	\item $\delbar\Phi+[A^{0,1},\Phi]=0$ and $\del\Phi^*+[A^{1,0},\Phi^*]=0$ (coefficients of $\lambda$ and $\lambda^{-1}$),
	\item $F(A)+[\Phi,\Phi^*]=0$ (coefficient for constant term).
\end{enumerate}

Note that the couples of equations on the same line are equivalent (by taking the hermitian conjugate). Equation 1. is automatic since $\Phi$ is of type $(1,0)$ and we are on a surface\footnote{For the notion of a Higgs bundle on a higher-dimensional manifold, one requires $\Phi\wedge\Phi=0$ in the definition. We see here why.}. 
Equation 2. is also automatic since $\Phi$ is a Higgs field, so holomorphic (see Proposition \ref{Prop:higgs-bundle-cotangent-vector}).

The only remaining equation is the so-called \textbf{Hitchin equation}:
\begin{equation}\label{Hitchin-eq}
\boxed{F(A) + [\Phi,\Phi^*]=0.}
\end{equation}

\begin{Remark}
One can obtain this equation as a dimensional reduction of the Yang--Mills equation in dimension 4.
\end{Remark}

\begin{definition}
A \textbf{harmonic Higgs bundle} is a Higgs bundle $(E,\Phi)$ equipped with a hermitian metric $h$, such that Hitchin's equation \eqref{Hitchin-eq} holds.
\end{definition}

The main result is the theorem of Hitchin \cite{hitchin1987self} and Simpson \cite{simpson}:

\begin{thm}[Hitchin--Simpson]\label{hitchin-simpson-thm}
In the complex gauge orbit of a stable Higgs bundle $(E,\Phi)$ with $\deg(E)=0$, there is a unique (up to unitary gauge) harmonic representative iff $(E,\Phi)$ is polystable.
\end{thm}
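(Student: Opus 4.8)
The plan is to prove the two implications separately; the forward direction (a harmonic metric exists $\Rightarrow$ $(E,\Phi)$ polystable) is elementary. Suppose $h$ solves Hitchin's equation \eqref{Hitchin-eq} and let $F\subset E$ be a $\Phi$-invariant holomorphic subbundle, equipped with the induced metric. The Chern connection of $(E,h)$ becomes block triangular with off-diagonal term the second fundamental form $\beta\in\Omega^{1,0}(S,\Hom(F^\perp,F))$, and $\Phi$ is block upper-triangular; writing the Chern--Weil formula $\deg F=\tfrac{i}{2\pi}\int_S\tr F(A_F)$, substituting the curvature decomposition and using \eqref{Hitchin-eq} for $E$ gives $\deg F=-\tfrac{1}{2\pi}\int_S\bigl(\|\beta\|^2+\|\text{off-diagonal block of }\Phi\|^2\bigr)\,d\mathrm{vol}\le 0$. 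Since $\deg E=0$ this says $\mu(F)\le\mu(E)$, so $(E,\Phi)$ is semistable, and equality forces $\beta=0$ and the off-diagonal block of $\Phi$ to vanish, i.e.\ a holomorphic and $\Phi$-invariant splitting $E=F\oplus F^\perp$; iterating proves polystability.

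For the hard direction, assume first that $(E,\Phi)$ is stable with $\deg E=0$. Fix a background metric $h_0$; every metric in the complex gauge orbit has the form $h=h_0e^{s}$ with $s$ self-adjoint for $h_0$, and solving \eqref{Hitchin-eq} in the orbit means finding $s$ with $F(A_{h_0e^s})+[\Phi,\Phi^{*_{h_0e^s}}]=0$. Imitating Donaldson's proof of Narasimhan--Seshadri (Theorem \ref{ns-thm}) and its Higgs-bundle adaptation by Hitchin and Simpson, I would introduce the Donaldson functional on the space of metrics, whose gradient at $h$ is exactly $F(A_h)+[\Phi,\Phi^{*_h}]$ (modulo the trace term, killed by the degree-zero normalization) and which is convex along the geodesics $h_t=h_0e^{ts}$; its critical points are precisely the harmonic metrics. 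Equivalently one runs the parabolic flow $h^{-1}\dot h=-(F(A_h)+[\Phi,\Phi^{*_h}])$, which exists for short time and, the zeroth-order term being benign, for all time.

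The crux — and the step I expect to be the main obstacle — is to deduce \emph{from stability} that the flow converges (equivalently, that the Donaldson functional is coercive). This is an Uhlenbeck--Yau type dichotomy: if $\sup_S\|s_t\|\to\infty$, one extracts by weak compactness and elliptic-regularity estimates — Simpson's estimates here playing the role of the Uhlenbeck--Yau compactness theorem used in the Narasimhan--Seshadri case, the Higgs field contributing only a bounded extra term — a nontrivial weakly holomorphic $\Phi$-invariant subsheaf whose slope is $\ge\mu(E)$, contradicting stability. Carrying this out rigorously (the a priori $C^0$ and higher-order estimates along the flow, the bound relating the $L^1$-size of the destabilizing term to the degree of the limiting subsheaf, and showing that subsheaf is an honest $\Phi$-invariant subbundle) is where essentially all the analytic difficulty resides; I would follow Simpson's paper \cite{simpson} closely.

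For uniqueness, if $h_1,h_2$ both solve \eqref{Hitchin-eq} write $h_2=h_1e^{s}$ with $s$ self-adjoint for $h_1$; subtracting the two equations and pairing with $s$ gives, after integration by parts and a Kato inequality, $\Delta\|s\|\le 0$ weakly, so by the maximum principle $s$ is parallel and commutes with $\Phi$, and stability then forces $s=c\,\id$, hence $h_2=e^{c}h_1$. Finally, for $(E,\Phi)$ merely polystable, decompose it as a direct sum $\bigoplus_i(E_i,\Phi_i)$ of slope-zero stable Higgs bundles, apply the stable case to each factor, and take the orthogonal direct sum of the resulting metrics; this solves \eqref{Hitchin-eq} on $(E,\Phi)$, and uniqueness on the summands yields uniqueness up to unitary gauge on the whole. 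Combined with the easy direction this proves the theorem, and together with Theorem \ref{corlette-donaldson-thm} it closes the non-abelian Hodge correspondence, since $\nabla=d_A+\Phi+\Phi^{*_h}$ exhibits a harmonic Higgs bundle and a harmonic flat bundle as two faces of the same object.
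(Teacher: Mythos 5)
Your outline is correct and, at the level of detail the paper itself operates at, it is essentially the standard proof: the easy direction by a Chern--Weil/second-fundamental-form computation showing every $\Phi$-invariant subbundle has non-positive degree with equality forcing a holomorphic $\Phi$-invariant splitting, and the hard direction by a gradient-descent argument whose convergence is extracted from stability via an Uhlenbeck--Yau/Simpson compactness dichotomy producing a destabilizing $\Phi$-invariant subsheaf otherwise. The one substantive difference from the paper's sketch is the choice of functional: the paper minimizes the Yang--Mills--Higgs functional $\int_S\|F(A)+[\Phi,\Phi^{*_h}]\|^2_{L^2}$ over the complex gauge orbit (Hitchin's route in \cite{hitchin1987self}), whereas you run the heat flow $h^{-1}\dot h=-(F(A_h)+[\Phi,\Phi^{*_h}])$ governed by the Donaldson functional on the space of metrics (Simpson's route in \cite{simpson}); these are equivalent in substance --- the paper itself notes, in the Corlette--Donaldson discussion, that varying the connection in its $\mathcal{G}^\C$-orbit is the same as varying the hermitian metric. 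What each buys: the Donaldson-functional route makes convexity along geodesics of metrics, and hence your uniqueness and polystable-reduction arguments (which the paper does not spell out and which are a welcome addition), particularly clean; the norm-squared-of-the-moment-map route ties directly into the Kempf--Ness and hyperk\"ahler-quotient picture the paper develops in Section \ref{HK}, where \eqref{Hitchin-eq} is literally the vanishing of a moment map. Like the paper, you correctly locate all the genuine analytic content (a priori estimates, construction of the weakly holomorphic destabilizing subsheaf and its regularity) in \cite{simpson}, so the proposal should be read as a faithful strategy outline rather than a self-contained proof --- which matches the paper's own level of rigor here.
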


Note that for $\Phi=0$, the Hitchin--Simpson theorem reduce to the Narasimhan--Seshadri theorem, since the Hitchin equation becomes $F(A)=0$, giving a flat unitary connection and $(E,0)$ is polystable iff $E$ is.

The proof is similar in spirit to the one of the Narasimhan--Seshadri theorem. If there is a harmonic representative, one shows polystability by a direct argument, see for example \cite[Section 3.2]{garcia-prada}.

For the converse, one defines a gradient flow using the functional 
$$f(\Phi,h) = \int_S \left\|F(A)+[\Phi,\Phi^{*_h}]\right\|^2_{L^2}.$$
Note that the integrand is nothing but some $L^2$-norm of the term from the Hitchin equation. One has to show that the flow stays inside the gauge-orbit, that a minimizing sequence $(A_n,\Phi_n)$ converges (for stable $(E,\Phi)$) and that the limit solves the Hitchin equation. Again details can be found in \cite[Section 3.2]{garcia-prada} and the original papers by Hitchin \cite{hitchin1987self} and Simpson \cite{simpson}.

We will see another proof sketch, similar to our proof sketch of the Narasimhan--Seshadri theorem, by interpreting the Hitchin equation as a moment map. To do so, we will introduce the notion of hyperkähler geometry and the hyperkähler quotient in Section \ref{HK}.

\medskip
\paragraph{Non-abelian Hodge correspondence.}
Now that we have the notions of harmonic representatives for both, flat bundles and Higgs bundles, the non-abelian Hodge correspondence reduces to a simple observation, the equivalence of harmonic flat bundles and harmonic Higgs bundles.

To a harmonic flat bundle $(E,\nabla=d_A+\Psi,h)$, we associate the Higgs bundle $(E,\delbar_E=d_A^{0,1}, \Phi=\Psi^{1,0})$. Together with the hermitian metric $h$, we actually get a harmonic Higgs bundle since the flatness $$F(A)+\Psi\wedge \Psi=0$$ is equivalent to Hitchin's equation
$$F(A)+[\Phi,\Phi^{*_h}]=0$$
since $\Psi=\Phi+\Phi^{*_h}$.

In the reverse direction, to a harmonic Higgs bundle $(E,\delbar_E, \Phi,h)$, we associate $(E,d_A+\Phi+\Phi^{*_h},h)$, where $d_A$ is the Chern connection. This is a harmonic flat bundle. Both constructions are inverse to each other.

This finishes the proof sketch of the non-abelian Hodge correspondence, which we summarized in Figure \ref{Fig:non-ab-proof}. 

\begin{figure}[h!]
\begin{center}
\includegraphics[height=7cm]{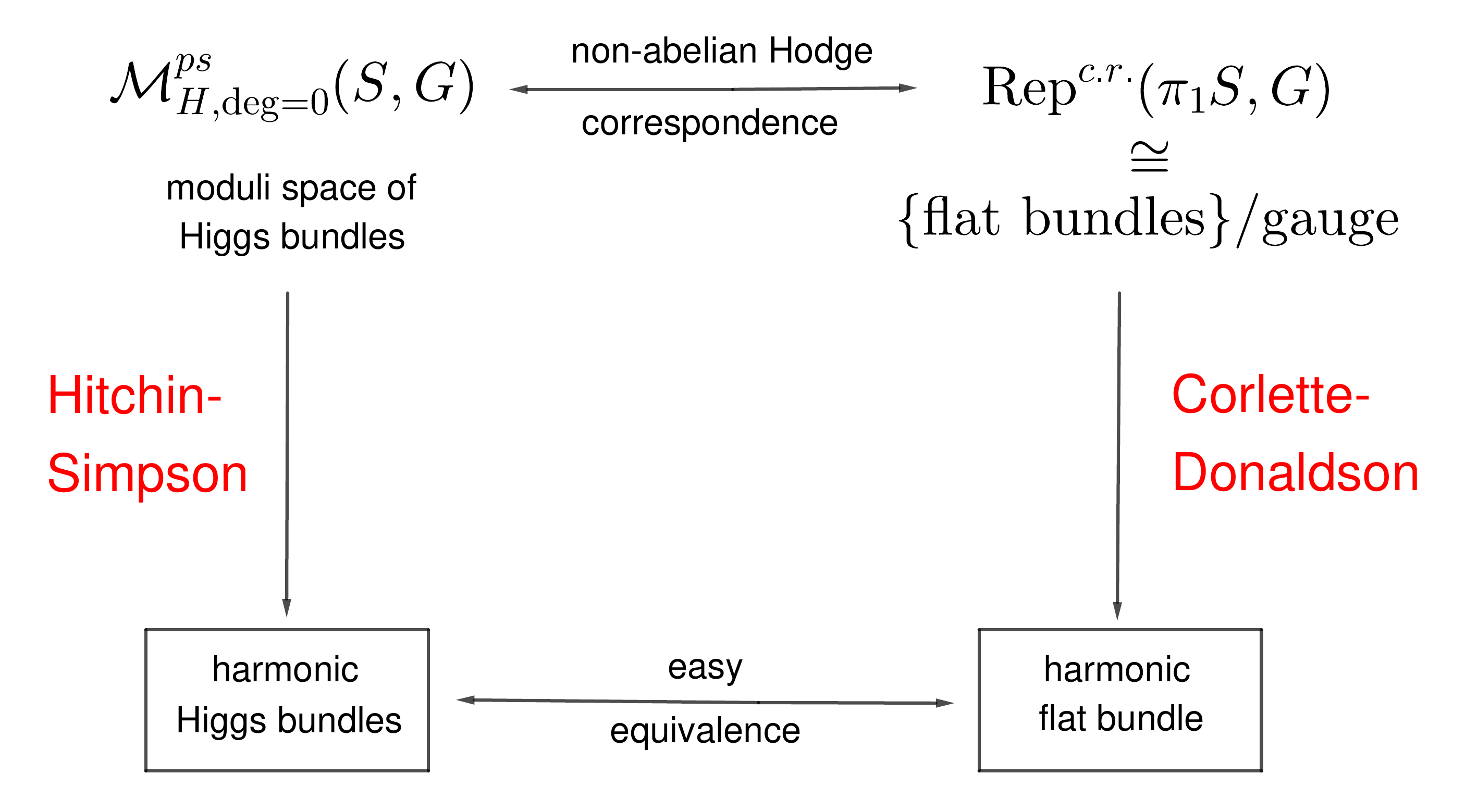}

\caption{Proof scheme of non-abelian Hodge correspondence via harmonic representatives}\label{Fig:non-ab-proof}
\end{center}
\end{figure}

Taking a step back, we can say that from a flat connection $\nabla=d_A+\Psi$ decomposed into a unitary and hermitian part, we can easily get a holomorphic structure $d_A^{0,1}$ and a Higgs field $\Psi^{1,0}$ (by forgetting half of the information). The difficulty lies in finding a preferred decomposition, which is achieved by using a harmonic hermitian structure.

The more surprising part of the non-abelian Hodge correspondence is that from half of the data, a holomorphic structure and a Higgs field, we can recover the flat connection by choosing an appropriate hermitian structure. The flatness condition reduces to solving Hitchin's equation.

To get a better and deeper understanding of the correspondence, we introduce hyperkähler geometry. The hyperkähler structure is the strongest possible geometric structure on a manifold (in a certain sense). This viewpoint will unify the character variety and the moduli space of Higgs bundles into one big picture.

\section{Hyperkähler geometry}\label{HK}

We give an overview on hyperkähler manifolds, in particular the quotient and twistor construction. This allows to understand the non-abelian Hodge correspondence as a natural diffeomorphism in the twistor space of the moduli space of Higgs bundles. A nice reference is Hitchin's paper \cite{hitchin1992hyper}.

\medskip
\paragraph{Kähler trilogy.}
A Kähler structure on a manifold is a Riemannian, symplectic and complex structure which interact nicely such that any two structures determine the third.

To start, let us see what happens at one point, i.e. we reduce to linear algebra. Consider $\R^{2n}$. We have the following correpondences between geometric structures and their symmetry groups:
\begin{center}
\begin{itemize}
	\item Riemannian structure $\longleftrightarrow$ $\mathrm{O}_{2n}(\R)$
	\item Symplectic structure $\longleftrightarrow$ $\mathrm{Sp}_{2n}(\R)$
	\item Complex structure $\longleftrightarrow$ $\mathrm{GL}_{n}(\C)$
\end{itemize}
\end{center}

\begin{prop}\label{kaehler-trilogy}
The intersection of any two of these three groups is $\mathrm{U}(n)$.
\end{prop}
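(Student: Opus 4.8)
The plan is to work out the three pairwise intersections explicitly, using the standard matrix descriptions of the three groups acting on $\R^{2n}$, and to check in each case that the resulting group is exactly $\mathrm{U}(n)$. First I would fix the standard data on $\R^{2n}$: the Euclidean inner product $g$, the standard symplectic form $\omega_0 = \sum_i dp_i\wedge dx_i$, and the standard complex structure $J$ (the matrix with $J^2=-\id$ that pairs $x_i$ with $p_i$). The key relation to keep in mind throughout is the compatibility identity $g(v,w) = \omega_0(v, Jw)$, which ties the three structures together; a matrix preserving two of $g,\omega_0,J$ automatically preserves the third via this identity, which is really the conceptual reason all three intersections coincide.

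The three steps are then: (1) $\mathrm{O}_{2n}(\R) \cap \mathrm{Sp}_{2n}(\R) = \mathrm{U}(n)$: a matrix $M$ with $M^TM = \id$ and $M^T J_0 M = J_0$ (writing $J_0$ for the matrix of $\omega_0$) forces $M$ to commute with $J = J_0$, hence to be complex-linear; combined with orthogonality this says $M$ is a unitary transformation of $(\C^n, \langle\cdot,\cdot\rangle)$ where the Hermitian form is $g + i\omega_0$. (2) $\mathrm{O}_{2n}(\R) \cap \mathrm{GL}_n(\C) = \mathrm{U}(n)$: a complex-linear map preserving the real inner product $g = \mathrm{Re}\langle\cdot,\cdot\rangle$ also preserves $\mathrm{Im}\langle\cdot,\cdot\rangle = \omega_0$ (since $\omega_0(v,w) = g(Jv,w)$ and the map commutes with $J$), hence preserves the Hermitian form, hence lies in $\mathrm{U}(n)$. (3) $\mathrm{Sp}_{2n}(\R) \cap \mathrm{GL}_n(\C) = \mathrm{U}(n)$: symmetrically, a complex-linear map preserving $\omega_0$ preserves $g(v,w) = \omega_0(v,Jw)$ as well, again landing in $\mathrm{U}(n)$. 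In all three cases the reverse inclusion $\mathrm{U}(n) \subset$ (intersection) is immediate, since unitary matrices visibly preserve all of $g$, $\omega_0$ and $J$.

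Concretely, I would phrase the computation in $2\times 2$ block form: write $M = \begin{pmatrix} A & B \\ C & D\end{pmatrix}$ with $A,B,C,D$ real $n\times n$ blocks, let $J_0 = \begin{pmatrix} 0 & -\id \\ \id & 0\end{pmatrix}$, and translate each defining condition into block equations. Commuting with $J_0$ gives $A = D$ and $B = -C$, so $M$ corresponds to the complex matrix $A + iC$; the orthogonality (resp. symplectic) condition then becomes exactly the statement that $A + iC$ is unitary. This makes all three identifications transparent and essentially identical.

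The main obstacle is not conceptual but bookkeeping: one must be careful that the three structures are chosen \emph{compatibly} at the outset (the same $J_0$ serving as the matrix of $\omega_0$ and as the complex structure $J$), since the proposition is really a statement about a compatible triple and is false for arbitrary unrelated choices. Once compatibility is fixed, each intersection is a short linear-algebra verification, and the identity $g = \omega_0(\cdot, J\cdot)$ does all the work of propagating ``preserves two of them'' to ``preserves the third''.
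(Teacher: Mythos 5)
Your proposal is correct, and it follows essentially the same route the paper has in mind: the paper does not spell out a proof but immediately records the compatibility relation $h=g+i\omega$ together with $\omega(X,iY)=g(X,Y)$, which is exactly the identity you use to propagate ``preserves two structures'' to ``preserves the third''. Your explicit block-matrix verification (with $A=D$, $B=-C$ and the identification $M\leftrightarrow A+iC$) is a fine concrete supplement, and your closing caution about choosing the three structures compatibly is exactly the right point to flag.
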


\begin{figure}[h!]
\centering
\includegraphics[height=4cm]{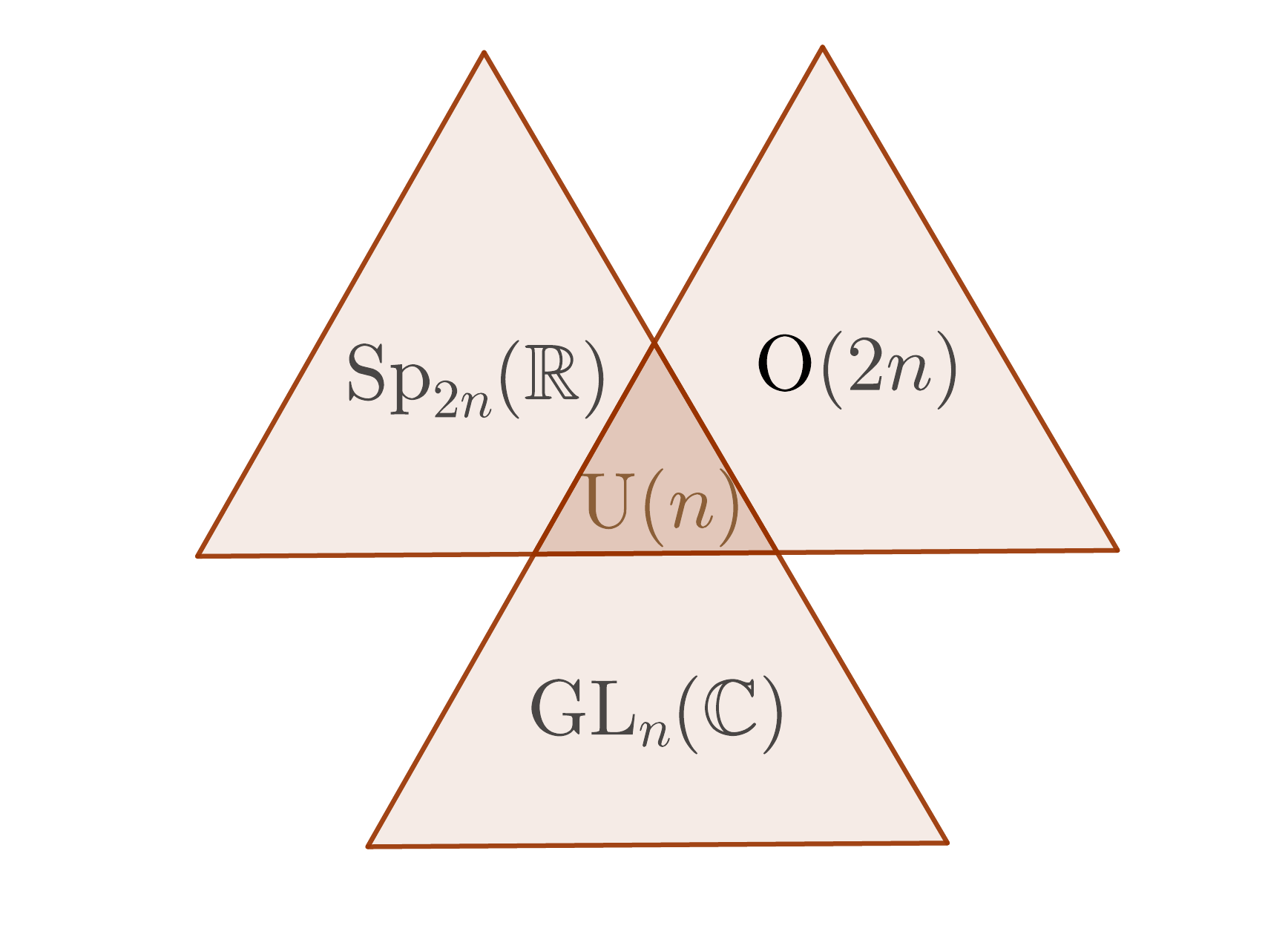}

\caption{Kähler trilogy for linear groups}
\end{figure}

The group $\mathrm{U}(n)$ corresponds to hermitian structures on $\R^{2n}\cong \C^n$. Our convention is that a hermitian product $h(X,Y)$ is $\C$-linear in $Y$ and anti-$\C$-linear in $X$.

\begin{prop}
If $h\in \mathrm{Herm}^{++}$, then $h=g+i\omega$ where $g$ is a Riemannian and $\omega$ a symplectic structure.
\end{prop}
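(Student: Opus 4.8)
The plan is to prove this by pure linear algebra on the vector space $\R^{2n}\cong\C^n$, splitting the hermitian form into real and imaginary parts and checking that each part has the required type. Set $g:=\mathrm{Re}(h)$ and $\omega:=\mathrm{Im}(h)$, so that $h=g+i\omega$ by definition; everything then follows from the two defining properties of a hermitian product, namely $h(Y,X)=\overline{h(X,Y)}$ together with $\C$-linearity in the second slot and anti-$\C$-linearity in the first (the convention fixed just above the proposition).

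First I would read off the symmetry type of each piece. Taking real and imaginary parts of $h(Y,X)=\overline{h(X,Y)}$ gives $g(Y,X)=g(X,Y)$ and $\omega(Y,X)=-\omega(X,Y)$, so $g$ is a symmetric bilinear form and $\omega$ an alternating one. Next I would check positivity: since $h(X,X)$ is real and positive for $X\neq 0$, we get $g(X,X)=h(X,X)>0$, so $g$ is a positive-definite inner product, i.e.\ a (pointwise) Riemannian structure.

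The one step that uses more than formal manipulation is the non-degeneracy of $\omega$, where the complex structure $J$ (multiplication by $i$) must enter. I would compute $h(X,JX)=h(X,iX)=i\,h(X,X)$, which is purely imaginary with imaginary part $h(X,X)$; hence $\omega(X,JX)=h(X,X)>0$ for $X\neq 0$. Therefore if $\omega(X,\cdot)\equiv 0$ then plugging in the argument $JX$ forces $X=0$, so $\omega$ is non-degenerate. (If one reads the statement on a manifold carrying a K\"ahler metric rather than on a single tangent space, the additional requirement is $d\omega=0$, which is precisely the K\"ahler condition; at the linear-algebra level treated here, non-degeneracy is all that "symplectic structure" asks for.)

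Finally, as a remark tying back to Proposition \ref{kaehler-trilogy}, I would record the compatibility identity $\omega(X,Y)=g(JX,Y)$, obtained from anti-$\C$-linearity of $h$ in the first slot via $h(iX,Y)=-i\,h(X,Y)$ and taking real parts; this makes explicit how $g$, $\omega$ and $J$ mutually determine one another. The only real obstacle in the whole argument is bookkeeping the sign conventions — which slot of $h$ is linear versus anti-linear — so I expect the proof to be short once the conventions are pinned down.
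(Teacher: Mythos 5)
Your proof is correct and matches the paper's (essentially implicit) argument: the paper states the proposition without a separate proof, recording only the identity $\omega(X,iY)=\mathrm{Im}(h(X,iY))=\mathrm{Re}(h(X,Y))=g(X,Y)$, which is exactly the computation you use (at $Y=X$) to get non-degeneracy of $\omega$, while the symmetry, antisymmetry and positivity of $g$ and $\omega$ follow, as you say, from $h(Y,X)=\overline{h(X,Y)}$ and positive definiteness. Your sign bookkeeping is consistent with the paper's convention ($\C$-linear in the second slot), and your closing remark $\omega(X,Y)=g(JX,Y)$ is equivalent to the paper's compatibility relation $g(X,Y)=\omega(X,IY)$ used later in the definition of a Kähler manifold.
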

This realizes the Kähler trilogy in one equation. In addition, we have
$$\omega(X,iY) = \mathrm{Im}(h(X,iY)) = \mathrm{Im}(ih(X,Y)) = \mathrm{Re}(h(X,Y)) = g(X,Y).$$

On a manifold, we gather these three structures together to define:
\begin{definition}
A \textbf{Kähler manifold} is $(M,g,\omega,I)$ where $g$ is a Riemannian, $\omega$ a symplectic and $I$ a complex structure such that $$g(X,Y) = \omega(X,IY) \;\;\forall\; X,Y\in \Gamma(TM).$$
\end{definition}

Note that in this definition, we consider the endomorphism $I\in \mathrm{End}(TM)$ satisfying $I^2=-\mathrm{id}$, which mimics the multiplication by $i$. Such a structure is called an \emph{almost-complex structure}. Any complex structure induces an almost-complex structure, but the converse is not true. An almost-complex structure which comes from a complex structure is called integrable.
The symplectic form $\omega$ is usually called the \emph{Kähler form}.

Using the fact that any two structures determine the third, we can equivalently define a Kähler manifold in the following ways:
\begin{itemize}
	\item \textit{Riemannian viewpoint:} Riemannian manifold with almost-complex structure $J$ which is orthogonal ($g(X,Y)=g(IX,IY)$) and with vanishing covariant derivative (using the Levi--Civita connection).
	\item \textit{Complex viewpoint:} Complex manifold with hermitian structure $h$ such that $\mathrm{Im}(h)$ is closed.
\end{itemize}

\begin{example}
Consider the simplest case $M=\C$ with hermitian metric $h=dz\otimes d\bar{z}$. Using real coordinates $z=x+iy$, we see that
$$h=(dx+idy)\otimes(dx-idy) = (dx^2+dy^2)-i(dx\wedge dy).$$
Hence the real part of $h$ is the standard Riemannian structure, and the negative imaginary part the standard symplectic structure on $\R^2$. 
\end{example}

Other examples:
\begin{itemize}
	\item $\C P^n$ is Kähler since the hamiltonian reduction of $\C^{n+1}$ by $\mathbb{S}^1$ gives not only a symplectic, but also a complex structure\footnote{There is the notion of Kähler reduction as for symplectic manifolds.}.
	\item Complex submanifolds of Kähler manifolds are again Kähler, so in particular any complex projective variety. In particular all Riemann surfaces are Kähler.
\end{itemize}

\begin{prop}
The holonomy of a Kähler manifolds (the monodromy of the Levi--Civita connection) is in $\mathrm{U}(n)$.
\end{prop}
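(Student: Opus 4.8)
The plan is to use the fact that, for a Kähler manifold, \emph{all} of the defining data $g$, $\omega$ and $I$ are parallel with respect to the Levi--Civita connection $\nabla$. The holonomy group is by definition the collection of parallel transport maps $P_\gamma$ around loops based at a point $p$, and \emph{a priori} these lie in $\mathrm{GL}(T_pM)\cong\mathrm{GL}_{2n}(\R)$. If $P_\gamma$ preserves $g_p$ then $P_\gamma\in\mathrm{O}_{2n}(\R)$; if it preserves $\omega_p$ then $P_\gamma\in\mathrm{Sp}_{2n}(\R)$; if it commutes with $I_p$ then $P_\gamma\in\mathrm{GL}_n(\C)$. Any two of these memberships already force $P_\gamma\in\mathrm{U}(n)$ by Proposition \ref{kaehler-trilogy}. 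So the whole statement reduces to showing that parallel transport preserves at least two of the three structures, equivalently that at least two of $\nabla g=0$, $\nabla\omega=0$, $\nabla I=0$ hold.

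The relation $\nabla g=0$ is free, since the Levi--Civita connection is metric. The main obstacle is to establish $\nabla I=0$ (equivalently, via the defining relation $g(X,Y)=\omega(X,IY)$ together with $\nabla g=0$, the identity $\nabla\omega=0$). Here is how I would argue. Because $\nabla$ is torsion-free, the exterior derivative of the Kähler $2$-form is the antisymmetrization of $\nabla\omega$, namely $d\omega(X,Y,Z)=(\nabla_X\omega)(Y,Z)+(\nabla_Y\omega)(Z,X)+(\nabla_Z\omega)(X,Y)$; hence the Kähler hypothesis $d\omega=0$ kills the totally antisymmetric component of $\nabla\omega$. The remaining components are controlled by the integrability of $I$: by Newlander--Nirenberg the Nijenhuis tensor of $I$ vanishes, and a standard tensorial computation — using $\nabla g=0$, $I^2=-\mathrm{id}$ and torsion-freeness — rewrites the vanishing of the Nijenhuis tensor as the vanishing of exactly those components of $\nabla\omega$ not already eliminated by $d\omega=0$. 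Combining the two gives $\nabla\omega=0$, and then $\nabla I=0$. I expect this bookkeeping, matching up pieces of $\nabla\omega$ with pieces of the Nijenhuis tensor, to be the genuinely technical point; everything around it is formal.

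With $\nabla g=0$ and $\nabla I=0$ in hand, the conclusion is immediate: for any parallel vector field $V$ along a loop $\gamma$, the field $IV$ is again parallel since $\nabla(IV)=(\nabla I)V+I\nabla V=0$, so $P_\gamma\circ I_p=I_p\circ P_\gamma$ and thus $P_\gamma\in\mathrm{GL}_n(\C)$; and $P_\gamma$ is an isometry of $(T_pM,g_p)$, so $P_\gamma\in\mathrm{O}_{2n}(\R)$. Therefore every holonomy transformation lies in $\mathrm{O}_{2n}(\R)\cap\mathrm{GL}_n(\C)=\mathrm{U}(n)$ by Proposition \ref{kaehler-trilogy}, which is the assertion. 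Conceptually the statement is nothing more than: on a Kähler manifold $g$ and $I$ (equivalently $g$ and $\omega$) are both $\nabla$-parallel, and $\mathrm{U}(n)$ is precisely the group of linear maps preserving both.
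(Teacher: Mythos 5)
Your argument is correct, and it actually does more work than the paper, whose entire justification is the one-line remark that $\mathrm{U}(n)$ is the structure group of hermitian products --- i.e.\ the paper implicitly invokes its ``Riemannian viewpoint'' definition of a Kähler manifold, in which $\nabla I=0$ is part of the hypothesis, so that holonomy trivially preserves $g$ and $I$ and hence lands in $\mathrm{O}_{2n}(\R)\cap\mathrm{GL}_n(\C)=\mathrm{U}(n)$. You instead start from the definition actually given in the text ($g$, $\omega$, $I$ with $g(X,Y)=\omega(X,IY)$, $\omega$ closed, $I$ integrable) and supply the missing equivalence: torsion-freeness identifies $d\omega$ with the cyclic sum of $\nabla\omega$, and the vanishing of the Nijenhuis tensor controls the remaining components, yielding $\nabla\omega=0$ and hence $\nabla I=0$; the final step, that parallel transport preserves any $\nabla$-parallel tensor, is exactly right. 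The only soft spot is that the ``standard tensorial computation'' is asserted rather than carried out; it is indeed classical (it follows from the identity $2g\left((\nabla_X I)Y,Z\right)=d\omega(X,IY,IZ)-d\omega(X,Y,Z)+g\left(N_I(Y,Z),IX\right)$, as in Kobayashi--Nomizu), so this is an acceptable citation-level gap rather than an error. In short: you prove the equivalence of definitions that the paper states without proof, and then both arguments conclude in the same way via the Kähler trilogy $\mathrm{O}_{2n}(\R)\cap\mathrm{GL}_n(\C)=\mathrm{U}(n)$.
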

This comes from the fact that the unitary group is the structure group of hermitian products.

\begin{Remark}
Two more important properties of Kähler manifolds from \cite[Chapter 0, Section 7]{griffiths}:
\begin{itemize}
	\item A metric is Kähler iff it is Euclidean up to order 2.
	\item The two possible Laplacians $\Delta_g$ (from the Riemannian structure) and $\Delta_\partial$ (from the complex structure) coincide up to a factor 2. This has many consequences, for example the \emph{Hodge identities} and the \emph{Lefschetz decomposition}.
\end{itemize}
\end{Remark}

\medskip
\paragraph{Hyperkähler trilogy.}
We get the same trilogy as for Kähler manifolds by shifting $\R$ to $\C$ and $\C$ to the quaternions $\H$. 

Consider the vector space $\C^{2n}$ which we can equip with the following geometric structures:
\begin{center}
\begin{itemize}
	\item Complex symplectic structure $\longleftrightarrow$ $\mathrm{Sp}_{2n}(\C)$
	\item Quaternionic structure $\longleftrightarrow$ $\mathrm{GL}_{n}(\H)$
	\item Hermitian structure $\longleftrightarrow$ $\mathrm{U}_{2n}(\C)$
\end{itemize}
\end{center}

\noindent The starting point for hyperkähler geometry is the analog of Proposition \ref{kaehler-trilogy}:
\begin{prop}\label{HK-trilogy}
The intersection of any two of these three groups is $\mathrm{U}_n(\H)$.
\end{prop}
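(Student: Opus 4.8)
The plan is to argue exactly as for the Kähler trilogy (Proposition~\ref{kaehler-trilogy}): on $\C^{2n}$ one fixes a \emph{compatible} triple consisting of the complex symplectic form $\sigma$, the positive Hermitian form $\langle\cdot,\cdot\rangle$ (anti-linear in the first slot, following the convention above) and the quaternionic structure $j$ (an anti-$\C$-linear map with $j^2=-\id$), the compatibility being encoded in the single identity
\begin{equation}\label{hk-fundamental}
\sigma(X,Y)=\langle jX,Y\rangle,
\end{equation}
the direct analogue of $g(X,Y)=\omega(X,IY)$. First I would check that \eqref{hk-fundamental} produces from any two of the three structures the third one: $X\mapsto\langle jX,Y\rangle$ is $\C$-bilinear because $j$ is anti-linear, and it is antisymmetric because $j$ is an isometry up to conjugation; together with non-degeneracy this also gives the reciprocal formula $\langle X,Y\rangle=-\sigma(jX,Y)$ (using $j^{-1}=-j$). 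Thus the three groups $\mathrm{Sp}_{2n}(\C)$, $\mathrm{GL}_n(\H)$, $\mathrm{U}_{2n}(\C)$ are precisely the stabilizers inside $\mathrm{GL}_{2n}(\C)$ of $\sigma$, of $j$, and of $\langle\cdot,\cdot\rangle$.

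The core of the proof is that, because of \eqref{hk-fundamental}, an element of $\mathrm{GL}_{2n}(\C)$ preserving two of $\{\sigma,\langle\cdot,\cdot\rangle,j\}$ automatically preserves the third. For instance, if $A$ preserves $\langle\cdot,\cdot\rangle$ and commutes with $j$, then $\sigma(AX,AY)=\langle jAX,AY\rangle=\langle AjX,AY\rangle=\langle jX,Y\rangle=\sigma(X,Y)$; if $A$ preserves $\sigma$ and commutes with $j$, then $\langle AX,AY\rangle=-\sigma(jAX,AY)=-\sigma(AjX,AY)=-\sigma(jX,Y)=\langle X,Y\rangle$; and if $A$ preserves $\sigma$ and $\langle\cdot,\cdot\rangle$, then $\langle jAX-AjX,AY\rangle=\sigma(AX,AY)-\langle AjX,AY\rangle=0$ for all $Y$, so $jA=Aj$ by non-degeneracy. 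Consequently each pairwise intersection $G_a\cap G_b$ coincides with the triple intersection $\mathrm{Sp}_{2n}(\C)\cap\mathrm{GL}_n(\H)\cap\mathrm{U}_{2n}(\C)$, and it only remains to identify the latter.

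Finally I would identify the common intersection with $\mathrm{U}_n(\H)$. An element of $\mathrm{GL}_n(\H)\cap\mathrm{U}_{2n}(\C)$ is a $\C$-linear map commuting with $j$ — hence $\H$-linear — which preserves $\langle\cdot,\cdot\rangle$. Assembling $\langle\cdot,\cdot\rangle$ and $j$ into the $\H$-valued form $\langle\!\langle X,Y\rangle\!\rangle:=\langle X,Y\rangle+\langle jX,Y\rangle\, j$ (using $\C\oplus\C j=\H$) gives a positive-definite quaternionic-Hermitian form on $\H^n$, and an $\H$-linear $A$ preserves it iff it preserves its $\C$-part $\langle\cdot,\cdot\rangle$ (the $j$-part then being automatic, since $\langle\!\langle AX,AY\rangle\!\rangle=\langle X,Y\rangle+\langle AjX,AY\rangle j=\langle\!\langle X,Y\rangle\!\rangle$). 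By definition the $\H$-linear automorphism group of such a form is $\mathrm{U}_n(\H)=\mathrm{Sp}(n)$; conversely $\mathrm{U}_n(\H)$ preserves $\langle\!\langle\cdot,\cdot\rangle\!\rangle$, hence $\langle\cdot,\cdot\rangle$ and $j$, hence $\sigma$ by \eqref{hk-fundamental}, so it lies in all three groups. This gives both inclusions and finishes the proof. The one genuinely delicate point is pinning down the compatibility conventions so that \eqref{hk-fundamental} and the positivity of $\langle\!\langle\cdot,\cdot\rangle\!\rangle$ hold on the nose; once that is fixed everything is formal, and the dimension count $\dim_\R\mathrm{U}_n(\H)=2n^2+n$, exactly half of $\dim_\R\mathrm{GL}_n(\H)=\dim_\R\mathrm{U}_{2n}(\C)=4n^2$, is a useful sanity check.
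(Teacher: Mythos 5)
Your proof is correct, and it is essentially a careful writing-out of what the paper only illustrates: the paper states the proposition and justifies it through the decomposition $Q=h-\omega_\C J$ of the quaternionic scalar product (made explicit in the example $M=\H$ via $dq\otimes d\bar q = g-i\omega_I-j\omega_J-k\omega_K$), and your compatibility identity $\sigma(X,Y)=\langle jX,Y\rangle$ is exactly the bilinear-form version of that decomposition, upgraded to a genuine stabilizer argument (any two of $\sigma,\langle\cdot,\cdot\rangle,j$ determine the third, so each pairwise stabilizer equals the triple one, which is $\mathrm{U}_n(\H)$). One small slip in your closing sanity check: $\dim_\R\mathrm{U}_n(\H)=2n^2+n$ is half of $\dim_\R\mathrm{Sp}_{2n}(\C)=4n^2+2n$, but it is not half of $\dim_\R\mathrm{GL}_n(\H)=\dim_\R\mathrm{U}_{2n}(\C)=4n^2$; this does not affect the argument.
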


\begin{figure}[h!]
\centering
\includegraphics[height=4cm]{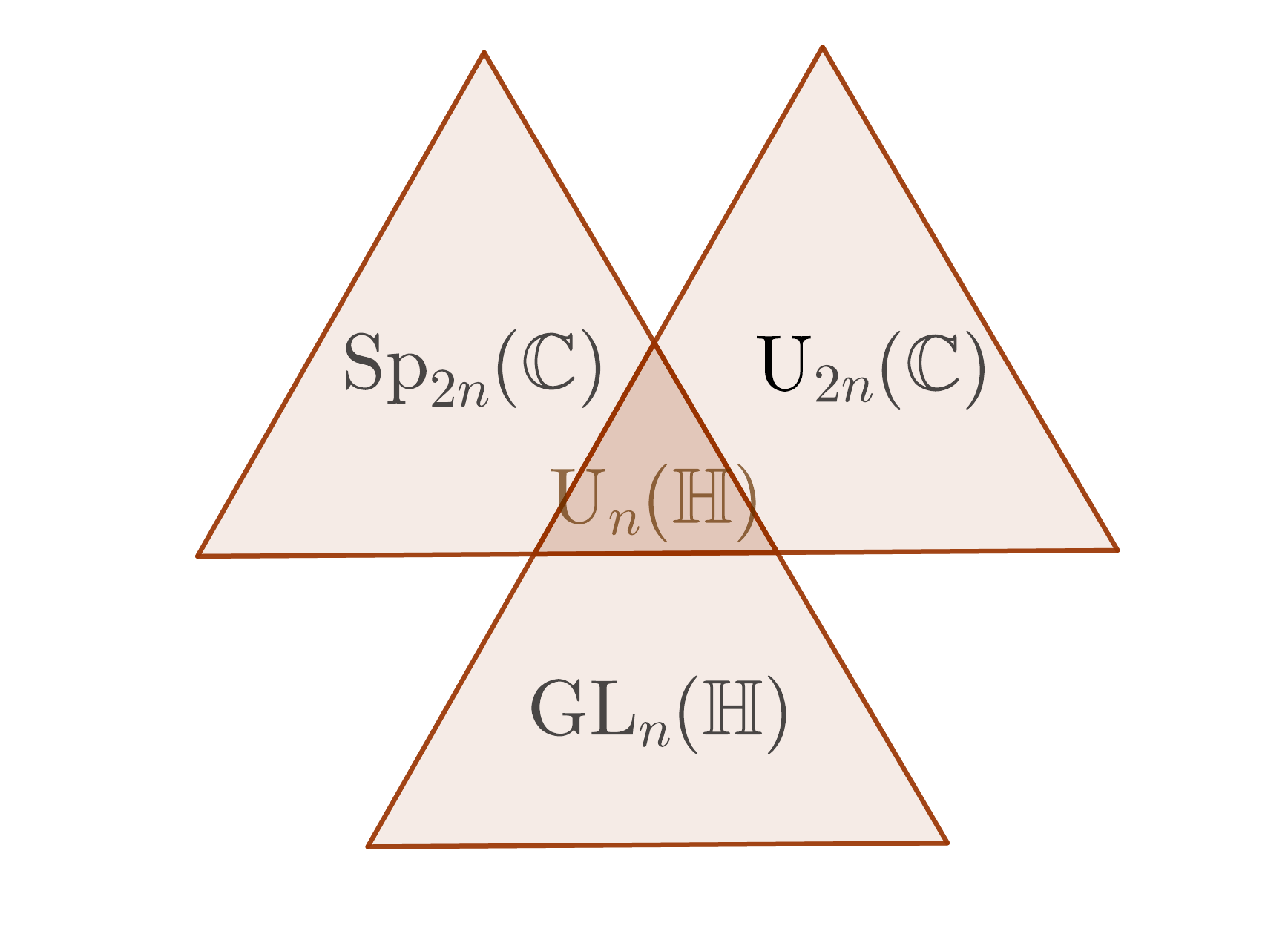}

\caption{Hyperkähler trilogy for linear groups}
\end{figure}

The group $\mathrm{U}_n(\H)$ corresponds to quaternionic scalar products. We will see below that a quaternionic scalar product $Q$ can be written
$$Q=h-\omega_\C J,$$ where $h$ is a hermitian, $\omega_\C$ a complex symplectic structure (both with respect to $I$) and $J$ a complex structure (from the quaternionic structure). This equation illustrates the hyperkähler trilogy.

As for Kähler manifolds, we could give three equivalent definitions for hyperkähler manifolds, but we restrict to the Riemannian viewpoint:
\begin{definition}
A \textbf{hyperkähler manifold}, \emph{HK manifold} for short, is a Riemannian manifold $(M,g)$ with three orthogonal covariant constant automorphisms $I,J,K\in \End(TM)$ satisfying the quaternionic relations $I^2=J^2=K^2=IJK=-\id$.
\end{definition}

We see that a hyperkähler manifold has several Kähler structures which together equip the tangent space with a quaternionic structure. From the definition we get three Kähler structures $(g,I), (g,J)$ and $(g,K)$, but there are much more:
a linear combination $\alpha I+\beta J+\gamma K$ is a complex structure iff $(\alpha I+\beta J+\gamma K)^2=-\id$ which is equivalent to $\alpha^2+\beta^2+\gamma^2=1$, which defines a sphere.

Therefore, a hyperkähler manifold has a 1-parameter family of Kähler structures, indexed by $\C P^1$.

\begin{exo}
Formulate the equivalent ways to define a hyperkähler manifold from the trilogy of groups.
\end{exo}

We mentioned earlier that hyperkähler structures are the ``strongest'' geometric structures on manifolds. This is true in the following sense: the \emph{Berger classification} gives a complete list of all possible holonomies of Riemannian manifolds. Recall that the holonomy is the monodromy of the Levi-Civita connection in the tangent bundle. In general, the holonomy group is $\mathrm{O}(n)$. Any reduction of this structure group corresponds to some geometric structure:
\begin{itemize}
	\item $\mathrm{SO}(n)$ corresponds to an orientation,
	\item $\mathrm{U}(n)$ corresponds to a Kähler structure.
\end{itemize}
The smallest of all groups in Berger's list is $\mathrm{U}_n(\H)$ which corresponds, as you can guess, to hyperkähler manifolds.

\begin{example}
The simplest example is the linear case $M=\H$. The three complex structures given by $i, j$ and $k$ can be seen by the different identifications between $\H$ and $\C^2$:
\begin{align*}
q=x_0+ix_1+jx_2+kx_3 &= (x_0+ix_1)+(x_2+ix_3)j\\
&= (x_0+jx_2)+(x_3+jx_1)k\\
&= (x_0+kx_3)+(x_1+kx_2)i.
\end{align*}
The corresponding symplectic structures $\omega_I, \omega_J$ and $\omega_K$ can be computed from these expressions. For example we get
$$\omega_I = dx_0\wedge dx_1+dx_2\wedge dx_3.$$
We can consider the quaternionic scalar product $Q=dq\otimes d\bar{q}$ where $\bar{q}=x_0-ix_1-jx_2-kx_3$ is the conjugate. One can compute that
$$Q=dq\otimes d\bar{q}=g-i\omega_I-j\omega_J-k\omega_K = h-\omega_\C j$$
where $h=g-i\omega_I$ is hermitian (with respect to $i$) and $\omega_\C=\omega_J+i\omega_K$ is a complex symplectic form.
This illustrates the hyperkähler trilogy from Proposition \ref{HK-trilogy}.
\end{example}

From this example, we can extract some general facts: the form $\omega_\C=\omega_J+i\omega_K$ is a holomorphic symplectic structure (with respect to $I$). In this contexte, we write $\omega_\R$ for $\omega_I$. On a HK manifold, we have a quaternionic scalar product $Q$ satisfying $$Q=h-\omega_\C J$$ where $h$ is the hermitian structure associated to $I$.

Since all the complex structures are on the same footing, you can change $I$ to any other and redefine $\omega_\C$ and $h$. The only structure which will not move is $Q$. A change of the basic complex structure is called a \emph{hyperkähler rotation} (having the sphere of complex structures in mind).

Other examples of HK manifolds are $\H^n$ and $T^*\C P^n$ (see Example \ref{TCPn} below). Hyperkähler manifolds are much more rigid and rare than Kähler manifolds: in particular \emph{no submanifold of $\H P^n$ (including the whole quaternionic projective space) is HK}.

There are two general methods to construct HK manifolds: the hyperkähler quotient and the twistor space construction.

\medskip
\paragraph{Hyperkähler quotient construction.}
The hyperkähler quotient is very much modeled on the symplectic quotient, i.e. the hamiltonian reduction.

Consider a group $G$ with an action on a HK manifold $(M,\omega_I,\omega_J,\omega_K)$, Hamiltonian with respect to all three symplectic forms. From Section \ref{hamiltonian-reduction}, we get three moment maps $\mu_1,\mu_2$ and $\mu_3$ which we can put together into a vector-valued moment map 
$$\mu: M\to \g^*\otimes \R^3.$$

\begin{thm}
For coadjoint orbits $\mc{O}_1,\mc{O}_2,\mc{O}_3$ which represent a regular value of $\mu$, the quotient
$$M/\!/\!/G := \mu^{-1}(\mc{O}_1,\mc{O}_2,\mc{O}_3)/G$$
is a hyperkähler manifold.
\end{thm}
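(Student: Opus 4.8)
The plan is to build the hyperkähler structure on $M/\!/\!/G$ out of three ordinary Marsden--Weinstein reductions, one for each of $\omega_I,\omega_J,\omega_K$, and then to check that the three reduced structures glue into a single quaternionic structure. As in the symplectic case, $G$-equivariance of the vector-valued moment map $\mu=(\mu_I,\mu_J,\mu_K)$ (Equation \eqref{Noether-thm} applied to each component) makes the level set $P:=\mu^{-1}(\mc{O}_1,\mc{O}_2,\mc{O}_3)$ invariant under $G$, and regularity together with the freeness and properness of the action ensures that $P$ and $M/\!/\!/G=P/G$ are smooth manifolds, just as in the reduction theorem of Section \ref{hamiltonian-reduction}. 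It suffices to treat $\mc{O}_i=\{0\}$; the case of general coadjoint orbits reduces to this by the same shifting argument used for ordinary symplectic reduction over an orbit.

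First I would do the linear algebra at a single point. Fix $p\in P$ and let $W=\g\cdot p\subset T_pM$ be the tangent to the $G$-orbit, so $\dim W=\dim G$ by freeness. The defining property of the moment maps, $\langle (d\mu_L)_p(v),X\rangle=\omega_L(X_M(p),v)$, together with the Kähler identity $\omega_L(X,Y)=g(LX,Y)$ for $L\in\{I,J,K\}$, gives $\ker(d\mu_L)_p=(L\cdot W)^{\perp_g}$, hence
\begin{equation}\label{hk-level-tangent}
T_pP=(IW+JW+KW)^{\perp_g}.
\end{equation}
Since $W\subseteq T_pP$ and $g$ is positive definite, \eqref{hk-level-tangent} forces $W$ to be $g$-orthogonal to $IW$, $JW$ and $KW$, and regularity of $\mu$ forces $IW+JW+KW$ to have dimension $3\dim G$; therefore the quaternionic span of the orbit, $Q_p:=W\oplus IW\oplus JW\oplus KW$, has dimension $4\dim G$. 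I would represent the tangent space of the quotient by the $g$-orthocomplement of the orbit inside the level set,
\begin{equation}\label{hk-horizontal}
T_{[p]}(M/\!/\!/G)\cong \mc{H}_p:=T_pP\cap W^{\perp_g}=Q_p^{\perp_g}.
\end{equation}
The crucial point is that $Q_p$ is a quaternionic subspace of $T_pM$ and $I,J,K$ are $g$-orthogonal, so $\mc{H}_p=Q_p^{\perp_g}$ is again quaternionic: $I,J,K$ preserve it and $g$ restricts to a quaternion-Hermitian inner product on it. Since $g,I,J,K$ and the Kähler forms $\omega_I,\omega_J,\omega_K$ are $G$-invariant, \eqref{hk-horizontal} is equivariant along orbits and these tensors descend to a Riemannian metric $\tilde g$, three orthogonal almost complex structures $\tilde I,\tilde J,\tilde K$ satisfying the quaternion relations, and three $2$-forms $\tilde\omega_I,\tilde\omega_J,\tilde\omega_K$ on $M/\!/\!/G$; the latter are nondegenerate because $g$ is nondegenerate on $\mc{H}_p$ and each $\tilde L$ is an isomorphism of $\mc{H}_p$.

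What remains is to show that each $\tilde\omega_L$ is closed and that each $\tilde L$ is integrable, i.e. that $(M/\!/\!/G,\tilde g,\tilde\omega_L,\tilde L)$ is genuinely Kähler. For $L=I$ I would use that, with respect to the fixed complex structure $I$, the form $\omega_\C=\omega_J+i\omega_K$ is holomorphic symplectic with holomorphic moment map $\mu_\C=\mu_J+i\mu_K\colon M\to\g^*\otimes\C$; regularity of the full $\mu$ implies that $0$ is a regular value of $\mu_\C$, so $N:=\mu_\C^{-1}(0)$ is a $G$-invariant $I$-complex submanifold of $M$, Kähler for $(g|_N,\omega_I|_N,I|_N)$, and that $0$ is a regular value of the restriction $\mu_I|_N$. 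Then $M/\!/\!/G=(\mu_I|_N)^{-1}(0)/G$ is precisely the Kähler reduction of $(N,\omega_I|_N,I|_N)$ by $G$, which is again Kähler --- a classical fact (see e.g. \cite{hitchin1992hyper}) provable directly or via the Kempf--Ness identification with a GIT quotient of the complexified orbit. Hence $\tilde\omega_I$ is closed and $\tilde I$ is integrable. Running the identical argument with $(I,J,K)$ cyclically permuted --- using the $J$-holomorphic form $\omega_K+i\omega_I$ and the $K$-holomorphic form $\omega_I+i\omega_J$ --- shows that $\tilde\omega_J,\tilde\omega_K$ are closed and $\tilde J,\tilde K$ are integrable; all three reductions have underlying manifold $P/G$ and the same metric $\tilde g$, so the structures agree. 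Therefore $(M/\!/\!/G,\tilde g,\tilde I,\tilde J,\tilde K)$ is a hyperkähler manifold, with holomorphic symplectic form in complex structure $\tilde I$ induced by $\omega_\C$.

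The main obstacle is the integrability of the reduced almost complex structures: the pointwise bookkeeping of \eqref{hk-level-tangent}--\eqref{hk-horizontal} and the verification of the quaternion relations are routine, and closedness of each $\tilde\omega_L$ is just Marsden--Weinstein, but there is no algebraic reason why an almost complex structure obeying the quaternion relations should integrate. The device that rescues us is to view one pair of Kähler forms as a holomorphic symplectic form and to invoke that Kähler reduction preserves the Kähler condition; a small point of care is that a single $\mu_L$ need not have $0$ as a regular value even when the triple $\mu$ does, so one must check --- as above --- that the relevant holomorphic moment map and its restriction are nonetheless submersive at $0$.
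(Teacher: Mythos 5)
Your argument is essentially the standard Hitchin--Karlhede--Lindstr\"om--Ro\v{c}ek proof, and it is also exactly the route the paper gestures at: the text states the theorem without proof and only records the two-step recipe of splitting $\mu=(\mu_\R,\mu_\C)$, observing that $\mu_\C^{-1}(\mc{O}_\C)$ is a complex (hence K\"ahler) submanifold for the structure $I$, and then performing the ordinary Hamiltonian reduction, together with the Kempf--Ness identifications in Equation \eqref{HK-quotient-equality}. Your pointwise bookkeeping is sound: $\ker(d\mu_L)_p=(LW)^{\perp_g}$ is correct, $W\subset T_pP$ forces the four summands $W,IW,JW,KW$ to be mutually $g$-orthogonal (using $g(Lu,L'v)=\pm g(u,L''v)$ for the quaternion relations), the horizontal space is the quaternionic orthocomplement, and the integrability of each reduced complex structure via the $L$-holomorphic symplectic form and K\"ahler reduction is precisely the device the paper has in mind. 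One small point you handle a bit loosely but harmlessly: regularity of the triple $\mu$ along the level set only gives submersivity of $\mu_\C$ at points of $P$ (since $\ker d\mu_\C=(JW+KW)^{\perp_g}$ there), so $N=\mu_\C^{-1}(\mc{O}_\C)$ is a smooth complex submanifold only in a neighborhood of $P$; that is all the K\"ahler reduction needs, but you should say it that way rather than claiming $0$ is a regular value of $\mu_\C$ outright.

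The one step I would not let stand as written is the first one: ``the case of general coadjoint orbits reduces to $\mc{O}_i=\{0\}$ by the same shifting argument used for ordinary symplectic reduction.'' The symplectic shifting trick replaces $M$ by $M\times\mc{O}^-$, which works because a coadjoint orbit is itself a symplectic Hamiltonian $G$-space; for a hyperk\"ahler reduction the shift space would have to carry a hyperk\"ahler structure with a trihamiltonian $G$-action inducing the given triple of orbits as its moment image, and generic coadjoint orbits of a compact group do not provide this. The clean general statement (and the one actually used everywhere in the paper --- the $\mathbb{S}^1$-example and the gauge-theoretic quotient at level zero) is for levels $(\zeta_1,\zeta_2,\zeta_3)$ fixed by the coadjoint action, where your direct argument applies verbatim with $P=\mu^{-1}(\zeta)$. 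So either restrict to central levels, or supply a genuine argument for the orbit case; as it stands that sentence is the only real gap in an otherwise correct proof.
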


Concentrating on one complex structure $I$, we can split up $\mu$ into $\mu_\R=\mu_1$ and $\mu_\C=\mu_2+i\mu_3$:
$$\mu=(\mu_\R,\mu_\C):M\to \g^*\oplus \g^*\otimes \C.$$
The map $\mu_\C$ is in fact holomorphic and corresponds to the moment map of the action of the complexified group $G^\C$ on $(M,\omega_\C)$. The HK quotient can then be computed in two steps:
\begin{enumerate}
	\item Compute $\mu_C^{-1}(\mc{O}_\C)$ which is Kähler (complex submanifold of a Kähler manifold),
	\item Perform the Hamiltonian reduction with respect to the $G$-action: 
	$$\mu_\C^{-1}(\mc{O}_\C)\sslash G = \left(\mu_\R^{-1}(\mc{O}_1)\cap \mu_\C^{-1}(\mc{O}_\C)\right)/G=M/\!/\!/G.$$
\end{enumerate}

By the Kempf--Ness theorem (which works also in the Kähler setting), we get
\begin{equation}\label{HK-quotient-equality}
\boxed{M/\!/\!/G = \mu_\C^{-1}(\mc{O}_\C)\sslash G = \mu_\C^{-1}(\mc{O}_\C)/_{GIT}\, G^\C = M\sslash G^\C}
\end{equation}
where the last quotient is the holomorphic hamiltonian reduction. All these equalities are very helpful to compute HK quotients as we shall see below.

\begin{example}\label{TCPn}
Consider the action of $G=\mathbb{S}^1$ on $\C^n\oplus (\C^n)^*=T^*(\C^n)$ given by $\lambda.(z,\xi) = (\lambda z,\lambda^{-1}\xi)$. One checks, using for example an identification $\H^n\cong T^*(\C^n)$, that
$$\omega_\R = \frac{i}{2}(dz\wedge d\bar{z}+d\xi\wedge d\bar{\xi}) \text{ and } \omega_\C = dz\wedge d\xi,$$
where we use the short-hand notation $dz\wedge d\bar{z}$ for $\sum_i dz_i\wedge d\bar{z}_i$ and similar for $\xi$.

To $\theta\in\R=\mathrm{Lie}(\mathbb{S}^1)$, the associated vector field is $X_\theta(z,\xi) = \binom{i\theta z}{-i\theta\xi}$. Hence
\begin{align*}
i_{X_\theta}\omega_1(\delta z, \delta \xi) &= \frac{i}{2}\left((i\theta z \delta\bar{z}+i\theta\bar{z}\delta z)+(-i\theta\xi\delta\bar{\xi}-i\theta\bar{\xi}\delta\xi)\right) \\
&=\delta\left(-\frac{\theta}{2}(\lvert z \rvert^2-\lvert \xi\rvert^2)\right)
\end{align*}
So $\mu_\R(z,\xi)=-\frac{1}{2}\left(\lvert z \rvert^2-\lvert \xi\rvert^2)\right)$. Further, we have
$$i_{X_\theta}\omega_\C(\delta z,\delta \xi) = i\theta z \delta \xi+i\theta \xi \delta z = \delta\left(i\theta z\xi\right),$$
so $\mu_\C(z,\xi) = iz\xi=i\xi(z)$.

Finally, the reduction over $(-1/2,0)$ gives
$$T^*\C^n/\!/\!/\mathbb{S}^1 = \{(z,\xi)\mid \xi(z)=0, \lvert z \rvert^2-\lvert \xi\rvert^2=1\}/\mathbb{S}^1 \cong T^*(\C P^{n-1}).$$
The last identification needs some reflection. For $\xi=0$ we simply get $\C P^{n-1}$. For $\xi\neq 0$, the condition $\xi(z)=0$ shows that $\xi$ is a cotangent vector to $z$ considered as a point in the projective space.

Without any precise computation, we could also compute $$T^*\C^n/\!/\!/\mathbb{S}^1 = T^*\C^n \sslash \C^* = T^*(\C^n/_{GIT}\, \C^*) = T^*(\C P^{n-1}),$$
where we used Equation \eqref{HK-quotient-equality} and the equality $T^*(X/_{GIT}\, G)=T^*X\sslash G$ (see Exercise \ref{cotang-group-quotient}).

One Kähler structure of $T^*(\C P^{n-1})$ comes from $\mu_\C^{-1}(0)\sslash\mathbb{S}^1$. To get all the others, we have to consider $\mu_\C^{-1}(\alpha)\sslash\mathbb{S}^1$ with $\alpha\in \C^*$. These hamiltonian reductions are affine bundles over $\C P^{n-1}$ with associated vector bundle $T^*(\C P^{n-1})$.
\end{example}

\medskip
\paragraph{Twistor construction.}
Another way to construct HK manifolds, and to get a pictorial approach to them, is the twistor space construction. The basic idea is to gather all Kähler structures together in one slightly larger space.

We have seen that on a HK manifold $M$, there is a 1-parameter family of Kähler structures indexed by $\C P^1$. Consider $$Z=M\times \C P^1,$$
equipped with the almost-complex structure $I(m,\lambda)=I_\lambda(m)\oplus I_0$, where $I_0$ is the (unique) standard complex structure on $\C P^1$ and $I_\lambda$ the complex structure on $M$ associated to $\lambda\in \C P^1$.

It turns out that $(Z,I)$, the so-called \textbf{twistor space}, is a complex manifold (i.e. that $I$ is integrable). Further the map to the second factor $Z\to \C P^1$ is holomorphic. Note that as complex manifold, $Z$ is not a direct product between $M$ and $\C P^1$ (only as smooth manifolds). 

We draw the following picture for the twistor space $Z$: it is a $M$-bundle over $\C P^1$ where all fibers are diffeomorphic (to $M$), but not biholomorphic (since the complex structure may change). Diametrical opposite points correspond to conjugated complex structures ($I$ becomes $-I=\bar{I}$).

\begin{center}
\begin{tikzpicture}[scale=1.2]
	\draw (0,0) circle (1cm);
	%\draw [domain=0:1] plot (\x, {\x /3}) ; 
\draw [domain=180:360] plot ({cos(\x)},{sin(\x)/3});
	\draw [domain=0:180, dotted] plot ({cos(\x)},{sin(\x)/3});
	\draw [fill=white] (0,1) circle (0.04);
	\draw [fill=white] (0,-1) circle (0.04);
	\draw [fill=white] (1,0) circle (0.04);
	\draw (0,-1.25) node {$(M,I)$};  
	\draw (0,1.2) node {$(M,-I)$};
	\draw (1.55,0) node {$(M,J)$};
\end{tikzpicture}
\end{center}

On $Z$, there is a real structure, i.e. an involution given by $$r(m,\lambda)=(m,-1/\bar{\lambda}).$$ Note that $-1/\bar{\lambda}$ is the diametral opposite point to $\lambda\in \C P^1$.

We wish to recover the HK manifold $M$ from $Z$. Any fiber of $p:Z\to \C P^1$ is diffeomorphic to $M$, but this does not recover the hyperkähler structure. For this, we need the information of all fibers. The idea is simply that $M$ is embedded into the space of sections via the constant section $(m,\lambda)$ with $m\in M$ fixed and $\lambda \in \C P^1$ varying.

The important observation is the following:
\begin{prop}
The holomorphic sections of $Z\to \C P^1$ which are invariant under the real structure $r$ are the constant ones.
\end{prop}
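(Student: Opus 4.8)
\emph{Proof proposal.} I would start by pinning down the shape of a section. As $p\colon Z\to\C P^1$ is the projection to the second factor, every section has the form $s(\lambda)=(f(\lambda),\lambda)$ for a smooth map $f\colon\C P^1\to M$, and the twistor complex structure $I(m,\lambda)=I_\lambda(m)\oplus I_0$ makes $s$ holomorphic exactly when $\mathrm df_\lambda\colon(T_\lambda\C P^1,I_0)\to(T_{f(\lambda)}M,I_\lambda)$ is complex linear for every $\lambda$, i.e. when $f$ solves a $\lambda$-twisted Cauchy--Riemann equation; and with $\sigma(\lambda)=-1/\bar\lambda$ the antipodal map, $r$-invariance of $s$ amounts to $f\circ\sigma=f$. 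The easy half is then immediate: for fixed $m$ the image $\{m\}\times\C P^1$ of $\lambda\mapsto(m,\lambda)$ has $I$-stable tangent space $0\oplus T\C P^1$, so it is a complex submanifold, the section is holomorphic, and it is visibly $r$-invariant. So it remains to prove that a real holomorphic section is necessarily constant in the $M$-direction.

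The heart of the matter is the vertical normal bundle $N_s:=s^*(\ker\mathrm dp)$, a holomorphic bundle on $\C P^1$ of rank $2k:=\dim_\C M$, which I would analyze in three steps. \emph{(i) Its degree.} Because $\sigma$ reverses orientation on $\C P^1$, the reality relation $f\circ\sigma=f$ forces $f_*[\C P^1]=0$ in $H_2(M;\mathbb{Q})$, so $s(\C P^1)$ is rationally homologous in $Z$ to a twistor line $L_m=\{m\}\times\C P^1$; combined with $TZ|_{L_m}\cong T\C P^1\oplus N_{L_m}$ and Hitchin's computation $N_{L_m}\cong\mathcal O(1)^{\oplus 2k}$, the equality $\langle c_1(TZ),[s(\C P^1)]\rangle=\langle c_1(TZ),[L_m]\rangle$ gives $\deg N_s=2k$. \emph{(ii) Its splitting type.} Since $\mathcal O(1)^{\oplus 2k}$ is the open (balanced) splitting type among rank-$2k$, degree-$2k$ bundles on $\C P^1$, semicontinuity yields $N_s\cong\mathcal O(1)^{\oplus 2k}$ for every real holomorphic $s$ near a twistor line; for the remaining real sections one invokes that $\mathrm dr$ endows $N_s$ with a quaternionic structure and that the $\mathcal O(2)$-twisted fibrewise holomorphic symplectic form together with positivity of the hyperk\"ahler metric again force the balanced splitting, following Hitchin \cite{hitchin1992hyper}. \emph{(iii) Rigidity.} As $H^1(\C P^1,\mathcal O(1)^{\oplus 2k})=0$, the space of holomorphic sections near such an $s$ is smooth of complex dimension $h^0=4k$, on which the real structure cuts out a totally real submanifold of real dimension $4k=\dim_\R M$; inside it the family $\{L_m\}_{m\in M}$ is a submanifold of the same dimension, hence open, and furthermore $s$ and $L_{f(\lambda_0)}$ both pass through the two distinct points $(f(\lambda_0),\lambda_0)$ and $(f(\lambda_0),\sigma\lambda_0)$, while two holomorphic sections with normal bundle $\mathcal O(1)^{\oplus 2k}$ agreeing at two points of $\C P^1$ must coincide — so every real holomorphic section near the twistor family is itself a twistor line.

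Globalizing, the locus of real holomorphic sections is (by (ii)--(iii)) everywhere a smooth $4k$-manifold in which $\{L_m\}\cong M$ is both open and closed — closed because a limit of constant sections is constant — hence a union of connected components; as $M$ is connected this component is $\{L_m\}$ itself, and the proposition follows once one also rules out components of the section space disjoint from the twistor family. The step I expect to be the genuine obstacle is exactly the pair of claims where softness is not enough: the balanced-normal-bundle statement in (ii) for sections far from a twistor line, and the absence of such stray components. Both are settled by exploiting the full twistor structure of $Z$ — the $\mathcal O(2)$-twisted holomorphic symplectic form along the fibres and the positive-definiteness of the underlying hyperk\"ahler metric — as carried out by Hitchin \cite{hitchin1992hyper}. (When $M$ is non-compact, as for the Higgs bundle moduli space, one restricts at the outset to sections with controlled behaviour at infinity, which changes neither the reduction nor the local argument above.)
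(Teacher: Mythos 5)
Your setup, the degree count in (i), and the rigidity argument in (iii) are fine: reality kills $f_*[\C P^1]$, Künneth gives $[s(\C P^1)]=[L_m]$ rationally, so $\deg N_s=2k$; and near a horizontal section the evaluation map at a pair of antipodal fibres is a local biholomorphism on the section space, so a real section close to the twistor family, passing through $(f(\lambda_0),\lambda_0)$ and $(f(\lambda_0),\sigma\lambda_0)$, must equal $L_{f(\lambda_0)}$. But the two steps you yourself flag as "the genuine obstacle" are exactly where the mathematical content of the proposition lies, and your proposed way of settling them does not work. The fibrewise $\mathcal{O}(2)$-twisted symplectic form only gives $N_s\cong N_s^*\otimes\mathcal{O}(2)$, i.e.\ the splitting degrees pair up as $a\leftrightarrow 2-a$, and the antiholomorphic lift of $r$ only pairs summands of equal degree; neither excludes splittings such as $\mathcal{O}(3)\oplus\mathcal{O}(-1)\oplus\mathcal{O}(1)^{\oplus 2k-2}$. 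Nor does "positivity of the hyperkähler metric" act on $N_s$ for a non-horizontal section: the metric is not part of the holomorphic data of $Z$ along an arbitrary section, so there is no induced positivity to invoke. The reference you lean on (Hitchin's hyperkähler survey) does not carry out either claim: the twistor theorem there goes in the other direction (it builds a hyperkähler manifold out of the real sections \emph{with} normal bundle $\mathcal{O}(1)\otimes\C^{2k}$), and it does not prove that every real holomorphic section of the twistor space of a given $M$ is horizontal, nor that the real section space has no components away from the horizontal family.

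That this cannot be fixed by soft structural arguments is shown by closely related examples: for the Deligne--Hitchin moduli space, which is how the twistor space of the Higgs bundle moduli space is usually presented, there exist real holomorphic sections that are \emph{not} twistor lines (work of Heller and collaborators), so any complete proof must use a genuinely quantitative input tied to the specific situation --- e.g.\ an energy identity showing that for a real holomorphic $s(\lambda)=(f(\lambda),\lambda)$ a suitable Dirichlet-type energy of $f$ equals the topological pairing you computed to be zero in (i), forcing $df\equiv 0$, or a normal-bundle restriction built into the statement. (For what it is worth, the paper itself states the proposition without proof and verifies it only in the flat example $\H$, where every section of $\mathcal{O}(1)\oplus\mathcal{O}(1)$ automatically has balanced normal bundle, so the difficulty you ran into is invisible there.) As it stands, your argument proves the correct statement only locally around the twistor family, plus the component containing it; the global claims in (ii) and the exclusion of stray components remain unproved, and the cited sources do not fill them.
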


This allows us to recover $M$ from the twistor space. The real and holomorphic sections are called \textbf{twistor lines}. 

We can revert the whole procedure to construct HK manifolds from twistor spaces. This goes as follows (see \cite[Theorem 3.3]{HKLR}):
\begin{thm}
Let $Z$ be a complex $(2n+1)$-dimensional manifold with a holomorphic map $\pi: Z\to \C P^1$ such that
\begin{enumerate}
	\item $\pi$ allows a family of holomorphic sections with normal bundle $\mc{O}(1)\otimes \C^{2n}$,
	\item there is a holomorphic symplectic structure in each fiber, depending quadratically\footnote{Abstractly, it is a section of $\Lambda^2T^*F\otimes \mc{O}(2)$ where $F$ denotes the fiber.} on $\lambda\in \C P^1$,
	\item there is a compatible real structure on $Z$ inducing the antipodal map on $\C P^1$.
\end{enumerate}
Then the space of real and holomorphic sections is a hyperkähler manifold with twistor space $Z$.
\end{thm}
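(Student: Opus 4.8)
The plan is to realize the hyperkähler manifold as the parameter space $M$ of \emph{real holomorphic sections} of $\pi$, following Hitchin--Karlhede--Lindström--Roček. The first step is Kodaira deformation theory applied to a single holomorphic section $s$: by hypothesis (1) its normal bundle is $N_s\cong\mc{O}(1)\otimes\C^{2n}$, so $H^1(\C P^1,N_s)=H^1(\mc{O}(1))^{\oplus 2n}=0$; hence the local family of holomorphic sections is unobstructed and forms a complex manifold $\hat M$ of dimension $h^0(N_s)=4n$, with $T_s\hat M\cong H^0(\C P^1,N_s)$. The real structure $r$ of hypothesis (3) acts on $\hat M$, and I would take $M$ to be its fixed-point set, a real manifold of dimension $4n$ with $T_sM\otimes\C\cong H^0(\C P^1,N_s)$; this is the candidate hyperkähler manifold.

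Next I would build the $\C P^1$-family of complex structures. For $\lambda\in\C P^1$ consider the evaluation $\mathrm{ev}_\lambda\colon M\to Z_\lambda:=\pi^{-1}(\lambda)$, $s\mapsto s(\lambda)$. Since $H^0(N_s\otimes\mc{O}(-2))=H^1(N_s\otimes\mc{O}(-2))=0$, a holomorphic section is determined by its values at the two antipodal points $\lambda$ and $r(\lambda)$, and such a section is automatically $r$-invariant by uniqueness; counting real dimensions ($4n=4n$) this makes $\mathrm{ev}_\lambda$ a local diffeomorphism. Pulling back the complex structure of the fibre $Z_\lambda$ gives an integrable almost-complex structure $I_\lambda$ on $M$, whose $(0,1)$-tangent space at $s$ is the subspace of $H^0(N_s)$ of sections vanishing at $\lambda$. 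Using the $\SU(2)$-action on $H^0(\C P^1,\mc{O}(1))\cong\C^2$, the subspace singled out by $\lambda$ traces a $2$-sphere, and a direct check identifies $\{I_\lambda\}$ with a quaternionic family $\{aI+bJ+cK:a^2+b^2+c^2=1\}$ satisfying $I_{r(\lambda)}=-I_\lambda$.

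Then I would extract the metric from hypothesis (2). For $U,V\in T_sM$, evaluating the fibrewise symplectic form along $s$ produces a section $\lambda\mapsto\omega_\lambda(U(\lambda),V(\lambda))$ of $\mc{O}(2)$ over $\C P^1$, i.e. (in an affine chart) $\omega_0(U,V)+\lambda\,\omega_1(U,V)+\lambda^2\,\omega_2(U,V)$ for three $2$-forms on $M$, with $\omega_0,\omega_2$ complex conjugate and $\omega_1$ real by compatibility of $r$. Since each $\omega_\lambda$ is of type $(2,0)$ for the complex structure of $Z_\lambda$, its pullback $\mathrm{ev}_\lambda^*\omega_\lambda$ is of type $(2,0)$ for $I_\lambda$; expanding in $\lambda$ and matching types identifies $\omega_1$ with a $(1,1)$-form $\omega_I$ for $I$ and $\omega_0\sim\omega_J+i\omega_K$ with its $(2,0)$-part, and similarly for $J,K$. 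One sets $g(U,V):=\omega_I(U,IV)$, checks it is symmetric, positive definite (using that $r$ is a genuine, not indefinite, compatible real structure so that the induced $L^2$-type pairing is definite), and verifies $g=\omega_J(\cdot,J\cdot)=\omega_K(\cdot,K\cdot)$; closedness of $\omega_I,\omega_J,\omega_K$ is inherited because $\mathrm{ev}_\lambda$ pulls back the closed fibrewise symplectic forms. By the standard fact that a compatible closed $2$-form together with an integrable compatible almost-complex structure forces that structure to be Levi-Civita parallel, $I,J,K$ are simultaneously covariant constant, so $(M,g,I,J,K)$ is hyperkähler. Finally $(s,\lambda)\mapsto s(\lambda)$ identifies $M\times\C P^1$ locally biholomorphically with $Z$, intertwining the two projections to $\C P^1$ and the two real structures, so $Z$ is the twistor space of $M$ and its $r$-invariant holomorphic sections are exactly the constant ones, as asserted by the Proposition preceding the statement.

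The step I expect to be the real obstacle is not the closedness of $\omega_I,\omega_J,\omega_K$, but the verification that the \emph{single} metric $g$ is simultaneously compatible with \emph{every} complex structure in the $\C P^1$-family and is positive definite --- equivalently, that the three coefficient forms $\omega_0,\omega_1,\omega_2$ genuinely assemble as the $(2,0)$, $(1,1)$ and $(0,2)$ pieces of one quaternionic package across the whole sphere rather than being three unrelated closed forms. This is precisely where the normal-bundle hypothesis $N_s\cong\mc{O}(1)\otimes\C^{2n}$ and the \emph{quadratic} dependence of the fibrewise symplectic form on $\lambda$ enter in an essential way, well beyond the dimension counts; once it is in place the parallelism of $I,J,K$ and hence the hyperkähler property follow formally. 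All the analytic details are carried out in \cite{HKLR}.
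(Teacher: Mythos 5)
The paper does not actually prove this theorem: it quotes it from \cite[Theorem 3.3]{HKLR} and only adds two clarifying remarks (uniqueness of the real holomorphic section through $P$ and $r(P)$, and the quadratic expression $\omega(\lambda)=\lambda\omega_\C+i\omega_\R+\lambda^{-1}\bar{\omega}_\C$ with its reality constraint). Your sketch reconstructs exactly the HKLR argument that this citation points to, and as a sketch it is correct: Kodaira theory with $H^1(\C P^1,\mc{O}(1)^{\oplus 2n})=0$ gives the unobstructed $4n$-dimensional family of sections; the vanishing of $H^0$ and $H^1$ of $N_s\otimes\mc{O}(-2)$ gives the unique section through two points and hence the paper's first remark and the local diffeomorphisms $\mathrm{ev}_\lambda$; the kernel-of-evaluation description of the $(0,1)$-spaces gives the sphere of integrable complex structures; and the three coefficients of the $\mc{O}(2)$-valued fibrewise form are the $\omega_I,\omega_J,\omega_K$ of the paper's second remark, with closedness plus integrability forcing $\nabla I=\nabla J=\nabla K=0$ by the standard Kähler argument. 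The one genuinely delicate point, which you rightly flag, is positive-definiteness of $g$: it does not follow formally from hypotheses (1)--(3) together with the real structure --- as stated, the construction can produce an indefinite (pseudo-hyperkähler) metric, and one either adds definiteness as a hypothesis or restricts to the open set of real sections on which $g>0$; your one-line justification via the real structure being ``genuine'' is the only hand-wavy spot. Apart from that caveat, your route is the same as the one the paper delegates to \cite{HKLR}.
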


Some remarks on the conditions:
\begin{itemize}
	\item The requirement to have normal bundle $\mc{O}(1)\otimes \C^{2n}$ roughly means that any 2 points (not in the same fiber) define a unique holomorphic section, since sections of $\mc{O}(1)$ are just affine functions. Hence, any point $P\in Z$ defines a unique real holomorphic section, passing through $P$ and $r(P)$.
	\item The holomorphic symplectic form can be written $\omega(\lambda) = \lambda\omega_\C+ i\omega_\R+\lambda^{-1}\bar{\omega}_\C$, which satisfies $\overline{\omega(-1/\bar\lambda)}=\omega(\lambda)$, the compatibility condition between the real structure and $\omega$.
\end{itemize}

\begin{example}
Let us analyze the twistor space of $M=\H$. 

We claim that $Z$ is the total space of the bundle $\mc{O}(1)\oplus\mc{O}(1)$ over $\C P^1$. The fiber has complex dimension 2 and condition (1) above is trivially satisfied. 

A section is of the form $s(\lambda)=(a\lambda+b,c\lambda+d)$. Using these coordinates, the real structure is given by $r(\lambda,a,b,c,d)=(-1/\bar\lambda,\bar{d},-\bar{c},-\bar{b},\bar{a})$. You can check that $r^2=\id$.
Another way to understand $r$ is to compose the antipodal map $\lambda\mapsto -1/\bar\lambda$ with the anti-involution $(z,w)\mapsto (\bar{w},-\bar{z})$ where $z, w$ are the coordinates of the fibers.
Indeed, we get 
$$r(s) = r(a\lambda+b,c\lambda+d)=(-\bar{c}\lambda^{-1}+\bar{d},\bar{a}\lambda^{-1}-\bar{b})=(\bar{d}\lambda-\bar{c},-\bar{b}\lambda+\bar{a})$$
where we used the transition function $\lambda\mapsto \lambda^{-1}$ between the two charts of $\C P^1$.

Hence, a section is $r$-invariant iff $a=\bar{d}$ and $b=-\bar{c}$. So the twistor lines are given by
$$s(\lambda) = (a\lambda+b,-\bar{b}\lambda+\bar{a}).$$
This corresponds to the matrix representation of quaternions, where a quaternion $x_0+ix_1+jx_2+kx_3$ with $a=x_0+ix_1$ and $b=x_2+ix_3$ is represented by 
$$q\mapsto \begin{pmatrix}a & -\bar{b}\\ b& \bar{a}\end{pmatrix}.$$
%\textcolor{red}{clarify the link, $a$ and $b$ are not the same as above...}
\end{example}

Two other examples without details, presented only through their twistor space picture:
\begin{itemize}
	\item The twistor space of $T^*(\C P^n)$: all fibers, apart from $\pm i$, are biholomorphic to an affine bundle over $\C P^n$ with associated vector bundle $T^*(\C P^n)$.
	\item Coadjoint orbits of complex simple Lie groups are hyperkähler. The maximal coadjoint orbit can be identified with $G/H$ where $H$ is the Cartan group of $G$. Again, all fibers are biholomorphic, apart from the fibers over $\pm i$ where we see the cotangent bundle to the flag variety $T^*(G^c/B)$ (where $G^c$ denotes the compact form of $G$ and $B$ a Borel subgroup of $G^c$).
\end{itemize}

A source for HK structures with a twistor space where all fibers, apart over two opposite points, are biholomorphic is the following (see \cite{feix} and \cite{kaledin}):
\begin{thm}[Feix--Kaledin]\label{feix-kaledin-thm}
If $M$ is a Kähler manifold, then there is a neighborhood of the zero-section $M\subset T^*M$ which has a $\C^*$-invariant hyperkähler structure, where the $\C^*$-action on $T^*M$ is given by $\lambda(m,v)=(m,\lambda v)$.
\end{thm}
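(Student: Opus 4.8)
The plan is to follow Feix's twistor-space construction, using the twistor theorem stated above to produce the hyperkähler metric; Kaledin's order-by-order construction of the metric gives an alternative route, which I sketch at the end. I note first that the statement is meant with the hypothesis that the Kähler metric on $M$ is \emph{real-analytic} (this is the setting of both Feix's and Kaledin's theorems, and it holds automatically when, e.g., $M$ is projective or a real-analytic Kähler potential is available); I assume this.

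First I would complexify. Since the metric is real-analytic, the real-analytic manifold underlying $M$ admits a complexification $M_\C$: a complex manifold of complex dimension $2n$, $n=\dim_\C M$, containing $M$ as a totally real submanifold and carrying an antiholomorphic involution $\sigma$ with fixed locus $M$. Near $M$ it is biholomorphic to a neighbourhood of the diagonal in $M\times\overline M$, hence carries two transverse holomorphic foliations $\mc{F}$, $\overline{\mc{F}}$ of rank $n$ (the fibres of the two projections), swapped by $\sigma$. Writing the Kähler form locally as $i\partial\bar\partial\phi$ with $\phi$ real-analytic and replacing $\bar z$ by an independent variable $w$, the resulting holomorphic function $\Phi(z,w)$ extends over $M_\C$, and $\Omega:=\sum_{i,j}\frac{\partial^2\Phi}{\partial z_i\partial w_j}\,dz_i\wedge dw_j$ is a well-defined holomorphic symplectic form on $M_\C$ (the ambiguity $\phi\mapsto\phi+f(z)+\overline{g(z)}$ does not change $\Omega$), for which both $\mc{F}$ and $\overline{\mc{F}}$ are Lagrangian. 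By the holomorphic Weinstein theorem, a neighbourhood of a leaf of $\overline{\mc{F}}$ is then symplectomorphic to a neighbourhood of the zero section of $T^*(\text{leaf space})\cong T^*M$.

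Next I would assemble the twistor space $Z\to\C P^1$ from two charts. Over $\C=\C P^1\setminus\{\infty\}$ I would take the fibrewise ``deformation to the normal bundle of a leaf of $\overline{\mc{F}}$'': a $\C^*$-equivariant holomorphic family whose fibre over $\lambda\in\C^*$ is (a neighbourhood in) $M_\C$ and whose fibre over $0$ is $T^*M$ with the fibre-scaling $\C^*$-action, the equivariance using the $\C^*$ that rescales the $\overline{\mc{F}}$-directions, which exists precisely because $\overline{\mc{F}}$ is Lagrangian. Symmetrically, over $\C P^1\setminus\{0\}$ I would use leaves of $\mc{F}$, with fibre over $\infty$ equal to $T^*\overline M$, gluing the two charts over $\C^*$ by the identity on $M_\C$. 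The holomorphic symplectic forms are arranged to patch into a family $\omega(\lambda)=\lambda^{-1}\Omega+i\omega_\R+\lambda\,\overline\Omega$ (a section of $\Lambda^2 T^*_{\mathrm{fib}}\otimes\mc{O}(2)$), and the real structure on $Z$ is $\sigma$ composed with the antipodal map $\lambda\mapsto-1/\bar\lambda$, which interchanges the two charts. With this in hand, conditions (2) and (3) of the twistor theorem hold by construction; condition (1) — normal bundle $\mc{O}(1)\otimes\C^{2n}$ — I would check by an explicit linear computation at the points of the zero section $M\subset T^*M=Z_0$ (where the degeneration is ``linear''), and then extend to a neighbourhood by openness. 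The twistor theorem then yields a hyperkähler manifold $N$, the space of real holomorphic sections near the zero section, with twistor space $Z$. Since a section is determined by its value at $\lambda=0$ together with its $\sigma$-image, $N$ is an open neighbourhood of the zero section in $T^*M$, the zero section itself being the real sections contained in $M\subset M_\C$ (so that the $\lambda=0$ complex structure of $N$ restricts to that of $M$); and the fibre-scaling $\C^*$ lifts to $Z$ covering the standard action on $\C P^1$, hence acts on $N=T^*M$ by scaling fibres, with $S^1$ acting by isometries and rotating $(I,J,K)$ — the claimed $\C^*$-invariance.

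The hard part will be the third step together with the normal-bundle check: producing the $\C^*$-equivariant degeneration of $M_\C$ to $T^*M$ that patches holomorphically across both $\lambda=0$ and $\lambda=\infty$, carries the $\lambda$-twisted symplectic form, and has the right normal bundle near the zero section. Dually, in Kaledin's approach one builds the hyperkähler metric on a neighbourhood of the zero section as a formal power series in the cotangent fibre variable (equivalently, the $\C^*$-weight); the obstructions at each order vanish thanks to the Kähler identities on $M$, so the real work is the \emph{convergence} of this series, which is exactly where real-analyticity of the Kähler metric is used. Once past that point, everything else — the passage from twistor data to a hyperkähler metric, the identification of the section space with $T^*M$, and the $\C^*$-equivariance — is formal.
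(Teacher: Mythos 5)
The paper does not prove this theorem at all: it is quoted as a deep result with a pointer to Feix \cite{feix} and Kaledin \cite{kaledin}, so there is no ``paper proof'' to compare against, and your outline should be measured against those original arguments. Measured that way, your road map is the right one: you correctly restore the real-analyticity hypothesis on the Kähler metric (which the statement above suppresses), the complexification $M_\C$ with its two transverse Lagrangian foliations and the holomorphic symplectic form obtained by polarizing a local Kähler potential are exactly the starting point of Feix's construction, and the two-chart twistor space over $\C P^1$ with real structure built from $\sigma$ composed with the antipodal map, together with the $S^1$-isometry rotating $J,K$ and the induced $\C^*$-equivariance, is how the statement is actually obtained.

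However, as a proof the proposal has genuine gaps, and they sit exactly where the content of the theorem lies. First, the ``$\C^*$-equivariant degeneration of $M_\C$ to $T^*M$ carrying the $\lambda$-twisted symplectic form and having normal bundle $\mc{O}(1)\otimes\C^{2n}$'' is asserted, not constructed; in Feix's argument this is the heart of the matter, done by attaching to each leaf of the foliation the finite-dimensional space of functions that are affine with respect to a flat connection induced on the leaf by the complexified Kähler structure, and building the $\lambda=0$ fibre (and the verification of the normal-bundle condition) out of the duals of these spaces --- ``deformation to the normal cone plus openness'' is a heuristic for this, not a substitute. Second, the twistor theorem you invoke (Theorem \ref{feix-kaledin-thm}'s companion, the HKLR-type statement) only produces a pseudo-hyperkähler metric on the space of real sections; positive-definiteness near the zero section must be checked separately (in Feix's paper it is deduced along the zero section from the original Kähler metric and then by continuity), and in the Kaledin route the convergence of the formal series is the whole difficulty, as you acknowledge. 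Since these steps are deferred rather than carried out, what you have is a faithful outline of the known proofs rather than a proof.
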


\medskip
\paragraph{Character varieties and Higgs bundles again.}
We will show that the moduli space of Higgs bundles $\mc{M}_H$ is hyperkähler by a quotient construction and describe its twistor space. 

Consider $\mc{A}$, the space of all $G$-connections ($G=\GL_n(\C)$) on a trivial complex bundle $E$ over a Riemann surface $S$ with fixed hermitian structure. The space $\mc{A}$ has a holomorphic symplectic form given by
$$\omega_\C = \int_S \tr \delta A \wedge \delta A.$$
It has also a Riemannian metric given by $$\left\|A\right\| = \int_S \tr (A^{1,0}\wedge A^{1,0 \,*})-\tr (A^{0,1}\wedge A^{0,1 \,*})$$
where we used the complex structure on $S$ to decompose $A$.
Hence $\mc{A}$ is an infinite-dimensional (flat) hyperkähler manifold.

Consider further the space of unitary gauge transformations $\mc{G}$ which acts on $\mc{A}$, preserving the HK structure. From \cite[Section 6.3]{hitchin1992hyper}, we have:
\begin{prop}
The moment maps are given by $\mu_\C(A)=F(A)$ the curvature (Atiyah--Bott) and $\mu_\R(A)=F'-F''$ where $F'$ and $F''$ are the curvatures of the unique unitary connections $\nabla'$ and $\nabla''$ with $(\nabla')^{1,0}=A^{1,0}$, respectively $(\nabla'')^{0,1}=A^{0,1}$.

Further, we have $F'=F''$ iff the metric $h$ is harmonic.
\end{prop}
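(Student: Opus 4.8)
The plan is to compute the two moment maps in turn, then read off the harmonicity statement from a one-line Hodge identity, ignoring throughout (as elsewhere in the text) the analytic subtleties of the infinite-dimensional setting. For $\mu_\C$ there is essentially nothing to do: the form $\omega_\C=\int_S\tr\,\delta A\wedge\delta A$ is literally the Atiyah--Bott form $\hat\omega$ of Section~\ref{atiyah--bott}, now read as a $\C$-bilinear (hence holomorphic) symplectic form on the complex affine space $\mc{A}$, and the unitary gauge group acts by the same infinitesimal formula $A_\xi=[\xi,A]-d\xi=-d_A\xi$, this time with $\xi\in\Omega^0(S,\mathfrak{u}(n))$. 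Hence the computation in the proof of Theorem~\ref{atiyah-bott-thm} applies word for word and yields $\iota_{A_\xi}\omega_\C=\delta\bigl(\int_S\tr(\xi\,F(A))\bigr)$, i.e.\ $\mu_\C(A)=F(A)$ under the pairing $\langle X,\xi\rangle=\int_S\tr(\xi X)$ identifying $(\mathrm{Lie}\,\mc{G})^*\otimes\C$ with $\Omega^2(S,\mathfrak{gl}_n(\C))$.

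For $\mu_\R$, the Kähler moment map for $\omega_\R=\omega_I$, I would first make the Kähler form explicit: decomposing $\delta A=\delta A^{1,0}+\delta A^{0,1}$ with the complex structure of $S$ and letting $I$ act by multiplication by $i$ on the $\mathfrak{gl}_n(\C)$-values, the metric quoted above produces a pairing of the shape $\omega_\R(\delta A,\delta A')=-\int_S\tr\bigl(\delta A\wedge *(\delta A')^{*_h}\bigr)$ (up to an overall sign and constant), with $*$ the Hodge star of $S$ and $*_h$ the adjoint on $\mathfrak{gl}_n(\C)$. I would then run the recipe of Section~\ref{hamiltonian-reduction}: insert the vector field $A_\xi=-d_A\xi$, compute $\omega_\R(A_\xi,\delta A)$, and integrate by parts on $S$ to bring it into the form $\delta(\,\cdot\,)$. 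Here it is convenient to use the unitary--hermitian splitting $\nabla=d_A+\Psi$ (with $d_A$ unitary and $\Psi$ a hermitian $1$-form) and set $\eta:=\Psi^{1,0}-\Psi^{0,1}\in\Omega^1(S,\mathfrak{u}(n))$; one checks directly that the unitary connections of the statement are $\nabla'=d_A+\eta$ and $\nabla''=d_A-\eta$ --- indeed $(\nabla')^{1,0}=\del_A+\Psi^{1,0}=A^{1,0}$ and $(\nabla'')^{0,1}=\delbar_A+\Psi^{0,1}=A^{0,1}$, and a unitary connection is determined by either of its type components --- so that $F'-F''=F(\nabla')-F(\nabla'')=2\,d_A\eta$ (the quadratic term $\eta\wedge\eta$ cancels). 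The integration by parts should then identify the bracketed expression with $\int_S\tr\bigl(\xi\cdot 2\,d_A\eta\bigr)=\int_S\tr\bigl(\xi(F'-F'')\bigr)$, i.e.\ $\mu_\R(A)=F'-F''$.

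For the last sentence, note that on the Riemann surface $S$ the $1$-form $\eta=\Psi^{1,0}-\Psi^{0,1}$ is, up to the factor $i$, the Hodge rotation $*\Psi$ (since $*$ acts by $\mp i$ on $(1,0)$- and $(0,1)$-forms), so $F'-F''=2\,d_A\eta=\pm 2i\,d_A(*\Psi)$; applying $d_A^{*}=-*\,d_A*$ on $1$-forms over a surface gives $d_A^{*}\Psi=\mp\tfrac{i}{2}*(F'-F'')$, and since the Hodge star is invertible, $F'=F''$ if and only if $d_A^{*}\Psi=0$ --- which is exactly the condition that $h$ be a harmonic metric on $(E,\nabla)$ in the sense of Definition~\ref{def-harmonic-bundle} (and notably this requires no flatness of $\nabla$). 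The main obstacle is the Kähler moment-map computation of the second paragraph: conceptually it is a near-verbatim repeat of the Atiyah--Bott argument, but one must pin down mutually compatible conventions for the Hodge star on $S$, the adjoint $*_h$, the sign of $\omega_I=g(I\cdot,\cdot)$, and the sign in the integration by parts, and verify they conspire to produce exactly $F'-F''$ with no stray constant --- a sign slip here would turn ``harmonic'' into its opposite. Everything infinite-dimensional (existence and smoothness of the forms and of $\mu$, regularity of the quotient) is swept under the rug, as in the rest of the paper.
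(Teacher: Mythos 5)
Your proposal is essentially correct, but note that the paper itself does not prove this Proposition at all: it quotes it from \cite[Section 6.3]{hitchin1992hyper} and only adds the remark that one may vary $h$ instead of $A$. So your computational route is genuinely different in that it supplies a self-contained derivation at the paper's level of rigor, in the same spirit as the Atiyah--Bott computation of Section \ref{atiyah--bott}. Your treatment of $\mu_\C$ is indeed a verbatim rerun of that computation; your identification $\nabla'=d_A+\eta$, $\nabla''=d_A-\eta$ with $\eta=\Psi^{1,0}-\Psi^{0,1}$ (skew-hermitian, so both connections are unitary and have the prescribed type components), hence $F'-F''=2\,d_A\eta$, is correct; and the harmonicity statement is fully settled by your observation $\eta=\pm i\,{*}\Psi$ together with $d_A^{*}=-{*}\,d_A\,{*}$ on $1$-forms --- here your worry about a sign slip is harmless, since $F'=F''$ and $d_A^{*}\Psi=0$ are vanishing conditions, so a global sign cannot turn ``harmonic'' into its opposite (a sign would only trade $\mu_\R$ for $-\mu_\R$, a normalization choice). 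The one step you defer, the $\omega_\R$ moment-map computation, does close as you predict: taking $X_\xi=-\nabla\xi=-\bigl(d_A\xi+[\Psi,\xi]\bigr)$ --- be careful that this is the full covariant derivative, not just the unitary part, despite your notation $-d_A\xi$; the hermitian piece $-[\Psi,\xi]$ is indispensable --- only the skew-against-hermitian cross terms survive in $\omega_\R$, and after integration by parts and the identity $-\tr\bigl(\xi[\Psi,{*}\delta a]\bigr)=\tr\bigl(\xi[\delta a,{*}\Psi]\bigr)$ one gets $\iota_{X_\xi}\omega_\R=\delta\bigl(c\int_S\tr(\xi\,d_A({*}\Psi))\bigr)=\delta\bigl(c'\int_S\tr(\xi\,(F'-F''))\bigr)$ for constants depending on the normalizations, which the paper's loosely stated metric leaves open anyway. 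What your route buys is an explicit verification; what the paper's citation buys is Hitchin's careful handling of the conventions and of the infinite-dimensional issues that both you and the paper set aside.
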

In the last statement, instead of varying $A$ in its gauge orbit, we vary the hermitian structure $h$, a trick we have already seen for proving the Corlette--Donaldson theorem.

Therefore, the HK quotient over the zero coadjoint orbit is\footnote{We neglect all difficulties due to the fact that $\mc{A}$ is infinite-dimensional.}
$$\mc{A}/\!/\!/\mc{G}=\{\text{flat harmonic bundles}\}/\mc{G} = \Rep(\pi_1\S,G)$$
where we used the Corlette--Donaldson theorem \ref{corlette-donaldson-thm}, stating that in each $\mc{G}^\C$-orbit of a flat connection, there is a unique (up to $\mc{G}$-gauge) harmonic representative.
We can also use the multiple ways to compute the HK quotient (see Equation \eqref{HK-quotient-equality}): 
$$\mc{A}/\!/\!/\mc{G} = \mu_\C^{-1}(0)/\mc{G}^\C = \Rep(\pi_1\S,G)$$
where we used the Atiyah--Bott theorem \ref{atiyah-bott-thm}.

Anyway, we see that \emph{the character variety $\Rep(\pi_1\S,\GL_n(\C))$ is a hyperkähler manifold!}

Still another way to do the quotient, which is not completely rigorous, is the following. Start from $$\mc{A}/\!/\!/\mc{G}=\mc{A}\sslash\mc{G}^\C\;\;\; \text{(Equation \eqref{HK-quotient-equality})}.$$ Then notice that $\mc{A}=\Omega^1(S,\g) = T^*\Omega^{0,1}(S,\g)$ (see Equation \eqref{cotangent01}). Modulo some stability conditions we then get
$$\mc{A}/\!/\!/\mc{G}=T^*\Omega^{0,1}(S,\g)\sslash\mc{G}^\C \approx T^*\left(\Omega^{0,1}(S,\g)/\mc{G}\right) = T^*\mathrm{Hol}(S,G).$$
Note that the \emph{moment map of the $\mc{G}^\C$-action on $T^*\Omega^{0,1}(S,\g)$ is nothing but the Hitchin equation!}

Finally, we know that the moduli space of Higgs bundles $\mc{M}_H$ is an open dense subset of $T^*\mathrm{Hol}(S,G)$ (see Equation \eqref{higgs-moduli-cotangent-bundle}). It turns out that the stability conditions we have neglected precisely describe $\mc{M}_H(S,G)$:
$$\mc{A}/\!/\!/\mc{G}=\mc{M}_H(S,G).$$
In particular, \emph{the moduli space of Higgs bundles is a hyperkähler manifold, the same as the character variety!}

The twistor picture of this hyperkähler manifold looks like this:
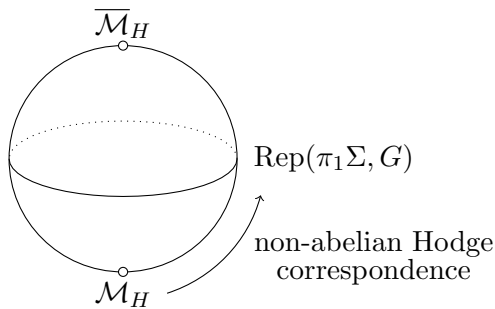
\begin{figure}
\begin{center}
\begin{tikzpicture}[scale=1.5]
	\draw (0,0) circle (1cm);
	%\draw [domain=0:1] plot (\x, {\x /3}) ; 
\draw [domain=180:360] plot ({cos(\x)},{sin(\x)/3});
	\draw [domain=0:180, dotted] plot ({cos(\x)},{sin(\x)/3});
	\draw [fill=white] (0,1) circle (0.04);
	\draw [fill=white] (0,-1) circle (0.04);
	\draw (0,-1.2) node {$\mc{M}_H$};  
	\draw (0,1.2) node {$\overline{\mc{M}}_H$};
	\draw (1.85,0) node {$\Rep(\pi_1\S, G)$};
	\draw[->] ({1.25*cos(288)},{1.25*sin(288)}) arc (288:345:1.25);
	\draw (2.2,-0.75) node {non-abelian Hodge};
	\draw (2.2,-1) node {correspondence};
\end{tikzpicture}

\caption{Twistor space of moduli space of Higgs bundles}\label{twistor-space-higgs}
\end{center}
\end{figure}

All the fibers are diffeomorphic, but only the fibers for $\l\in \C^*$ are also biholomorphic (to the character variety). Over the points $\lambda=0$ and $\lambda=\infty$, we see the moduli space of Higgs bundles $\mc{M}_H(S,G)$ and its conjugate. A twistor line is given by a quadratic expression $$\mc{A}(\lambda)=\lambda\Phi+\nabla_A+\lambda^{-1}\Phi^{*_h}$$ which is the form we considered in Equation \eqref{flat-lambda-connection} to prove Hitchin--Simpson theorem. The reality constraint is given by $$-\mathcal{A}(-1/\bar{\lambda})^{*_h} = \mc{A}(\lambda),$$
which explains the quadratic depends in $\lambda$ of $\mc{A}$ and the appearance of the term $\lambda^{-1}\Phi^{*_h}$.

\begin{thm}
The moduli space of Higgs bundles and the character variety are two incarnation of the same hyperkähler manifold. The twistor lines are connections of the form $\mc{A}(\lambda)=\lambda\Phi+\nabla_A+\lambda^{-1}\Phi^{*_h}$.
\end{thm}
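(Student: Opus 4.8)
The plan is to realize both moduli spaces as the \emph{single} hyperkähler quotient $\mc{A}/\!/\!/\mc{G}$, where $\mc{A}$ is the (infinite-dimensional, flat) hyperkähler manifold of $G$-connections on a fixed hermitian bundle over $S$ and $\mc{G}$ is the unitary gauge group, and then to identify the twistor lines concretely. First I would record the two symplectic forms $\omega_\R,\omega_\C$ on $\mc{A}$ together with its flat metric, and check that $\mc{G}$ acts preserving all of them with moment maps $\mu_\C(A)=F(A)$ (Atiyah--Bott, Theorem \ref{atiyah-bott-thm}) and $\mu_\R(A)=F'-F''$ as in the Proposition recalled above from \cite{hitchin1992hyper}. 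Setting the vector-valued moment map to zero gives, on the one hand, $F'=F''$, i.e. harmonicity of $h$ (after the usual trick of varying $h$ rather than $A$ in the gauge orbit), and on the other hand $F(A)=0$, i.e. flatness; quotienting by $\mc{G}$ then yields $\{\text{flat harmonic bundles}\}/\mc{G}$, which by the Corlette--Donaldson theorem \ref{corlette-donaldson-thm} is exactly $\Rep^{c.r.}(\pi_1\S,G)$.

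Next I would exploit the chain of equalities \eqref{HK-quotient-equality}: the same quotient equals $\mu_\C^{-1}(0)\sslash\mc{G}=\mu_\C^{-1}(0)/_{GIT}\,\mc{G}^\C=\mc{A}\sslash\mc{G}^\C$. Using $\mc{A}=\Omega^1(S,\g)\cong T^*\Omega^{0,1}(S,\g)$ from Equation \eqref{cotangent01}, the holomorphic reduction becomes $T^*\Omega^{0,1}(S,\g)\sslash\mc{G}^\C\approx T^*\big(\Omega^{0,1}(S,\g)/\mc{G}\big)=T^*\mathrm{Hol}(S,G)$, and one checks that the $\mc{G}^\C$-moment map here is precisely the left-hand side of the Hitchin equation \eqref{Hitchin-eq}; the stability conditions that this formal manipulation glosses over are exactly those cutting out $\mc{M}_H(S,G)$ inside $T^*\mathrm{Hol}(S,G)$ (compare Equation \eqref{higgs-moduli-cotangent-bundle} and the Hitchin--Simpson theorem \ref{hitchin-simpson-thm}). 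Hence $\mc{A}/\!/\!/\mc{G}$ is at once $\Rep^{c.r.}(\pi_1\S,G)$ and $\mc{M}_H(S,G)$, which proves they are the same hyperkähler manifold; the complex structure at $\lambda=0$ is the Dolbeault/Higgs one, the structures $I_\lambda$ for $\lambda\in\C^*$ are all biholomorphic to the Betti/de Rham one, and the passage between $\lambda=0$ and generic $\lambda$ is the non-abelian Hodge correspondence.

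For the twistor lines I would appeal to the twistor description of a hyperkähler quotient: a real holomorphic section of $Z\to\C P^1$ is represented by a $G$-connection whose dependence on $\lambda$ is governed by the fibrewise holomorphic symplectic form $\omega(\lambda)=\lambda\omega_\C+i\omega_\R+\lambda^{-1}\bar\omega_\C$, forcing a quadratic Laurent dependence. Writing the flat connection attached to a harmonic Higgs bundle $(E,\delbar_E,\Phi,h)$ as $\nabla_A+\Phi+\Phi^{*_h}$ and twisting by the $\C^*$-action $\Phi\mapsto\lambda\Phi$ on $\mc{M}_H$, one recovers exactly $\mc{A}(\lambda)=\lambda\Phi+\nabla_A+\lambda^{-1}\Phi^{*_h}$ of Equation \eqref{flat-lambda-connection}; the weight $\lambda^{-1}$ on $\Phi^{*_h}$ is dictated by the reality constraint $-\mc{A}(-1/\bar\lambda)^{*_h}=\mc{A}(\lambda)$, i.e. compatibility with the real structure on $Z$. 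Flatness of $\mc{A}(\lambda)$ for all $\lambda$ is equivalent, coefficient by coefficient, to the Hitchin equation together with the conditions $\Phi\wedge\Phi=0$ and $\delbar_E\Phi=0$ which hold automatically on a surface, so the $r$-invariant holomorphic sections are precisely these $\mc{A}(\lambda)$, as claimed.

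The main obstacle is entirely analytic: making the infinite-dimensional hyperkähler quotient rigorous. This hides the hardest inputs --- the Corlette--Donaldson and Hitchin--Simpson existence-and-uniqueness theorems (heat-flow and gradient-descent arguments, Uhlenbeck-type compactness) --- and the delicate point that the formal quotient $T^*\Omega^{0,1}(S,\g)\sslash\mc{G}^\C$, once the correct stability is imposed, genuinely is the smooth manifold $\mc{M}_H(S,G)$ rather than a singular or merely set-theoretic object. Everything else (the moment-map computations, the $\C^*$-action, the reality constraint) is the kind of finite-dimensional-looking bookkeeping already carried out in the sections above.
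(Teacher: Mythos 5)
Your proposal follows essentially the same route as the paper: realizing both spaces as the hyperkähler quotient $\mc{A}/\!/\!/\mc{G}$ with the Atiyah--Bott and $F'-F''$ moment maps, invoking Corlette--Donaldson on one side and the formal $T^*\Omega^{0,1}(S,\g)\sslash\mc{G}^\C \approx T^*\mathrm{Hol}(S,G)$ identification with stability on the other, and identifying the twistor lines via the $\C^*$-twisted family $\mc{A}(\lambda)$ and the reality constraint. This matches the paper's argument, and your closing remark about the analytic gaps (the infinite-dimensional quotient resting on the Corlette--Donaldson and Hitchin--Simpson theorems) is exactly the caveat the paper itself makes.
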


Note that the complex structure on $\mc{M}_H(S,G)$ comes from the complex structure of the surface, while the complex structure on $\Rep(\pi_1\S,G)$ comes from the complex Lie group $G$.

The twistor space approach explains several phenomena in a concise way:
\begin{itemize}
	\item The need for a real structure leads to the consideration of a hermitian structure $h$ on the bundle.
	\item The fact that a twistor line is determined by a point $P$ (here $\Phi$) and its conjugate $r(P)$ (here $\Phi^{*_h}$) partially explains the form of $\mc{A}(\lambda)$.
	\item All fibers are canonically diffeomorphic via the twistor lines, which gives the diffeomorphism between $\lambda=0$ and $\lambda=1$: the non-abelian Hodge correspondence!
	\item The $\C^*$-action on $\mc{M}_H(S,G)$ explains why all fibers over $\l\in \C^*$ are the same Kähler manifold. Indeed applying the non-abelian Hodge correspondence to $\ell \Phi$ instead of $\Phi$ is equivalent to choosing $\lambda=\ell$ instead of $\lambda=1$.
\end{itemize}

\section{Application: Hitchin components}\label{hit-comp}

We have analyzed character varieties $\Rep(\pi_1\S,G)$ for unitary groups and complex groups. \emph{What about other real forms of $\SL_n(\C)$, in particular the split real form $\SL_n(\R)$?}

The question turns out to be quite difficult. For a complex simple Lie group, the character variety is connected, but this is not true any longer for real groups, where many components can appear. Already the count of these component is highly non-trivial.

The main tool we have to analyze any kind of character variety is the non-abelian Hodge correspondence, since $G\subset G^\C$ allows to go to the complex group. The question above becomes: \emph{What kind of Higgs bundle correspond to $\SL_n(\R)$?}
We have seen that the unitary group corresponds simply to $\Phi=0$ (vanishing Higgs field), since the non-abelian Hodge correspondence reduces to the Narasimhan--Seshadri theorem \ref{ns-thm} in the unitary case.

\medskip
\paragraph{Motivation.}
The main motivation for studying character varieties for split real forms is the link to geometric structures. For $G=\SL_2(\R)$ (or more precisely for $\PSL_2(\R)=\SL_2(\R)/\pm \id$) and $\S$ a surface of genus at least 2, there is a connected component of the character variety, called the \textbf{Teichmüller space}, describing several geometric structures of the surface:
\begin{align*}
\mathrm{Teich}(\S) &= \{\text{complex structures}\}/\mathrm{Diff}_0(\S) \\
&= \{\text{hyperbolic structures}\}/\mathrm{Diff}_0(\S) \\
&= \text{connected component of } \Rep(\pi_1\S,\PSL_2(\R)),
\end{align*}
where $\mathrm{Diff}_0(\S)$ denotes the identity component of the diffeomorphism group of $\S$. A \emph{hyperbolic structure} is a Riemannian metric with constant curvature equal to -1.

The link between the character variety, hyperbolic and complex structures goes as follows: equip a surface $\S$ with a complex structure, so it becomes a Riemann surface $S$. Then its universal cover $\widetilde{\S}$ also gets a complex structures (since a complex structure is a local property). The famous \emph{Poincaré uniformisation theorem} asserts that any simply connected Riemann surface is either $\C P^1, \C$ or the hyperbolic plane $\H^2$. For $\S$ of genus at least 2, the universal cover has the topological type of $\H^2$, so by the uniformisation theorem it is biholomorphic to $\H^2$.

The fundamental group $\pi_1\S$ acts on $\widetilde{\S}$ by deck transformations which are isometries. Since the isometry group of $\H^2$ is $\PSL_2(\R)$, we get a representation $\rho: \pi_1\S\to \PSL_2(\R)$. It is this representation which allows to recover $S$ from $\H^2$ since $S=\H^2/\rho(\pi_1\S)$. It turns out that two complex structures obtained by this quotient are equivalent under $\mathrm{Diff}_0(\S)$ iff the representations are conjugated. Therefore we get an inclusion of $\mathrm{Teich}(\S)$ into the character variety. It is then easy to check that it is a connected component. Even better: it is \emph{the} connected component of discrete and faithful representations (since the quotient of $\H^2$ by $\rho$ is a manifold).

\medskip
\paragraph{Hitchin components.}
In the seminal paper \cite{hitchin}, Nigel Hitchin constructs a connected component in $\Rep(\pi_1\S,\PSL_n(\R))$ (in fact more generally for split real groups $G$) with similar properties to Teichmüller space. In particular, all representations of the so-called Hitchin component are discrete and faithful.

Consider a Riemann surface $S$ and fix a spin structure $K^{1/2}$ (a line bundle with square the canonical bundle $K=T^*S$). We have seen in Example \ref{higgs-basic-ex} that $$\left(V=K^{1/2}\oplus K^{-1/2}, \Phi=\begin{pmatrix}0 & 0 \\ 1 & 0\end{pmatrix}\right)$$ is a stable Higgs bundle.

To get a bundle of rank $n$, take the symmetric product $E=\mathrm{Sym}^n(V)=K^{(n-1)/2}\oplus K^{(n-3)/2}\oplus \cdots \oplus K^{-(n-1)/2}$. For the Higgs field, we choose a matrix with identical entries along the parallel lines to the main diagonal:
\begin{equation}\label{Hitchin-section}
\Phi = \begin{pmatrix}0 & t_2&t_3& \cdots& t_n\\ 1& 0&t_2&\cdots & t_{n-1}\\ 0&1&0&\ddots &\vdots\\ \vdots &\ddots &\ddots &\ddots & t_2\\ 0&\cdots& 0 &1&0\end{pmatrix}.
\end{equation}

The entry $t_i$ is a holomorphic section of $\Hom(K^{n-(2i-1)/2},K^{n-(2i-3)/2}\otimes K)\cong K^i$. To show stability, notice that for $t_i=0$ for all $i$, we get a stable Higgs bundle by the same argument as for the case $n=2$. Then, there is a diagonal gauge transformation which transforms $t_k$ to $\lambda^{k-1}t_k$ for $\lambda\in \C^*$. Hence we can get arbitrarily close to $t_k=0 \;\forall\, k$. Since stability is an open condition, we see that our $(E,\Phi)$ is stable.

\begin{thm}[Hitchin]
The flat connections associated to $(E,\Phi)$ in Equation \eqref{Hitchin-section} via the non-abelian Hodge correspondence describe a connected component of $\Rep(\pi_1\S,\PSL_n(\R))$.
\end{thm}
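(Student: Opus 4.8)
The strategy is to exhibit this family of flat connections as the image of a section of the Hitchin fibration, to check that the image is a connected smooth submanifold of the character variety of the correct dimension, to verify that it lies inside the real locus $\Rep(\pi_1\S,\PSL_n(\R))$, and finally to argue by an open--closed argument that such a submanifold is a connected component. First I would organize the parameters: the tuples $(t_2,\dots,t_n)$ with $t_i\in H^0(S,K^i)$ form a finite-dimensional complex vector space $B:=\bigoplus_{i=2}^n H^0(S,K^i)$, which is contractible, hence connected. Since $\deg K^i=i(2g-2)>2g-2$ for $i\geq 2$, Serre duality gives $H^1(S,K^i)=0$, so Riemann--Roch yields $\dim_\C H^0(S,K^i)=(2i-1)(g-1)$, whence $\dim_\R B=(2g-2)\sum_{i=2}^n(2i-1)=(2g-2)(n^2-1)=-\chi(\S)\dim\PSL_n(\R)$, exactly the dimension of $\Rep(\pi_1\S,\PSL_n(\R))$ at an irreducible representation.

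Next, the assignment $(t_i)\mapsto(E,\Phi)$, with $E=\mathrm{Sym}^{n-1}(K^{1/2}\oplus K^{-1/2})$ and $\Phi$ as in \eqref{Hitchin-section}, defines a map into $\mc{M}_H(S,\SL_n(\C))$; composing with the non-abelian Hodge correspondence (Theorem \ref{non-ab-Hodge-thm}) gives a real-analytic map $\Psi\colon B\to\Rep(\pi_1\S,\PSL_n(\C))$. This map is injective, since the coefficients of the characteristic polynomial of $\Phi$ recover precisely $(t_2,\dots,t_n)$; thus $\Psi$ is, after Hodge, a section of the Hitchin fibration. As observed before the statement, all these $(E,\Phi)$ are stable, so $\Psi$ takes values in the smooth locus of irreducible representations, and being a section it is an immersion. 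Consequently $\Psi(B)$ is a connected $C^\infty$-submanifold of the character variety of dimension $(2g-2)(n^2-1)$.

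The crux, and the step I expect to be the main obstacle, is to show $\Psi(B)\subset\Rep(\pi_1\S,\PSL_n(\R))$. Here $V=K^{1/2}\oplus K^{-1/2}$ has $\det V\cong\mathcal{O}$, hence an $\SL_2$-structure, so $E=\mathrm{Sym}^{n-1}V$ carries a nondegenerate bilinear form $B_E$ (symmetric or skew according to the parity of $n$) with respect to which the companion-matrix field $\Phi$ — the image of a highest-weight vector under the principal $\SL_2\hookrightarrow\SL_n$ — is symmetric in the graded sense. One then invokes the \emph{uniqueness} part of the Hitchin--Simpson theorem (Theorem \ref{hitchin-simpson-thm}): the metric obtained from the harmonic metric $h$ by transport along $B_E$ and complex conjugation again solves Hitchin's equation \eqref{Hitchin-eq} for the same Higgs bundle, so by uniqueness it equals $h$. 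This compatibility forces the flat connection $\mc{A}=\nabla_A+\Phi+\Phi^{*_h}$ to be preserved by an antiholomorphic involution of $\SL_n(\C)$ whose real points form the split real form $\SL_n(\R)$ (and $\PSL_n(\R)$ after quotienting by the centre). The discrete invariant (signature) of this real structure is locally constant on the connected family $B$, and at the Fuchsian point $t_2=\cdots=t_n=0$ it is, by construction, the principal embedding $\PSL_2(\R)\hookrightarrow\PSL_n(\R)$ applied to the uniformising representation; hence $\Psi(B)\subset\Rep(\pi_1\S,\PSL_n(\R))$.

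It remains to promote $\Psi(B)$ to a connected component. It is \emph{closed}, being the image of a continuous section into the Hausdorff space $\mc{M}_H\cong\Rep(\pi_1\S,\PSL_n(\C))$ — equivalently the equalizer of $\mathrm{id}$ and $\Psi\circ\mathrm{Hit}$ — hence closed in the subspace $\Rep(\pi_1\S,\PSL_n(\R))$. It is \emph{open} in $\Rep(\pi_1\S,\PSL_n(\R))$: at an irreducible representation $\rho$ the character variety is smooth of dimension $\dim H^1(\pi_1\S,\mf{g}_{\mathrm{Ad}\rho})=(2g-2)(n^2-1)=\dim\Psi(B)$, and $\Psi$ is an immersion, so its image contains a neighbourhood of each of its points. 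A nonempty connected subset that is both open and closed is a connected component, which proves the theorem. (The further assertion in the text that all representations in this component are discrete and faithful is not a consequence of this argument and requires additional input, for instance the Anosov property of Hitchin representations.)
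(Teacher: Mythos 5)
Your proposal follows essentially the same route as the paper's proof: the uniqueness part of the Hitchin--Simpson theorem forces the harmonic metric to be compatible with the pairing coming from $\mathrm{Sym}^{n-1}$ of the principal $\SL_2$ structure, hence the flat connection $\nabla_A+\Phi+\Phi^{*_h}$ is fixed by an antiholomorphic involution whose fixed set is the split real form, and one concludes by the dimension count together with the open--closed argument. The only cosmetic difference is that the paper identifies the involution $\tau=\rho\sigma$ with the split real structure via an explicit lemma (citing Hitchin), whereas you identify the real form by local constancy along the connected base $B$ and evaluation at the Fuchsian point; both you and the paper leave the same finer points (e.g.\ the gauge identification of $(E,-\Phi)$ with $(E,\Phi)$ needed to apply uniqueness, and smoothness of the real character variety at Hitchin representations) at sketch level.
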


Since $t_i\in H^0(S,K^i)$, we get a parametrization of the \textbf{Hitchin component} $\mathrm{Hit}(n,\S)$ by $\bigoplus_{i=2}^n H^0(S,K^i)$. For $n=2$, the Hitchin component coincides with Teichmüller space. Notice that all these components are contractible.

\begin{Remark}
The Hitchin component contains all the representations of the form 
\begin{equation}\label{rep-n-hitchin}
\pi_1\S \xrightarrow[\text{fuchsian}]{} \PSL_2(\R)\xrightarrow[\text{principal}]{}\PSL_n(\R)
\end{equation}
where the first map is a discrete and faithful representation (a point in Teichmüller space) and the second is a canonical map which corresponds to the unique irreducible representation of dimension $n$ of $\PSL_2(\R)$.
\end{Remark}

To strategy of the proof is to construct an involution on the Lie algebra $\g$ with fixed points the split real form. Using the uniqueness of the non-abelian Hodge correspondence, we can conclude that the monodromy is fixed by the involution.

\begin{proof}
Let $h$ be the harmonic metric on $E$ associated to $(E,\Phi)$. Denote by $\rho$ the compact real form associated to $h$, i.e. $\rho(M)=-M^{*_h}$. Let $\sigma$ be minus the ``skew-transpose'', i.e. which sends $e_{i,j}$ to $-e_{n+1-j,n+1-i}$ (where $e_{i,j}$ denote the standard matrix entries).
\begin{lemma}
The map $\tau=\rho \sigma=\sigma \rho$ is an involution corresponding to the split real form $\mathfrak{sl}_n(\R)$.
\end{lemma}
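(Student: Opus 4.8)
The plan is to unpack the statement into three claims — that $\rho$ and $\sigma$ commute, that $\tau=\rho\sigma$ squares to the identity, and that its fixed subalgebra $\mathfrak{g}^{\tau}$ is the split real form $\mathfrak{sl}_n(\R)$ — and to treat them in that order, using only elementary structure theory of real forms together with the uniqueness half of the Hitchin--Simpson theorem \ref{hitchin-simpson-thm}.

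First I would record the two basic inputs. Writing $J$ for the anti-diagonal permutation matrix (so $Je_{ij}J=e_{n+1-i,\,n+1-j}$, $J^{2}=\id$, $J^{T}=J$), the map $\sigma$ is $\sigma(M)=-JM^{T}J$; a one-line check shows it is a $\C$-linear involutive automorphism of $\g=\mathfrak{sl}_n(\C)$, with fixed subalgebra $\g^{\sigma}=\{M:\ M^{T}J+JM=0\}\cong\mathfrak{so}(n,\C)$, the orthogonal algebra of the symmetric form $J$. Similarly $\rho(M)=-M^{*_h}=-h^{-1}M^{*}h$ is the conjugate-linear Cartan involution attached to the compact form, with $\g^{\rho}=\mathfrak{su}(n)$ (for the metric $h$). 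Here $h$ is the harmonic metric of $(E,\Phi)$ written in the frame of \eqref{Hitchin-section}; since $(E,\Phi)=(\mathrm{Sym}^{n}V,\Phi)$ decomposes as $\bigoplus_i K^{(n+1-2i)/2}$, the matrix $h$ is diagonal, $h=\mathrm{diag}(h_1,\dots,h_n)$.

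The main step is to show $\rho\sigma=\sigma\rho$, and this is where the geometry enters. The pairing on $\mathrm{Sym}^{n}V$ induced by the canonical pairing $K^{1/2}\otimes K^{-1/2}\to\mc O$ on $V$ identifies $E$ with $E^{*}$ along the anti-diagonal, i.e. it is precisely the form $J$, and it carries $(E,\Phi)$ to the Higgs bundle obtained from it by the operation dual to $\sigma$. Hence the transformed metric (the image of $h$ under this symmetry) is again a harmonic metric for $(E,\Phi)$, so by the uniqueness clause of Theorem \ref{hitchin-simpson-thm} it equals $h$ up to a positive constant; in the frame of \eqref{Hitchin-section} this translates to $J\bar hJ=c\,h^{-1}$ with $c>0$, equivalently $h_ih_{n+1-i}$ independent of $i$. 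A direct computation from $\sigma(M)=-JM^{T}J$ and $\rho(M)=-h^{-1}M^{*}h$, using this relation, then gives $\sigma\rho=\rho\sigma$. Consequently $\tau^{2}=(\rho\sigma)^{2}=\rho^{2}\sigma^{2}=\id$, and since $\tau$ is the composite of a conjugate-linear and a $\C$-linear automorphism it is a conjugate-linear involution, so $\g^{\tau}$ is a real form of $\mathfrak{sl}_n(\C)$.

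Finally I would identify the real form via its maximal compact subalgebra. Because $\rho$ and $\sigma$ commute, $\g^{\sigma}$ is $\rho$-stable and $\rho$ restricts to a conjugate-linear involution of $\g^{\tau}$; being the restriction of the compact-form conjugation it is a Cartan involution, so the maximal compact subalgebra of $\g^{\tau}$ is $\g^{\tau}\cap\g^{\rho}=\g^{\sigma}\cap\g^{\rho}$. The latter is a real form of $\g^{\sigma}=\mathfrak{so}(n,\C)$ (it is the fixed set of a conjugate-linear involution of this complex Lie algebra) sitting inside the compact algebra $\mathfrak{su}(n)$, hence it is itself compact, i.e. $\cong\mathfrak{so}(n,\R)$, of real dimension $\binom n2$. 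Among the real forms of $\mathfrak{sl}_n(\C)$ only the split one $\mathfrak{sl}_n(\R)$ has a maximal compact subalgebra of this dimension — those of $\mathfrak{su}(p,q)$ and of $\mathfrak{sl}_{n/2}(\H)$ are strictly larger — so $\g^{\tau}\cong\mathfrak{sl}_n(\R)$, as claimed. The one genuinely nontrivial point is the commutation $\rho\sigma=\sigma\rho$, for which the uniqueness of the harmonic metric is essential; everything else is formal.
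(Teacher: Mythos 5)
Your argument is sound in its overall structure, and it supplies what the paper itself does not: the text defers the proof entirely to Hitchin \cite[Prop.~6.1]{hitchin}, only remarking what $\tau$ looks like in a unitary chart. Your route --- observe that the anti-diagonal pairing $J$ gives a holomorphic isomorphism $(E,\Phi)\cong(E^*,\Phi^T)$ because $J\Phi^TJ=\Phi$ (this is exactly the identity $\sigma(\Phi)=-\Phi$ used in the paper), transport the harmonic metric through this isomorphism, invoke uniqueness for the stable Higgs bundle to get $J\bar hJ=c\,h^{-1}$, deduce $\rho\sigma=\sigma\rho$, and then identify $\g^\tau$ through its maximal compact $\g^\tau\cap\g^\rho=\g^\sigma\cap\g^\rho\cong\mathfrak{so}(n)$ --- is essentially the standard argument behind Hitchin's proposition, and all the main steps check out: the commutation condition is precisely $J\bar hJ=c\,h^{-1}$ for a scalar $c$, the constant cancels in the conjugation, $\tau$ is then a conjugate-linear involutive automorphism, and the dimension count of maximal compacts singles out $\mathfrak{sl}_n(\R)$ (for $n=2$ the count does not separate $\mathfrak{sl}_2(\R)$ from $\mathfrak{su}(1,1)$, but these are isomorphic, so nothing is lost).

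Two blemishes are worth fixing. First, the claim that $h$ is diagonal in the frame of \eqref{Hitchin-section} ``since $E$ decomposes as $\bigoplus_i K^{(n+1-2i)/2}$'' is not justified and is in general false: a harmonic metric has no reason to respect the holomorphic splitting, and diagonality is known only in special situations (e.g.\ $n=2$, or cyclic Higgs bundles where only $t_n\neq 0$, where an extra finite symmetry plus uniqueness forces it). Fortunately you never use diagonality --- the relation $J\bar hJ=c\,h^{-1}$ obtained from uniqueness is all the computation needs --- so simply delete that sentence and the parenthetical ``$h_ih_{n+1-i}$ independent of $i$''. Second, two small points should be made explicit rather than implicit: the pairing induced on $\mathrm{Sym}^nV$ by $K^{1/2}\otimes K^{-1/2}\to\mc O$ is the anti-diagonal only up to nonzero constants (binomial coefficients), so it is cleaner to take $J$ to be the plain anti-diagonal matrix and check directly that it is holomorphic (each anti-diagonal entry is a constant map $K^{a}\to K^{a}$) and intertwines $\Phi$ with $\Phi^T$ because the entries of $\Phi$ are constant along diagonals; and you should state the one-line fact that the dual metric $\bar h^{-1}$ on $(E^*,\Phi^T)$ again solves Hitchin's equation, since $F_{h^\vee}=-F_h^T$ and $(\Phi^T)^{*_{h^\vee}}=(\Phi^{*_h})^T$. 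With these adjustments your proof is complete and matches the argument the paper outsources.
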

In a chart where $M^{*_h}=M^\dagger$, the map $\tau$ turns a matrix by 180 degree and conjugates all the entries. It is interesting to directly check that this is an Lie algebra anti-homomorphism (i.e. that $\tau([A,B]=-[\tau(A),\tau(B)]$). We refer to \cite[Prop.6.1]{hitchin} for a proof of the lemma.

By definition of $\Phi$, we have $\sigma(\Phi) = -\Phi$. Hence 
$$\tau(\Phi+\Phi^{*_h})=\tau(\Phi-\rho(\Phi)) = -\rho(\Phi)+\Phi = \Phi+\Phi^{*_h}.$$
Let $(\Phi,A)$ be the flat connection which solves Hitchin's equation. Then $(-\Phi,A)$ is also a solution. Further $(\sigma(\Phi),\sigma(A))=(-\Phi,\sigma(A))$ is again a solution. By uniqueness of the non-abelian Hodge correspondence, we get $\sigma(A)=A$. Thus,
$$\tau(A) = \rho(\sigma(A)) = \rho(A) = A.$$
Therefore the flat connection $\Phi+\nabla_A+\Phi^{*_h}$ is invariant under $\tau$, so its monodromy is in the split real form $\PSL_n(\R)$. A topological argument (dimension count and open-closed property) shows that we get a component of $\Rep(\pi_1\S,\PSL_n(\R))$.
\end{proof}

We can describe Hitchin's construction in the twistor picture: there is a map, the so-called \textbf{Hitchin fibration}, $\mc{M}_H\to \bigoplus_{i=2}^n H^0(S,K^i)$ given by $[(E,\Phi)]\mapsto \mathrm{det}(\Phi-t\id)$, i.e. the characteristic polynomial of the Higgs field. The coefficients are holomorphic differentials. The $(E,\Phi)$ considered above in Equation \eqref{Hitchin-section} is a section of the Hitchin fibration.

\begin{figure}
\begin{center}
\begin{tikzpicture}[scale=1.5]
	\draw (0,0) circle (1cm);
	%\draw [domain=0:1] plot (\x, {\x /3}) ; 
\draw [domain=180:360] plot ({cos(\x)},{sin(\x)/3});
	\draw [domain=0:180, dotted] plot ({cos(\x)},{sin(\x)/3});
	\draw [fill=white] (0,1) circle (0.04);
	\draw [fill=white] (0,-1) circle (0.04);
	\draw (0,-1.2) node {$\mc{M}_H$};  
	\draw (0,1.2) node {$\overline{\mc{M}}_H$};
	\draw (2.2,0.5) node {$\Rep(\pi_1\S,\PSL_n(\C))$};
	\draw (2.2,0.25) node {$\cup$};
	\draw (2.2,0) node {$\mathrm{Hit}(n,\S)$};
	\draw[->] ({1.25*cos(288)},{1.25*sin(288)}) arc (288:345:1.25);
	\draw (2.2,-0.75) node {non-abelian Hodge};
	\draw (2.2,-1) node {correspondence};
	\draw (0,-1.5) node {$\downarrow$};
	\draw (0,-1.9) node {$\bigoplus_{i=2}^n H^0(K^i)$};
	\draw (-1.25,-1.4) node {Hitchin};
	\draw (-1.25,-1.65) node {fibration};
	\draw [domain=-30:32, dashed, ->] plot ({0.33*cos(\x)}, {0.33*sin(\x)-1.5});
	\draw (1,-1.4) node {Hitchin};
	\draw (1,-1.62) node {section};
\end{tikzpicture}

\caption{Hitchin component via non-abelian Hodge correspondence}\label{hitchin-comp-twistor-picture}
\end{center}
\end{figure}
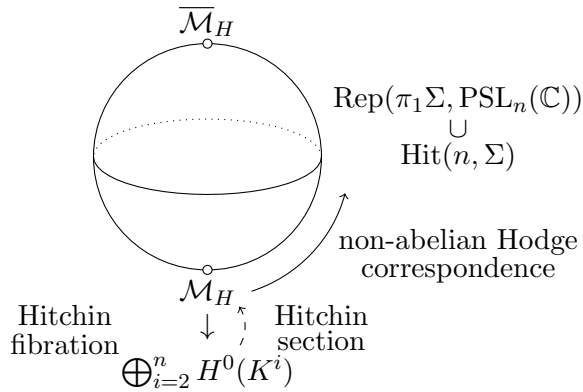

The image of the Hitchin section in the character variety under the non-abelian Hodge correspondence is inside the $\PSL_n(\R)$-character variety and forms the Hitchin component.

\section{Generalizations and research directions}\label{sec:generalizations}

There are various generalizations, some very active research projects, of the non-abelian Hodge correspondence. We present a selection here.

\medskip
\paragraph{Reductive groups $G$.}
We restricted mostly attention to $G=\GL_n(\C)$ in the previous chapters. Nearly all can be generalized to reductive groups (direct sum of simple and abelian Lie groups), typically subgroups of $\GL_n(\C)$. For the bundles, this means that its structure group is $G$, so the fibers carry some extra geometric structures, invariant under parallel transport. Some examples:
\begin{itemize}
	\item $G=\SL_n(\C)$: the fiber carries a fixed volume form,
	\item $G=\mathrm{SO}_n(\R)$: the fiber carries a scalar product,
	\item $G=\mathrm{Sp}_{2n}(\R)$: the fiber carries a symplectic structure.
\end{itemize}
For general $G$, the notion of a \emph{principal $G$-bundle} has to be used.

On the Higgs bundle side, there is the notion of a $G$-Higgs bundle for a complex Lie group $G$. The non-abelian Hodge correspondence reads
$$\mc{M}_H(S,G) \cong \Rep(\pi_1\S,G).$$

For real forms $G^\R$, there is an appropriate notion of a $G^\R$-Higgs bundle (which is still a holomorphic object!), developed in \cite{garcia2009hitchin}, giving $$\mc{M}_H(S,G^\R) \cong \Rep(\pi_1\S,G^\R).$$ This generalizes Hitchin's construction in the split real case.

With these techniques, the number of components of character varieties can be counted. The study of components consisting entirely of discrete and faithful representations (called higher Teichmüller components) is called \textbf{higher Teichmüller theory}. I recommend the nice introduction of Anna Wienhard \cite{wienhard2018invitation}.

Some active research tasks are:
\begin{itemize}
	\item Determine the topology (Betti numbers, Hilbert--Poincaré polynomial) of the components of the character variety for real groups $G^\R$.
	\item Find geometric structures whose moduli spaces are these components. In particular, find a geometric interpretation of the Hitchin components.
	\item Characterize and classify all higher Teichmüller components. A recent breakthrough is the notion of $\Theta$-positivity in \cite{guichard2018positivity}. See also \cite{bradlow2021general} for the Higgs bundle analog.
\end{itemize}

\medskip
\paragraph{Non-trivial complex bundles.}
In the case where the underlying complex vector bundle $E$ is not trivial, we cannot hope for the existence of flat connections. But we can get as close as possible: we can get \textbf{projectively flat} connections.

A flat connection allows to restrict the transition functions of the bundle to be constant. A projectively flat connection allows to restrict to homotheties (so the transition functions of the projectivized bundle are constant). The curvature of a projectively flat connection is a central element of $\Omega^2(S,\g)$, i.e. a constant multiple of the identity. To be more precise, on a holomorphic bundle $E$ over a Riemann surface with Kähler form $\omega$ (simply an area form in this case), the curvature of a projectively flat connection $\nabla$ is 
\begin{equation}\label{proj-flat-connection-curvature}
F(\nabla) = -2i\pi\mu(E) \omega \id.
\end{equation}

On the side of character varieties, a projectively flat connection on $E$ corresponds to a representation of a central extension of $\pi_1\S$, or equivalently to a twisted representation of $\pi_1\S$. The central extension $\widehat{\pi_1\S}$ is defined by
$$1\to \Z \to \widehat{\pi_1\S} \to \pi_1\S \to 1,$$
or in terms of a presentation by 
$$\widehat{\pi_1\S} = \langle a_i, b_i, c \mid 1\leq i\leq g, c\text{ central}, \textstyle\prod_i [a_i,b_i]=c\rangle.$$
A \emph{twisted representation} is a map $\rho:\pi_1\S \to G$, such that $\rho(\prod_i[a_i,b_i])=\zeta_n^d\id$ where $\zeta_n$ is a $n$-th root of unity and $n=\mathrm{rk}(E)$ and $d=\deg(E)$. We then have
\begin{align*}
\mc{M}_{H, \deg(E)=d}(S,G) \cong \Rep(\widehat{\pi_1\S},G) &= \{(a_i,b_i)\in G^{2g}\mid \textstyle\prod_i [a_i,b_i]=\zeta_n^d\id\}/G \\
&= \{\text{projectively flat connections}\}/\mc{G}.
\end{align*}

\medskip
\paragraph{Compact Kähler manifolds.}
The non-abelian Hodge correspondence can be generalized from compact Riemann surfaces to compact Kähler manifolds $X$. For this, Simpson \cite{simpson} generalized the notion of a Higgs bundle. Roughly speaking a Higgs field $\Phi$ is still a holomorphic section of $\End(E)\otimes K$ (where $K$ is the canonical bundle of $X$, i.e. the determinant bundle of $T^*X$), such that $\Phi\wedge\Phi=0$.

A first step is the generalized Narasimhan--Seshadri theorem, giving the equivalence between polystable holomorphic bundles and certain connections called \emph{hermitian Einstein connections}. These are connections $\nabla$ satisfying $$F(\nabla)\wedge \omega^{n-1}=\lambda(E)\omega^n\id$$where $\omega$ is the Kähler form on $X$ and $\lambda(E)$ is some constant depending on $E$. Note the similarity to Equation \eqref{proj-flat-connection-curvature}.

The \emph{Hitchin--Kobayashi correspondence} asserts that a holomorphic bundle over a compact Kähler manifold allows a hermitian Einstein connection iff it is polystable. In terms of moduli spaces: $$\Hol^{ps}(X) \cong \{\text{hermitian Einstein connections}\}/\mc{G}.$$

The second step is to add the Higgs field. The connections now have to satisfy $$\left(F(\nabla_A)+[\Phi,\Phi^*]\right)\wedge \omega^{n-1}=\lambda(E)\omega^n\id.$$ These are called \emph{hermitian Yang--Mills connections}. In the case the underlying complex vector bundle $E$ is trivial, we get the non-abelian Hodge correspondence for Kähler manifolds:
$$\mc{M}_H(X,G) \cong \Rep^{c.r.}(\pi_1X,G).$$
The moduli space of stable Higgs bundles is still hyperkähler under mild assumptions, see for example \cite{biswas2006geometry}.

\medskip
\paragraph{Parabolic Higgs bundles.}
Let $S_{g,n}$ be a Riemann surface of genus $g$ with $n$ marked points with underlying surface $\S_{g,n}$. We can consider meromorphic connections with simple poles at the marked points. When these marked points are considered as boundary components, we have seen that it is natural to fix the monodromy of a flat connection around each boundary component (to get a symplectic moduli space). 

The corresponding notion for holomorphic bundles is called a \emph{parabolic structure}, which roughly speaking is a bundle with fixed (partial) flags in the fibers over the marked points. The stable parabolic bundles correspond to unitary representations of $\pi_1\S_{g,n}$ (see \cite{mehta1980moduli}). Going further to the non-compact group $\GL_n(\C)$, there is the notion of a parabolic Higgs bundle and harmonic metrics with singularities at the marked points, such that
\begin{equation}\label{nahc-parabolic}
\mc{M}_{H}^{\text{para}}(S_{g,n},\GL_n(\C)) \cong \Rep(\pi_1\S_{g,n},\GL_n(\C)).
\end{equation}

Current research directions:
\begin{itemize}
	\item Find the correct notion of a parabolic principal $G$-Higgs bundle generalizing the diffeomorphism \eqref{nahc-parabolic}. To get an overview on different approaches, see for example the introduction of \cite{kydonakis2021tame}.
\end{itemize}

\medskip
\paragraph{Wild character varieties.}
In the same setting, one might consider meromorphic connections on $S_{g,n}$ with higher order poles at the marked points. For a pole of order at least 2, the monodromy around the marked point is not sufficient anymore to characterize the gauge class of the meromorphic connection. 

The extra data you need is called the \emph{Stokes data}. The corresponding character varieties of meromorphic connections and fixed Stokes data are called \emph{wild character varieties}. There are studied by Philip Boalch \cite{boalch2001symplectic} and give surprising links to quantum groups and integrable systems. In particular, there is a generalized Atiyah--Bott reduction, showing that the space of generalized monodromies (including the Stokes data) is a symplectic space.

\medskip
\paragraph{Non-holomorphic setting.}
Many deep results on the character variety, which depends only on the topology of the surface, are proven using holomorphic techniques by fixing a complex structure on $\S$, turning it into a Riemann surface $S$. Often, it turns out that the final results are independent of the complex structure chosen, but it is highly non-trivial to prove it.

One instance is the ``quantization'' of character varieties. We have seen that the character variety is symplectic. Fixing a complex structure on $\S$ gives a compatible complex structure on $\Rep(\pi_1\S,G)$, so it becomes a Kähler manifold. For Kähler manifolds, there is a procedure to quantize them, called \emph{geometric quantization}. This quantization does not depend on the complex structure, but this is not at all obvious.

Another situation where the rigidity of holomorphic structures is an obstacle is the following: there is a natural action of the mapping class group $\mathrm{MCG}(\S)=\mathrm{Diff}(\S)/\mathrm{Diff}_0(\S)$ on Teichmüller space, and also on Hitchin components (considered as deformations of representation of the form $\pi_1S\to \PSL_2(\R)\to\PSL_n(\R)$). This action is impossible to see in Hitchin's parametrization since the mapping class group changes the complex structure.

\begin{oq}
Is there a non-holomorphic approach to character varieties, linking them to geometric structures on smooth bundles or the surface itself? 
\end{oq}
In particular, this includes the question formulated earlier about the existence of a geometric structure on the surface whose moduli space is the Hitchin component.

Here is where my own research comes into play. In \cite{fock-thomas}, together with Vladimir Fock, we introduced the notion of a \emph{higher complex structure}, a geometric structure on a surface generalizing the complex structure. The main conjecture is that the moduli space of higher complex structures $\T^n$ is canonically diffeomorphic to the Hitchin component. Moreover, the cotangent bundle $\cotang$ should be diffeomorphic to an open dense subset of $\Rep(\pi_1\S,\SL_n(\C))$ which looks very similar to the non-abelian Hodge correspondence.

Let me give a flavor of this conjectural landscape: a higher complex structure (of order $n$) induces a bundle $V$ of rank $n$ over $\S$ together with a matrix-valued 1-form $\Phi=\Phi_1dx+\Phi_2dy$ where $\Phi_1$ and $\Phi_2$ are two commuting nilpotent matrices. This looks a bit similar to a Higgs bundle, but $\Phi$ is not holomorphic here, but nilpotent. A point in the cotangent bundle $\cotang$ corresponds to a deformation of $\Phi$ away from the nilpotent locus.

\begin{conj}
The space $\cotang$ has a hyperkähler structure near the zero-section. All fibers in the twistor space, apart from two, are diffeomorphic to an open subset $U$ of $\Rep(\pi_1\S,\SL_n(\C))$ and the zero-section corresponds to the Hitchin component.
\end{conj}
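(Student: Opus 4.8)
The plan is to reduce the first assertion to the Feix--Kaledin theorem \ref{feix-kaledin-thm} and to obtain the twistor description by mimicking the non-abelian Hodge construction, with the (non-holomorphic) nilpotent form $\Phi$ attached to a higher complex structure playing the role of the Higgs field. First I would establish that the moduli space $\T^n$ of higher complex structures carries a Kähler structure: it is already a complex manifold, being a deformation space, so the missing ingredient is a compatible metric. I would build a ``higher Weil--Petersson'' form by the usual recipe --- identify $T_{[\Phi]}\T^n$ with a first cohomology group of the deformation complex governing higher complex structures, pair two classes using the complex structure of $S$ together with an $\mathrm{Ad}$-invariant form on $\g=\mathfrak{gl}_n(\C)$, and integrate over $\S$; closedness should follow from the same $\del\delbar$-type argument that produces the Weil--Petersson form on Teichmüller space (the case $n=2$). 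Granting this, Feix--Kaledin applies verbatim and produces a $\C^*$-invariant hyperkähler structure on a neighbourhood of the zero-section $\T^n\subset\cotang$, the $\C^*$-action being fibrewise rescaling $\lambda(m,v)=(m,\lambda v)$. This settles the first sentence of the conjecture.

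For the twistor picture I would construct, for every point of a neighbourhood of the zero-section --- a higher complex structure giving a rank-$n$ bundle $V$ with $\Phi=\Phi_1dx+\Phi_2dy$, $\Phi_1,\Phi_2$ commuting nilpotent matrices, together with a cotangent vector $\psi$ deforming $\Phi$ off the nilpotent locus --- a flat $\SL_n(\C)$-connection following the recipe $\mc{A}(\lambda)=\lambda(\Phi+\psi)+\nabla_A+\lambda^{-1}(\Phi+\psi)^{*_h}$, in analogy with Equation \eqref{flat-lambda-connection}. The metric $h$ would be produced by solving a Hitchin-type equation $F(A)+[\Phi+\psi,(\Phi+\psi)^{*_h}]=0$; existence and uniqueness near the zero-section should follow from an implicit-function argument based at the known solution on the zero-section, where this equation reduces to the one solved in Hitchin's construction (Section \ref{hit-comp}). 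Taking the monodromy of $\mc{A}(1)$ gives a map to $\Rep(\pi_1\S,\SL_n(\C))$; I would show it is a diffeomorphism onto an open subset $U$, and that composing with hyperkähler rotation identifies every twistor fiber over $\lambda\in\C^*$ with $U$, while the two excluded fibers, over $\lambda=0$ and $\lambda=\infty$, recover $\cotang$ with its two conjugate complex structures, exactly as $\mc{M}_H$ and $\overline{\mc{M}}_H$ appear in Figure \ref{twistor-space-higgs}.

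It remains to see that the zero-section corresponds to the Hitchin component. Restricting the monodromy map to $\psi=0$, the nilpotent $\Phi$ coming from a higher complex structure of order $n$ should be gauge-equivalent to the standard form \eqref{Hitchin-section}, so the associated flat connection $\Phi+\nabla_A+\Phi^{*_h}$ is $\tau$-invariant for the involution $\tau$ cutting out $\mathfrak{sl}_n(\R)$ used in Section \ref{hit-comp}, hence has monodromy in the $\PSL_n(\R)$-Hitchin component; an open-closed argument then shows the image of the zero-section is the whole component. Matching the two parametrizations --- $\T^n$ by the data of a higher complex structure and $\mathrm{Hit}(n,\S)$ by $\bigoplus_{i=2}^n H^0(S,K^i)$ --- is the remaining identification, and is the main conjecture of \cite{fock-thomas}.

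The hard part will be twofold. Analytically, higher complex structures are genuinely non-holomorphic, so there is no $\delbar$-operator or Dolbeault cohomology to lean on: the Hitchin-type equation for $\Phi+\psi$ must be set up and solved from scratch, and already showing that the monodromy map is a local diffeomorphism rather than merely smooth requires controlling the linearization of that equation. Structurally, even granting the hyperkähler structure from Feix--Kaledin, one must show that its generic complex structure is the one induced from $\SL_n(\C)$ on the character variety --- this is not automatic from the abstract theorem and amounts to proving a non-abelian Hodge correspondence adapted to higher complex structures. I expect this last point, together with the still-open diffeomorphism $\T^n\cong\mathrm{Hit}(n,\S)$, to be the principal obstacle.
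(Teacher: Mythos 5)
You are being asked about a statement that the paper itself presents as an open \emph{conjecture}: the paper offers no proof at all, only a plausibility remark, namely that \emph{if} Goldman's symplectic structure and the natural complex structure on $\T^n$ were to combine into a Kähler structure, the Feix--Kaledin theorem \ref{feix-kaledin-thm} would give the hyperkähler structure near the zero-section of $\cotang$. Your plan is consistent in spirit with that remark (and with the twistor picture the paper draws by analogy with Figure \ref{twistor-space-higgs}), but it is a research programme, not a proof, and you should present it as such. Every load-bearing step you describe is precisely what is open: the existence of a closed, compatible ``higher Weil--Petersson'' form on $\T^n$ is not constructed anywhere (your appeal to a deformation-complex pairing plus a $\del\delbar$ argument is a hope, not an argument, since higher complex structures are not governed by a holomorphic deformation theory); the Hitchin-type equation $F(A)+[\Phi+\psi,(\Phi+\psi)^{*_h}]=0$ for a non-holomorphic, merely nilpotent $\Phi$ has no existence/uniqueness theory, and the implicit-function argument you invoke needs the linearization to be an isomorphism, which is exactly the analytic content that is missing; and the identification of the zero-section with $\mathrm{Hit}(n,\S)$ presupposes that the nilpotent pair $(\Phi_1,\Phi_2)$ attached to a higher complex structure can be brought to the companion-type form \eqref{Hitchin-section} and fed into the non-abelian Hodge machinery, which is the main conjecture of the cited work rather than something you can use.

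Two further points of caution. First, even granting a Kähler structure on $\T^n$ and hence Feix--Kaledin, the theorem only produces \emph{some} hyperkähler structure on a neighbourhood of the zero-section; it says nothing about its generic twistor fibers being the $\SL_n(\C)$-character variety, so the second sentence of the conjecture does not follow from the first and requires the (conjectural) monodromy construction you sketch via $\mc{A}(\lambda)$ as in \eqref{flat-lambda-connection}. Second, note a mismatch in your Kähler input: the paper's heuristic uses Goldman's symplectic structure, which lives on the Hitchin component and is only transported to $\T^n$ \emph{via the conjectural diffeomorphism}, whereas you propose to build a symplectic form intrinsically on $\T^n$; either route is legitimate to attempt, but neither is carried out, so the honest conclusion is the one you reach yourself in your final paragraph: the statement remains open, and your text should be framed as a strategy towards it rather than a proof of it.
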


The twistor picture can be drawn as follows (see \cite[Chapter 10]{thomas}):

\begin{center}
\begin{tikzpicture}[scale=1.5]
	\draw (0,0) circle (1cm);
	%\draw [domain=0:1] plot (\x, {\x /3}) ; 
\draw [domain=180:360] plot ({cos(\x)},{sin(\x)/3});
	\draw [domain=0:180, dotted] plot ({cos(\x)},{sin(\x)/3});
	\draw [fill=white] (0,1) circle (0.04);
	\draw [fill=white] (0,-1) circle (0.04);
	\draw (0,-1.2) node {$\cotang$};  
	\draw (0,1.2) node {$\overline{\cotang}$};
	\draw (2.3,0.5) node {$U\subset \Rep(\pi_1\S,\PSL_n(\C))$};
	\draw (2.2,0.15) node {$\cup$};
	\draw (2.2,-0.1) node {$\mathrm{Hit}(n,\S)$};
	\draw[->] ({1.25*cos(288)},{1.25*sin(288)}) arc (288:345:1.25);
	%\draw (2.2,-0.75) node {\textcolor{red}{?}};
	%\draw (2.2,-1) node {correspondence};
	\draw (0,-1.5) node {$\downarrow$};
	\draw (0,-1.9) node {$\T^n$};
	\draw (-1.25,-1.4) node {canonical};
	\draw (-1.25,-1.65) node {projection};
	\draw [domain=-30:32, dashed, ->] plot ({0.33*cos(\x)}, {0.33*sin(\x)-1.5});
	\draw (1,-1.4) node {zero-};
	\draw (1,-1.62) node {section};
\end{tikzpicture}
\end{center}

Notice the similarities with Figure \ref{hitchin-comp-twistor-picture}, the twistor picture which constructs the Hitchin component. In particular, the analog of the Hitchin fibration is much simpler: it is just the projection map, and the Hitchin section is the natural inclusion $\T^n\subset \cotang$. The conjecture that the zero-section corresponds to the Hitchin component can be seen as an analog of the Narasimhan--Seshadri theorem: for vanishing cotangent vector, the monodromy of the flat connection reduces to a real form. In our case, we get the split real form, while for Higgs bundles we get the compact real form.

\begin{Remark}
A good reason to believe in the conjecture is the following: Hitchin's component has Goldman's symplectic structure and the moduli space of higher complex structure carries a natural complex structure. If both combine to a Kähler structure, then there is a HK structure near the zero-section of $\cotang$ by the Feix--Kaledin theorem \ref{feix-kaledin-thm}.
\end{Remark}

For $n=2$, the situation is well-understood: higher complex structures of order 2 are nothing but usual complex structures, so $\T^2=\mathrm{Teich}(\S)$. Further $T^*\T^2$ is the space of complex projective structures and the map to $\Rep(\pi_1\S,\SL_2(\C))$ is given by the monodromy of the $\C P^1$-structure. Its image is an open dense subset proven in \cite{gallo2000monodromy}.

The question about what kind of geometric structures represent points in $\cotang$, a generalization of complex projective structures, is open.

\medskip
\paragraph{Other directions.}
A non-exhaustive list:
\begin{itemize}
	\item Quantization of character varieties: the celebrated \emph{Verlinde formula} gives the dimension of the Hilbert spaces associated to the geometric quantization. The link to quantum gravity is lurking.
	\item $p$-adic non-abelian Hodge correspondence: finding an analog in the $p$-adic world is very challenging and linked to the \emph{geometric Langlands program}. To give one example, Ngô's proof of the Fundamental Lemma uses the Hitchin fibration.
\end{itemize}

%\appendix
%\section{About the exercises}

% bibliography using bibtex
\bibliographystyle{alpha}
\bibliography{ref}
%\nocite{*}

\end{document}